\renewcommand{\epsilon}{{\varepsilon}}
\numberwithin{equation}{section}
\newtheorem{theorem}{Theorem}[section]
\newtheorem{lemma}[theorem]{Lemma}
\newtheorem{remark}[theorem]{Remark}
\newtheorem{proposition}[theorem]{Proposition}
\newtheorem{corollary}[theorem]{Corollary}
\newtheorem{claim}[theorem]{Claim}
\newcommand{\R}{\mathbb R}
\newcommand{\N}{\mathcal N}
\def\({\left(}
\def\){\right)}
\def\<{\left\langle}
\def\>{\right\rangle}
\def\Sch{{\mathcal S}}
\def\F{\mathcal F}
\def\K{\mathcal K}
\def\TT{\mathcal T}
\def\Q{\mathcal Q}
\def\G{\mathcal G}
\def\L{\mathcal L}
\def\EE{\mathcal E}
\def\Lm{\rm L}
\DeclareMathOperator{\RE}{Re}
\DeclareMathOperator{\IM}{Im}
\newcommand{\qtq}[1]{\quad\text{#1}\quad}
\newcommand{\ve}[1]{\textbf{#1}}
\begin{document}

\title[Dynamic of threshold solutions]{Dynamics of the Energy-Critical Nonlinear Schr\"{o}dinger System in $\R^{4}$}

\author[Alex H. Ardila]{Alex H. Ardila}
\address{Department of Mathematics, Universidad del Valle, Colombia} 
\email{ardila@impa.br}

\begin{abstract} 
In this paper, we investigate the dynamics of radial solutions at threshold energy for a 3-component Schr\"{o}dinger system with cubic nonlinearity in four dimensions. The main difference from the cases previously addressed in the literature is that, in our system,  the kernel of the imaginary part $L_I$ of the linearized operator $-i\L=L_{R}+iL_{I}$ has dimension 2. 
To overcome this difficulty, we carry out a detailed study of the coercivity properties of these operators. We also introduce a new modulation parameter associated with the additional eigenfunction in the kernel of the operator $L_{I}$, which enables us to perform the modulation analysis and establish the uniqueness of exponentially decaying solutions to the linearized equation.
\end{abstract}

\subjclass[2010]{35Q55}
\keywords{Energy-critical NLS; 3-component Schr\"{o}dinger system; scattering; blow-up; modulation stability; spectral theory}

\maketitle

\medskip

\section{Introduction}
\label{sec:intro}

We consider the Cauchy problem for the following 3-component Schr\"{o}dinger system with cubic nonlinearity in four dimensions:

\begin{equation} \label{NLS}
\begin{cases} 
i \partial_t u_1 + \frac{1}{2m_1} \Delta u_1 + 2\overline{u}_1 u_2 u_3= 0, \\ 
i \partial_t u_2 + \frac{1}{2m_2} \Delta u_2 + u_1^2  \overline{u}_3 = 0, \\ 
i \partial_t u_3 + \frac{1}{2m_3} \Delta u_3 + u_1^{2} \overline{u}_2 = 0, \\ 
\end{cases}
 \end{equation}
where $\ve{u}=(u_1, u_2, u_3): \mathbb{R}\times \mathbb{R}^{4}\to \mathbb{C}^{3}$ and $m_1$, $m_2$, $m_3$ are positive coupling constants. Such systems with polynomial-type nonlinear terms arise in the study of laser-plasma interactions; For further details, see \cite{CCC, CoDiSa} and the references therein.

In what follows, we use the vector notation $\ve{u} = (u_1, u_2, u_{3})$, where  $\ve{u}$ is treated as a column vector. The local well-posedness of the Cauchy problem for \eqref{NLS} was established in \cite[Proposition 1.1]{OgaTsu2023}. We also refer to \cite{Tsu2025} for a detailed study of well-posedness for multicomponent nonlinear Schrödinger equations with Sobolev-critical nonlinearity.   Specifically, for initial data $\ve{u}_0 \in (\dot{H}^1(\mathbb{R}^4))^{3}$, there exists a unique solution  $\ve{u} \in C\big(I; (\dot{H}^1(\mathbb{R}^4))^3\big)$, defined on a maximal interval $I = (-T_{-}(\mathbf{u}_0), T_{+}(\mathbf{u}_0))$. Moreover, this solution conserves the energy $E(\ve{u}(t))=E(\ve{u}_{0})$ for all $t\in I$, where
\begin{align}\label{Ener1}
	E(\ve{u})=K(\ve{u})-2P(\ve{u}),
\end{align}
with
\begin{align}\label{HNN}
	K(\ve{u}):=\sum_{k=1}^{3}\tfrac{1}{2m_{k}}\|\nabla u_{k}\|_{L^{2}(\mathbb{R}^{4})}^{2}
\qtq{and}
P(\ve{u}):=\RE\int\limits_{\mathbb{R}^{4}}\overline{u}_{1}^{2}(x)u_{2}(x)u_{3}(x)dx.
\end{align}

The system \eqref{NLS} exhibits two fundamental symmetries: scaling invariance and phase rotation invariance. Specifically, if $\mathbf{u} = (u_1, u_2, u_3)$ is a solution to \eqref{NLS}, then the following are also solutions:
\begin{enumerate}[label=(\roman*)]
    \item {Scaling symmetry}: 
          $\lambda^{-1}\mathbf{u}(\lambda^{-2}t, \lambda^{-1}x)$ for any scaling parameter $\lambda > 0$;
    \item {Phase rotation symmetry}: 
          $\big(e^{i(\theta_1+\theta_2)}u_1(t,x), e^{2i\theta_1}u_2(t,x), e^{2i\theta_2}u_3(t,x)\big)$ for any phases $\theta_1, \theta_2 \in \mathbb{R}$.
\end{enumerate}

The scattering versus blow-up dichotomy for system \eqref{NLS} is investigated in \cite{OgaTsu2023, Tsu2025}. More precisely, the authors in \cite[Theorem 1.3]{OgaTsu2023} established the existence of ground states of the form $\Q = (Q_1, Q_2, Q_3)$, where 
\begin{align*}
Q_{1}(x) &= \left( \tfrac{1}{4 m_2 m_3} \right)^{\frac{1}{4}} Q(x), \quad
Q_{2}(x) = \tfrac{1}{2}\left( \tfrac{m_2}{m^{2}_1 m_3} \right)^{\tfrac{1}{4}} Q(x), \\
Q_{3}(x) &= \tfrac{1}{2} \left( \tfrac{m_3}{m^{2}_1 m_2} \right)^{\frac{1}{4}} Q(x),
\end{align*}
with $Q(x) = (1 + |x|^2/8)^{-1}\in \dot{H}^1(\R^4)$. Note that $Q$ is the positive solution to the nonlinear elliptic equation
\begin{align}\label{groundS}
\Delta Q + Q^{3} = 0.
\end{align}
The uniqueness of the ground state $\Q = (Q_1, Q_2, Q_3)$ (modulo symmetries) is proved in Proposition~\ref{UBS} below.

In~\cite[Theorem~1.4]{OgaTsu2023}, the authors established a classification of radial solutions to \eqref{NLS} with energy below the ground state threshold $E(\Q)$. Under the mass resonance condition $2m_1 + m_2 = m_3$, for any radial initial data $\mathbf{u}_0 \in (\dot{H}^1(\mathbb{R}^4))^3$ satisfying $E(\mathbf{u}_0) < E(\Q)$, the corresponding solution $\mathbf{u}(t)$ exhibits a sharp dichotomy: either (i) global existence and scattering when $K(\mathbf{u}_0) < K(\Q)$, or (ii) finite-time blow-up when $K(\mathbf{u}_0) > K(\Q)$, provided $\mathbf{u}_0$ additionally satisfies either $|x|\mathbf{u}_0 \in (L^2(\mathbb{R}^4))^3$ or $\mathbf{u}_0 \in (H^1(\mathbb{R}^4))^3$. Notice that this  classification  depends on the mass resonance condition; see \cite[Appendix]{OgaTsu2023} for further discussion of this assumption.
We recall that a solution $\mathbf{u}(t)$ of \eqref{NLS} scatters in $(\dot{H}^1(\mathbb{R}^4))^3$ if there exist $(u_{1\pm}, u_{2\pm}, u_{3\pm}) \in (\dot{H}^1(\mathbb{R}^4))^3$ such that  
\[
\lim_{t \to \pm \infty} \|u_k(t) - e^{\frac{it}{2m_k} \Delta} u_{k\pm}\|_{\dot{H}^1(\mathbb{R}^4)} = 0 \quad \text{for } k=1, 2, 3.
\]
An analogous result for multicomponent nonlinear Schr\"odinger equations with Sobolev-critical nonlinearity can be found in \cite[Theorem~1.4]{Tsu2025}.

In this paper, we investigate the behavior of solutions precisely at the energy threshold $E(\Q)$. More specifically, we establish the following results. First, we construct two special solutions that will enable us to classify the threshold dynamics.

\begin{theorem}\label{Gcharc}
Fix $m_1, m_2, m_3 > 0$. Under the mass resonance condition $2m_1 + m_2 = m_3$, the system \eqref{NLS} admits two special radial solutions $\G^+(t)$ and $\G^-(t)$ with the following properties:
\begin{enumerate}[label=(\roman*)]
    \item For the solution $\G^+$:
    \begin{itemize}
        \item Energy threshold: $E(\G^+(t)) = E(\Q)$;
        \item Global existence in positive time: $T_+(\G^+) = +\infty$;
         \item Supercritical condition: $K(\G^+(0)) > K(\Q)$.
    \end{itemize}
    
    \item For the solution $\G^-$:
    \begin{itemize}
        \item Energy threshold: $E(\G^-(t)) = E(\Q)$;
        \item Subcritical condition: $K(\G^-(0)) < K(\Q)$;
        \item Global existence in positive and negative time: $T_+(\G^-) = +\infty$ and $T_-(\G^-) = +\infty$;
        \item Scattering behavior: $\G^-(t)$ scatters as $t \to -\infty$.
    \end{itemize}
\end{enumerate}
Moreover,
\[
\lim_{t \to +\infty}\G^{-}(t)=\Q \quad\text{in $(\dot{H}^{1}(\R^{4}))^{3}$.}
\]
\end{theorem}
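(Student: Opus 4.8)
The plan is to follow the now-classical Duyckaerts--Merle / Duyckaerts--Roudenko strategy for constructing threshold solutions, adapted to the three-component system and to the two-dimensional kernel of $L_I$. The starting point is the spectral analysis of the linearized operator $-i\L = L_R + iL_I$ around the ground state $\Q$: one shows that $\L$ has exactly one unstable direction, i.e. there is a real number $e_0 > 0$ and a (suitably normalized) eigenfunction $\mathcal{Y}^+$ with $\L\mathcal{Y}^+ = e_0 \mathcal{Y}^+$, and correspondingly $\mathcal{Y}^- = \overline{\mathcal{Y}^+}$ (or the appropriate conjugation for the system) with $\L\mathcal{Y}^- = -e_0\mathcal{Y}^-$, both exponentially decaying. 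Here the coercivity properties of $L_R$ and $L_I$ on the orthogonal complement of the (enlarged) kernel — including the extra eigenfunction coming from $\ker L_I$ having dimension~2 — are what make the abstract scheme work; I would invoke precisely those coercivity statements as established earlier in the paper. One then seeks approximate solutions of \eqref{NLS} of the form
\[
\ve{u}^{a}_k(t) = e^{\frac{it}{2m_k}\Delta}\big(\text{perturbation}\big)\ \longleftrightarrow\ \ve{u}^a(t) = \Q + a\, e^{-e_0 t}\,\mathcal{Y}^+ + \sum_{j\ge 2} a^j \varphi_j, \qquad a \in \{+1,-1\},
\]
where the profiles $\varphi_j$ solve an explicit triangular system of elliptic equations obtained by matching powers of $e^{-e_0 t}$; each $\varphi_j$ is exponentially decaying in time because $je_0$ avoids the spectrum obstruction $\{0, \pm e_0\}$ for $j \ge 2$, and $\|\ve{u}^a(t) - \Q - a e^{-e_0 t}\mathcal{Y}^+\|_{(\dot H^1)^3} = O(e^{-2e_0 t})$ as $t \to +\infty$.

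Next I would upgrade these approximate solutions to genuine solutions by a fixed-point/compactness argument: for each large $T_n$, let $\ve{u}_n$ be the solution of \eqref{NLS} with data $\ve{u}_n(T_n) = \ve{u}^a(T_n)$; using the Strichartz estimates and the dispersive/scattering theory available for this system (from \cite{OgaTsu2023,Tsu2025}) together with the exponential smallness of the source term, one shows $\|\ve{u}_n(t) - \ve{u}^a(t)\|_{(\dot H^1(\R^4))^3} \lesssim e^{-2e_0 t}$ uniformly on $[T_0, T_n]$ for some fixed $T_0$. A subsequence of $\ve{u}_n(T_0)$ converges in $(\dot H^1)^3$, and the limiting solution $\G^a$ satisfies $\|\G^a(t) - \Q - a e^{-e_0 t}\mathcal{Y}^+\|_{(\dot H^1)^3} = O(e^{-2e_0 t})$, hence $\G^a(t) \to \Q$ in $(\dot H^1)^3$ as $t \to +\infty$, which is the final displayed claim; this also gives $T_+(\G^\pm) = +\infty$. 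Because $E$ and $K$ are continuous on $(\dot H^1)^3$ and $E(\ve{u}^a(t)) \to E(\Q)$ while $E$ is conserved, we get $E(\G^\pm(t)) = E(\Q)$. The sign of $K(\G^a(0)) - K(\Q)$ is read off from the second-order expansion: one shows $\partial_a\big|_{a=0}$ of the relevant quadratic form is zero and the second derivative is a nonzero multiple of $a\langle \text{something}, \mathcal{Y}^+\rangle$, so that $K(\G^+(t)) > K(\Q)$ and $K(\G^-(t)) < K(\Q)$ for $t$ large, and then for all $t$ by the rigidity of the variational characterization of $\Q$ (the function $t \mapsto K(\G^\pm(t)) - K(\Q)$ cannot change sign by the sub/supercritical trapping below the threshold, combined with $E = E(\Q)$).

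For $\G^-$ one must additionally show global existence in \emph{negative} time and scattering as $t \to -\infty$. Since $K(\G^-(0)) < K(\Q)$ and $E(\G^-(0)) = E(\Q)$, the solution stays in the "subcritical" region for all negative times for which it exists — here I would use a continuity/variational argument showing $K(\G^-(t)) < K(\Q)$ persists (the solution cannot reach $K = K(\Q)$ without violating the energy constraint together with the sharp Sobolev/Gagliardo--Nirenberg-type inequality underlying $E(\Q)$) — so by the Kenig--Merle-type concentration-compactness/rigidity theorem for \eqref{NLS} at energies $\le E(\Q)$ in the subcritical regime, $\G^-$ is global and scatters as $t \to -\infty$. (One checks that the rigidity argument of \cite{OgaTsu2023}, proved for $E < E(\Q)$, extends to the boundary case $E = E(\Q)$ with $K < K(\Q)$ precisely because the only obstruction, a compact non-scattering solution with $E = E(\Q)$, would have to be $\Q$ up to symmetries, contradicting $K < K(\Q)$.)

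The main obstacle is the construction and control of the profiles $\varphi_j$ and the fixed-point step in the presence of the two-dimensional $\ker L_I$: the extra modulation parameter must be tracked so that the linearized evolution $e^{t\L}$ restricted to the stable/center directions has the required exponential decay estimates (polynomial-in-$t$ growth is not acceptable), and this is exactly where the refined coercivity of $L_R$ and $L_I$ — and the new modulation parameter introduced in the paper — are essential. A secondary technical point is verifying that the scattering/rigidity theory of \cite{OgaTsu2023,Tsu2025}, together with its Strichartz framework, applies uniformly enough to push the approximate-to-exact solution estimate through on the unbounded interval $[T_0,+\infty)$.
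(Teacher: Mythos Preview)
Your outline is essentially the Duyckaerts--Merle scheme that the paper follows, but there are two concrete differences and one garbled step worth flagging.

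First, for the passage from approximate to exact solutions you propose solving \eqref{NLS} backward from data $\ve{u}^a(T_n)$ and extracting a limit by compactness. The paper instead runs a direct contraction on $[t_k,+\infty)$ (Proposition~\ref{ContracA}): one writes $W^a = W^a_k + \ve{h}$ and shows that the Duhamel map for $\ve{h}$ is a contraction in the weighted space $\{\sup_{t\ge t_k} e^{(k+\frac12)\lambda_1 t}\|\nabla \ve{h}\|_{Z(t,\infty)}\le 1\}$, using only the Strichartz estimates and Lemmas~\ref{LL1}--\ref{LN1}. Both routes are standard; the fixed-point version avoids the extra compactness step and gives the estimate $\|W^a - W^a_k\|\lesssim e^{-(k+\frac12)\lambda_1 t}$ for every $k$, which is later used in the uniqueness argument (Proposition~\ref{UniqueU}).

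Second, your description of the sign of $K(\G^\pm)-K(\Q)$ is muddled: you speak of a ``second-order expansion'' and a derivative in $a$ that vanishes. In fact the relevant expansion is first order in $e^{-\lambda_1 t}$,
\[
K(W^a(t)) = K(\Q) + 2a\,e^{-\lambda_1 t}\,(e_1,\Q)_K + O\bigl(e^{-\frac32\lambda_1 t}\bigr),
\]
and the key nondegeneracy is $(e_1,\Q)_K\neq 0$ (Remark~\ref{PE1}), proved via the coercivity of $\F$ on $\tilde G^\perp$. Also, your formal series is written as $\sum a^j\varphi_j$; the correct ansatz is $\sum_{j\ge 1} e^{-j\lambda_1 t} g^a_j(x)$ (Proposition~\ref{ApproxSo}), with each $g^a_{j+1}$ obtained by inverting $\L-(j+1)\lambda_1$, which is legitimate precisely because $\sigma(\L)\cap\R=\{0,\pm\lambda_1\}$ (Lemma~\ref{SpecLL}).

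Third, for the backward scattering of $\G^-$ you suggest extending the sub-threshold rigidity of \cite{OgaTsu2023} to $E=E(\Q)$. Be careful: at the threshold there \emph{are} non-scattering solutions with $K<K(\Q)$ (indeed $\G^-$ itself does not scatter forward), so the naive extension fails. What the paper actually uses is Corollary~\ref{ClassC}: no radial threshold solution with $K<K(\Q)$ can have infinite $L^6_{t,x}$ norm in \emph{both} time directions. Since $\G^-\to\Q$ as $t\to+\infty$, forward scattering fails, hence backward scattering holds; global existence in both directions comes from Lemma~\ref{GlobalW}.
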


Our second result provides a  classification of solution behaviors at the energy threshold $E(\Q)$. More precisely, 

\begin{theorem}\label{TH22}
Fix $m_1, m_2, m_3 > 0$ satisfying the mass resonance condition $2m_1 + m_2 = m_3$. Let $\mathbf{u}(t)$ be the solution to \eqref{NLS} with radial initial data $\mathbf{u}_0 \in (\dot{H}^1(\mathbb{R}^4))^3$ such that $E(\mathbf{u}_0) = E(\Q)$. Then the following classification holds:
\begin{enumerate}[label=(\roman*)]
    \item Subcritical case. If $K(\mathbf{u}_0) < K(\Q)$, then 
    \begin{itemize}
        \item The solution $\mathbf{u}(t)$ is global in time;
        \item Either $\mathbf{u}$ coincides with $\G^-$ modulo the symmetries of the equation or $\mathbf{u}(t)$ scatters in both time directions;
    \end{itemize}
    
    \item If $K(\mathbf{u}_0) = K(\Q)$, then  $\mathbf{u}=\Q$ modulo symmetries of the equation.

    \item Supercritical  case. If $K(\mathbf{u}_0) > K(\Q)$ with $\mathbf{u}_0 \in (L^2(\mathbb{R}^4))^3$, then either $\mathbf{u}$ coincides with $\G^+$ modulo symmetries of the equation or the solution blows up in finite time.
\end{enumerate}
\end{theorem}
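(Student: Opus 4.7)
The plan is to follow the Duyckaerts--Merle threshold classification scheme, taking the special solutions $\G^{\pm}$ from Theorem~\ref{Gcharc} as benchmarks and relying on variational estimates, modulation analysis around $\Q$, and a uniqueness statement for exponentially decaying solutions. Part~(ii) is purely variational: a sharp Sobolev-type inequality $2P(\ve{u}) \leq C\, K(\ve{u})^{3/2}$ (with equality iff $\ve{u}$ lies in the symmetry orbit of $\Q$) is saturated by $\ve{u}_0$, since $E(\ve{u}_0) = E(\Q)$ and $K(\ve{u}_0) = K(\Q)$ force $P(\ve{u}_0) = P(\Q)$; this identifies $\ve{u}_0$ with $\Q$ modulo the symmetries of \eqref{NLS}, and hence $\ve{u}(t) \equiv \Q$ modulo symmetries for all $t$.

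For part~(i), the subthreshold variational trapping of \cite{OgaTsu2023} already yields global existence. The core dichotomy to establish is: either $\ve{u}(t)$ stays bounded away from the $\Q$-orbit in $(\dot{H}^1)^3$ for all $t\in\R$, in which case a concentration-compactness/rigidity argument in the spirit of the subthreshold scattering analysis of \cite{OgaTsu2023} produces scattering in both directions; or there is a sequence $t_n \to T \in \{-\infty,+\infty\}$ along which $\ve{u}(t_n)$ converges to $\Q$ (modulo symmetries) in $(\dot{H}^1)^3$. In the latter case I would set up the modulation decomposition
\[
\ve{u}(t) = \bigl[\Q + \ve{h}(t)\bigr]_{\lambda(t),\, \theta_1(t),\, \theta_2(t),\, \mu(t)},
\]
where $\lambda,\theta_1,\theta_2$ correspond to scaling and the two phase rotations and $\mu$ is the new modulation parameter associated with the second direction in $\ker L_I$. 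The orthogonality conditions on $\ve{h}$ are chosen so that the modulation ODEs close and so that the quadratic form $\langle \L \ve{h}, \ve{h}\rangle$ is coercive on the resulting subspace. Combining the spectral picture of $-i\L = L_R + iL_I$ (a single pair of real eigenvalues $\pm\lambda$ with eigenfunctions $Y^{\pm}$, plus coercivity on the orthogonal complement of the generalized kernel and of the unstable directions) with the modulation equations yields the exponential estimate $\|\ve{h}(t)\|_{\dot H^1} \lesssim e^{-\lambda|t - t_n|}$ as $t \to T$. A uniqueness statement for small radial solutions converging to $\Q$ exponentially then identifies $\ve{u}$ with $\G^{-}$ modulo symmetries.

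Part~(iii) runs dually. Under the additional hypothesis $\ve{u}_0 \in (L^2)^3$, a virial/pseudoconformal monotonicity identity combined with compactness rules out any global forward trajectory that remains uniformly far from the $\Q$-orbit; hence either $\ve{u}$ blows up in finite time, or some sequence $t_n \to T$ satisfies $\ve{u}(t_n) \to \Q$ modulo symmetries. The same modulation analysis and exponential-decay uniqueness argument as in part~(i) then identify $\ve{u}$ with $\G^{+}$ modulo symmetries.

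The principal obstacle, flagged in the abstract, is that $\ker L_I$ is two-dimensional instead of one-dimensional as in the scalar case. This forces the introduction of the new modulation parameter $\mu$, demands a more delicate coercivity analysis for $\langle \L \cdot,\cdot\rangle$ on the enlarged generalized kernel of $-i\L$, and makes the uniqueness step for exponentially decaying solutions substantially more involved, since the extra null direction must be controlled when comparing two candidate solutions that converge to $\Q$.
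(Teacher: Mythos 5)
Your high-level plan matches the paper's proof (Section~\ref{S:proof}): part~(ii) is handled by the sharp variational characterization of $\Q$ (Proposition~\ref{UBS}); parts~(i) and~(iii) proceed via the Duyckaerts--Merle scheme of compactness/rigidity, modulation around $\Q$, exponential convergence $\|\ve{u}(t)-\Q_{[\eta,\theta,\lambda]}\|_{\dot H^1}\lesssim e^{-ct}$ (Propositions~\ref{CompacDeca} and~\ref{SupercriQ}), and then the uniqueness statement for exponentially decaying solutions (Proposition~\ref{UniqueU} and Corollary~\ref{coroCla}) to identify $\ve{u}$ with $W^{\mp 1}=\G^{\mp}$. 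So the route is the same, not a genuinely different one. However, there are three points where your sketch drifts from what the paper actually does, two of them substantive enough to flag.

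First, a small but telling slip: the sharp inequality in this energy-critical 4D setting reads $|P(\ve{u})|\leq G_S\,K(\ve{u})^2$ (Proposition~\ref{TheGN}), not $P\lesssim K^{3/2}$. The power $2$ is what makes the problem scaling-invariant in $\dot H^1$; your exponent $3/2$ is the mass-critical relation and would change the whole variational structure. The argument for~(ii) is otherwise exactly the paper's: $E(\ve{u}_0)=E(\Q)$ and $K(\ve{u}_0)=K(\Q)$ force $P(\ve{u}_0)=P(\Q)$, saturating the inequality, so Proposition~\ref{UBS} pins down $\ve{u}_0$.

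Second, your modulation ansatz lists four geometric parameters $\lambda,\theta_1,\theta_2,\mu$ and calls $\mu$ ``the new parameter associated with the second direction in $\ker L_I$.'' This is a misreading of the structure. The $2$-dimensional kernel of $L_I$ is spanned by $\Q_p$ and $\Q_q$, and it is accounted for exactly by the two phase parameters (the paper's $\eta$ and $\theta$); there is no additional geometric symmetry beyond the two phase rotations and scaling. The paper's decomposition \eqref{DecomUFree} is $\ve{u}_{[\eta,\theta,\mu]}=(1+\alpha)\Q+\ve{h}$ with one scaling $\mu$, two phases $\eta,\theta$, and an amplitude coefficient $\alpha$ along $\Q$ (orthogonality is then imposed against $\Lambda\Q$, $i\Q_p$, $i\Q_q$, $\nabla\Q$, and $\mathcal F(\Q,\cdot)$). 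If you keep a redundant fourth geometric parameter the implicit function theorem step in Lemma~\ref{ExistenceFree} over-determines the system and the modulation equations will not close.

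Third, and most importantly for~(iii): the paper does \emph{not} use concentration-compactness to rule out forward trajectories that stay far from the orbit in the supercritical case. For $K(\ve{u}_0)>K(\Q)$ one has the definite sign $F_\infty[\ve{u}(t)]=-4\delta(t)<0$, and the $L^2$ hypothesis plus Strauss' radial decay lemma allow the localized virial error to be absorbed directly (Lemma~\ref{BoundDQ}), yielding $\frac{d}{dt}I_R\le -2\delta(t)$ for all $t\ge 0$. Combined with $V_R>0$ and $V_R''<0$ this forces $V_R'=I_R>0$, and integrating gives $\int_t^\infty\delta\lesssim I_R[\ve{u}(t)]\lesssim R^2\delta(t)$, hence $\delta(t)\lesssim e^{-ct}$ by Gronwall (Lemma~\ref{NeD}). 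No compactness of the trajectory is invoked. Compactness (Lemma~\ref{Compacness11}) is used only in the subcritical case~(i), where the sign of $F_\infty$ is favorable ($F_\infty=+4\delta$) but one must control the virial cutoff errors via the precompactness modulus and the quantity $\lambda(t)$. Your sketch would lead you to set up machinery (profile decomposition, almost-periodic solutions) that is simply not needed in~(iii) and, worse, would not by itself produce the exponential rate; the virial-monotonicity argument is where the exponential decay actually comes from.

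These issues are fixable and do not invalidate your overall strategy, but as written the proposal would stall at exactly the two technical points the paper highlights as the heart of the matter: the modulation closure given $\dim\ker L_I=2$, and the passage from ``close along a sequence'' to ``exponential convergence.''
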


It is worth emphasizing that the coupling condition $2m_1 + m_2 = m_3$ plays a fundamental role in the analysis of the dynamics of \eqref{NLS}. This condition is necessary for deriving the virial identity presented in Lemma~\ref{VirialIden}, which in turn is essential for establishing the exponential convergence of solutions $\mathbf{u}(t)$ to the ground state $\mathcal{Q}$ (modulo the symmetries of the equation) at the energy threshold. For further details, we refer to the proofs of Propositions~\ref{CompacDeca} and~\ref{SupercriQ}.

To prove Theorem~\ref{TH22}, we closely follow the argument developed by T. Duyckaerts and F. Merle \cite{DuyMerle2009}. To this end, we define the ground state orbit $\mathcal{B}$ associated to $\Q$ as:
\[
\mathcal{B} := \left\{ \Q_{[\theta_1, \theta_2, \lambda]} : \theta_1, \theta_2 \in \mathbb{R}, \lambda > 0 \right\},
\]
where
\[
\Q_{[\theta_1, \theta_2, \lambda]} := \left( e^{i(\theta_1+\theta_2)} \lambda^{-1} Q_1(\lambda^{-1}x), \ e^{2i\theta_1} \lambda^{-1} Q_2(\lambda^{-1}x), \ e^{2i\theta_2} \lambda^{-1} Q_3(\lambda^{-1}x) \right).
\]
We then show that any solution $\mathbf{u}(t)$ of \eqref{NLS} with initial data $\mathbf{u}_0 \in (\dot{H}^1(\mathbb{R}^4))^3$ satisfying the conditions of Theorem~\ref{TH22} must exhibit exactly one of the following seven behaviors:
\begin{enumerate}
    \item Scattering in both time directions ($t \to \pm\infty$);
    \item Trapped by $\mathcal{B}$ as $t \to +\infty$ and scattering as $t \to -\infty$;
    \item Trapped by $\mathcal{B}$ as $t \to -\infty$ and scattering as $t \to +\infty$;
    \item Finite-time blow-up in both time directions;
    \item Trapped by $\mathcal{B}$ as $t \to +\infty$ and finite-time blow-up for $t < 0$;
    \item Trapped by $\mathcal{B}$ as $t \to -\infty$ and finite-time blow-up for $t > 0$;
		\item The initial data $\mathbf{u}_0$ belongs to the orbit $\mathcal{B}$.
\end{enumerate}
Here, ``trapped by $\mathcal{B}$'' means that the solution remains within an $\mathcal{O}(\varepsilon)$-neighborhood of $\mathcal{B}$ in the $(\dot{H}^1(\mathbb{R}^4))^3$ norm after some time (or before some time). Later, using the special solutions  $\G^{\pm}$, we characterize all possible solutions exhibiting the asymptotic behaviors (2), (3), (5), and (6), proving their uniqueness up to symmetries of the system. This yields Theorem~\ref{TH22} as a direct consequence.

Recent years have witnessed significant advances in the analysis of solution behavior for systems of nonlinear Schrödinger equations with polynomial-type nonlinearities. Substantial progress has been made in understanding both the local and global dynamics of these systems. We can mention some recent works in this direction: the existence of ground states and well-posedness results have been established in \cite{NayaOzaTana, LiHayashi2014, Zhang2016, Masaki2022}, while orbital stability and instability properties have been investigated in \cite{COCOOh, SCO, AA2018, AeDiFo2021, FukayaHayashiInui2024}. The dynamics below the mass-energy threshold have been analyzed in \cite{MEXu, GaoMengXuZheng, HaInuiNi, OgaTsu2023, Tsu2025, MasakiTsukuda2023}, with critical threshold behavior examined in \cite{ArdilaCelyMeng, CAPA2022, NogueraPastor2022}.

The main difficulty presented by the system \eqref{NLS} stems from the two degrees of freedom in the phase rotation symmetry, which leads to $\dim \ker(L_{I}) = 2$, where $L_{I}$ is the imaginary part of the linearized operator $-i\mathcal{L} = L_{R} + iL_{I}$ (this operator can be found in Section~\ref{EqLinea}). To the best of our knowledge, in all previous works studying energy threshold dynamics for the NLS, the kernel of the imaginary part has dimension 1; See, for example, \cite{DuyMerle2009, CamposFarahRoudenko} for the classical energy-critical NLS case; \cite{MIWUGU2015, LiLiuTangXu2025} for the energy-critical Hartree equation; \cite{ArdilaCelyMeng} for the energy-critical NLS system with quadratic interaction; \cite{YangZengZhang2022} for the energy-critical NLS with inverse square potential; and \cite{LiuYangZhang2024} for the energy-critical inhomogeneous NLS, among others.

To overcome this difficulty, we carry out a detailed study of the coercivity properties of these operators. Furthermore, We introduce a new modulation parameter associated with the additional eigenfunction in the kernel of the operator $L_{I}$. By studying the decay of solutions to the linearized equation and following the arguments developed in \cite{DuyMerle2009, MIWUGU2015}, we obtain all seven aforementioned behaviors and establish the uniqueness (modulo symmetries) of solutions satisfying the threshold scenarios (2), (3), (5), and (6).



In the rest of the introduction, let us briefly describe the organization of the
paper and the strategy of proof for Theorem~\ref{Gcharc} and Theorem~\ref{TH22}. In Section~\ref{S0:preli}, we introduce the notation used throughout the text and revisit the Cauchy problem. 
In Section~\ref{S1:Vari}, we characterize the functions that achieve equality in the Gagliardo-Nirenberg inequality \eqref{GNI}. We show that these are precisely the translations, dilations, and phase rotations of $\mathcal{Q}$. This characterization plays a crucial role in the modulation analysis and in understanding the dynamic behavior of the solution at the energy threshold.

Furthermore, we establish the virial identity. This identity is a key element for proving the exponential convergence of the solution $\mathbf{u}(t)$ to the ground states $\Q$ at the energy threshold, as shown in Propositions~\ref{CompacDeca} and~\ref{SupercriQ}. Note that to derive the virial identity, it is necessary to assume the coupling condition $2m_1 + m_2 = m_3$. This section also presents several variational characterizations of $\Q$ which will be useful for the subsequent modulation analysis.

In Section~\ref{EqLinea}, we study the coercive properties of the linearized operators $L_I$ and $L_R$, which arise from linearizing the Schr\"{o}dinger system around the ground state $\Q$. The main results of this section are Lemmas~\ref{CoerLi11} and~\ref{Coer11}, which establish that, under suitable orthogonality conditions, $L_I$ and $L_R$ are coercive. This coercivity is essential for the modulation analysis.

Unlike the scalar case, where the kernel of the imaginary part of the linearized operator is one-dimensional, in this system the kernel of $L_I$ is two-dimensional due to the system's two phase invariances. To address this difficulty and establish coercivity, we transform $L_I$ and $L_R$ via a change of variables (cf. proof of Lemma~\ref{Apx22}).  This transformation allows us to diagonalize the operators into blocks involving well-known scalar operators. The coercivity of these scalar operators is already established in the theory for the scalar case; from this fact, we can derive the coercivity and spectral properties of $L_I$ and $L_R$, which will be used throughout this work.

In Section~\ref{S:Modula}, we establish the modulation analysis for radial solutions near the ground state $\Q$. The central result, Proposition~\ref{ModilationFree}, shows that any threshold solution $\mathbf{u}(t)$  can be uniquely decomposed as $\mathbf{u}_{[\eta(t),\theta(t),\mu(t)]}(t) = (1+\alpha(t))\Q + \mathbf{h}(t)$, where the parameters $\eta(t)$, $\theta(t)$, and $\mu(t)$ satisfy the estimates \eqref{EstimateOne} and \eqref{EstimateFree}. Note that we introduce two phase parameters $\eta(t)$ and $\theta(t)$, due to the two-dimensional kernel of $L_I$. This decomposition provides a precise description of the evolution of $\mathbf{u}(t)$ near $\Q$.

In Sections~\ref{S: SubThres} and~\ref{S: SuperThres}, we study solutions with initial data satisfying parts (i) and (iii) of Theorem~\ref{TH22}. The main techniques involve using a virial argument and a concentration-compactness approach adapted to the system \eqref{NLS} to establish the exponential decay \eqref{PIN} and~\eqref{DoQ11} of $\delta(t)$ for large positive time. This decay, combined with modulational stability, implies the exponential convergence in the positive time direction to $\Q$ (up to scaling and phase rotation). In contrast to the scalar case, obtaining this exponential convergence to $\Q$ requires careful consideration of both phase parameters ($\eta(t)$ and $\theta(t)$) associated with the additional symmetries of the system \eqref{NLS}.

In Section~\ref{SpecLine}, we establish the spectral properties of the linearized operator $\mathcal{L}$ around $\mathcal{Q}$, which are derived from the spectral analysis of the component operators $L_I$ and $L_R$. We introduce a quadratic form $\mathcal{F}$ associated with $\mathcal{L}$ and characterize two subspaces $G^\perp \cap \dot{H}^1_{\text{rad}}$ and $\tilde{G}^\perp \cap \dot{H}^1_{\text{rad}}$ within $\dot{H}^1$ where $\mathcal{F}$ remains positive (coercive), effectively avoiding the neutral and negative directions of the linearized dynamics. These spectral results are fundamental for the subsequent construction and uniqueness proof of the special radial solutions $\G^{\pm}(t)$ in Sections~\ref{S:Existence} and \ref{S:uniq}.

Section~\ref{S:Existence} is devoted to proving Theorem~\ref{Gcharc}. Specifically, using the spectral properties of the real eigenvalues of the linearized operator $\mathcal{L}$ and applying a fixed-point argument, we construct the radial solutions $\mathcal{G}^{\pm}(t)$ established in Theorem~\ref{Gcharc}.

In Section~\ref{S:uniq}, we utilize the positivity of the quadratic form $\mathcal{F}$ over $G^\perp \cap \dot{H}^1_{\text{rad}}$ to study the exponential decay properties of solutions to the linearized equation. In contrast to the scalar case, here we must introduce two coordinate functions associated with the two eigenfunctions spanning the kernel of $L_I$. For these coordinate functions, we establish specific exponential decay estimates, which in turn enable us to derive exponential decay for solutions of the linearized equation (see \eqref{bqBp} for details). Finally, we apply these exponential decay results to prove the uniqueness of the special solutions. Furthermore, with the uniqueness of special solutions established, in Section~\ref{S:proof} we provide the proof of Theorem~\ref{TH22}.

In Appendix~\ref{S:A2}, we demonstrate that the linearized operator $\mathcal{L}$ possesses at least one negative eigenvalue. This spectral information is crucial for both the construction and uniqueness proof of the special solutions in Sections~\ref{S:uniq} and \ref{S:Existence}.

\section{Notation and Local theory}\label{S0:preli}
For any $s \geq 0$, we denote $\dot{H}^{s}(\mathbb{R}^{4}:\mathbb{C}) \times \dot{H}^{s}(\mathbb{R}^{4}:\mathbb{C}) \times \dot{H}^{s}(\mathbb{R}^{4}:\mathbb{C})$ by $(\dot{H}^{s}(\mathbb{R}^{4}:\mathbb{C}))^{3}$, equipped with the standard norm. Similarly, we write $(H^{s}(\mathbb{R}^{4}:\mathbb{C}))^{3}$ to denote $H^{s}(\mathbb{R}^{4}:\mathbb{C}) \times H^{s}(\mathbb{R}^{4}:\mathbb{C}) \times H^{s}(\mathbb{R}^{4}:\mathbb{C})$.  

For a time interval $I$, we use the following notation:
\begin{equation}\label{SFU}
\begin{aligned}
&S(I) = \big(L^{6}_{t}L^{6}_{x}(I \times \mathbb{R}^{4})\big)^{3}, \quad
Z(I) = \big(L^{6}_{t}L^{\frac{12}{5}}_{x}(I \times \mathbb{R}^{4})\big)^{3}, \\
&N(I) = \big(L^{2}_{t}L^{\frac{4}{3}}_{x}(I \times \mathbb{R}^{4})\big)^{3}, \quad \mathcal{S} := \big(\mathcal{S}(\mathbb{R}^{4})\big)^{3},
\end{aligned}
\end{equation}
where $\mathcal{S}(\mathbb{R}^{4})$ denotes the Schwartz space. Furthermore, when no confusion arises, we simply write  
\[\dot{H}^{s} := (\dot{H}^{s}(\mathbb{R}^{4}:\mathbb{C}))^{3} \qtq{and} L^{p} := (L^{p}(\mathbb{R}^{4}:\mathbb{C}))^{3}.\]

We recall the Sobolev inequality in $\mathbb{R}^4$:
\begin{align}\label{SobL}
	\|f\|_{L^4(\mathbb{R}^4)} \leq G_4 \|\nabla f\|_{L^2(\mathbb{R}^4)},
\end{align}

for $f \in \dot{H}^{1}(\mathbb{R}^4)$, where $G_4$ is the best Sobolev constant.

By {solution}  to \eqref{NLS}, we mean a function $\mathbf{u} \in C_t(I, \dot{H}^1_x(\mathbb{R}^4))$ defined on an interval $I \ni 0$ that satisfies the {Duhamel formula}:
	\[
	\mathbf{u}(t) = U(t)\mathbf{u}_0 + i\int_0^t U(t - \tau)F(\mathbf{u}(\tau))\,d\tau, \quad \text{for } t \in I,
	\]
	where
	\[
	U(t) =
	\begin{pmatrix}
		e^{\frac{1}{2m_1}it\Delta} & 0 & 0 \\
		0 & e^{\frac{1}{2m_2}it\Delta} & 0 \\
		0 & 0 & e^{\frac{1}{2m_3}it\Delta}
	\end{pmatrix},
	\quad
	F(\mathbf{u}) := 
	\begin{pmatrix}
		2\overline{u}_1 u_2 u_3 \\
		u_1^2 \overline{u}_3 \\
		u_1^2 \overline{u}_2
	\end{pmatrix}.
	\]
	
	The solution $\mathbf{u}$ to the system on an interval $I \ni t_0$ satisfies the following {Strichartz estimates} (cf. \cite{OgaTsu2023, Tsu2025}):
	\[
	\left\| \int_{t_0}^t U(t - s)F(\mathbf{u}(s))\,ds \right\|_{Z(I)} \leq C\|F(\mathbf{u})\|_{N(I)},
	\]
	and
	\begin{align}\label{Estriz}
			\|\mathbf{u}\|_{Z(I)} \leq C \left( \|\mathbf{u}(t_0)\|_{L^2(\mathbb{R}^4)} + \|F(\mathbf{u})\|_{N(I)} \right).
	\end{align}

	\subsection*{Local theory}
The following results can be found in \cite{OgaTsu2023, Tsu2025}.

\begin{proposition}\label{LCP}
Fix $\ve{u}_{0} \in \dot{H}^{1}$. Then the following hold:
\begin{enumerate}[label=\rm{(\roman*)}]
    \item There exist $T_{+}(\ve{u}_{0}) > 0$, $T_{-}(\ve{u}_{0}) > 0$, and a unique solution  
    $\ve{u}: (-T_{-}(\ve{u}_{0}), T_{+}(\ve{u}_{0})) \times \mathbb{R}^{4} \to \mathbb{C}$ to \eqref{NLS} with initial data $\ve{u}(0) = \ve{u}_{0}$.
    
    \item Finite blow-up criterion. If $T_{+} = T_{+}(\ve{u}_{0}) < +\infty$, then $\|\ve{u}\|_{L^{6}_{t,x}((0, T_{+}) \times \mathbb{R}^{4})} = +\infty$. An analogous statement holds for negative time.
\end{enumerate}
\end{proposition}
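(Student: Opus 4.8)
The plan is to run the standard Kato / Cazenave--Weissler fixed-point scheme at the scaling-critical regularity $\dot H^{1}$, using the Strichartz estimates recorded above together with the Sobolev embedding $\dot W^{1,12/5}(\R^{4})\hookrightarrow L^{6}(\R^{4})$, and then to deduce the maximal-interval and blow-up statements by a continuation argument.

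\emph{Part (i).} Fix $\mathbf{u}_{0}\in\dot H^{1}$. The point, forced by the scaling invariance, is that one should track a scaling-critical norm rather than $\|\mathbf{u}_{0}\|_{\dot H^{1}}$: by \eqref{Estriz} applied to $\nabla\mathbf{u}$ (legitimate since $\nabla$ commutes with $U(t)$ and $\partial_{t}$) and density of Schwartz functions in $\dot H^{1}$, one has $\|\nabla U(t)\mathbf{u}_{0}\|_{Z(I)}\to 0$ as $|I|\to 0$, so we may pick $I\ni 0$ small enough that $\|\nabla U(t)\mathbf{u}_{0}\|_{Z(I)}\le\epsilon_{0}$ for a small absolute constant $\epsilon_{0}$. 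Define $\Phi(\mathbf{u})(t)=U(t)\mathbf{u}_{0}+i\int_{0}^{t}U(t-s)F(\mathbf{u}(s))\,ds$ on $X:=\{\mathbf{u}\in C_{t}(I;\dot H^{1}):\ \|\nabla\mathbf{u}\|_{Z(I)}\le 2\epsilon_{0}\}$. Since $F$ is cubic and algebraic, $\nabla F(\mathbf{u})$ is a finite sum of terms $(\nabla u_{i})\,u_{j}\,u_{k}$ (and conjugates); Hölder in space with exponents $\tfrac{12}{5},6,6$, then Hölder in time with exponents $6,6,6$, then the Sobolev embedding $\|u\|_{L^{6}_{x}}\lesssim\|\nabla u\|_{L^{12/5}_{x}}$, yield the trilinear estimate $\|\nabla F(\mathbf{u})\|_{N(I)}\lesssim\|\nabla\mathbf{u}\|_{Z(I)}^{3}$ and the difference estimate $\|\nabla(F(\mathbf{u})-F(\mathbf{v}))\|_{N(I)}\lesssim\bigl(\|\nabla\mathbf{u}\|_{Z(I)}^{2}+\|\nabla\mathbf{v}\|_{Z(I)}^{2}\bigr)\|\nabla(\mathbf{u}-\mathbf{v})\|_{Z(I)}$, the three-component coupling entering only through a component-wise Hölder split. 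Combining with \eqref{Estriz} shows $\Phi$ maps $X$ into itself and is a contraction once $\epsilon_{0}$ is small, so there is a unique fixed point $\mathbf{u}\in X$, which is the desired solution; the inhomogeneous estimate simultaneously gives $\mathbf{u}\in C_{t}(I;\dot H^{1})$. Uniqueness in $C_{t}(I;\dot H^{1})$ with $\nabla\mathbf{u}\in Z(I)$ follows from the difference estimate on a sufficiently short subinterval plus a continuity argument, and defining $T_{\pm}(\mathbf{u}_{0})$ as the supremum of times to which the solution so constructed extends produces the maximal interval.

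\emph{Part (ii).} Suppose, for contradiction, that $T_{+}:=T_{+}(\mathbf{u}_{0})<+\infty$ while $\|\mathbf{u}\|_{L^{6}_{t,x}((0,T_{+})\times\R^{4})}<+\infty$. Partition $(0,T_{+})$ into finitely many subintervals $I_{1},\dots,I_{J}$ on each of which $\|\mathbf{u}\|_{S(I_{j})}\le\epsilon_{0}$. On $I_{1}$, the Strichartz estimate \eqref{Estriz} for $\nabla\mathbf{u}$ and the trilinear bound bootstrap to $\|\nabla\mathbf{u}\|_{Z(I_{1})}<\infty$ and $\sup_{t\in I_{1}}\|\mathbf{u}(t)\|_{\dot H^{1}}<\infty$; iterating over $j$ gives $\|\nabla\mathbf{u}\|_{Z((0,T_{+}))}<\infty$. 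The Duhamel formula together with the inhomogeneous Strichartz estimate then shows that $\{\mathbf{u}(t)\}_{t\uparrow T_{+}}$ is Cauchy in $\dot H^{1}$, hence $\mathbf{u}(t)\to\mathbf{u}^{\ast}$ in $\dot H^{1}$ as $t\uparrow T_{+}$. Applying Part (i) with initial time $t_{0}$ close to $T_{+}$ — and using that the length of the existence interval furnished there is governed only by $\|\nabla U(\cdot-t_{0})\mathbf{u}(t_{0})\|_{Z}$, which by the Duhamel identity is bounded by $\|\nabla\mathbf{u}\|_{Z((t_{0},T_{+}))}+C\|\nabla\mathbf{u}\|_{Z((t_{0},T_{+}))}^{3}\to 0$ as $t_{0}\uparrow T_{+}$ — we extend $\mathbf{u}$ beyond $T_{+}$, contradicting maximality. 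The statement for $T_{-}$ is identical.

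\emph{Main obstacle.} The only genuine subtlety is that the problem is energy-critical, so the local existence time is \emph{not} a function of $\|\mathbf{u}_{0}\|_{\dot H^{1}}$ alone (that quantity is scaling-invariant) and the continuation argument cannot be closed by iterating a uniform $\dot H^{1}$-dependent time step; this is exactly why the quantity one must propagate, and the one appearing in the blow-up alternative, is the scaling-critical norm $\|\mathbf{u}\|_{L^{6}_{t,x}}$ rather than $\|\mathbf{u}(t)\|_{\dot H^{1}}$. Otherwise the argument is the classical Kato scheme: the algebraic (cubic) form of the nonlinearity obviates any fractional-Leibniz estimate, and the coupling of the three equations is handled routinely, so, as stated, the remaining details are those of \cite{OgaTsu2023, Tsu2025}.
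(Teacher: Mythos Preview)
The paper does not give its own proof of this proposition; it simply records the result as known, pointing to \cite{OgaTsu2023, Tsu2025}. Your sketch is the standard Kato/Cazenave--Weissler fixed-point argument at the critical regularity, which is exactly what those references carry out, and your identification of the energy-critical subtlety (local time depends on the profile of the data through $\|\nabla U(\cdot)\mathbf{u}_{0}\|_{Z}$, not on $\|\mathbf{u}_{0}\|_{\dot H^{1}}$) is correct.
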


\begin{proposition}[Sufficient condition for scattering]
Let $\ve{u}(t)$ be a global $\dot{H}^{1}$ solution in positive time ($T_{+} = +\infty$). If $\ve{u}$ remains uniformly bounded in $L_{t,x}^{6}$, i.e.,
\[
\|\ve{u}\|_{L^{6}_{t,x}([0, +\infty) \times \mathbb{R}^{4})} < \infty,
\]
then $\ve{u}$ scatters in $\dot{H}^{1}$.
\end{proposition}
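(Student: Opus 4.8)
The plan is to run the standard bootstrap that upgrades finiteness of the scattering norm to finiteness of all $\dot{H}^{1}$-level Strichartz norms, and then to read off the asymptotic states from the Duhamel formula. Since $\|\mathbf{u}\|_{S([0,\infty))}=\|\mathbf{u}\|_{L^{6}_{t,x}([0,\infty)\times\R^{4})}<\infty$ by hypothesis, I would first partition $[0,\infty)=\bigcup_{j=1}^{J}I_{j}$ with $I_{j}=[t_{j},t_{j+1}]$, $t_{1}=0$, into finitely many intervals on which $\|\mathbf{u}\|_{S(I_{j})}\le\delta$, for a small constant $\delta>0$ to be fixed. On each $I_{j}$ I would apply the Strichartz estimates recalled in Section~\ref{S0:preli} in their $\dot{H}^{1}$ form (obtained by letting $\nabla$ act through $U(t)$ in the Duhamel formula), together with the elementary $L^{\infty}_{t}\dot{H}^{1}_{x}$ control, to get
\[
\sup_{t\in I_{j}}\|\nabla\mathbf{u}(t)\|_{L^{2}}+\|\nabla\mathbf{u}\|_{Z(I_{j})}\le C\|\nabla\mathbf{u}(t_{j})\|_{L^{2}}+C\|\nabla F(\mathbf{u})\|_{N(I_{j})}.
\]
The Leibniz rule applied to the cubic $F$, followed by Hölder in space and time with the exponents built into $S$, $Z$, $N$ in \eqref{SFU} (namely $\tfrac{3}{4}=\tfrac{5}{12}+\tfrac{1}{6}+\tfrac{1}{6}$ in space and $\tfrac{1}{2}=\tfrac{1}{6}+\tfrac{1}{6}+\tfrac{1}{6}$ in time), yields
\[
\|\nabla F(\mathbf{u})\|_{N(I_{j})}\le C\|\nabla\mathbf{u}\|_{Z(I_{j})}\,\|\mathbf{u}\|_{S(I_{j})}^{2}\le C\delta^{2}\|\nabla\mathbf{u}\|_{Z(I_{j})}.
\]
Fixing $\delta$ so small that $C^{2}\delta^{2}\le\tfrac{1}{2}$, the last term is absorbed into the left side, leaving $\sup_{t\in I_{j}}\|\nabla\mathbf{u}(t)\|_{L^{2}}+\|\nabla\mathbf{u}\|_{Z(I_{j})}\le 2C\|\nabla\mathbf{u}(t_{j})\|_{L^{2}}$.

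Next I would iterate this bound over $j=1,\dots,J$. From $\|\nabla\mathbf{u}(t_{j+1})\|_{L^{2}}\le 2C\|\nabla\mathbf{u}(t_{j})\|_{L^{2}}$ and $t_{1}=0$ one gets $\|\nabla\mathbf{u}(t_{j})\|_{L^{2}}\le(2C)^{\,j-1}\|\nabla\mathbf{u}(0)\|_{L^{2}}$; since there are only finitely many intervals, this gives $\sup_{t\in[0,\infty)}\|\nabla\mathbf{u}(t)\|_{L^{2}}<\infty$ and, summing the finitely many pieces, $\|\nabla\mathbf{u}\|_{Z([0,\infty))}<\infty$, hence $\|\nabla F(\mathbf{u})\|_{N([0,\infty))}<\infty$. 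Then, from the Duhamel formula, $e^{-\frac{it}{2m_{k}}\Delta}u_{k}(t)=u_{k}(0)+i\int_{0}^{t}e^{-\frac{is}{2m_{k}}\Delta}F_{k}(\mathbf{u}(s))\,ds$; defining $u_{k+}:=u_{k}(0)+i\int_{0}^{\infty}e^{-\frac{is}{2m_{k}}\Delta}F_{k}(\mathbf{u}(s))\,ds$ (which lies in $\dot{H}^{1}$ by the previous step), the inhomogeneous $\dot{H}^{1}$-Strichartz estimate gives
\[
\big\|u_{k}(t)-e^{\frac{it}{2m_{k}}\Delta}u_{k+}\big\|_{\dot{H}^{1}}=\Big\|\int_{t}^{\infty}e^{-\frac{is}{2m_{k}}\Delta}F_{k}(\mathbf{u}(s))\,ds\Big\|_{\dot{H}^{1}}\le C\|\nabla F(\mathbf{u})\|_{N([t,\infty))},
\]
which tends to $0$ as $t\to+\infty$ by dominated convergence. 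This is exactly the asserted scattering in $\dot{H}^{1}$.

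The argument has no deep difficulty — it is the routine ``finite scattering norm implies scattering'' scheme — but the step requiring genuine care is the bootstrap controlling $\|\nabla\mathbf{u}(t)\|_{L^{2}}$: the solution is a priori only known to belong to $C_{t}\dot{H}^{1}$, with no uniform-in-time bound available from energy conservation (since $E=K-2P$ does not control $K$), so one must exploit that the partition into small-$S$-norm subintervals is finite in order to upgrade local-in-time Strichartz control into a global-in-time one, and likewise to handle the fact that the $Z$-norm does not simply add across subintervals. The remaining ingredient, the product estimate $\|\nabla F(\mathbf{u})\|_{N}\lesssim\|\nabla\mathbf{u}\|_{Z}\|\mathbf{u}\|_{S}^{2}$, is elementary here because $F$ is a genuine cubic polynomial in $(\mathbf{u},\overline{\mathbf{u}})$, so the ordinary Leibniz rule plus Hölder with the admissible exponents of \eqref{SFU} suffices and no fractional chain rule is needed.
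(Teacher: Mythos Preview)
Your argument is correct and is exactly the standard ``finite scattering norm implies scattering'' scheme for energy-critical NLS. Note, however, that the paper does not actually supply its own proof of this proposition: it is listed among the local-theory results imported from \cite{OgaTsu2023, Tsu2025}, so there is nothing in the paper to compare your approach against beyond observing that your proof is the routine one those references would contain.
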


We also have the following stability property:

\begin{lemma}[Long-time perturbation theory]\label{ConSNLS}
Let $I \subset \mathbb{R}$ be a time interval containing $0$, and let $\tilde{\ve{u}}$ be a solution to \eqref{NLS} on $I$. 
Assume that for some $L > 0$,
\[
\sup_{t \in I} \|\tilde{\ve{u}}(t)\|_{\dot{H}^{1}} \leq L \quad \text{and} \quad 
\|\tilde{\ve{u}}\|_{L_{t,x}^{6}(I \times \mathbb{R}^{4})} \leq L.
\]
There exists $\epsilon_0(L) > 0$ such that if
\[
\|\ve{u}_{0} - \tilde{\ve{u}}_{0}\|_{\dot{H}^{1}} \leq \epsilon
\]
for $0 < \epsilon < \epsilon_{0}(L)$, then there exists a unique solution $\ve{u}$ to \eqref{NLS} with initial data $\ve{u}_{0}$ such that
\[
\sup_{t \in I} \|\ve{u}(t) - \tilde{\ve{u}}(t)\|_{\dot{H}^{1}} \leq C(L)\epsilon
\quad \text{and} \quad
\|\ve{u}\|_{L_{t,x}^{6}(I \times \mathbb{R}^{4})} \leq C(L).
\]
\end{lemma}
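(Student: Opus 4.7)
The plan is to follow the standard Kenig–Merle long-time perturbation argument, adapted to the vector-valued cubic system \eqref{NLS}. First I would partition $I$ into $N = N(L,\delta)$ consecutive subintervals $I_1, \dots, I_N$ on which the scattering norm of $\tilde{\mathbf{u}}$ is small: $\|\tilde{\mathbf{u}}\|_{L^6_{t,x}(I_j \times \mathbb{R}^4)} \leq \delta$, with $\delta = \delta(L) > 0$ to be chosen later. This is possible since $\|\tilde{\mathbf{u}}\|_{L^6_{t,x}(I \times \mathbb{R}^4)} \leq L$. Applying the Strichartz estimate \eqref{Estriz} to $\nabla \tilde{\mathbf{u}}$ on each $I_j$, together with the hypothesis $\sup_I\|\tilde{\mathbf{u}}\|_{\dot{H}^1} \leq L$, then yields the uniform bound $\|\nabla \tilde{\mathbf{u}}\|_{Z(I_j)} \leq C(L)$ provided $\delta$ is small.

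The key ingredient is a trilinear difference estimate on each $I_j$. Writing $\mathbf{w} := \mathbf{u} - \tilde{\mathbf{u}}$ and expanding $F(\tilde{\mathbf{u}}+\mathbf{w}) - F(\tilde{\mathbf{u}})$ into finitely many trilinear products of components of $\tilde{\mathbf{u}}$ and $\mathbf{w}$ (each containing at least one $\mathbf{w}$-factor), one applies the Leibniz rule to the gradient and then H\"older in space with $\tfrac{3}{4} = \tfrac{5}{12} + \tfrac{1}{6} + \tfrac{1}{6}$ and in time with $\tfrac{1}{2} = \tfrac{1}{6} + \tfrac{1}{6} + \tfrac{1}{6}$. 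Combined with the Sobolev embedding $\dot{W}^{1,12/5}(\mathbb{R}^4) \hookrightarrow L^6(\mathbb{R}^4)$, which gives $\|\mathbf{v}\|_{S(I_j)} \leq C\|\nabla \mathbf{v}\|_{Z(I_j)}$, this yields a bound of the schematic form
\[
\|\nabla\bigl(F(\tilde{\mathbf{u}}+\mathbf{w}) - F(\tilde{\mathbf{u}})\bigr)\|_{N(I_j)} \leq C\|\nabla \mathbf{w}\|_{Z(I_j)}\Bigl(\delta^{2} + C(L)\delta + \|\nabla \mathbf{w}\|_{Z(I_j)}^{2}\Bigr).
\]
Substituting this into the Strichartz bound \eqref{Estriz} applied to $\nabla\mathbf{w}$ on $I_j$ and choosing $\delta=\delta(L)$ sufficiently small, a standard continuity bootstrap gives
\[
\|\nabla \mathbf{w}\|_{Z(I_j)} + \sup_{t\in I_j}\|\mathbf{w}(t)\|_{\dot{H}^1} \leq C(L)\|\mathbf{w}(t_{j-1})\|_{\dot{H}^1}.
\]

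Iterating across the $N = N(L)$ subintervals, the $\dot{H}^1$ error is amplified by a fixed factor $C(L)$ at each step, so that after relabeling constants $\sup_I\|\mathbf{u}-\tilde{\mathbf{u}}\|_{\dot{H}^1} \leq C(L)\,\epsilon$ whenever $\epsilon_0(L)$ is chosen small enough to guarantee that the bootstrap closes on every subinterval. The scattering bound $\|\mathbf{u}\|_{L^6_{t,x}} \leq C(L)$ then follows from the triangle inequality together with the Sobolev control of $\mathbf{w}$ on each $I_j$. I expect the main obstacle to be the careful bookkeeping of the trilinear difference estimate: because the components $u_1, u_2, u_3$ are mixed by $F$ with complex conjugation and distinct coupling constants $m_1, m_2, m_3$, one must expand $F(\tilde{\mathbf{u}}+\mathbf{w}) - F(\tilde{\mathbf{u}})$ by type and verify that each resulting trilinear term is controlled by a product of one $Z$ norm and two $S$ norms. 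Once this trilinear estimate is secured, the iteration scheme is a verbatim adaptation of the stability lemma for the scalar energy-critical NLS (cf.\ \cite{DuyMerle2009}).
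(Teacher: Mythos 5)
The paper does not actually prove this lemma; it defers to the references \cite{OgaTsu2023, Tsu2025} for the local theory, including long-time perturbation. Your proposal reconstructs the standard Kenig--Merle stability argument, and the structure is sound: partitioning $I$ into $N(L,\delta)$ subintervals of small $L^6_{t,x}$-norm, bounding $\|\nabla\tilde{\mathbf{u}}\|_{Z(I_j)}$ via \eqref{Estriz}, estimating the trilinear difference with H\"older exponents $\tfrac{3}{4}=\tfrac{5}{12}+\tfrac{1}{6}+\tfrac{1}{6}$ in space and $\tfrac{1}{2}=\tfrac{1}{6}+\tfrac{1}{6}+\tfrac{1}{6}$ in time together with $\dot{W}^{1,12/5}(\R^4)\hookrightarrow L^6(\R^4)$, bootstrapping on each $I_j$, and iterating. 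This is the right proof. One bookkeeping slip: your schematic bound
\[
\|\nabla(F(\tilde{\mathbf{u}}+\mathbf{w})-F(\tilde{\mathbf{u}}))\|_{N(I_j)} \leq C\|\nabla\mathbf{w}\|_{Z(I_j)}\bigl(\delta^2 + C(L)\delta + \|\nabla\mathbf{w}\|_{Z(I_j)}^2\bigr)
\]
omits the contribution from terms of the type $(\nabla\tilde{u}_j)\,w_k\,w_\ell$, in which the derivative falls on the background and both undifferentiated factors are errors. Since $\|\nabla\tilde{\mathbf{u}}\|_{Z(I_j)}$ is only controlled by $C(L)$ (not by $\delta$), this produces an additional term $C(L)\|\nabla\mathbf{w}\|_{Z(I_j)}^2$, i.e.\ an extra $C(L)\|\nabla\mathbf{w}\|_{Z(I_j)}$ inside the parentheses. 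This does not break the argument --- the bootstrap still closes once $\epsilon_0(L)$ is chosen small enough that $\|\nabla\mathbf{w}\|_{Z(I_j)}\ll 1/C(L)$ throughout the iteration, which is consistent with the geometric growth $\|\nabla\mathbf{w}\|_{Z(I_j)}\lesssim C(L)^j\epsilon$ --- but the continuity argument needs this term to be displayed, as it is the one that forces the smallness condition to depend on $L$ and not merely on $\delta$.
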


Finally, the following result characterizes the solution dynamics below the energy threshold. For the proof, we refer to \cite[Theorem 1.4]{OgaTsu2023} and  \cite[Theorem 1.4]{Tsu2025}.

\begin{theorem}[Sub-threshold dynamics: scattering vs. blow-up]\label{Th1}
Let $m_1, m_2, m_3 > 0$ satisfy the mass resonance condition $2m_1 + m_2 = m_3$. Consider the solution $\ve{u}(t)$ to \eqref{NLS} with initial data $\ve{u}_0 \in \dot{H}^1$. Then the following dynamics hold:
\begin{enumerate}[label=\rm{(\roman*)}]
    \item (Global existence and scattering) If $\ve{u}_0\in \dot{H}^1 $ is radially symmetric and satisfies $E(\ve{u}_0) < E(\Q)$ and $\|\nabla \ve{u}_0\|_{L^2} < \|\nabla \Q\|_{L^2}$, then $\ve{u}(t)$ exists globally in time and scatters in $\dot{H}^1$ as $t \to \pm \infty$.
    
    \item (Finite-time blow-up) If $\ve{u}_0 \in \dot{H}^1$ satisfies $E(\ve{u}_0) < E(\Q)$ and $\|\nabla \ve{u}_0\|_{L^2} > \|\nabla \Q\|_{L^2}$, and either $\ve{u}_0 \in L^{2}$  is radial or  $|x|\ve{u}_0 \in L^2$,
        then the solution $\ve{u}(t)$ blows up in finite time.
 \end{enumerate}
\end{theorem}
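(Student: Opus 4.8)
The plan is to prove the dichotomy by treating the two regimes separately: for the subcritical part (i) I would run the concentration--compactness--rigidity scheme of Kenig and Merle, while for the supercritical part (ii) a convexity (virial) argument of Glassey type suffices. Throughout I work with the functionals $K,P$ of \eqref{HNN}; by the variational structure of $\Q$ the two cases correspond to $K(\ve{u}_{0})<K(\Q)$ and $K(\ve{u}_{0})>K(\Q)$, respectively. The coupling condition $2m_{1}+m_{2}=m_{3}$ enters exactly once but decisively: it is precisely the algebraic identity under which the weighted virial quantity closes, i.e.\ it is what makes the virial identity of Lemma~\ref{VirialIden} available.

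The first step is the variational input and energy trapping. From the sharp Gagliardo--Nirenberg inequality $P(\ve{u})\le C_{GN}K(\ve{u})^{2}$ (see \eqref{GNI}), with $C_{GN}=P(\Q)/K(\Q)^{2}$, together with the Pohozaev identities at $\Q$ (which give $P(\Q)=\tfrac14 K(\Q)$, hence $E(\Q)=\tfrac12 K(\Q)$ and $C_{GN}=1/(4K(\Q))$), one gets $E(\ve{u})\ge g\big(K(\ve{u})\big)$ with $g(s)=s-2C_{GN}s^{2}$, and $g$ attains its strict maximum $g(K(\Q))=E(\Q)$ at $s=K(\Q)$. Consequently, along the flow $K(\ve{u}(t))$ can never equal $K(\Q)$ (this would force $E(\ve{u}_{0})\ge E(\Q)$), so it stays on the side fixed by $K(\ve{u}_{0})$; moreover there is $\delta>0$ with $K(\ve{u}(t))\le(1-\delta)K(\Q)$ in case (i) and $K(\ve{u}(t))\ge(1+\delta)K(\Q)$ in case (ii), uniformly on the maximal interval. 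In particular $\sup_{t}\|\nabla\ve{u}(t)\|_{L^{2}}<\infty$ in case (i), and on the subcritical side $K(\ve{u})-4P(\ve{u})\ge\delta K(\ve{u})\ge0$.

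For part (i): small-data theory built on the Strichartz estimate \eqref{Estriz} gives global existence and scattering when $\|\ve{u}_{0}\|_{\dot{H}^{1}}$ is small. If the conclusion failed there would be a critical level $E_{c}\in(0,E(\Q))$ at which it first fails; a linear profile decomposition adapted to $U(t)$ (each component decomposed along its own flow $e^{\frac{it}{2m_{k}}\Delta}$, carrying only a scaling parameter since the data are radial), together with the long-time perturbation Lemma~\ref{ConSNLS}, produces a minimal non-scattering solution $\ve{u}_{c}$ whose rescaled orbit $\{\lambda(t)\,\ve{u}_{c}(t,\lambda(t)\cdot)\}_{t}$ is precompact in $\dot{H}^{1}$; after the standard control of the scaling $\lambda(t)$ (in particular $\lambda(t)\gtrsim1$), a localized virial argument forces $\ve{u}_{c}\equiv0$. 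Concretely, with $V_{R}(t)=\sum_{k}m_{k}\int\chi_{R}(x)\,|u_{k}(t,x)|^{2}\,dx$ and $\chi_{R}$ a smooth truncation of $|x|^{2}$ at scale $R$, Lemma~\ref{VirialIden} gives $V_{R}''(t)=c_{0}\big(K(\ve{u}_{c}(t))-4P(\ve{u}_{c}(t))\big)$, $c_{0}>0$, up to a remainder supported in $\{|x|\ge R\}$; energy trapping yields $K-4P\ge\delta K\ge\delta\kappa>0$, where $\kappa:=\inf_{t}K(\ve{u}_{c}(t))>0$ (precompactness and nontriviality preclude $\kappa=0$, since $K(\ve{u}_{c}(t_{n}))\to0$ along a sequence would force scattering), while precompactness and the control on $\lambda(t)$ make the remainder $\le\tfrac12 c_{0}\delta\kappa$ uniformly in $t$ for $R$ large. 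Then $V_{R}''(t)\ge\tfrac12 c_{0}\delta\kappa>0$, and integrating forward in time contradicts $\sup_{t}|V_{R}'(t)|<\infty$; hence $\ve{u}_{c}\equiv0$, a contradiction, and every radial solution with $E(\ve{u}_{0})<E(\Q)$ in the subcritical region is global and scatters.

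For part (ii): I would apply the virial identity directly to $\ve{u}$. If $|x|\ve{u}_{0}\in L^{2}$, put $V(t)=\sum_{k}m_{k}\int|x|^{2}|u_{k}(t,x)|^{2}\,dx<\infty$; Lemma~\ref{VirialIden} gives $V''(t)=c_{0}\big(K(\ve{u}(t))-4P(\ve{u}(t))\big)=c_{0}\big(2E(\ve{u}_{0})-K(\ve{u}(t))\big)$, and since $K(\ve{u}(t))\ge(1+\delta)K(\Q)$ while $E(\ve{u}_{0})<E(\Q)=\tfrac12K(\Q)$, this is $\le-c_{0}\delta K(\Q)<0$ on the maximal interval; as $V\ge0$, the solution cannot be global, so $T_{\pm}(\ve{u}_{0})<\infty$. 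If instead $\ve{u}_{0}\in L^{2}$ is radial, replace $V$ by its truncation $V_{R}$: the additional terms are $O\big(R^{-2}\|\ve{u}_{0}\|_{L^{2}}^{2}\big)$ plus a nonlinear tail on $\{|x|\ge R\}$ controlled by the radial Sobolev inequality, and since $K-4P$ is strictly negative with room one still obtains $V_{R}''\le-\tfrac12 c_{0}\delta K(\Q)<0$ for $R$ large, whence finite-time blow-up as before. I expect the main obstacle to be the rigidity/compactness argument in (i): setting up the linear profile decomposition and the minimal solution for a \emph{system}---tracking the three dispersive flows $e^{\frac{it}{2m_{k}}\Delta}$ and the single common scaling simultaneously---and then verifying that the truncation remainders are genuinely negligible under the precompactness of the rescaled orbit and the bounds on $\lambda(t)$. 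This is exactly where $2m_{1}+m_{2}=m_{3}$ is indispensable (so that the weights $1/m_{k}$ recombine into the single functional $K-4P$), and it is also the source of the radiality hypothesis, both to eliminate translation parameters from the profile decomposition and to use radial Sobolev decay in the $L^{2}$-only blow-up case.
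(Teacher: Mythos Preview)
The paper does not prove Theorem~\ref{Th1} at all: it simply records the statement and refers to \cite[Theorem~1.4]{OgaTsu2023} and \cite[Theorem~1.4]{Tsu2025} for the proof. Your outline follows the standard Kenig--Merle concentration--compactness/rigidity scheme for part (i) and the Glassey virial/localized virial argument for part (ii), which is exactly the route taken in those references, so there is no substantive divergence to discuss.
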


We recall the following Strauss lemma \cite{Strauss1977}.

\begin{lemma}\label{STRau}
There is a constant $C > 0$ such that, for any radial function $f$ in $H^1(\mathbb{R}^4)$
and any $R > 0$,
\[
\|f\|_{L^\infty_{\{|x|\geq R\}}} \leq \frac{C}{R^{\frac{3}{2}}} \|f\|_{L^2}^{\frac{1}{2}} \|\nabla f\|_{L^2}^{\frac{1}{2}}.
\]
\end{lemma}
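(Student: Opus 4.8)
The plan is to reduce the estimate to the one-dimensional radial profile and combine the fundamental theorem of calculus with Cauchy--Schwarz and the four-dimensional volume weight $r^{3}$.

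First I would prove the bound for a radial Schwartz function $f$, writing $f(x)=g(|x|)$ with $g\in C^{\infty}([0,\infty))$ of rapid decay. For $r\ge R$, since $g(s)^{2}\to 0$ as $s\to\infty$,
\[
g(r)^{2}=-\int_{r}^{\infty}\frac{d}{ds}\big(g(s)^{2}\big)\,ds=-2\int_{r}^{\infty}g(s)g'(s)\,ds,
\]
so that $|g(r)|^{2}\le 2\int_{r}^{\infty}|g(s)|\,|g'(s)|\,ds$. Since $s\ge R$ on the domain of integration, inserting the weight $s^{3}$ and applying Cauchy--Schwarz gives
\[
|g(r)|^{2}\le\frac{2}{R^{3}}\int_{0}^{\infty}\big(|g(s)|\,s^{3/2}\big)\big(|g'(s)|\,s^{3/2}\big)\,ds
\le\frac{2}{R^{3}}\Big(\int_{0}^{\infty}|g(s)|^{2}s^{3}\,ds\Big)^{1/2}\Big(\int_{0}^{\infty}|g'(s)|^{2}s^{3}\,ds\Big)^{1/2}.
\]
Writing $\omega_{3}=2\pi^{2}$ for the surface measure of the unit sphere in $\R^{4}$, one has $\|f\|_{L^{2}}^{2}=\omega_{3}\int_{0}^{\infty}|g(s)|^{2}s^{3}\,ds$ and, since $|\nabla f(x)|=|g'(|x|)|$, also $\|\nabla f\|_{L^{2}}^{2}=\omega_{3}\int_{0}^{\infty}|g'(s)|^{2}s^{3}\,ds$. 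Hence $|g(r)|^{2}\le \frac{2}{\omega_{3}R^{3}}\|f\|_{L^{2}}\|\nabla f\|_{L^{2}}$; taking the supremum over $r\ge R$ and a square root yields the claim with $C=(2/\omega_{3})^{1/2}=\pi^{-1}$.

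For a general radial $f\in H^{1}(\R^{4})$ I would argue by density: pick radial Schwartz functions $f_{n}\to f$ in $H^{1}$ (using a radial mollifier and a smooth radial truncation, both of which preserve radial symmetry). Applying the inequality just proved to the differences $f_{n}-f_{m}$ shows that $(f_{n})$ is Cauchy in $L^{\infty}(\{|x|\ge R\})$ for every $R>0$, so $f_{n}$ converges locally uniformly on $\R^{4}\setminus\{0\}$ to a continuous function equal to $f$ almost everywhere. Passing to the limit in the inequality for $f_{n}$ — the right-hand side converges because $f_{n}\to f$ in $H^{1}$ — gives the desired bound for $f$, with the same constant $C$, independent of $R$ and $f$.

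There is no genuine obstacle here; the only points meriting a word of care are the approximation step and the observation that a radial $H^{1}$ function on $\R^{4}$ automatically admits a continuous representative on $\R^{4}\setminus\{0\}$ decaying at infinity, which is precisely what legitimizes integrating from $r$ to $\infty$ in the first step. The constant is completely explicit from the computation above.
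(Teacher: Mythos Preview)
Your proof is correct and is the standard argument for the Strauss radial lemma: reduce to the one-dimensional profile, integrate $\frac{d}{ds}g(s)^{2}$ from $r$ to $\infty$, insert the weight $s^{3}/R^{3}\geq 1$, and apply Cauchy--Schwarz so that the polar factors $s^{3}$ rebuild the $L^{2}$-norms of $f$ and $\nabla f$ in $\R^{4}$. The density step is handled carefully.

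Note, however, that the paper does not give its own proof of this lemma; it simply records the statement and cites Strauss~\cite{Strauss1977}. Your argument is essentially the original one from that reference (specialized to dimension $4$), so there is nothing to compare beyond saying that you have supplied the standard proof the paper chose to omit.
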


\section{Variational Analysis}\label{S1:Vari}

Following \cite{OgaTsu2023, Tsu2025}, we say that a function $\ve{u}=(u_{1},u_{2},u_{3}):\mathbb{R}^{4}\rightarrow\mathbb{C}^{3}$ is  a ground state if it satisfies the variational problem:
\begin{equation}\label{GSta}
E(\ve{u})=\inf\left\{E(\ve{v}): \ve{v}\in\dot{H}^{1}\setminus\{0\},\;\text{and}\; \N(\ve{v})=0\right\},
\end{equation}
where $\N$ denotes the Nehari functional $\N(\ve{v}):=H(\ve{v})-4P(\ve{v})$.

We have the following Gagliardo-Nirenberg type inequality. The proof can be found in \cite[Theorem 1.3]{OgaTsu2023}.
\begin{proposition}\label{TheGN}
For any $\ve{u}\in \dot{H}^{1}$, we have
\begin{equation}\label{GNI}
\left|P(\ve{u})\right|\leq G_{S}[K(\ve{u})]^{2},
\end{equation}
where $G_{S}$ is a positive constant given by
\[
G_{S}=\sqrt[4]{\tfrac{m_{1}\sqrt{m_{2}m_{3}}}{2}}G_{4}
\]
with $G_{4}$ being the best Sobolev constant in dimension 4.
\end{proposition}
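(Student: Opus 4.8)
The plan is to obtain \eqref{GNI} by combining H\"older's inequality, the Sobolev embedding \eqref{SobL}, and an elementary two-step use of the arithmetic--geometric mean inequality. Since $P(\ve{u}) = \RE\int_{\R^4}\overline{u}_1^2 u_2 u_3\,dx$, one first bounds $|P(\ve{u})| \le \int_{\R^4}|u_1|^2|u_2||u_3|\,dx$. Viewing the integrand as the product of the four $L^4(\R^4)$ factors $|u_1|,|u_1|,|u_2|,|u_3|$ (each of which lies in $L^4$ because $\dot{H}^1(\R^4)\hookrightarrow L^4(\R^4)$ by \eqref{SobL}), the generalized H\"older inequality with all four conjugate exponents equal to $4$, followed by \eqref{SobL} on each factor, gives
\[
|P(\ve{u})| \;\le\; \|u_1\|_{L^4}^2\,\|u_2\|_{L^4}\,\|u_3\|_{L^4} \;\le\; G_4^4\,\|\nabla u_1\|_{L^2}^2\,\|\nabla u_2\|_{L^2}\,\|\nabla u_3\|_{L^2}.
\]

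Next I would pass to the variables $a_k := \tfrac{1}{2m_k}\|\nabla u_k\|_{L^2}^2$, so that $K(\ve{u}) = a_1+a_2+a_3$ by \eqref{HNN} and $\|\nabla u_k\|_{L^2}^2 = 2m_k a_k$. The right-hand side above then equals $4\,m_1\sqrt{m_2 m_3}\,G_4^4\,a_1\sqrt{a_2 a_3}$, so it remains only to estimate $a_1\sqrt{a_2 a_3}$ by a multiple of $(a_1+a_2+a_3)^2$. Two applications of AM--GM do this: $\sqrt{a_2 a_3}\le \tfrac12(a_2+a_3)$, and then $a_1(a_2+a_3)\le \tfrac14\big(a_1+(a_2+a_3)\big)^2$, which combine to $a_1\sqrt{a_2 a_3}\le \tfrac18\,K(\ve{u})^2$. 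Inserting this bound yields \eqref{GNI} with $G_S = \tfrac12\,m_1\sqrt{m_2 m_3}\,G_4^4$.

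For the inequality itself I do not expect any real obstacle: every step is classical, and the only delicate point is keeping track of the coupling constants $m_k$ when converting between $\|\nabla u_k\|_{L^2}$ and $K(\ve{u})$. The genuinely substantive part --- not needed for the statement as phrased, but used in Section~\ref{S1:Vari} --- is the sharpness of $G_S$ together with the characterization of its extremizers. That requires tracking the equality cases of the three ingredients at once: equality in H\"older forces $|u_1|,|u_2|,|u_3|$ to be pairwise proportional a.e.\ and the phases of $\overline{u}_1^2 u_2 u_3$ to align so that $P(\ve{u})=\int_{\R^4}|u_1|^2|u_2||u_3|\,dx$; equality in \eqref{SobL} forces each $u_k$ to be a complex multiple of a common translate and dilate of the Aubin--Talenti function $Q(x)=(1+|x|^2/8)^{-1}$ of \eqref{groundS}, for which $\|\nabla Q\|_{L^2}^2=\|Q\|_{L^4}^4=G_4^{-4}$; and equality in the two AM--GM steps forces $a_2=a_3$ and $a_1=a_2+a_3$. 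Combining these constraints pins down the ratios of the three amplitudes to be exactly those of $Q_1,Q_2,Q_3$ and reduces the phases to the two-parameter rotation symmetry, so that the extremizers are precisely the elements of $\{\Q_{[\theta_1,\theta_2,\lambda]}\}$ and their translates; in particular $P(\Q)=G_S\,K(\Q)^2$. This extremizer analysis is exactly what Section~\ref{S1:Vari} takes up.
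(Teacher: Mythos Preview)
Your argument is correct and complete. The paper itself does not supply a proof of Proposition~\ref{TheGN}; it merely cites \cite[Theorem~1.3]{OgaTsu2023}. Your route---H\"older in $L^4\times L^4\times L^4\times L^4$, then \eqref{SobL} on each factor, then the two-step AM--GM to collapse $a_1\sqrt{a_2a_3}$ into $K(\ve{u})^2$---is the natural one, and every step is sharp: at $\ve{u}=\Q$ one has $a_2=a_3$, $a_1=a_2+a_3$, each $|u_k|$ proportional to the Sobolev extremizer $Q$, and real aligned phases, so equality holds throughout. This also makes your equality-case discussion exact, and it dovetails with the paper's Proposition~\ref{UBS}.

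One point worth flagging: the constant you obtain, $G_S=\tfrac12\,m_1\sqrt{m_2m_3}\,G_4^{\,4}$, is the correct sharp value---a direct computation with $\Q$ from \eqref{q1q3} gives $P(\Q)/K(\Q)^2=\tfrac12\,m_1\sqrt{m_2m_3}\,G_4^{\,4}$, using $\|Q\|_{L^4}^4=\|\nabla Q\|_{L^2}^2=G_4^{-4}$. The expression $G_S=\sqrt[4]{m_1\sqrt{m_2m_3}/2}\,G_4$ printed in the statement does not match this and appears to be a typographical slip in the paper; your derivation gives the right value.
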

Next, we characterize the functions that satisfy the equality in \eqref{GNI}. We follow \cite[Section 3]{HajaiejStuart2004}. Suppose that 
$\left|P(\ve{u})\right|= G_{S}[K(\ve{u})]^{2}$ with $\ve{u}\neq 0$. Notice that
\begin{equation}\label{gradi}
\|\nabla |f|\|^{2}_{L^{2}}\leq \|\nabla f\|^{2}_{L^{2}} \quad \text{for } f\in \dot{H}^{1}(\R^{4}).
\end{equation}
Combining \eqref{GNI} and \eqref{gradi}, we obtain (we set $|\ve{u}|=(|u_{1}|,|u_{2}|,|u_{3}|)$)
\begin{equation}\label{ineGc}
\left|P(\ve{u})\right|\leq \left|P(|\ve{u}|)\right|\leq G_{S}[K(|\ve{u}|)]^{2}\leq G_{S}[K(\ve{u})]^{2}.
\end{equation}
We set $\varphi_{j}=|u_{j}|\geq 0$ for $j=1$, $2$, $3$. Equation \eqref{ineGc} implies that $\left|P(|\ve{u}|)\right|= G_{S}[K(|\ve{u}|)]^{2}$, and thus $(\varphi_{1}, \varphi_{2}, \varphi_{3})$ minimizes the variational problem \eqref{GSta} (see \cite[Proposition 3.2]{OgaTsu2023}). Then $(\varphi_{1}, \varphi_{2}, \varphi_{3})$ satisfies the stationary problem (the Euler-Lagrange equation):
\begin{equation}\label{s1A}
\left\{\begin{aligned}
&-\tfrac{1}{2m_{1}}\Delta \varphi_{1}=2\varphi_{1}\varphi_{2}\varphi_{3},\\
&-\tfrac{1}{2m_{2}}\Delta \varphi_{2}=\varphi_{1}^{2}\varphi_{3},\\
&-\tfrac{1}{2m_{3}}\Delta \varphi_{3}=\varphi_{1}^{2}\varphi_{2}.
\end{aligned}\right.
\end{equation}
By using the change of coefficients,
\[
W(x) =(W_{1}(x),W_{2}(x),W_{3}(x))
=\left(\sqrt[4]{4m_{2}m_{3}}\, \varphi_{1}(x),\,2\sqrt[4]{\tfrac{m_{1}^{2}m_{3}}{m_{2}}}\,\varphi_{2}(x),\,2\sqrt[4]{\tfrac{m_{1}^{2}m_{2}}{m_{3}}}\,\varphi_{3}(x)\right),
\]
 the system \eqref{s1A} can be transformed into the system:
\begin{equation}\label{NWu}
\begin{cases}
&-\Delta W_{1}=W_{1}W_{2}W_{3},\\
&-\Delta W_{2}=W_{1}^{2}W_{3},\\
&-\Delta W_{3}=W_{1}^{2}W_{2}.
\end{cases}
\end{equation}
Note that $W_{j}(x)\geq 0$ for all $x\in \R^{4}$ and for $j=1$, $2$, $3$. By standard elliptic regularity theory, it is clear that $W_{j}\in C^{2}(\R^{4})$ for $j=1$, $2$, $3$ (see e.g. \cite[Lemma 2.2]{SerraBadi}). In addition, an application of the Comparison Principle \cite[Corollary 2.8]{Lib2011} shows that $W_{i}(x)>0$ for all $x\in \R^{4}$ and for $j=1$, $2$, $3$. In \cite[Page 5]{OgaTsu2023}, it is shown that the solution to the system \eqref{NWu} with $W_{i}(x)>0$ is unique up to translation and dilation and is given by $(Q, Q, Q)$ (see the definition of $Q$ in \eqref{groundS}). In particular, we see that (up to translation and dilation)
\[
(\varphi_{1}, \varphi_{2}, \varphi_{3})
=\left(\sqrt[4]{\tfrac{1}{4m_{2}m_{3}}}Q,\tfrac{1}{2}\sqrt[4]{\tfrac{m_{2}}{m_{1}^{2}m_{3}}}Q,\tfrac{1}{2}\sqrt[4]{\tfrac{m_{3}}{m_{1}^{2}m_{2}}}Q\right).
\]
Next, from \eqref{ineGc} we get that $K(|\ve{u}|)=K(\ve{u})$. Thus, by \eqref{gradi}, we conclude 
\begin{align}\label{IdeGra}
	\|\nabla |u_{j}|\|^{2}_{L^{2}}= \|\nabla u_{j}\|^{2}_{L^{2}} \quad \text{for } j=1, 2, 3.
\end{align}
We claim that $u_{j}(x)=e^{i \theta_{j}}\varphi_{j}(x)$ with $\theta_{j}\in \R$ for $j=1$, $2$, $3$. Indeed,
we set $w(x) := \tfrac{u_{j}(x)}{\varphi_{j}(x)}$ (recall that $\varphi_{j}>0$). Since $|w|^2 = 1$, it follows that $\text{Re}(\overline{w} \nabla w) = 0$ and
\[
\nabla u_{j} = (\nabla \varphi_{j}) w + \varphi_{j} \nabla w = w (\nabla \varphi_{j} +\varphi_{j}\overline{w} \nabla w).
\]
Therefore, we infer that
\[
|\nabla u_{j}|^2 = |\nabla \varphi_{j}|^2 + \varphi_{j}^2 |\nabla w|^2.
\]
By \eqref{IdeGra} we obtain
\[
\int_{\mathbb{R}^4} \varphi_{j}^2 |\nabla w|^2 \, dx = 0.
\]
Since $\varphi_{j}>0$, we get $|\nabla w| = 0$. Thus, $w$ is constant with $|w| = 1$, and we have that there exists $\theta_{j} \in \mathbb{R}$ such that $u_{j} = e^{i\theta} \varphi_{j}(x)$. This proves the claim. 

Finally, note that $\ve{u} = (u_{1}, u_{2}, u_{3})$ also satisfies the stationary problem associated with \eqref{NLS}. Indeed, $\ve{u}$ is a minimizer of the variational problem \eqref{GSta}. Therefore, the phases $\theta_{j}$ satisfy the identity: $2\theta_{1} = \theta_{2} + \theta_{3}$ (cf. \eqref{s1A}).

We obtain the following result:
\begin{proposition}\label{UBS}
Let $\ve{u}\in \dot{H}^{1}$. Then $\ve{u}$ satisfies the equality in \eqref{GNI} if, and only if, there exist $\alpha>0$, $\lambda>0$, $x_{0}\in \R^{4}$, and $\theta_{1}$, $\theta_{2}\in \R$ such that 
\[
\ve{u}(x) = \left( \alpha \, e^{i(\theta_{1}+\theta_{2})} Q_{1}\left(\lambda^{-1}(x+x_{0})\right), \alpha\,e^{2i\theta_{1}} Q_{2}\left(\lambda^{-1}(x+x_{0})\right), \alpha \, e^{2i\theta_{2}} Q_{3}\left(\lambda^{-1}(x+x_{0})\right) \right).
\]
\end{proposition}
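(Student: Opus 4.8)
The plan is to show that Proposition~\ref{UBS} is essentially a repackaging of the analysis carried out in the paragraphs immediately preceding it, with the only genuine new ingredient being the bookkeeping of the scaling parameters $\alpha$ and $\lambda$. First I would dispose of the easy (``if'') direction: given $\ve{u}$ of the stated form, a direct computation using the scaling and phase-rotation symmetries (i) and (ii) together with the fact that $\Q=(Q_1,Q_2,Q_3)$ saturates \eqref{GNI} (because $Q$ is the optimizer of the Sobolev inequality \eqref{SobL}, so that $(Q,Q,Q)$ solves \eqref{NWu} and equality holds in Proposition~\ref{TheGN}) shows that $|P(\ve{u})|=G_S[K(\ve{u})]^2$; both $P$ and $K$ are homogeneous of degree $4$ under the map $\ve{u}\mapsto\alpha\ve{u}(\lambda^{-1}\cdot)$, so the ratio is unchanged and the equality is preserved. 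Phase rotations leave $|P|$ and $K$ invariant, and translations leave everything invariant, so this direction is immediate.

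For the ``only if'' direction I would simply assemble the chain of implications already established in the text. Assume $|P(\ve{u})|=G_S[K(\ve{u})]^2$ with $\ve{u}\neq 0$. By \eqref{ineGc} the moduli $\varphi_j=|u_j|$ satisfy $|P(|\ve{u}|)|=G_S[K(|\ve{u}|)]^2$, so $(\varphi_1,\varphi_2,\varphi_3)$ minimizes \eqref{GSta} and solves the Euler--Lagrange system \eqref{s1A}; after the change of coefficients $W_j$ this becomes \eqref{NWu}, whose positive solutions are unique modulo translation and dilation and equal $(Q,Q,Q)$. Tracking back through the substitution gives
\[
(\varphi_1,\varphi_2,\varphi_3)=\Big(\beta\,\lambda^{-1}Q(\lambda^{-1}(x+x_0)),\ \tfrac{\beta}{2}(\cdots)\lambda^{-1}Q(\lambda^{-1}(x+x_0)),\ \tfrac{\beta}{2}(\cdots)\lambda^{-1}Q(\lambda^{-1}(x+x_0))\Big)
\]
for some $\lambda>0$, $x_0\in\R^4$, and an overall amplitude $\beta>0$ — here is where $\alpha$ enters: the Euler--Lagrange scaling fixes the solution only up to the one-parameter dilation group, but because \eqref{s1A} is not scale-invariant in amplitude one must reinsert a free multiplicative constant $\alpha$ coming from the fact that a minimizer of the \emph{ratio} in \eqref{GNI} need not satisfy any normalization; concretely, rescaling $Q(x)=(1+|x|^2/8)^{-1}$ and noting $Q_j$ are the specific multiples of $Q$ in the definition of $\Q$, one reads off that $\varphi_j=\alpha\lambda^{-1}Q_j(\lambda^{-1}(x+x_0))$. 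Then the argument with $w=u_j/\varphi_j$, $|w|^2=1$, $\RE(\bar w\nabla w)=0$, combined with the equality $\|\nabla|u_j|\|_{L^2}=\|\nabla u_j\|_{L^2}$ forced by \eqref{IdeGra}, shows each $w$ is a unimodular constant $e^{i\theta_j}$. Finally, since $\ve{u}$ itself minimizes \eqref{GSta} it solves the stationary system associated to \eqref{NLS}, which forces the phase relation $2\theta_1=\theta_2+\theta_3$; setting $\theta_1':=\theta_1$ and $\theta_2':=\theta_3$ (so that $\theta_2=\theta_1'+\theta_2'$... ) and relabeling, one puts the phases in the form $(\theta_1+\theta_2,\,2\theta_1,\,2\theta_2)$ as in the statement. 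This completes the characterization.

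The only step requiring genuine care — and the one I would expect to be the main obstacle — is the correct identification of the amplitude parameter $\alpha$ and verifying the phase bookkeeping $2\theta_1=\theta_2+\theta_3\ \Leftrightarrow\ $ phases of the form $(\theta_1+\theta_2,2\theta_1,2\theta_2)$. The subtlety is that the preceding discussion states the uniqueness for \eqref{NWu} ``up to translation and dilation'' and writes $(\varphi_1,\varphi_2,\varphi_3)$ in a fixed normalization, but a minimizer of the scale-invariant ratio in \eqref{GNI} lives in a larger orbit: one must check that multiplying by an arbitrary $\alpha>0$ and dilating by $\lambda$ exhausts all equality cases and that no further freedom is hidden in the Euler--Lagrange multiplier. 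I would make this precise by observing that if $\ve{u}$ achieves equality then so does $t\ve{u}(\lambda^{-1}\cdot)$ for every $t,\lambda>0$, reducing to a normalized minimizer to which the uniqueness statement for \eqref{NWu} applies verbatim, and then undoing the normalization. The phase relation then follows by substituting the separated form $u_j=e^{i\theta_j}\varphi_j$ into the first equation of the stationary system for \eqref{NLS}, namely $-\tfrac{1}{2m_1}\Delta(e^{i\theta_1}\varphi_1)=2e^{-i\theta_1}\varphi_1 e^{i\theta_2}\varphi_2 e^{i\theta_3}\varphi_3$, and comparing phases: $e^{i\theta_1}=e^{i(-\theta_1+\theta_2+\theta_3)}$, i.e. $2\theta_1\equiv\theta_2+\theta_3\ (\mathrm{mod}\ 2\pi)$, which is exactly solved by $\theta_2=2\theta_1-\theta_3$; writing $a=\theta_1,\,b=\theta_3-\theta_1$ gives $(\theta_1,\theta_2,\theta_3)=(a,\,a-2b+2\cdot? \,,\,)$ — rather than belabor the arithmetic I would simply note that the solution set of $2\theta_1=\theta_2+\theta_3$ is precisely $\{(\theta_1'+\theta_2',\,2\theta_1',\,2\theta_2'):\theta_1',\theta_2'\in\R\}$, which is a $2$-dimensional linear subspace of $\R^3$ of the same dimension, hence equal. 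That gives exactly the parametrization in the statement.
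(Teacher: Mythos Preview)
Your proposal is correct and follows essentially the same route as the paper: the argument for Proposition~\ref{UBS} \emph{is} the chain of implications in the paragraphs preceding the statement (equality in \eqref{GNI} $\Rightarrow$ $|\ve{u}|$ solves \eqref{s1A} $\Rightarrow$ reduction to \eqref{NWu} and uniqueness of $(Q,Q,Q)$ $\Rightarrow$ constant phases via the $w=u_j/\varphi_j$ argument $\Rightarrow$ phase constraint $2\theta_1=\theta_2+\theta_3$), and you have correctly identified the two bookkeeping points (the extra amplitude $\alpha$ coming from the scale-invariance of the ratio, and the identification of the plane $\{2\theta_1=\theta_2+\theta_3\}$ with $\{(\theta_1'+\theta_2',2\theta_1',2\theta_2')\}$) that the paper leaves implicit. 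Your dimension-count argument for the phase reparametrization is cleaner than the aborted explicit substitution and is exactly what is needed.
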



We need the following bubble decomposition. The proof follows the same lines as the scalar case; see \cite[Section 4.2]{KiiVisan2008} for more details.

\begin{theorem}\label{BubD}
Let $\ve{f}_n$ be a bounded radial sequence in $\dot{H}^1_x$. Then there exist $J^\ast \in \{0, 1, 2, \ldots\} \cup \{\infty\}$, 
$\left\{{\Phi}^{j}\right\}_{j=1}^{J^\ast} \subseteq \dot{H}^1_x$ and $\left\{\lambda_n^j\right\}_{j=1}^{J^\ast} \subseteq (0, \infty)$ so that along some subsequence in $n$ one may write
\[
\ve{f}_n(x) = \sum_{j=1}^J (\lambda_n^j)^{-2}{\Phi}^j \left( \tfrac{x}{\lambda_n^j} \right) + \ve{r}_n^J(x) \quad \text{for all } 0 \leq J \leq J^\ast
\]
with the following  properties:
\begin{align}\label{ArN}
&\limsup_{J \to J^\ast} \limsup_{n \to \infty} \| \ve{r}_n^J \|_{L^{4}_x} = 0,\\\label{AK}
&\sup_J \limsup_{n \to \infty} \left| K(\ve{f}_{n}) - \left( K(\ve{r}_{n})+ \sum_{j=1}^J K({\Phi}^j) \right) \right| = 0,\\\label{OGla}
&\lim_{n \to \infty}\,\, \frac{\lambda_n^j}{\lambda_n^{j'}} + \frac{\lambda_n^{j'}}{\lambda_n^j} = \infty \quad \text{for all } j \neq j'.
\end{align}
\end{theorem}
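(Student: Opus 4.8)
The plan is to adapt the standard profile-decomposition (Bahouri--Gérard / Keraani) machinery to the three-component radial setting, following \cite[Section~4.2]{KiiVisan2008} essentially verbatim since the coupling between components plays no role here: the decomposition is performed componentwise but with a \emph{common} scale $\lambda_n^j$ extracted from the full vector. First I would set up the inductive construction. Writing $\ve{r}_n^0 := \ve{f}_n$, suppose $\ve{r}_n^{J-1}$ has been constructed with $\sup_n \|\ve{r}_n^{J-1}\|_{\dot H^1} < \infty$; let $A_{J} := \lim_{n} \|\ve{r}_n^{J-1}\|_{L^4_x}$ (pass to a subsequence so the limit exists). If $A_J = 0$ we stop and set $J^\ast = J-1$. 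Otherwise, I invoke the \emph{inverse Sobolev embedding} / refined Sobolev inequality: there is a radial profile $\Phi^{J} \in \dot H^1_x$ (a vector) and scales $\lambda_n^{J} \in (0,\infty)$ such that, along a subsequence, $(\lambda_n^J)^2 \ve{r}_n^{J-1}(\lambda_n^J \cdot) \rightharpoonup \Phi^J$ weakly in $\dot H^1_x$, with the quantitative lower bound $\|\Phi^J\|_{\dot H^1} \gtrsim A_J \big(A_J / \sup_n\|\ve{f}_n\|_{\dot H^1}\big)^{\beta}$ for some $\beta>0$ (this is the key point that forces the profile energies to be summable). The radial symmetry is what lets us dispense with a spatial translation parameter, exactly as in the scalar radial case. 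Then set $\ve{r}_n^J(x) := \ve{r}_n^{J-1}(x) - (\lambda_n^J)^{-2}\Phi^J(x/\lambda_n^J)$.

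The second block of steps is to verify the three asserted properties. The energy decoupling \eqref{AK}: from the weak convergence $(\lambda_n^J)^2\ve{r}_n^{J-1}(\lambda_n^J\cdot)\rightharpoonup\Phi^J$ one gets the Pythagorean-type identity $\|\ve{r}_n^{J-1}\|_{\dot H^1}^2 = \|\Phi^J\|_{\dot H^1}^2 + \|\ve{r}_n^J\|_{\dot H^1}^2 + o(1)$, i.e. $K$ decouples at each stage; iterating and using $K(\ve f) = \tfrac12\sum_k \tfrac1{m_k}\|\nabla f_k\|_{L^2}^2 \simeq \|\ve f\|_{\dot H^1}^2$ gives \eqref{AK} (note $K$ is exactly a weighted sum of squared homogeneous norms, so it is literally a Hilbert-space norm squared and the Pythagorean identity is exact up to $o(1)$). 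Scale orthogonality \eqref{OGla}: if along a subsequence $\lambda_n^{j}/\lambda_n^{j'}$ stayed bounded away from $0$ and $\infty$ for $j<j'$, then $\Phi^{j'}$ would have to be a weak limit that was already subtracted off at stage $j$, forcing $\Phi^{j'}=0$, contradicting $A_{j'}>0$; this is the standard orthogonality argument. Finally the smallness of the remainder \eqref{ArN}: the summability $\sum_j \|\Phi^j\|_{\dot H^1}^2 \le \sup_n\|\ve f_n\|_{\dot H^1}^2 < \infty$ (from iterating the energy identity) forces $\|\Phi^J\|_{\dot H^1}\to 0$, hence by the quantitative lower bound $A_J \to 0$ as $J\to J^\ast$, which is \eqref{ArN}. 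A standard diagonal argument over the countably many subsequences extracted produces a single subsequence working for all $J$.

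The main obstacle, and the only place any genuine work is needed, is the \textbf{refined / inverse Sobolev inequality in the vector radial setting}: one needs that whenever a bounded radial sequence in $\dot H^1(\mathbb R^4)$ does not go to zero in $L^4_x$, one can extract (after rescaling) a nontrivial weak limit, with the quantitative energy lower bound stated above. For a single component this is classical (it follows from a Littlewood--Paley refinement of Sobolev, $\|f\|_{L^4} \lesssim \|f\|_{\dot B^{0}_{4,\infty}}^{1/2}\|f\|_{\dot H^1}^{1/2}$, combined with the fact that a radial $\dot H^1$ bubble concentrating at a dyadic frequency has a definite amount of energy); for the vector it suffices to apply this to whichever component $k$ realizes $\liminf_n \|r_{n,k}^{J-1}\|_{L^4} \gtrsim A_J$, extract the scale from that component, and pass to weak limits in all three components along that same scale. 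I would also remark that, unlike in \cite{KiiVisan2008}, no time-translation or Galilean parameters appear because we are decomposing a fixed sequence of \emph{functions} (not solutions) and working with radial data, so the bookkeeping is lighter; the proof is therefore a routine transcription once the refined Sobolev input is in place, and I would simply cite \cite[Section~4.2]{KiiVisan2008} for the details while indicating the componentwise modifications above.
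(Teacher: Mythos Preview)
Your proposal is correct and matches the paper's approach exactly: the paper gives no proof at all beyond the sentence ``The proof follows the same lines as the scalar case; see \cite[Section 4.2]{KiiVisan2008} for more details,'' and your sketch is precisely a fleshing-out of that citation, with the componentwise extraction of a common scale being the only (minor) adaptation needed for the vector setting. There is nothing to add.
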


Using H\"{o}lder's inequality, \eqref{ArN} and the orthogonalization of the parameters $\lambda_n^j$ given in \eqref{OGla}, we easily deduce the following result.

\begin{corollary}
Under the conditions of Theorem~\ref{BubD}, we have that
\begin{equation}\label{AsP}
\limsup_{J \to J^\ast} \limsup_{n \to \infty} | P(\ve{f}_{n}) -\sum_{j=1}^J P({\Phi}^j)| = 0.
\end{equation}
\end{corollary}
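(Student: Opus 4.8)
The plan is to derive \eqref{AsP} from the bubble decomposition in Theorem~\ref{BubD}, in particular from the smallness of the remainder in $L^4_x$ stated in \eqref{ArN} and the asymptotic orthogonality of scales in \eqref{OGla}, by expanding the trilinear form $P$ along the decomposition and showing that all cross terms vanish in the limit.

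First I would recall that $P(\ve{f})=\RE\int \overline{f_1}^2 f_2 f_3\,dx$ is a bounded trilinear (more precisely, conjugate-bilinear times linear) form on $L^4_x\times L^4_x\times L^4_x\times L^4_x$, with $|P(\ve{f})|\le \|f_1\|_{L^4}^2\|f_2\|_{L^4}\|f_3\|_{L^4}$ by H\"older; combined with \eqref{SobL} this gives $|P(\ve{f})|\lesssim K(\ve{f})^2$, so all the quantities below are finite and uniformly bounded in $n$ and $J$ by \eqref{AK}. Writing $\ve{f}_n=\sum_{j=1}^J \Phi^j_n+\ve{r}_n^J$ where $\Phi^j_n(x):=(\lambda_n^j)^{-2}\Phi^j(x/\lambda_n^j)$, and using multilinearity, $P(\ve{f}_n)$ expands into the sum of the diagonal terms $\sum_{j=1}^J P(\Phi^j_n)$, plus finitely many genuinely mixed terms involving at least two distinct profile indices $j\ne j'$, plus terms containing at least one factor from the remainder $\ve{r}_n^J$. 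Since $P$ is invariant under the scaling $\ve{f}\mapsto \lambda^{-2}\ve{f}(\cdot/\lambda)$ (this is exactly the scaling that leaves $K$ invariant, by the conformal-type scaling of $\dot H^1(\R^4)$ into the quartic term), each diagonal term satisfies $P(\Phi^j_n)=P(\Phi^j)$, which produces the desired main sum $\sum_{j=1}^J P(\Phi^j)$.

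Next I would dispose of the cross terms. For a mixed term involving two profiles at orthogonal scales, say a factor $\Phi^j_n$ and a factor $\Phi^{j'}_n$ with $j\ne j'$, I bound the corresponding integral by H\"older, pairing those two factors in a product $\|\Phi^j_n \Phi^{j'}_n\|_{L^2}$ (times $L^4$ norms of the remaining two factors, which are uniformly bounded). A change of variables reduces $\|\Phi^j_n\,\Phi^{j'}_n\|_{L^2}$ to an expression of the form $\|\,\Phi^j\,\cdot\,(\lambda_n^{j'}/\lambda_n^j)^{-2}\Phi^{j'}((\lambda_n^j/\lambda_n^{j'})\,\cdot)\,\|_{L^2}$, which tends to $0$ as $n\to\infty$ by \eqref{OGla} together with a standard density/approximation argument (approximate $\Phi^j,\Phi^{j'}$ by compactly supported $L^\infty$ functions; disjoint-support and concentration effects then force the product to vanish). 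As usual one first handles fixed truncations $J\le J^\ast$ finite, where the number of cross terms is finite, so the limit in $n$ can be taken term by term. For the remainder terms, any monomial in the expansion of $P(\ve{f}_n)-\sum_{j=1}^J P(\Phi^j_n)$ that still involves $\ve{r}_n^J$ in at least one slot is bounded via H\"older by $\|\ve{r}_n^J\|_{L^4}$ times a uniformly bounded product of $L^4$ norms of the other three factors (each of which is either a profile $\Phi^\ell_n$, bounded by $\|\Phi^\ell\|_{L^4}$, or another copy of $\ve{r}_n^J$, bounded by $\sup_n\|\ve{f}_n\|_{L^4}+\sum\|\Phi^\ell\|_{L^4}$); hence these are $O(\|\ve{r}_n^J\|_{L^4})$, which is negligible after taking $\limsup_{n}$ then $\limsup_{J\to J^\ast}$ in view of \eqref{ArN}.

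Putting these estimates together yields, for every finite $J\le J^\ast$,
\[
\limsup_{n\to\infty}\Bigl| P(\ve{f}_n)-\sum_{j=1}^J P(\Phi^j)\Bigr| \le C\,\limsup_{n\to\infty}\|\ve{r}_n^J\|_{L^4},
\]
and then taking $\limsup_{J\to J^\ast}$ and invoking \eqref{ArN} gives \eqref{AsP}. The main obstacle I expect is the bookkeeping in the cross-term vanishing: one must make sure the mixed profile-profile terms really go to zero using only the weak orthogonality \eqref{OGla} (not any stronger pairwise separation), which is the standard but slightly delicate interaction estimate in profile-decomposition arguments; the remainder and diagonal terms are routine. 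If one prefers, the interaction-vanishing step can simply be cited from the scalar treatment in \cite[Section 4.2]{KiiVisan2008}, applied componentwise, since the trilinear structure of $P$ does not interact with the scaling orthogonality in any essentially new way.
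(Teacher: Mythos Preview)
Your proposal is correct and follows exactly the approach the paper indicates: expand $P(\ve{f}_n)$ multilinearly, use scaling invariance to identify the diagonal terms with $\sum_j P(\Phi^j)$, kill the profile--profile cross terms via H\"older together with the scale orthogonality \eqref{OGla}, and control all terms containing $\ve{r}_n^J$ by \eqref{ArN}. The paper itself gives only a one-line justification (``Using H\"older's inequality, \eqref{ArN} and the orthogonalization of the parameters $\lambda_n^j$ given in \eqref{OGla}, we easily deduce the following result''), so your write-up is in fact more detailed than the original.
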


Next we define the quantity
\begin{equation}\label{deltaK}
\delta(\ve{f}):=|K(\ve{f})-K(\Q)|.
\end{equation}

\begin{proposition}\label{Aprox}
Let $\ve{u} \in \dot{H}^1$ be radial with  $E(\ve{u}) = E(\Q)$. Then there exists a function $\varepsilon = \varepsilon(\rho)$, such that
\[
\inf_{\theta_{1}\in \mathbb{R}, \theta_{2} \in \mathbb{R}, \lambda > 0} \|\ve{u}_{[\theta_{1}, \theta_{2}, \lambda]} - \Q\|_{\dot{H}^1} \leq \varepsilon(\delta(\ve{u})), \quad \lim_{\rho \to 0} \varepsilon(\rho) = 0,
\]
where
\begin{equation}\label{DefUR}
\ve{u}_{[\theta_{1}, \theta_{2}, \lambda]}
=(
e^{i(\theta_{1}+\theta_{2})} \lambda^{-1} u_{1}(\lambda^{-1}x),
e^{2i\theta_{1}} \lambda^{-1} u_{2}(\lambda^{-1}x), 
e^{2i\theta_{2}} \lambda^{-1} u_{3}(\lambda^{-1}x)).
\end{equation}
\end{proposition}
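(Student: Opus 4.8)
The plan is to argue by contradiction using a compactness/concentration argument, exactly as in the scalar energy-critical NLS (cf. \cite[Section 4.2]{KiiVisan2008} and \cite{DuyMerle2009}). Suppose the statement fails: then there is $\rho_0>0$ and a sequence $\ve{u}_n\in\dot H^1$ of radial functions with $E(\ve{u}_n)=E(\Q)$, $\delta(\ve{u}_n)\to 0$, yet
\[
\inf_{\theta_1,\theta_2\in\R,\ \lambda>0}\|(\ve{u}_n)_{[\theta_1,\theta_2,\lambda]}-\Q\|_{\dot H^1}\geq\rho_0\qquad\text{for all }n.
\]
Since $\delta(\ve{u}_n)\to 0$, we have $K(\ve{u}_n)\to K(\Q)$, so $\ve{u}_n$ is bounded in $\dot H^1$. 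First I would apply the bubble decomposition Theorem~\ref{BubD} to $\ve{f}_n:=\ve{u}_n$, obtaining profiles $\Phi^j$, scales $\lambda_n^j$ and remainders $\ve{r}_n^J$.

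The next step is to show that exactly one profile survives and it equals $\Q$ up to symmetry. From the energy identity $E(\ve{u}_n)=K(\ve{u}_n)-2P(\ve{u}_n)$, the convergence $K(\ve{u}_n)\to K(\Q)$, and $E(\ve{u}_n)=E(\Q)=K(\Q)-2P(\Q)$, we deduce $P(\ve{u}_n)\to P(\Q)$. Since $E(\Q)=K(\Q)-2P(\Q)$ and the ground state saturates \eqref{GNI} (so $P(\Q)=G_S K(\Q)^2$ with $\N(\Q)=0$, giving $E(\Q)=\tfrac12 K(\Q)>0$), one has $P(\Q)>0$. Now combine \eqref{AK} and \eqref{AsP}: $\sum_j K(\Phi^j)\le K(\Q)+o(1)$ and $\sum_j P(\Phi^j)=P(\ve{u}_n)+o(1)\to P(\Q)>0$. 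Applying the Gagliardo–Nirenberg inequality \eqref{GNI} to each profile, $|P(\Phi^j)|\le G_S K(\Phi^j)^2$, together with superadditivity $\sum_j K(\Phi^j)^2\le(\sum_j K(\Phi^j))^2$, yields
\[
P(\Q)=\sum_j P(\Phi^j)\le\sum_j|P(\Phi^j)|\le G_S\sum_j K(\Phi^j)^2\le G_S\Big(\sum_j K(\Phi^j)\Big)^2\le G_S K(\Q)^2=P(\Q).
\]
Hence all inequalities are equalities. Equality in the superadditivity step forces all but one $K(\Phi^j)$ to vanish, so there is a single profile $\Phi:=\Phi^{j_0}$; equality $\sum K(\Phi^j)=K(\Q)$ then gives $K(\Phi)=K(\Q)$, which via \eqref{AK} forces $K(\ve{r}_n)\to 0$, i.e. the remainder vanishes in $\dot H^1$; and equality $|P(\Phi)|=G_S K(\Phi)^2$ means $\Phi$ saturates \eqref{GNI}. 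By Proposition~\ref{UBS} (radial case, so $x_0$ can be taken $0$ after re-centering, or absorbed), $\Phi=\alpha\,\Q_{[\theta_1,\theta_2]}$ for some $\alpha>0$ and phases; and $K(\Phi)=K(\Q)$ forces $\alpha=1$, so $\Phi=\Q$ up to phase rotation.

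Finally, unwinding the decomposition, along a subsequence $\ve{u}_n(x)=(\lambda_n)^{-2}\Phi(x/\lambda_n)+\ve{r}_n(x)$ with $\|\ve{r}_n\|_{\dot H^1}\to 0$; here the profile scaling convention $(\lambda_n^j)^{-2}\Phi^j(\cdot/\lambda_n^j)$ matches the $\dot H^1$ (energy-critical) scaling in $\R^4$ used in \eqref{DefUR} after identifying the exponents. Choosing $\lambda=\lambda_n$ and the phases realizing $\Phi=\Q_{[\theta_1,\theta_2]}$, we get $(\ve{u}_n)_{[\theta_1,\theta_2,\lambda_n]}\to\Q$ in $\dot H^1$, contradicting the lower bound $\rho_0$. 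This produces the desired modulus of continuity $\varepsilon(\rho)$: one sets $\varepsilon(\rho):=\sup\{\inf_{\theta_1,\theta_2,\lambda}\|\ve{v}_{[\theta_1,\theta_2,\lambda]}-\Q\|_{\dot H^1}:\ \ve{v}\in\dot H^1\text{ radial},\ E(\ve{v})=E(\Q),\ \delta(\ve{v})\le\rho\}$, and the argument above shows $\varepsilon(\rho)\to 0$ as $\rho\to 0$.

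I expect the main obstacle to be bookkeeping the scaling conventions carefully: \eqref{DefUR} uses the $\dot H^1$-critical scaling $\lambda^{-1}u(\lambda^{-1}x)$ appropriate for the nonlinearity in \eqref{NLS}, whereas Theorem~\ref{BubD} is stated with the $\dot H^1(\R^4)$ energy-scaling $(\lambda_n^j)^{-2}\Phi^j(x/\lambda_n^j)$ (the natural normalization in $\R^4$ is $\lambda^{-(d-2)/2}=\lambda^{-1}$, so these in fact coincide once the profile exponent convention is reconciled). One must also confirm that the single-profile extraction genuinely uses radial symmetry to kill translation parameters (so that Proposition~\ref{UBS} can be invoked with $x_0=0$), and that $P(\Q)>0$, which follows since $\Q$ achieves the nonzero infimum in \eqref{GSta} with $E(\Q)=\tfrac12K(\Q)$. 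The rest is a routine adaptation of \cite{KiiVisan2008, DuyMerle2009}.
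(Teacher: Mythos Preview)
Your proposal is correct and follows essentially the same approach as the paper: the paper reduces Proposition~\ref{Aprox} to Lemma~\ref{AproxLim23}, whose proof is precisely your profile-decomposition argument (bubble decomposition, pass to the limit in $K$ and $P$ via \eqref{AK}--\eqref{AsP}, chain of inequalities using \eqref{GNI} and superadditivity, then invoke Proposition~\ref{UBS} to identify the single surviving profile with $\Q$ and conclude $K(\ve r_n)\to 0$). Your bookkeeping remarks on scaling exponents and on $x_0=0$ in the radial setting are apt but do not alter the structure of the argument.
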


The proof of Proposition~\ref{Aprox} is an immediate consequence of the following lemma.

\begin{lemma}\label{AproxLim23}
Let $\left\{\ve{u}^{n}\right\}^{\infty}_{n=1}$ be a sequence in $\dot{H}_{\text{rad}}^1$ such that $E(\ve{u}^{n}) = E(\Q)$. If $K(\ve{u}^{n}) \to K(\Q)$ then, up to a subsequence, there exist $\theta_1, \theta_2 \in \mathbb{R}/2\pi\mathbb{Z}$ and $\{\mu_n\} \subset (0, +\infty)$ 
such that
\begin{align}\label{CQQ}
	\ve{u}^{n}_{[\theta_{1}, \theta_{2}, \mu_{n}]}\to \Q \qtq{in $\dot{H}^1$ as $n\to +\infty$.}
\end{align}
\end{lemma}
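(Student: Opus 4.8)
The plan is to prove Lemma~\ref{AproxLim23} by a concentration-compactness argument based on the bubble decomposition of Theorem~\ref{BubD}, exploiting the fact that $E(\Q)$ is the threshold energy and that $\Q$ (modulo symmetries) is the unique minimizer in the Gagliardo--Nirenberg inequality \eqref{GNI} as established in Proposition~\ref{UBS}. The starting observation is that the hypothesis $E(\ve{u}^n)=E(\Q)$ together with $K(\ve{u}^n)\to K(\Q)$ forces, via \eqref{Ener1}, that $P(\ve{u}^n)\to P(\Q)=\tfrac12(K(\Q)-E(\Q))$, and moreover $P(\ve{u}^n)=G_S[K(\ve{u}^n)]^2+o(1)$, i.e. the sequence is \emph{asymptotically optimizing} for \eqref{GNI}. (One checks that $\Q$ itself saturates \eqref{GNI}, which pins down the constant $P(\Q)=G_S K(\Q)^2$.) Since $\{\ve{u}^n\}$ is bounded in $\dot H^1_{\mathrm{rad}}$ — the bound coming from $K(\ve{u}^n)\to K(\Q)$ — Theorem~\ref{BubD} applies and yields profiles $\{\Phi^j\}$, scales $\{\lambda_n^j\}$ and remainders $\ve{r}_n^J$.

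Next I would run the standard ``only one bubble, no remainder'' elimination. By the energy/$K$-decoupling \eqref{AK} and the $P$-decoupling \eqref{AsP}, and using $E=K-2P$, one gets in the limit
\[
K(\Q)=\lim_n K(\ve{r}_n^J)+\sum_{j=1}^{J}K(\Phi^j),\qquad
P(\Q)=\lim_n P(\ve{r}_n^J)+\sum_{j=1}^{J}P(\Phi^j)+o_J(1).
\]
Applying \eqref{GNI} to each profile and to the remainder, $|P(\Phi^j)|\le G_S K(\Phi^j)^2$ and $|P(\ve{r}_n^J)|\le G_S K(\ve{r}_n^J)^2$, then summing and using superadditivity of $t\mapsto t^2$ on nonnegative reals together with the $K$-decoupling, one obtains
\[
G_S K(\Q)^2=P(\Q)=\lim_n P(\ve{r}_n^J)+\sum_j P(\Phi^j)\le G_S\Big(\lim_n K(\ve{r}_n^J)^2+\sum_j K(\Phi^j)^2\Big)\le G_S\Big(\lim_n K(\ve{r}_n^J)+\sum_j K(\Phi^j)\Big)^2=G_S K(\Q)^2,
\]
so every inequality is an equality. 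Equality in the superadditivity step forces at most one of the quantities $\{K(\Phi^j)\}_j\cup\{\lim_n K(\ve{r}_n^J)\}$ to be nonzero; since $K(\Q)\neq 0$, exactly one profile $\Phi^1$ survives with $K(\Phi^1)=K(\Q)$, all other $\Phi^j=0$, and $\lim_n K(\ve{r}_n^J)=0$, hence by \eqref{AK} also $\ve{r}_n\to 0$ in $\dot H^1$. Thus, after relabeling, $\ve{u}^n(x)=(\lambda_n)^{-2}\Phi^1(x/\lambda_n)+o_{\dot H^1}(1)$; passing to the rescaled sequence $\tilde{\ve{u}}^n:=(\lambda_n)^{2}\ve{u}^n(\lambda_n\,\cdot)$ gives $\tilde{\ve{u}}^n\to\Phi^1$ strongly in $\dot H^1$ (note the bubble decomposition in the statement uses $\lambda^{-2}$-scaling of a fixed profile, which I will reconcile with the $\lambda^{-1}$-scaling in \eqref{DefUR} by a harmless change of variable). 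Consequently $K(\Phi^1)=K(\Q)$, $P(\Phi^1)=P(\Q)=G_S K(\Phi^1)^2$, so $\Phi^1$ saturates \eqref{GNI}, and by Proposition~\ref{UBS} together with radiality (which kills the translation parameter, $x_0=0$) and the normalization $K(\Phi^1)=K(\Q)$ (which kills the amplitude, $\alpha=1$, since $K$ is quadratic and rescaling $\lambda$ fixes $K$), we conclude $\Phi^1=\Q_{[\theta_1,\theta_2,1]}$ for some $\theta_1,\theta_2\in\R/2\pi\Z$. Absorbing the residual scaling into $\mu_n$ and the phases into $\theta_1,\theta_2$ yields \eqref{CQQ}.

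The main obstacle I anticipate is the algebra ensuring the profile $\Phi^1$ inherits \emph{exactly} the normalization $K(\Phi^1)=K(\Q)$ and the exact relation $P(\Phi^1)=G_S K(\Phi^1)^2$ with the correct constant — i.e.\ genuinely pinning down that the surviving profile is $\Q$ and not merely a rescaled near-optimizer. This requires first verifying carefully that $\Q$ itself attains equality in \eqref{GNI} (so that $P(\Q)=G_S K(\Q)^2$ is available as the target value), which follows from the explicit form of $\Q$ and $Q$ and the identity $\Delta Q+Q^3=0$ via a Pohozaev/integration-by-parts computation; and second, tracking the two phase degrees of freedom through Proposition~\ref{UBS} and the constraint $2\theta_1=\theta_2+\theta_3$ there to land in the two-parameter family appearing in \eqref{DefUR}. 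A secondary technical point is the reconciliation of the two scaling conventions and the passage from ``$\ve{r}_n^J\to 0$ in $L^4$ and $K$-decoupling'' to genuine $\dot H^1$-strong convergence of the rescaled sequence; this is routine once the single-bubble structure is in hand, using \eqref{AK} with $J=1$ and $J\to J^\ast$.
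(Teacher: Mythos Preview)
Your proposal is correct and follows essentially the same route as the paper's proof: apply the bubble decomposition of Theorem~\ref{BubD}, use the $K$- and $P$-decouplings \eqref{AK}--\eqref{AsP} together with the sharp inequality \eqref{GNI} and superadditivity of $t\mapsto t^2$ to force a single nontrivial profile with vanishing remainder, then invoke Proposition~\ref{UBS} (with radiality and the normalization $K(\Phi^1)=K(\Q)$) to identify the profile as $\Q_{[\theta_1,\theta_2,\lambda_0]}$. The paper's version is slightly terser---it drops the remainder's contribution to $P$ immediately via \eqref{ArN} rather than carrying it through the inequality chain---but the mechanism is identical, and the technical points you flag (scaling-convention reconciliation, $P(\Q)=G_S K(\Q)^2$, fixing $\alpha=1$) are all handled or implicit there as well.
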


\begin{proof}
By Theorem~\ref{BubD} we can write
\begin{align}\label{Desa1}
	\ve{u}^{n} = \sum_{j=1}^J (\lambda_n^j)^{-2}{\Phi}^j \left( \tfrac{x}{\lambda_n^j} \right) + \ve{r}_n^J(x)
\end{align}
Since $E(\ve{u}^{n}) = E(\Q)$, we get
\[
2P(\ve{u}^{n}) = K(\ve{u}^{n}) - E(\ve{u}^{n}) \to K(\Q) - E(\Q) = 2P(\Q).
\]
Therefore, by \eqref{AsP} we obtain
\[
\sum_{j=1}^{J^\ast} P({\Phi}^j) = P(\Q).
\]
Moreover, \eqref{AK} implies that
\[
\sum_{j=1}^{J^{\ast}} K(\Phi^j) \leq K(\Q).
\]
Thus, from the sharp Gagliardo-Nirenberg inequality \eqref{GNI}
\[
P(\Q) = \sum_{j=1}^{J^\ast} P(\Phi^j) \leq G_{S} \sum_{j=1}^{J^\ast} K(\Phi^j)^{2} \leq G_{s} \left[\sum_{j=1}^{J^\ast} K(\Phi^j)\right]^{2}
\leq [K(\Q)]^{2}.
\]
As $P(\Q) >0$, Proposition~\ref{UBS} implies that $J^{\ast}=1$ and $\Phi^1=\Q_{[\theta_{1}, \theta_{2}, \lambda_{0}]}$ for some 
$\theta_{1}$, $\theta_{2}$ and $\lambda_{0}>0$. On the other hand, by \eqref{AK} (recall that $K(\ve{u}^{n}) \to K(\Q)$) we see that
$K(\ve{r}^{n})\to 0$ as $n\to \infty$.
Since $\|\cdot\|_{\dot{H}^{1}}$ is equivalent to the norm induced by $K(\cdot)$, from \eqref{Desa1} we obtain \eqref{CQQ}.
This completes the proof.
\end{proof}

We observe that the following Pohozaev identity holds:  
\begin{align}\label{IP4}
	K(\Q) = 4P(\Q).  
\end{align}

We conclude this section with the following result. The proof follows from the Gagliardo-Nirenberg \eqref{GNI} inequality and proceeds along the same lines as in \cite[Claim 2.6]{DuyMerle2009}.

\begin{lemma}\label{ConvexIn}
If $\ve{f}\in \dot{H}^{1}$ and $K(\ve{f})\leq K(\Q)$, then
\[
K(\ve{f})E(\Q) \leq K(\Q)E(\ve{f}).
\]
\end{lemma}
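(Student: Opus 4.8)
The plan is to mimic the scalar argument from \cite[Claim 2.6]{DuyMerle2009}, reducing the inequality to a one-variable convexity/monotonicity statement. Write $k=K(\ve{f})$ and $k_Q=K(\Q)$, and recall from \eqref{IP4} the Pohozaev relation $k_Q=4P(\Q)$, so that $E(\Q)=K(\Q)-2P(\Q)=k_Q/2$. The desired inequality $K(\ve{f})E(\Q)\le K(\Q)E(\ve{f})$ is therefore equivalent to $k\cdot\tfrac{k_Q}{2}\le k_Q\cdot E(\ve{f})$, i.e. $E(\ve{f})\ge k/2$. Since $E(\ve{f})=k-2P(\ve{f})$, this is in turn equivalent to $P(\ve{f})\le k/4$. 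Thus the whole lemma collapses to showing: if $K(\ve{f})\le K(\Q)$, then $P(\ve{f})\le \tfrac14 K(\ve{f})$.

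To prove $P(\ve{f})\le\tfrac14 K(\ve{f})$ under the constraint $k\le k_Q$, I would use the sharp Gagliardo-Nirenberg inequality \eqref{GNI}, $|P(\ve{f})|\le G_S k^2$, together with the value of the optimal constant computed via $\Q$: plugging $\ve{f}=\Q$ into \eqref{GNI} with equality (Proposition~\ref{UBS}) and using \eqref{IP4} gives $P(\Q)=G_S k_Q^2$ and $P(\Q)=k_Q/4$, hence $G_S=\tfrac{1}{4k_Q}$. Consequently $P(\ve{f})\le G_S k^2=\tfrac{k^2}{4k_Q}=\tfrac{k}{4}\cdot\tfrac{k}{k_Q}\le\tfrac{k}{4}$, where the last step uses precisely $k\le k_Q$. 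This chain gives exactly what is needed, and unwinding the equivalences above yields $K(\ve{f})E(\Q)\le K(\Q)E(\ve{f})$.

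I should be slightly careful about one routine point: the equivalences rewriting the target inequality all multiply through by $k_Q>0$ and $k\ge 0$, which is harmless, but I want $E(\Q)=k_Q/2>0$ so that the direction of the inequality is preserved; this follows from $E(\Q)=K(\Q)-2P(\Q)=k_Q-k_Q/2=k_Q/2$ via \eqref{IP4}, with $k_Q=K(\Q)>0$ since $\Q\ne 0$. The only genuine input beyond bookkeeping is the sharp constant identification $G_S=1/(4K(\Q))$, which is where \eqref{GNI} attaining equality at $\Q$ and the Pohozaev identity \eqref{IP4} are both used; everything else is algebra.

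The main (very mild) obstacle is purely organizational: making sure the optimal-constant computation is invoked correctly, since \eqref{GNI} is stated with $G_S$ given by an explicit formula in terms of $m_1,m_2,m_3$ and $G_4$ rather than as $1/(4K(\Q))$, so I would either re-derive $G_S=1/(4K(\Q))$ from Proposition~\ref{UBS} and \eqref{IP4}, or equivalently just apply \eqref{GNI} to both $\ve{f}$ and $\Q$ and divide, which sidesteps ever naming the constant. With that in hand the proof is three or four lines, and it "proceeds along the same lines as in \cite[Claim 2.6]{DuyMerle2009}" as the excerpt promises.
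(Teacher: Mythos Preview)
Your proposal is correct and follows the same approach indicated in the paper, which simply refers to the Gagliardo--Nirenberg inequality \eqref{GNI} and \cite[Claim~2.6]{DuyMerle2009}. The key reduction to $P(\ve{f})\le \tfrac14 K(\ve{f})$ via the identification $G_S=1/(4K(\Q))$ (from equality at $\Q$ together with \eqref{IP4}) is exactly the intended argument.
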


\subsection{Virial identities}
For $R>1$, we consider the functions
\[
w_{R}(x)=R^{2}\phi\(\tfrac{x}{R}\)
\quad \text{and}\quad w_{\infty}(x)=|x|^{2},
\]
where $\phi$ is a real-valued radial function satisfying
\[
\phi(x)=
\begin{cases}
|x|^{2},& \quad |x|\leq 1,\\
0,& \quad |x|\geq 2,
\end{cases}
\quad \text{with}\quad 
|\partial^{\alpha}\phi(x)|\lesssim |x|^{2-|\alpha|}.
\]
Let $\ve{u}=(u,v,g)$ be a solution to equation \eqref{NLS}. We define the function 
\[
V(t):=\int_{\R^{4}}\left(m_{1}|u(t,x)|^{2}+m_{2}|v(t,x)|^{2}+m_{3}|g(t,x)|^{2}\right)w_{R}(x)\,dx.
\]
We also consider the localized virial functional (for $\ve{u}=(u,v,g)$)
\[
I_{R}[\ve{u}]=2\IM\int_{\R^{4}} \nabla w_{R}(x) \cdot 
\left(\nabla u(t) \overline{u(t)}+ \nabla {v(t)} \overline{v(t)}+\nabla g(t) \overline{g(t)}\right) dx.
\]

The following result will be needed; see \cite[Lemma 2.2]{OgaTsu2023} for more details.

\begin{lemma}\label{VirialIden}
Let $R\in [1, \infty]$. Assume the constants $m_1$, $m_2$ and $m_3$ satisfy the mass resonance condition $2m_1 + m_2 = m_3$.  Suppose $\ve{u}(t)$ solves \eqref{NLS}.
 Then
\begin{align}\label{F1LocalVirial}
	\tfrac{d}{d t}V(t)&=I_{R}[\ve{u}(t)],\\ \label{LocalVirial}
	\tfrac{d}{d t}I_{R}[\ve{u}]&=F_{R}[\ve{u}(t)],
\end{align}
where
\begin{align*}
F_{R}[\ve{u}]&:=\int_{\R^{4}}\left(-\tfrac{1}{4} \Delta \Delta w_{R}\right)\left(\tfrac{1}{m_{1}}|u|^{2}+\tfrac{1}{m_{2}}|v|^{2}+\tfrac{1}{m_{3}}|g|^{2}\right)\,dx\\
&-2\RE\int_{\R^{4}}\Delta[w_{R}(x)]\overline{u}(x)^{2}v(x)g(x)dx\\
&\quad +\RE\int_{\R^{4}} \left[\tfrac{1}{m_{1}}\overline{u_{j}} u_{k} +\tfrac{1}{m_{2}}\overline{v_{j}} v_{k} + \tfrac{1}{m_{3}}\overline{g_{j}} g_{k}\right]\partial_{jk}[w_{R}(x)]dx.
\end{align*}
In particular, when $R=\infty$, we obtain $F_{\infty}[\ve{u}]=4[K(\ve{u})-4P(\ve{u})]$.
\end{lemma}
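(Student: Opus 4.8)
\textbf{Proof proposal for Lemma~\ref{VirialIden}.}

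The plan is to compute the first two time derivatives of $V(t)$ directly from the equation \eqref{NLS}, integrating by parts freely on the smooth, compactly supported (for $R<\infty$) or polynomially growing (for $R=\infty$) weight $w_R$. First I would establish \eqref{F1LocalVirial}: differentiating $V(t)$ in $t$ and substituting $\partial_t u_k = \tfrac{i}{2m_k}\Delta u_k + i(\text{nonlinear term})_k$ from \eqref{NLS}, the mass-type density $\sum_k m_k|u_k|^2$ produces, after the usual cancellation, only the kinetic contributions $\tfrac{d}{dt}\int m_k|u_k|^2 w_R = -\tfrac{1}{m_k}\IM\int \nabla w_R\cdot(\ldots)$-type terms; the point is that the nonlinear contributions to $\tfrac{d}{dt}\sum_k m_k|u_k|^2$ cancel \emph{pointwise} precisely under the gauge/mass structure of \eqref{NLS} together with the resonance condition $2m_1+m_2=m_3$ (this is where the phase-rotation invariance (ii) enters: the nonlinearity $F(\mathbf u)$ is orthogonal, in the appropriate bilinear pairing, to the weighted mass). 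This yields $\tfrac{d}{dt}V(t)=I_R[\mathbf u(t)]$.

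Next I would differentiate $I_R[\mathbf u(t)]$ in $t$. Writing $I_R[\mathbf u] = 2\IM\int \nabla w_R\cdot\sum_k \overline{u_k}\nabla u_k\,dx$, I substitute the equation again. The linear (Laplacian) part of each $\partial_t u_k$ contributes, after integration by parts, the standard Morawetz/virial expression: the bilaplacian term $-\tfrac14\Delta\Delta w_R\cdot\tfrac{1}{m_k}|u_k|^2$ and the Hessian term $\tfrac{1}{m_k}\RE(\overline{\partial_j u_k}\,\partial_k u_k)\partial_{jk}w_R$ — this is the classical computation for each scalar Schr\"odinger component with its own mass $m_k$, and I would just cite/adapt \cite[Lemma 2.2]{OgaTsu2023}. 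The nonlinear part of $\partial_t u_k$ contributes the term $-2\RE\int\Delta w_R\,\overline{u_1}^2 u_2 u_3\,dx$: here one collects the three nonlinear contributions (one from each component), uses the specific algebraic form of $F(\mathbf u)$ — namely that $2\overline{u_1}u_2u_3$, $u_1^2\overline{u_3}$, $u_1^2\overline{u_2}$ are exactly the three "partial derivatives" of the potential $P$ — and the mass resonance $2m_1+m_2=m_3$ to make the $1/m_k$-weighted sum of these collapse into a single clean term proportional to $\Delta w_R$ times the potential density. Summing the linear and nonlinear parts gives $\tfrac{d}{dt}I_R[\mathbf u]=F_R[\mathbf u(t)]$.

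Finally, for the $R=\infty$ case with $w_\infty=|x|^2$ one has $\Delta w_\infty = 8$, $\Delta\Delta w_\infty = 0$, and $\partial_{jk}w_\infty = 2\delta_{jk}$, so $F_\infty[\mathbf u] = 0 - 2\cdot 8\, P(\mathbf u) + 2\sum_k\tfrac{1}{m_k}\|\nabla u_k\|_{L^2}^2 = 4K(\mathbf u) - 16 P(\mathbf u) = 4[K(\mathbf u)-4P(\mathbf u)]$, which is the asserted identity; to make this rigorous one approximates by the truncated weights $w_R$ and passes to the limit, which is justified for $\dot H^1$ solutions under the stated additional integrability ($|x|\mathbf u_0\in L^2$ or $\mathbf u_0\in H^1$), or one simply invokes \cite[Lemma 2.2]{OgaTsu2023} where this limiting argument is carried out. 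The main obstacle, and the only place where genuine care beyond the scalar template is required, is the bookkeeping in the nonlinear term of $\tfrac{d}{dt}I_R$: one must verify that the mass-weighted combination of the three cubic terms, after using the resonance relation, produces \emph{exactly} $-2\RE\int\Delta w_R\,\overline{u_1}^2 u_2 u_3$ with no leftover boundary or cross terms — this is precisely the computation for which the condition $2m_1+m_2=m_3$ is indispensable, and it is the step I would write out in full detail.
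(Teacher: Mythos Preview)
The paper does not prove this lemma; it refers to \cite[Lemma~2.2]{OgaTsu2023}. Your direct-computation plan is standard and correct in outline, and the $R=\infty$ specialization is fine. The slip is in locating where the mass resonance actually enters. In the \emph{second} identity $\tfrac{d}{dt}I_R=F_R$: observe that $I_R$ is defined with no $m_k$ weights, and the nonlinear piece $iF_k$ of $\partial_t u_k=\tfrac{i}{2m_k}\Delta u_k+iF_k$ carries no $m_k$ either. Hence the entire nonlinear contribution to $\tfrac{d}{dt}I_R$ is \emph{independent} of $(m_1,m_2,m_3)$ --- there is no ``$1/m_k$-weighted sum'' to collapse. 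The mechanism is exactly the gradient structure you already name: one checks the pointwise identity $\sum_k\RE(\bar F_k\,\partial_j u_k)=\partial_j\RE(\bar u_1^{\,2}u_2u_3)$, and a single integration by parts against $\partial_j w_R$ produces the $\Delta w_R$ term, for \emph{any} positive masses.

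The resonance is needed only in the \emph{first} identity $\tfrac{d}{dt}V=I_R$, which you also correctly flag. The pointwise nonlinear contribution to $\partial_t\sum_k m_k|u_k|^2$ computes to $2(m_2+m_3-2m_1)\,\IM(\bar u_1^{\,2}u_2u_3)$, and this coefficient must vanish. Note that this forces $2m_1=m_2+m_3$, which is symmetric under $m_2\leftrightarrow m_3$ as the system itself is; if you attempt your ``write out in full detail'' step under the condition $2m_1+m_2=m_3$ as printed, the cancellation will not close --- compare directly with the formulation in \cite{OgaTsu2023}.
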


Given the specifications of the weight function $w_{R}$ defined above (with $\phi(r)=\phi(|x|)$), we see that
\begin{align*}
&\RE\int_{\R^{4}} \left[\tfrac{1}{m_{1}}\overline{u_{j}} u_{k} + \tfrac{1}{m_{2}}\overline{v_{j}} v_{k} + \tfrac{1}{m_{3}}\overline{g_{j}} g_{k}\right]\partial_{jk}[w_{R}(x)]dx=\\
&\RE\int_{\R^{4}}\left[\tfrac{1}{m_{1}}|\nabla u|^{2} + \tfrac{1}{m_{2}}|\nabla v|^{2} + \tfrac{1}{m_{3}}|\nabla g|^{2}\right]\partial^{2}_{r}w_{R}dx.
\end{align*}


 As a consequence of Lemma~\ref{VirialIden}, we obtain the following results.

\begin{lemma}\label{Virialzero}
Let $R\in [1, \infty]$, $\theta\in \R$ and $\lambda>0$. Then
\[
I_{R}[\Q_{[\theta_{1}, \theta_{2}, \lambda]}]=0.
\]
\end{lemma}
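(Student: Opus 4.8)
The claim is that the localized (and non-localized) virial functional $I_R$ vanishes on every element of the ground-state orbit $\Q_{[\theta_1,\theta_2,\lambda]}$. The point is simply that $I_R$ only sees the imaginary part of $\nabla\mathbf{u}\,\overline{\mathbf{u}}$, and each component of $\Q_{[\theta_1,\theta_2,\lambda]}$ is a constant complex phase times a \emph{real-valued} function, so each term in the integrand is the gradient of a real function dotted with the gradient of a real function, hence real, hence its imaginary part is zero. Concretely, I would write $\Q_{[\theta_1,\theta_2,\lambda]}=(e^{i(\theta_1+\theta_2)}\lambda^{-1}Q_1(\lambda^{-1}x),\,e^{2i\theta_1}\lambda^{-1}Q_2(\lambda^{-1}x),\,e^{2i\theta_2}\lambda^{-1}Q_3(\lambda^{-1}x))$ and note that each $Q_k$ is real-valued (indeed a positive multiple of $Q(x)=(1+|x|^2/8)^{-1}$).

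\textbf{Key steps.} First, fix $k$ and set $\psi_k(x):=\lambda^{-1}Q_k(\lambda^{-1}x)$, which is real-valued, so the $k$-th term of the integrand in $I_R$ is $\IM\big(\nabla w_R\cdot \nabla(e^{i\beta_k}\psi_k)\,\overline{e^{i\beta_k}\psi_k}\big)=\IM\big(\nabla w_R\cdot\nabla\psi_k\,\psi_k\big)$ for the appropriate real phase $\beta_k$ (the $|e^{i\beta_k}|^2=1$ kills the phase). Second, observe $\nabla w_R\cdot\nabla\psi_k\,\psi_k$ is a product of real quantities, hence real, so its imaginary part vanishes; summing over $k=1,2,3$ gives $I_R[\Q_{[\theta_1,\theta_2,\lambda]}]=0$. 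Alternatively, and perhaps cleaner to state, one may note $\nabla w_R\cdot\nabla\psi_k\,\psi_k=\tfrac12\nabla w_R\cdot\nabla(\psi_k^2)$ is manifestly real. This works uniformly for $R\in[1,\infty)$ and, since $w_\infty(x)=|x|^2$ is also real-valued and the same algebra applies, for $R=\infty$ as well (the only subtlety at $R=\infty$ is integrability, but $Q_k$ and $\nabla Q_k$ decay fast enough that $\nabla w_\infty\cdot\nabla\psi_k\,\psi_k\sim |x|\cdot|x|^{-3}\cdot|x|^{-2}$ is integrable in $\R^4$; for $R<\infty$ there is no issue since $w_R$ is compactly supported).

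\textbf{Main obstacle.} There is essentially no obstacle: the statement is a one-line consequence of the reality of the ground-state profiles combined with the fact that $I_R$ is built from an imaginary part. The only thing worth being careful about is making the phase-cancellation explicit (writing out $\nabla(e^{i\beta_k}\psi_k)=e^{i\beta_k}\nabla\psi_k$ and $\overline{e^{i\beta_k}\psi_k}=e^{-i\beta_k}\psi_k$ so the product is $\psi_k\nabla\psi_k\in\R^4$) and, in the case $R=\infty$, noting that the integral converges absolutely so that the identity $\tfrac{d}{dt}V=I_R$ and the manipulations in Lemma~\ref{VirialIden} are legitimate. Hence the proof is short: substitute the explicit form of $\Q_{[\theta_1,\theta_2,\lambda]}$, use that each profile is a constant phase times a real function, and conclude that every summand in $I_R$ is the imaginary part of a real number.
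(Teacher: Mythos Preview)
Your argument is correct and is precisely the expected one: the paper states the lemma without proof (merely as a consequence of the definition of $I_R$ following Lemma~\ref{VirialIden}), and the natural justification is exactly your observation that each component of $\Q_{[\theta_1,\theta_2,\lambda]}$ is a constant phase times a real profile, so $\nabla u_k\,\overline{u_k}=\psi_k\nabla\psi_k$ is pointwise real and its imaginary part vanishes identically.

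One small correction to your parenthetical: for $R=\infty$ the quantity $\nabla w_\infty\cdot\nabla\psi_k\,\psi_k\sim |x|^{-4}$ is \emph{not} absolutely integrable in $\R^4$ (it is the borderline logarithmically divergent case), so the real part of the integral does not converge. This is harmless, however, since what appears in $I_\infty$ is the integral of the \emph{imaginary part} of the integrand, which you have shown to be identically zero; no integrability of the full expression is needed.
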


\begin{lemma}\label{VirialModulate}
Let $\ve{u}$ be a solution of \eqref{NLS} defined on the interval $I$. Consider $R\in [1, \infty]$, and functions $\chi: I\to \R$, $\theta_{1}: I\to \R$, $\theta_{2}: I\to \R$, and $\lambda: I\to \R^{\ast}$. Then for all $t\in I$,
\begin{align}\nonumber
    \tfrac{d}{d t}I_{R}[\ve{u}] &= F_{\infty}[\ve{u}(t)] \\ \label{Modu11}
                           &\quad + F_{R}[\ve{u}(t)] - F_{\infty}[\ve{u}(t)] \\ \label{Modu22}
                           &\quad - \chi(t)\big\{F_{R}[\Q_{[\theta_{1}(t), \theta_{2}(t), \lambda(t)]}] - 
													F_{\infty}[\Q_{[\theta_{1}(t), \theta_{2}(t), \lambda(t)]}]\big\}.
\end{align}
\end{lemma}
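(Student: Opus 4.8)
The plan is to derive the identity by assembling three pieces, starting from the exact (non-localized) virial law and then accounting for the localization error and the subtracted modulated ground-state term. First I would recall from Lemma~\ref{VirialIden} that for the true solution $\ve{u}(t)$ of \eqref{NLS} we have $\tfrac{d}{dt}I_{R}[\ve{u}(t)] = F_{R}[\ve{u}(t)]$ for any $R \in [1,\infty]$, and in particular the limiting case $R = \infty$ gives $\tfrac{d}{dt}I_{\infty}[\ve{u}(t)] = F_{\infty}[\ve{u}(t)] = 4[K(\ve{u}(t)) - 4P(\ve{u}(t))]$. The point of the decomposition \eqref{Modu11}--\eqref{Modu22} is purely algebraic: we write
\[
F_{R}[\ve{u}(t)] = F_{\infty}[\ve{u}(t)] + \big(F_{R}[\ve{u}(t)] - F_{\infty}[\ve{u}(t)]\big),
\]
which is a trivial add-and-subtract, and this already produces the first two lines. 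Hence the only real content is that we are allowed to append the third line, i.e. that the quantity
\[
\chi(t)\big\{F_{R}[\Q_{[\theta_{1}(t),\theta_{2}(t),\lambda(t)]}] - F_{\infty}[\Q_{[\theta_{1}(t),\theta_{2}(t),\lambda(t)]}]\big\}
\]
vanishes identically, so that subtracting it changes nothing.

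The key step, then, is to show that $F_{R}[\Q_{[\theta_{1},\theta_{2},\lambda]}] = F_{\infty}[\Q_{[\theta_{1},\theta_{2},\lambda]}]$ for every choice of phases and scaling parameter, for the specific weight $w_R$ fixed before the statement. I would argue this via scaling and the structure of $F_R$. Writing $F_R = F_\infty + (F_R - F_\infty)$, the difference $F_R - F_\infty$ is supported (in the integrands) on the region $\{|x| \geq R\}$ where $w_R$ and $w_\infty = |x|^2$ differ, together with their derivatives; more precisely, $\Delta\Delta w_R$, $\Delta w_R - 2d = \Delta w_R - 8$, and $\partial^2_r w_R - 2$ (equivalently $\partial_{jk} w_R - 2\delta_{jk}$) all vanish for $|x| \leq R$. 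So $F_R[\ve{v}] - F_\infty[\ve{v}]$ is an integral of quantities that are pointwise controlled by $\big(|\nabla v_k|^2 + |v_k|^2/|x|^2 + |\bar v_1^2 v_2 v_3|\big)\mathbf{1}_{\{|x|\geq R\}}$ up to harmless factors. For $\ve{v} = \Q_{[\theta_1,\theta_2,\lambda]}$ the phases drop out of every term in $F_R$ and $F_\infty$ (each term is a modulus-squared of a gradient, a modulus-squared over $|x|^2$, or the real part of $\bar u^2 v g$ which for the ground state equals $Q^4$ times a positive constant), so without loss of generality we may take $\theta_1 = \theta_2 = 0$. Then $\Q_{[\lambda]}(x) = \lambda^{-1}\Q(x/\lambda)$, and a direct change of variables shows $F_R[\Q_{[\lambda]}] = \lambda^{d-2}\cdot(\text{same expression with } R \mapsto R/\lambda)$ acting on $\Q$; but the cleanest route is simply to invoke Lemma~\ref{Virialzero}, which already gives $I_R[\Q_{[\theta_1,\theta_2,\lambda]}] = 0$, and to note that $\Q_{[\theta_1,\theta_2,\lambda]}$ is a stationary solution of \eqref{NLS} up to the scaling/phase action, so that $t \mapsto I_R[\Q_{[\theta_1,\theta_2,\lambda]}]$ is constant in $t$ and hence $\tfrac{d}{dt}I_R = F_R[\Q_{[\theta_1,\theta_2,\lambda]}] = 0$; the same with $R = \infty$ gives $F_\infty[\Q_{[\theta_1,\theta_2,\lambda]}] = 0$ (this is consistent with the Pohozaev identity \eqref{IP4}, since $F_\infty = 4[K - 4P]$ and $K(\Q) = 4P(\Q)$). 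Therefore both $F_R$ and $F_\infty$ evaluated at the modulated ground state are zero, so their difference is zero, and multiplying by $\chi(t)$ keeps it zero.

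Putting this together: for all $t \in I$,
\[
\tfrac{d}{dt}I_{R}[\ve{u}(t)] = F_{R}[\ve{u}(t)] = F_{\infty}[\ve{u}(t)] + \big(F_{R}[\ve{u}(t)] - F_{\infty}[\ve{u}(t)]\big) - \chi(t)\big\{F_{R}[\Q_{[\theta_{1}(t),\theta_{2}(t),\lambda(t)]}] - F_{\infty}[\Q_{[\theta_{1}(t),\theta_{2}(t),\lambda(t)]}]\big\},
\]
since the bracketed term in the last summand is identically zero by the previous paragraph. This is precisely \eqref{Modu11}--\eqref{Modu22}, completing the proof.

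I do not expect any genuine obstacle here — the statement is essentially a bookkeeping identity whose only nontrivial input is the vanishing $F_R[\Q_{[\theta_1,\theta_2,\lambda]}] = F_\infty[\Q_{[\theta_1,\theta_2,\lambda]}] = 0$, which follows from Lemma~\ref{VirialIden}, Lemma~\ref{Virialzero}, and the mass-resonance assumption $2m_1 + m_2 = m_3$ (needed for Lemma~\ref{VirialIden} to apply in the first place). The one point requiring a line of care is justifying that $F_R$ applied to the modulated ground state is $t$-independent: this is because $\Q_{[\theta_1,\theta_2,\lambda]}$, although not literally a static solution of \eqref{NLS}, is a solution whose modulus profile is time-independent, and $F_R$ depends on $\ve{u}$ only through $|u_k|$, $|\nabla u_k|$, and $\RE(\bar u_1^2 u_2 u_3)$ — all of which are frozen in $t$ for a modulated ground state with fixed parameters; alternatively one avoids this subtlety entirely by the direct scaling computation reducing everything to $F_{R/\lambda}[\Q]$ and using the $R=\infty$ Pohozaev identity together with the observation that the ground state profile makes each localization-error integrand integrate to zero by the same cancellation that yields $I_R[\Q_{[\theta_1,\theta_2,\lambda]}] = 0$ in Lemma~\ref{Virialzero}.
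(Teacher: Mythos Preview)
Your proof is correct and follows the same approach the paper has in mind: the paper simply states that Lemma~\ref{VirialModulate} is ``a consequence of Lemma~\ref{VirialIden}'', and your expansion --- write $F_R[\ve{u}] = F_\infty[\ve{u}] + (F_R[\ve{u}] - F_\infty[\ve{u}])$ and then subtract the identically vanishing quantity $\chi(t)\{F_R[\Q_{[\ldots]}] - F_\infty[\Q_{[\ldots]}]\}$, the vanishing following because $\Q_{[\theta_1,\theta_2,\lambda]}$ is a stationary solution of \eqref{NLS} so that Lemma~\ref{VirialIden} gives $F_R[\Q_{[\ldots]}] = 0 = F_\infty[\Q_{[\ldots]}]$ --- is precisely that consequence. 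One small correction: $\Q_{[\theta_1,\theta_2,\lambda]}$ \emph{is} literally a static solution of \eqref{NLS} (it satisfies the elliptic system \eqref{s1A} with zero frequency, so $i\partial_t = 0$ closes the equation), so the hedging in your final paragraph is unnecessary.
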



\section{Linearized Equation}\label{EqLinea}

Let $\ve{u}(t)$ be a solution to \eqref{NLS}. Define $\ve{h}=(h_1, h_2, h_3)$ via
\[
\ve{h}(t,x):=\ve{u}(t,x)-\Q(x),
\]
where $\Q(x)=(Q_1, Q_2, Q_3)$ is the ground state. Recall that the functions $Q_1$, $Q_2$ and $Q_3$ are given by 
\begin{equation}\label{q1q3}
\begin{aligned}
Q_{1}(x) &= \left( \tfrac{1}{4 m_2 m_3} \right)^{\!\frac{1}{4}} Q(x), \\
Q_{2}(x) &= \tfrac{1}{2}\left( \tfrac{m_2}{m^{2}_1 m_3} \right)^{\!\frac{1}{4}} Q(x), \\
Q_{3}(x) &= \tfrac{1}{2} \left( \tfrac{m_3}{m^{2}_1 m_2} \right)^{\!\frac{1}{4}} Q(x)
\end{aligned}
\end{equation}
with $Q(x)= \tfrac{1}{(1 + |x|^2 / 8)}$. Note that since $\ve{u}(t)$ is a solution to \eqref{NLS} and $\Q$ satisfies the elliptic equation \eqref{s1A}, we have that $\ve{h}$ satisfies the nonlinear Schr\"odinger equation

\[
\begin{cases}
i \partial_t h_1 + \frac{1}{2m_1} \Delta h_1 + N_{1}(\ve{h}) = 0, \\
i \partial_t h_2 + \frac{1}{2m_2} \Delta h_2 + N_{2}(\ve{h}) = 0, \\
i \partial_t h_3 + \frac{1}{2m_3} \Delta h_3 + N_{3}(\ve{h}) = 0,
\end{cases}
\]
where
\begin{align*}
N_{1}(\ve{h}) &:= 2\overline{h}_1 h_2 h_3 + 2\overline{h}_1 h_2 Q_3 + 2\overline{h}_1 Q_2 h_3 + 2\overline{h}_1 Q_2 Q_3 \\
&\quad + 2\overline{Q}_1 h_2 h_3 + 2\overline{Q}_1 h_2 Q_3 + 2\overline{Q}_1 Q_2 h_3,\\
N_{2}(\ve{h}) &:= h_1^2 \overline{h}_3 + h_1^2 \overline{Q}_3 + 2h_1 Q_1 \overline{h}_3 + 2h_1 Q_1 \overline{Q}_3 + Q_1^2 \overline{h}_3, \\
N_{3}(\ve{h}) &:= h_1^2 \overline{h}_2 + h_1^2 \overline{Q}_2 + 2h_1 Q_1 \overline{h}_2 + 2h_1 Q_1 \overline{Q}_2 + Q_1^2 \overline{h}_2.
\end{align*}

Equivalently, $\ve{h}$ satisfies the equation
\begin{equation}\label{Decomh}
\partial_{t}\ve{h} + \mathcal{L} \ve{h} = iR\ve{h}, \quad \text{where} \quad
\mathcal{L} := \begin{pmatrix}
0 & -L_{I} \\
L_{R} & 0
\end{pmatrix},
\end{equation}
with
\[
R(\ve{h}) = \begin{pmatrix}
2\overline{h}_1 h_2 h_3 + 2\overline{h}_1 h_2 Q_3 + 2\overline{h}_1 Q_2 h_3 + 2\overline{Q}_1 h_2 h_3, \\
h_1^2 \overline{h}_3 + h_1^2 \overline{Q}_3 + 2h_1 Q_1 \overline{h}_3, \\
h_1^2 \overline{h}_2 + h_1^2 \overline{Q}_2 + 2h_1 Q_1 \overline{h}_2
\end{pmatrix}.
\]

Furthermore, the operators $L_{I}$ and $L_{R}$ are given by
\[
L_{R} := 
\begin{pmatrix}
-\tfrac{1}{2m_1} \Delta & 0 & 0 \\
0 & -\tfrac{1}{2m_2} \Delta & 0 \\
0 & 0 & -\tfrac{1}{2m_3} \Delta
\end{pmatrix}
+
\begin{pmatrix}
-2 Q_2 Q_3 & -2Q_1 Q_3 & -2Q_1 Q_2 \\
-2 Q_1 Q_3 & 0 & -Q_1^2 \\
-2 Q_1 Q_2 & -Q_1^2 & 0
\end{pmatrix}
\]
and
\[
L_{I} := 
\begin{pmatrix}
-\tfrac{1}{2m_1} \Delta & 0 & 0 \\
0 & -\tfrac{1}{2m_2} \Delta & 0 \\
0 & 0 & -\tfrac{1}{2m_3} \Delta
\end{pmatrix}
+
\begin{pmatrix}
2 Q_2 Q_3 & -2Q_1 Q_3 & -2Q_1 Q_2 \\
-2 Q_1 Q_3 & 0 & Q_1^2 \\
-2 Q_1 Q_2 & Q_1^2 & 0
\end{pmatrix}.
\]

Notice also that we can write \eqref{Decomh} as a Schr\"odinger equation (recall that $\ve{h}=(h_{1}, h_{2}, h_{3})$):
\[
(i\partial_{t}h_{1}, i\partial_{t}h_{2}, i\partial_{t}h_{3}) + \left( \tfrac{1}{2m_1} \Delta h_1, \tfrac{1}{2m_2} \Delta h_2, \tfrac{1}{2m_3} \Delta h_3 \right) + K(\ve{h}) = -R(\ve{h}),
\]
where
\[
K(\ve{h}) = \begin{pmatrix}
2\overline{h}_1 Q_2 Q_3 + 2\overline{Q}_1 h_2 Q_3 + 2\overline{Q}_1 Q_2 h_3, \\
2h_1 Q_1 \overline{Q}_3 + Q_1^2 \overline{h}_3, \\
2h_1 Q_1 \overline{Q}_2 + Q_1^2 \overline{h}_2
\end{pmatrix}.
\]

Substituting the functions $Q_1$, $Q_2$, and $Q_3$ (cf.~\eqref{q1q3}) into the operators $L_{R}$ and $L_{I}$ and simplifying, we obtain
\[
L_{I} = 
\begin{pmatrix} 
 -\dfrac{1}{2m_1} \Delta & 0 & 0 \\ 
 0 & -\dfrac{1}{2m_2} \Delta & 0 \\ 
 0 & 0 & -\dfrac{1}{2m_3} \Delta 
\end{pmatrix} 
+ 
\begin{pmatrix} 
 \dfrac{1}{2 m_1} & -\dfrac{1}{\sqrt[4]{4 m_1^{2} m_2^{2}}} & -\dfrac{1}{\sqrt[4]{4 m_1^{2} m_3^{2}}} \\ 
 -\dfrac{1}{\sqrt[4]{4 m_1^{2} m_2^{2}}} & 0 & \dfrac{1}{\sqrt{4 m_2 m_3}} \\ 
 -\dfrac{1}{\sqrt[4]{4 m_1^{2} m_3^{2}}} & \dfrac{1}{\sqrt{4 m_2 m_3}} & 0 
\end{pmatrix} Q^2.
\]
and
\[
L_{R} = 
\begin{pmatrix} 
 -\dfrac{1}{2m_1} \Delta & 0 & 0 \\ 
 0 & -\dfrac{1}{2m_2} \Delta & 0 \\ 
 0 & 0 & -\dfrac{1}{2m_3} \Delta 
\end{pmatrix} 
+ 
\begin{pmatrix} 
 -\dfrac{1}{2 m_1} & -\dfrac{1}{\sqrt[4]{4 m_1^{2} m_2^{2}}} & -\dfrac{1}{\sqrt[4]{4 m_1^{2} m_3^{2}}} \\ 
 -\dfrac{1}{\sqrt[4]{4 m_1^{2} m_2^{2}}} & 0 & -\dfrac{1}{\sqrt{4 m_2 m_3}} \\ 
 -\dfrac{1}{\sqrt[4]{4 m_1^{2} m_3^{2}}} & -\dfrac{1}{\sqrt{4 m_2 m_3}} & 0 
\end{pmatrix} Q^2.
\]
Next, we will study the coercivity of the operators $L_{R}$ and $L_{I}$. For the following results, we introduce:
\begin{equation*}
\Phi_{1} = \left(\tfrac{Q}{\sqrt{3m_1}}, \tfrac{-Q}{\sqrt{12m_2}}, \tfrac{Q}{\sqrt{12m_3}}\right) \quad \text{and} \quad
\Phi_{2} = \left(\tfrac{Q}{\sqrt{12m_1}}, \tfrac{Q}{\sqrt{12m_2}}, \tfrac{-Q}{\sqrt{12m_3}}\right).
\end{equation*}

\begin{lemma}\label{Apx22}
There exists $C>0$, depending on $m_1$, $m_2$, $m_3$, and the best Sobolev constant in dimension~4, such that for every $\ve{v}\in (\dot{H}^{1}(\R^{4}: \R))^{3}$ satisfying
\begin{align}\label{Ort223}
    (\ve{v}, \Phi_{1})_{\dot{H}^{1}} = (\ve{v}, \Phi_{2})_{\dot{H}^{1}} = 0, 
\end{align}
then we have
\[
\langle L_{I}\ve{v},\ve{v}\rangle \geq C \|\ve{v}\|^{2}_{\dot{H}^{1}}.
\]
\end{lemma}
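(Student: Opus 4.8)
The plan is to diagonalize $L_I$ by exploiting the fact that the potential matrix multiplying $Q^2$ has a special structure, and then reduce to scalar Schr\"odinger operators whose coercivity is classical. First I would introduce the orthogonal change of variables on $\C^3$ (really $\R^3$) that conjugates the constant symmetric matrix
\[
M_I = \begin{pmatrix} \tfrac{1}{2m_1} & -\tfrac{1}{\sqrt[4]{4m_1^2m_2^2}} & -\tfrac{1}{\sqrt[4]{4m_1^2m_3^2}} \\ -\tfrac{1}{\sqrt[4]{4m_1^2m_2^2}} & 0 & \tfrac{1}{\sqrt{4m_2m_3}} \\ -\tfrac{1}{\sqrt[4]{4m_1^2m_3^2}} & \tfrac{1}{\sqrt{4m_2m_3}} & 0 \end{pmatrix}
\]
into diagonal form, while simultaneously respecting the diagonal kinetic part $\mathrm{diag}(\tfrac{1}{2m_k})(-\Delta)$. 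Because the three equations carry different masses $m_k$, the correct substitution is $w_k = \sqrt{2m_k}\, v_k$ (or the inverse), which turns the kinetic part into a scalar $-\Delta$ times the identity and turns $M_I$ into the conjugated matrix $D^{-1}M_I D^{-1}$ with $D=\mathrm{diag}(\sqrt{2m_k})$; a direct computation should show this new matrix has a clean spectrum, with one eigenvalue giving the operator $-\Delta - 3Q^2$ (i.e. $-\Delta + $ the linearized-around-$Q$ potential with the "good" sign) and the other two eigenvalues giving $-\Delta - Q^2$ and possibly $-\Delta + Q^2$. The eigenvectors of $D^{-1}M_I D^{-1}$, pulled back through $D$, are exactly (proportional to) the profiles $\Phi_1,\Phi_2$ and a third profile $\Phi_0$ which I expect to be $(\tfrac{Q}{2m_1}, \tfrac{Q}{2\sqrt[4]{4m_1^2m_2 m_3}\cdot(\cdots)}, \cdots)$ — the ground-state direction $\Q$ up to normalization.

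Next I would record the scalar facts: the operator $\mathcal{L}_- := -\Delta - Q^2$ on $\dot H^1_{\mathrm{rad}}(\R^4)$ satisfies $\mathcal{L}_- Q = 0$ ($Q$ being the only zero mode, from \eqref{groundS}), is nonnegative, and is coercive on the orthogonal complement of $Q$; likewise $-\Delta - 3Q^2$ has exactly one negative eigenvalue with a positive radial ground state and a one-dimensional kernel spanned by the scaling direction $\Lambda Q = Q + x\cdot\nabla Q$, so that it too is coercive after removing those two directions. After the change of variables, $\langle L_I \ve v, \ve v\rangle$ becomes a sum $\sum_j \langle \mathcal{L}^{(j)} w^{(j)}, w^{(j)}\rangle$ over the eigen-blocks, where $w^{(j)}$ is the component of the transformed $\ve v$ along the $j$-th eigenvector. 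The two orthogonality conditions \eqref{Ort223} translate precisely into $w^{(1)}=w^{(2)}=0$ being replaced by: the component of $\ve v$ along the kernel directions of $L_I$ vanishes — that is, $L_I \Phi_1 = L_I \Phi_2 = 0$, so $\Phi_1,\Phi_2$ span $\ker L_I$, and \eqref{Ort223} kills exactly the neutral directions. I would verify $L_I\Phi_1 = L_I\Phi_2=0$ directly using $\mathcal{L}_- Q=0$, which is the cleanest way to see the roles of $\Phi_1,\Phi_2$; then the remaining blocks are each a scalar operator that is coercive on $\dot H^1$ either outright (if the eigenvalue of $M_I$ makes the potential have the right sign) or on the complement of its kernel — but its kernel has already been removed by \eqref{Ort223}.

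Assembling: on the subspace cut out by \eqref{Ort223}, each block contributes $\ge c_j \|w^{(j)}\|_{\dot H^1}^2$, and since the change of variables $\ve v \mapsto (w^{(j)})$ is a bounded isomorphism of $(\dot H^1)^3$ with bounded inverse (the constants depending only on $m_1,m_2,m_3$), summing gives $\langle L_I\ve v,\ve v\rangle \ge C\|\ve v\|_{\dot H^1}^2$ with $C=C(m_1,m_2,m_3,G_4)$. I would be careful that the "coercivity on the orthogonal complement of the kernel" for the scalar operators is stated in the $\dot H^1$ inner product (not $L^2$); this is the standard Rellich/concentration-compactness argument and, if needed, I would cite it or sketch it (spectral gap above $0$ plus the fact that the essential spectrum of $-\Delta - cQ^2$ on $\dot H^1$ starts at a positive level after renormalization).

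The main obstacle I anticipate is the bookkeeping of the change of variables: getting the conjugating orthogonal matrix explicitly, checking that $\Phi_1,\Phi_2$ are genuinely the images of two eigenvectors (and in particular that they are orthogonal to the third, ground-state, direction in the $\dot H^1$ inner product, which uses $\langle \mathcal{L}_-^{1/2}\cdots\rangle$-type identities), and confirming that the third eigenvalue of $M_I$ really does produce a \emph{coercive} scalar operator rather than one with a negative direction. If instead that third block is $-\Delta - 3Q^2$ (with a negative eigenvalue), then two orthogonality conditions would not suffice and the statement as phrased would be false — so a key sanity check is that the negative/neutral directions of $L_I$ total exactly two dimensions, matching the two conditions in \eqref{Ort223}. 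I expect the computation to confirm that $\ker L_I = \mathrm{span}\{\Phi_1,\Phi_2\}$ is two-dimensional and $L_I \ge 0$, so that removing $\Phi_1,\Phi_2$ is exactly removing the bottom of the spectrum and coercivity follows.
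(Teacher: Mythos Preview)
Your strategy is exactly the paper's: rescale by $\Gamma^{-1}\ve v = (\tfrac{v_1}{\sqrt{2m_1}},\tfrac{v_2}{\sqrt{2m_2}},\tfrac{v_3}{\sqrt{2m_3}})$ so the kinetic part becomes $-\Delta\otimes I_3$, then diagonalize the resulting constant potential matrix by an orthogonal $P$, reducing $\langle L_I\ve v,\ve v\rangle$ to a sum of scalar quadratic forms $\langle L_{\gamma_j}\tilde w_j,\tilde w_j\rangle$ with $L_\gamma=-\Delta-\gamma Q^2$.

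Your speculation about the blocks is off, though, and this is worth correcting before you start worrying about negative directions. The conjugated matrix has eigenvalues $1,1,-3$, so the three scalar operators are $L_1,L_1,L_{-3}$: \emph{two} copies of $-\Delta-Q^2$ and \emph{one} copy of $-\Delta+3Q^2$. The block $L_{-3}=-\Delta+3Q^2$ is manifestly coercive on all of $\dot H^1$ (positive potential), so no orthogonality is needed there. The two copies of $L_1$ each have a one-dimensional kernel spanned by $Q$ (since $-\Delta Q=Q^3$), and the two conditions \eqref{Ort223} translate under $P^*\Gamma^{-1}$ precisely into $(\tilde w_1,Q)_{\dot H^1}=(\tilde w_2,Q)_{\dot H^1}=0$; then the scalar coercivity of $L_1$ on $\{Q\}^\perp$ (this is \cite[Claim~3.5]{DuyMerle2009}) finishes the job. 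There is no $-\Delta-3Q^2$ block in $L_I$ --- that operator shows up in the analysis of $L_R$, not here --- so your contingency about a stray negative direction does not arise. Your ``sanity check'' that $L_I\ge0$ with $\ker L_I$ two-dimensional is correct and is exactly what the block structure $(L_1,L_1,L_{-3})$ encodes.
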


\begin{proof}
Consider the operator $\mathcal{A}$ given by
\[
\mathcal{A} = \begin{pmatrix}
-\Delta & 0 & 0 \\
0 & -\Delta & 0 \\
0 & 0 & -\Delta
\end{pmatrix}
+\begin{pmatrix}
1 & -\sqrt{2} & -\sqrt{2} \\
-\sqrt{2} & 0 & 1 \\
-\sqrt{2} & 1 & 0
\end{pmatrix}Q^2.
\]
For $\gamma\in \R$ we define $L_{\gamma}v = -\Delta v - \gamma Q^{2}v$ for $v\in \dot{H}^{1}(\R^{4})$. Then $\mathcal{A}$ can be diagonalized as follows:
\[
\mathcal{A} = P\begin{pmatrix}
L_{1} & 0 & 0 \\
0 & L_{1} & 0 \\
0 & 0 & L_{-3}
\end{pmatrix}P^{\ast}, \quad \text{where} \quad
P = \begin{pmatrix}
\frac{1}{\sqrt{3}} & \frac{1}{\sqrt{3}} & \frac{1}{\sqrt{5}} \\
\frac{\sqrt{2}}{\sqrt{3}} & 0 & \frac{-\sqrt{2}}{\sqrt{5}} \\
0 & \frac{\sqrt{2}}{\sqrt{3}} & \frac{-\sqrt{2}}{\sqrt{5}}
\end{pmatrix}.
\]
Notice that 
\[
\langle \mathcal{A}\ve{v},\ve{v}\rangle = \langle L_{1}w_{1}, w_{1} \rangle + \langle L_{1}w_{2}, w_{2} \rangle + \langle L_{-3}w_{3}, w_{3} \rangle,
\]
where $\ve{w}=P^{\ast}\ve{v}$. Now, we define the transformation 
\[
\Gamma(\ve{v}) = \Gamma(u, v, g) := (\sqrt{2m_1}u, \sqrt{2m_2}v, \sqrt{2m_3}g).
\]
Then, it is easy to verify that 
\begin{align*}
\langle L_{I}\ve{v}, \ve{v} \rangle &= \langle L_{I}\Gamma(\Gamma^{-1}\ve{v}), \Gamma(\Gamma^{-1}\ve{v}) \rangle \\
&= \langle \mathcal{A}\Gamma^{-1}\ve{v}, \Gamma^{-1}\ve{v} \rangle \\
&= \langle L_{1}\tilde{w}_{1}, \tilde{w}_{1} \rangle + \langle L_{1}\tilde{w}_{2}, \tilde{w}_{2} \rangle + \langle L_{-3}\tilde{w}_{3}, \tilde{w}_{3} \rangle,
\end{align*}
where $(\tilde{w}_{1}, \tilde{w}_{2}, \tilde{w}_{3}) = P^{\ast}\Gamma^{-1}\ve{v}$.

Since (cf. \eqref{Ort223})
\[
(\tilde{w}_{1}, Q)_{\dot{H}^{1}} = (\tilde{w}_{2}, Q)_{\dot{H}^{1}} = 0,
\]
\cite[Claim 3.5]{DuyMerle2009} implies that there exists a constant $C_{1}$, depending on the Sobolev constant in dimension~4,  such that
\[
\langle L_{1}\tilde{w}_{1}, \tilde{w}_{1} \rangle + \langle L_{1}\tilde{w}_{2}, \tilde{w}_{2} \rangle + \langle L_{-3}w_{3}, w_{3} \rangle\geq C_{1}\left[\|\tilde{w}_{1}\|^{2}_{\dot{H}^{1}} + \|\tilde{w}_{2}\|^{2}_{\dot{H}^{1}}+ \|\tilde{w}_{3}\|^{2}_{\dot{H}^{1}}\right].
\]
Thus, if $\ve{v}\neq 0$ and satisfies \eqref{Ort223}, we have
\[
\langle L_{I}\ve{v}, \ve{v}\rangle \geq C_{1} \|P^{\ast}\Gamma^{-1}\ve{v}\|^{2}_{\dot{H}^{1}} = \|\Gamma^{-1}\ve{v}\|^{2}_{\dot{H}^{1}} \geq C_{2}\|\ve{v}\|^{2}_{\dot{H}^{1}},
\]
where $C_2$ depends on $m_1$, $m_2$, $m_3$, and the Sobolev constant in dimension~4.
This completes the proof of the lemma.
\end{proof}

Before stating the next result, we define the following vectors:
\begin{align*}
\Pi_{1} &= \left( \tfrac{Q}{2\sqrt{m_1}}, \tfrac{Q}{2\sqrt{2m_2}}, \tfrac{Q}{2\sqrt{2m_3}} \right), \quad 
\Pi_{2} = \left( \tfrac{\Lambda Q}{2\sqrt{m_1}}, \tfrac{\Lambda Q}{2\sqrt{2m_2}}, \tfrac{\Lambda Q}{2\sqrt{2m_3}} \right), \\
\Psi_{j} &= \left( \tfrac{\partial_{j}Q}{2\sqrt{m_1}}, \tfrac{\partial_{j}Q}{2\sqrt{2m_2}}, \tfrac{\partial_{j}Q}{2\sqrt{2m_3}} \right)
\quad \text{for } 1 \leq j \leq 4,
\end{align*}
where $\Lambda Q$ denotes the scaling derivative $\Lambda Q = 2Q + x\cdot\nabla Q$  and $\partial_j Q$ are the spatial derivatives.

\begin{lemma}\label{Apx11}
There exists $C>0$,   depending on $m_1$, $m_2$, $m_3$, and the best Sobolev constant in dimension~4, such that for every $\ve{v}\in (\dot{H}^{1}(\R^{4}: \R))^{3}$ satisfying
\begin{align}\label{Ort11}
    (\ve{v}, \Pi_{1})_{\dot{H}^{1}} = (\ve{v}, \Pi_{2})_{\dot{H}^{1}} = (\ve{v}, \Psi_{j})_{\dot{H}^{1}} = 0
\end{align}
for $1\leq j\leq 4$, then we have
\[
\langle L_{R}\ve{v},\ve{v}\rangle \geq C \|\ve{v}\|^{2}_{\dot{H}^{1}}.
\]
\end{lemma}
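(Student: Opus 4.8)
The plan is to mimic the diagonalization strategy already used for $L_I$ in Lemma~\ref{Apx22}, reducing $L_R$ to a direct sum of scalar operators whose coercivity is classical. First I would apply the same rescaling $\Gamma(\ve{v}) = (\sqrt{2m_1}\,u, \sqrt{2m_2}\,v, \sqrt{2m_3}\,g)$ so that $\langle L_R \ve{v}, \ve{v}\rangle = \langle \mathcal{A}_R \Gamma^{-1}\ve{v}, \Gamma^{-1}\ve{v}\rangle$, where $\mathcal{A}_R$ is the constant-coefficient operator
\[
\mathcal{A}_R = \begin{pmatrix} -\Delta & 0 & 0 \\ 0 & -\Delta & 0 \\ 0 & 0 & -\Delta \end{pmatrix} + \begin{pmatrix} -1 & -\sqrt{2} & -\sqrt{2} \\ -\sqrt{2} & 0 & -1 \\ -\sqrt{2} & -1 & 0 \end{pmatrix} Q^2.
\]
The potential matrix here is the same symmetric matrix as in the $L_I$ case but with the sign of the $(1,1)$ entry flipped and the sign of the $(2,3)$, $(3,2)$ entries flipped. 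I would diagonalize this $3\times 3$ matrix: one checks that $\left(\tfrac{1}{2},\tfrac{1}{2\sqrt2},\tfrac{1}{2\sqrt2}\right)$-type vectors are eigenvectors. In fact the natural guess, consistent with the definitions of $\Pi_1,\Pi_2,\Psi_j$, is that the matrix has eigenvalues $-3$ (once) and $1$ (twice) — or some permutation — so that $\langle \mathcal{A}_R \ve{w}, \ve{w}\rangle$ becomes $\langle L_3 \tilde w_1, \tilde w_1\rangle + \langle L_1 \tilde w_2, \tilde w_2\rangle + \langle L_1 \tilde w_3, \tilde w_3\rangle$ after an orthogonal change of variables $\ve{w} = P^* \Gamma^{-1}\ve{v}$, where $L_\gamma w = -\Delta w - \gamma Q^2 w$ as before. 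The eigenvector corresponding to the ``$L_3$'' block must be proportional to $(1, \tfrac{1}{\sqrt2}, \tfrac{1}{\sqrt2})$ so that it picks out precisely the directions appearing in $\Pi_1$, $\Pi_2$, and $\Psi_j$ after undoing $\Gamma$.

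Second, I would translate the orthogonality conditions \eqref{Ort11} through $P^*\Gamma^{-1}$. Since $\Pi_1 = \Gamma^{-1}$ applied (up to normalization) to the vector $(Q,0,0)$ written in the diagonalizing basis, and similarly $\Pi_2 \leftrightarrow (\Lambda Q,0,0)$ and $\Psi_j \leftrightarrow (\partial_j Q, 0, 0)$ in that basis — this is exactly why these particular vectors were chosen — the conditions \eqref{Ort11} become
\[
(\tilde w_1, Q)_{\dot H^1} = (\tilde w_1, \Lambda Q)_{\dot H^1} = (\tilde w_1, \partial_j Q)_{\dot H^1} = 0, \quad 1\le j\le 4,
\]
with no constraints on $\tilde w_2, \tilde w_3$. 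These six orthogonality conditions on $\tilde w_1$ are precisely the hypotheses of the classical coercivity estimate for $L_3$ (the operator whose kernel is spanned by $\Lambda Q$ and $\partial_j Q$, with $Q$ removing the single negative direction); this is the analogue of \cite[Claim 3.5]{DuyMerle2009} and can be found there (or in the standard references on the energy-critical NLS, e.g. in the work on $W$). Thus $\langle L_3 \tilde w_1, \tilde w_1\rangle \gtrsim \|\tilde w_1\|_{\dot H^1}^2$. Meanwhile $L_1 \ge 0$ gives $\langle L_1 \tilde w_k, \tilde w_k\rangle \ge 0$ for $k=2,3$, but that is not enough for full coercivity; I need a positive lower bound in $\tilde w_2, \tilde w_3$ as well. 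Here I would use that $L_1 = -\Delta - Q^2$ has trivial kernel and no negative spectrum on $\dot H^1$ (since $Q^2$ is not the linearizing potential $3Q^2$), so that $\langle L_1 w, w\rangle \ge c\|w\|_{\dot H^1}^2$ for \emph{all} $w \in \dot H^1$ by a standard spectral/compactness argument — no orthogonality needed. (If instead the diagonalization yields two copies of $L_3$ and one of $L_1$, or the eigenvalue turns out to be something like $-3$, I would reshuffle: the point is that at most one block is $L_3$-type requiring the orthogonality conditions, and the others are strictly positive scalar operators.)

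Finally, combining the three lower bounds gives $\langle L_R \ve{v}, \ve{v}\rangle \ge C_1 \|P^*\Gamma^{-1}\ve{v}\|_{\dot H^1}^2 = C_1\|\Gamma^{-1}\ve{v}\|_{\dot H^1}^2 \ge C_2 \|\ve{v}\|_{\dot H^1}^2$, since $P^*$ is orthogonal (hence an $\dot H^1$-isometry acting componentwise) and $\Gamma^{-1}$ is a bounded invertible multiplier with constants depending only on $m_1, m_2, m_3$; this is exactly the endgame of Lemma~\ref{Apx22}. The main obstacle I anticipate is purely algebraic bookkeeping: verifying that the specific vectors $\Pi_1, \Pi_2, \Psi_j$ (with their exact coefficients $\tfrac{1}{2\sqrt{m_1}}, \tfrac{1}{2\sqrt{2m_2}}, \tfrac{1}{2\sqrt{2m_3}}$) are the $\Gamma^{-1}$-images, in the $P$-basis, of $(Q,0,0)$, $(\Lambda Q, 0, 0)$, $(\partial_j Q, 0, 0)$ — i.e. that the orthogonality conditions really decouple so cleanly onto the single $L_3$-block — and pinning down which eigenvalue ($1$, $-3$, or another) goes with which eigenvector of the potential matrix. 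A secondary subtlety is confirming that the non-$L_3$ scalar blocks are genuinely coercive (not merely nonnegative), which requires knowing the bottom of the spectrum of $-\Delta - \gamma Q^2$ on $\dot H^1(\R^4)$ for the relevant $\gamma$; this is where I would invoke the scalar spectral theory rather than re-derive it.
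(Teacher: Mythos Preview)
Your strategy is exactly the paper's: rescale via $\Gamma$, diagonalize the potential matrix, and reduce to scalar coercivity. But there is a sign slip in the block identification that leads you to a false claim. The potential matrix has eigenvalues $1$ (double) and $-3$, as you computed; since $L_\gamma = -\Delta - \gamma Q^2$, the eigenvalue $1$ produces the block $-\Delta + Q^2 = L_{-1}$, \emph{not} $L_1$. The paper's diagonalization is accordingly $\mathcal{B} = C\,\mathrm{diag}(L_{-1},L_{-1},L_3)\,C^*$.

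This matters because your subsequent assertion that ``$L_1 = -\Delta - Q^2$ has trivial kernel'' is wrong: from $\Delta Q + Q^3 = 0$ one has $L_1 Q = 0$, so $Q\in\ker L_1$ and $L_1$ is merely nonnegative, not coercive, without an orthogonality condition. Fortunately the correct block $L_{-1} = -\Delta + Q^2$ is trivially coercive on all of $\dot H^1$ (indeed $\langle L_{-1}w,w\rangle \ge \|\nabla w\|_{L^2}^2$), so no orthogonality is needed there and the rest of your argument goes through verbatim. Your translation of the orthogonality conditions \eqref{Ort11} onto the $L_3$ component is correct (the eigenvector $(\tfrac{\sqrt2}{2},\tfrac12,\tfrac12)$ for $-3$ is exactly what the coefficients in $\Pi_1,\Pi_2,\Psi_j$ encode after $\Gamma$), and the paper then invokes \cite[Lemma 3.5]{CamposFarahRoudenko} for the scalar $L_3$ coercivity under $(\tilde w_3,Q)_{\dot H^1}=(\tilde w_3,\Lambda Q)_{\dot H^1}=(\tilde w_3,\partial_j Q)_{\dot H^1}=0$.
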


\begin{proof}
The proof follows similar arguments to Lemma~\ref{Apx22}. Consider the operator $\mathcal{B}$ given by 
\[
\mathcal{B} = \begin{pmatrix}
-\Delta & 0 & 0 \\
0 & -\Delta & 0 \\
0 & 0 & -\Delta
\end{pmatrix}
+ \begin{pmatrix}
-1 & -\sqrt{2} & -\sqrt{2} \\
-\sqrt{2} & 0 & -1 \\
-\sqrt{2} & -1 & 0
\end{pmatrix}Q^2.
\]
Note that $\mathcal{B}$ can be diagonalized as follows:
\[
\mathcal{B} = C\begin{pmatrix}
L_{-1} & 0 & 0 \\
0 & L_{-1} & 0 \\
0 & 0 & L_{3}
\end{pmatrix}
C^{\ast}, \quad \text{where} \quad
C = \begin{pmatrix}
-\frac{\sqrt{10}}{5} & -\frac{2\sqrt{15}}{15} & \frac{\sqrt{2}}{2} \\
\frac{2\sqrt{5}}{5} & -\frac{\sqrt{30}}{15} & \frac{1}{2} \\
-\frac{2\sqrt{5}}{5} & \frac{\sqrt{30}}{15} & \frac{1}{2}
\end{pmatrix}.
\]
Observe that
\[
\langle \mathcal{B}\ve{v},\ve{v}\rangle = \langle L_{-1}w_{1}, w_{1} \rangle + \langle L_{-1}w_{2}, w_{2} \rangle + \langle L_{3}w_{3}, w_{3} \rangle,
\]
where $\ve{w}=C^{\ast}\ve{v}$. Defining the transformation 
\[
\Gamma(\ve{v}) = \Gamma(u, v, g) := (\sqrt{2m_1}u, \sqrt{2m_2}v, \sqrt{2m_3}g),
\]
we obtain
\begin{align*}
\langle L_{R}\ve{v}, \ve{v} \rangle &= \langle L_{R}\Gamma(\Gamma^{-1}\ve{v}), \Gamma(\Gamma^{-1}\ve{v}) \rangle \\
&= \langle \mathcal{B}\Gamma^{-1}\ve{v}, \Gamma^{-1}\ve{v} \rangle \\
&= \langle L_{-1}\tilde{w}_{1}, \tilde{w}_{1} \rangle + \langle L_{-1}\tilde{w}_{2}, \tilde{w}_{2} \rangle + \langle L_{3}\tilde{w}_{3}, \tilde{w}_{3} \rangle.
\end{align*}
From \eqref{Ort11} we deduce that
\[
(\tilde{w}_{3}, Q)_{\dot{H}^{1}} = (\tilde{w}_{3}, \Lambda Q)_{\dot{H}^{1}} = (\tilde{w}_{3}, \partial_{j}Q)_{\dot{H}^{1}} = 0,
\]
for $1\leq j\leq 4$. Therefore, there exists $C_3>0$ such that (see \cite[Lemma 3.5]{CamposFarahRoudenko})
\[
\langle L_{3}\tilde{w}_{3}, \tilde{w}_{3} \rangle \geq C_{3}\|\tilde{w}_{3}\|^{2}_{\dot{H}^{1}}.
\]
Consequently, if $\ve{v}\neq 0$ and satisfies \eqref{Ort11}, we have
\[
\langle L_{R}\ve{v}, \ve{v}\rangle \gtrsim \|C^{\ast}\Gamma^{-1}\ve{v}\|^{2}_{\dot{H}^{1}} = \|\Gamma^{-1}\ve{v}\|^{2}_{\dot{H}^{1}} \gtrsim \|\ve{v}\|^{2}_{\dot{H}^{1}},
\]
which completes the proof.
\end{proof}

We denote by $\mathcal{F}(\ve{u}, \ve{v})$ the bilinear symmetric form
\begin{equation}\label{Quadratic}
\mathcal{F}(\ve{u}, \ve{v}) := \tfrac{1}{2}\langle L_{R} \RE\ve{u}, \RE\ve{v}\rangle + \tfrac{1}{2}\langle L_{I}\IM\ve{u}, \IM\ve{v}\rangle,
\end{equation}
and we write $\mathcal{F}(\ve{u}) := \mathcal{F}(\ve{u},\ve{u})$.

In what follows, we consider the functions 
\begin{equation}
\begin{aligned}
\Q_{p} &:= (Q_{1}, 2Q_{2}, 0), \\
\Q_{q} &:= (2Q_{1}, -Q_{2}, 5Q_{3}), \\
\partial_{j} \Q &:= (\partial_{j} Q_{1}, \partial_{j} Q_{2}, \partial_{j} Q_{3}) \quad \text{for } j=1,\ldots,4, \\
\Lambda \Q &:= (\Lambda Q_{1}, \Lambda Q_{2}, \Lambda Q_{3}) \quad \text{with } \Lambda Q_{j} = 2Q_{j} + x\cdot\nabla Q_{j} \in L^{2}.
\end{aligned}
\label{DEFQt}
\end{equation}
Note that $\Q \in \text{Span}\left\{\Q_{p}, \Q_{q}\right\}$ and $(\Q_{p}, \Q_{q})_{\dot{H}^{1}}=0$. By direct calculation, we obtain:
\[
L_{R}(\partial_{j} \Q) = 0, \quad L_{R}(\Lambda\Q) = 0,
\]
and
\[
L_{I}(\Q_{p}) = 0, \quad L_{I}(\Q_{q}) = 0.
\]
In particular, we have that
\[
\mathcal{L} (\partial_{j}\Q) = \mathcal{L} (\Lambda \Q) = \mathcal{L} (i\Q_{p}) = \mathcal{L} (i\Q_{q}) = 0.
\]

\begin{lemma}\label{CoerLi11}
There exists a positive constant $C$,  depending on $m_1$, $m_2$, $m_3$, and the best Sobolev constant in dimension~4,  such that for every $\ve{v}\in (\dot{H}^{1}(\R^{4}: \R))^{3}$ satisfying
\begin{align}\label{SOrt1111}
(\ve{v}, \Q_{p})_{\dot{H}^{1}} = (\ve{v}, \Q_{q})_{\dot{H}^{1}} = 0,
\end{align}
then we have
\begin{align}\label{Fine22}
\langle L_{I}\ve{v},\ve{v}\rangle \geq C \|\ve{v}\|^{2}_{\dot{H}^{1}}.
\end{align}
\end{lemma}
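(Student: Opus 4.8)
The statement asserts that $L_I$ is coercive on the $\dot H^1$-orthogonal complement of $\mathrm{Span}\{\Q_p,\Q_q\}$, and this complement should coincide (up to the diagonalizing change of variables) with the orthogonality conditions \eqref{Ort223} appearing in Lemma~\ref{Apx22}. The plan is therefore to deduce Lemma~\ref{CoerLi11} from Lemma~\ref{Apx22} by identifying the two families of orthogonality constraints. First I would recall from Lemma~\ref{Apx22} that coercivity of $L_I$ holds whenever $(\ve v,\Phi_1)_{\dot H^1}=(\ve v,\Phi_2)_{\dot H^1}=0$, where $\Phi_1,\Phi_2$ are the explicit vectors defined just before that lemma. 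Since $L_I(\Q_p)=0$ and $L_I(\Q_q)=0$, and since $L_I$ restricted to its kernel (intersected with the relevant space) is exactly two-dimensional with kernel $\mathrm{Span}\{\Q_p,\Q_q\}$, the null space of $L_I$ is spanned equally by $\{\Phi_1,\Phi_2\}$ after applying the transformation $\Gamma$; more precisely, I would check directly that $\mathrm{Span}\{\Gamma^{-1}\Q_p,\Gamma^{-1}\Q_q\}$ and $\mathrm{Span}\{\Gamma^{-1}\Phi_1,\Gamma^{-1}\Phi_2\}$ both equal the kernel of $\mathcal A$ intersected with the radial sector, namely the span of $(Q,0,0)$ and $(0,Q,0)$ pulled back through $P$. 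This reduces the lemma to a linear-algebra identity relating the two pairs of orthogonality conditions.

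The cleanest route: I would show that $\mathrm{Span}\{\Q_p,\Q_q\}=\mathrm{Span}\{\Phi_1,\Phi_2\}$ as subspaces of $(\dot H^1)^3$. Indeed, $\Q_p=(Q_1,2Q_2,0)$ and $\Q_q=(2Q_1,-Q_2,5Q_3)$ are both of the form (scalar multiple of $Q$ in each slot), and so are $\Phi_1=\big(\tfrac{Q}{\sqrt{3m_1}},\tfrac{-Q}{\sqrt{12m_2}},\tfrac{Q}{\sqrt{12m_3}}\big)$ and $\Phi_2=\big(\tfrac{Q}{\sqrt{12m_1}},\tfrac{Q}{\sqrt{12m_2}},\tfrac{-Q}{\sqrt{12m_3}}\big)$. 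Writing the coefficient vectors in $\R^3$ — for $\Q_p$ it is proportional to $\big((4m_2m_3)^{-1/4},\,(m_2/m_1^2m_3)^{1/4},\,0\big)$ after substituting \eqref{q1q3}, and analogously for $\Q_q,\Phi_1,\Phi_2$ — I would verify that these four vectors span the same $2$-plane in $\R^3$. Equivalently, since $L_I$ has a two-dimensional kernel (established in the excerpt: $L_I\Q_p=L_I\Q_q=0$, and after diagonalization the kernel comes entirely from $\mathrm{ker}\,L_1$ in the two $L_1$-blocks, each contributing the radial function $Q$), both pairs are bases of $\mathrm{ker}(L_I)\cap(\dot H^1_{\mathrm{rad}})^3$, hence span the same plane. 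Once this is known, the orthogonality condition $(\ve v,\Q_p)_{\dot H^1}=(\ve v,\Q_q)_{\dot H^1}=0$ is equivalent to $(\ve v,\Phi_1)_{\dot H^1}=(\ve v,\Phi_2)_{\dot H^1}=0$, and Lemma~\ref{Apx22} applies verbatim to give \eqref{Fine22}.

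Concretely the steps are: (1) substitute \eqref{q1q3} into $\Q_p,\Q_q$ to express their $\R^3$ coefficient vectors explicitly; (2) observe $L_I\Q_p=L_I\Q_q=0$ (already in the excerpt) and argue $\dim\ker L_I|_{\mathrm{rad}}=2$ via the block-diagonalization $\mathcal A = P\,\mathrm{diag}(L_1,L_1,L_{-3})\,P^\ast$, using that $\ker L_{-3}=\{0\}$ on $\dot H^1_{\mathrm{rad}}$ while $\ker L_1\cap\dot H^1_{\mathrm{rad}}=\mathrm{Span}\{Q\}$ (this last fact is classical and quoted from \cite{DuyMerle2009}); (3) conclude $\mathrm{Span}\{\Q_p,\Q_q\}=\ker L_I|_{\mathrm{rad}}=\mathrm{Span}\{\Phi_1,\Phi_2\}$, so the two orthogonality systems coincide; (4) invoke Lemma~\ref{Apx22}. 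The main obstacle is step~(2)–(3): one must be careful that $\Phi_1,\Phi_2$ as defined actually lie in $\ker L_I$ (a short direct computation using the explicit matrix form of $L_I$, or tracing through $\Gamma$ and $P$), and that $\mathcal A$ — equivalently $L_I$ — has no other radial kernel directions, which rests on the positivity of $L_{-3}$ on radial $\dot H^1$ functions (true since $-\Delta+3Q^2\ge 0$ trivially, as $Q>0$). Alternatively, if verifying $\Phi_i\in\ker L_I$ directly is awkward, I would instead just check the $3\times 3$ linear algebra: the matrix whose rows are the coefficient vectors of $\Q_p,\Q_q,\Phi_1,\Phi_2$ has rank $2$, which is a finite explicit computation in $m_1,m_2,m_3$. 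Either way the proof is short once the subspace identification is in hand.
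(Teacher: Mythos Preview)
Your proposal rests on the claim that $\mathrm{Span}\{\Q_p,\Q_q\}=\mathrm{Span}\{\Phi_1,\Phi_2\}$, and this claim is false. A quick check: $\Phi_1+\Phi_2=\bigl(\tfrac{3}{\sqrt{12m_1}}Q,\,0,\,0\bigr)$ is proportional to $(Q,0,0)$, but $(Q,0,0)$ does \emph{not} lie in $\ker L_I$ (compute the first component of $L_I(Q,0,0)$: it equals $\tfrac{1}{2m_1}(-\Delta Q+Q^3)=\tfrac{1}{m_1}Q^3\neq 0$). Hence $\Phi_1,\Phi_2$ are not both kernel elements, and your step~(3) collapses. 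Equivalently, the rank of the $4\times 3$ coefficient matrix you propose to check is $3$, not $2$; for instance, with $m_1=m_2=m_3$ one finds $\Phi_1\notin\mathrm{Span}\{\Q_p,\Q_q\}$ by direct substitution. So the two orthogonality systems are genuinely different, and Lemma~\ref{Apx22} cannot be invoked by a simple change of basis.

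The paper circumvents this by a dimension-counting contradiction that does \emph{not} require the spans to coincide. If some nonzero $\ve g\perp\Q_p,\Q_q$ satisfied $\langle L_I\ve g,\ve g\rangle\le 0$, then since $L_I\Q_p=L_I\Q_q=0$ makes the bilinear form $\langle L_I\cdot,\cdot\rangle$ vanish against $\Q_p,\Q_q$, the form is $\le 0$ on the entire $3$-dimensional space $E=\mathrm{Span}\{\ve g,\Q_p,\Q_q\}$. But Lemma~\ref{Apx22} gives strict positivity on a codimension-$2$ subspace $\{\Phi_1,\Phi_2\}^\perp$, and $E\cap\{\Phi_1,\Phi_2\}^\perp$ has dimension at least $1$, yielding a nonzero vector on which the form is simultaneously $\le 0$ and $>0$. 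That contradiction gives positivity on $\{\Q_p,\Q_q\}^\perp$, and coercivity then follows from a standard compact-perturbation argument. Your attempted shortcut of identifying the two $2$-planes is thus both unnecessary and unavailable.
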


\begin{proof}
It suffices to show that for all $\ve{v}\in \dot{H}^{1}$ satisfying \eqref{SOrt1111} we have 
$\mathcal{F}_{-}(\ve{v}) := \langle L_{I}\ve{v},\ve{v}\rangle > 0$. Indeed, since the quadratic form $\mathcal{F}_{-}(\cdot)$ is a compact perturbation of $K(\ve{u})$, 
if $\mathcal{F}_{-}(\ve{v})>0$, a standard argument shows \eqref{Fine22}.

Suppose by contradiction that there exists $\ve{g}\in \dot{H}^{1}\setminus\left\{0\right\}$ such that
\begin{align}\label{contra1}
(\ve{g}, \Q_{p})_{\dot{H}^{1}} = (\ve{g}, \Q_{q})_{\dot{H}^{1}} = 0,
\end{align}
and $\mathcal{F}_{-}(\ve{g}) \leq 0$. Recall that $L_I(\Q_{q}) = L_I(\Q_{p}) = 0$. Since $\mathcal{F}_{-}(\Q_{q}, \ve{h}) = \mathcal{F}_{-}(\Q_{p}, \ve{h}) = 0$ for all $\ve{h}\in \dot{H}^{1}$, we see that
$\mathcal{F}_{-}(\ve{v}) \leq 0$ for $\ve{v}\in E$, where $E = \text{Span}\left\{\ve{g}, \Q_{q}, \Q_{p}\right\}$. Moreover, by \eqref{contra1} we infer that $E$ is a subspace of dimension $3$, which contradicts Lemma~\ref{Apx22}. 
\end{proof}

\begin{remark}\label{KerLI}
As $L_I \Q_{p}=L_I \Q_{q}=0$, from Lemma~\ref{CoerLi11} we get  $L_{I}\geq 0$ and $\text{Ker}(L_I)=\text{span}\left\{\Q_{p}, \Q_{q}\right\}$. 
\end{remark}

\begin{lemma}\label{Coer11}
There exists a positive constant $C>0$,  depending on $m_1$, $m_2$, $m_3$, and the best Sobolev constant in dimension~4,  such that for every $\ve{v}\in (\dot{H}^{1}(\R^{4}: \R))^{3}$ satisfying
\begin{align}\label{SOrt11}
    \mathcal{F}(\ve{v}, \Q) = (\ve{v}, \Lambda \Q)_{\dot{H}^{1}} = (\ve{v}, \partial_{j}\Q)_{\dot{H}^{1}} = 0
\end{align}
for $1\leq j\leq 4$, then we have
\begin{align}\label{Fine}
    \langle L_{R}\ve{v},\ve{v}\rangle \geq C \|\ve{v}\|^{2}_{\dot{H}^{1}}.
\end{align}
\end{lemma}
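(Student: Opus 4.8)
The plan is to mimic the proof of Lemma~\ref{CoerLi11}, reducing the coercivity of $L_R$ on the codimension-$6$ subspace defined by \eqref{SOrt11} to the coercivity statement of Lemma~\ref{Apx11}, which applies on a codimension-$6$ subspace (orthogonality to $\Pi_1,\Pi_2,\Psi_1,\dots,\Psi_4$). As in Lemma~\ref{CoerLi11}, it suffices to prove the strict positivity $\mathcal{F}_{+}(\ve{v}):=\langle L_R\ve{v},\ve{v}\rangle>0$ for every nonzero $\ve{v}\in\dot{H}^1(\R^4:\R)^3$ satisfying \eqref{SOrt11}, because $\mathcal{F}_{+}$ is a compact perturbation of $K(\cdot)$ (the potential terms involve $Q^2$, which gives a compact operator on $\dot H^1(\R^4)$) and a standard contradiction-with-compactness argument then upgrades strict positivity to \eqref{Fine}.

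First I would record the degeneracies: from the identities displayed before Lemma~\ref{CoerLi11}, $L_R(\partial_j\Q)=0$ for $1\le j\le 4$ and $L_R(\Lambda\Q)=0$, so these five functions span (a subspace of) $\ker L_R$. Note also $\langle L_R\Lambda\Q,\ve h\rangle=\langle L_R\partial_j\Q,\ve h\rangle=0$ for all $\ve h$, and moreover $\langle L_R\Q,\ve h\rangle = 2\mathcal{F}(\Q,\ve h) - \langle L_I\,0,\,0\rangle$; more simply, since $\Q$ is real, $\mathcal{F}(\Q,\ve h)=\tfrac12\langle L_R\Q,\RE\ve h\rangle$, so the condition $\mathcal{F}(\ve v,\Q)=0$ is exactly $\langle L_R\Q,\ve v\rangle=0$ for real $\ve v$. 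Then, arguing by contradiction, suppose there is $\ve g\in\dot H^1\setminus\{0\}$ satisfying \eqref{SOrt11} with $\langle L_R\ve g,\ve g\rangle\le 0$. On $F:=\mathrm{Span}\{\ve g,\Lambda\Q,\partial_1\Q,\dots,\partial_4\Q\}$ the quadratic form $\langle L_R\cdot,\cdot\rangle$ is $\le 0$: indeed any element is $a\ve g+\ve k$ with $\ve k\in\ker L_R$, and $\langle L_R(a\ve g+\ve k),a\ve g+\ve k\rangle = a^2\langle L_R\ve g,\ve g\rangle\le 0$ since the cross terms vanish. The orthogonality conditions \eqref{SOrt11} guarantee $\dim F=6$ (the six generators being $\dot H^1$-orthogonal in the relevant pairings, with $\ve g\perp_{\dot H^1}\Lambda\Q,\partial_j\Q$, and $\Lambda\Q,\partial_j\Q$ mutually orthogonal). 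I then need a positivity-on-large-subspace fact for $L_R$ to reach a contradiction.

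The subtlety, and the reason this is slightly more delicate than Lemma~\ref{CoerLi11}, is bookkeeping the dimension count: Lemma~\ref{Apx11} gives coercivity (hence strict positivity away from $0$) on the codimension-$6$ space $\{\Pi_1,\Pi_2,\Psi_1,\dots,\Psi_4\}^{\perp_{\dot H^1}}$. I would verify that, after the change of variables $\Gamma^{-1}$ and the block-diagonalization used in the proof of Lemma~\ref{Apx11}, the orthogonality $(\ve v,\Lambda\Q)_{\dot H^1}=(\ve v,\partial_j\Q)_{\dot H^1}=0$ together with $\mathcal{F}(\ve v,\Q)=0$ translates precisely into $(\tilde w_3,Q)_{\dot H^1}=(\tilde w_3,\Lambda Q)_{\dot H^1}=(\tilde w_3,\partial_jQ)_{\dot H^1}=0$, so that $\langle L_3\tilde w_3,\tilde w_3\rangle\ge C\|\tilde w_3\|^2_{\dot H^1}$ while the two $L_{-1}$-blocks are automatically $\ge 0$ (since $L_{-1}=-\Delta + Q^2\ge -\Delta\ge 0$ has no negative spectrum). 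Wait—$L_{-1}v=-\Delta v - (-1)Q^2 v = -\Delta v + Q^2 v$, which is manifestly positive, so $\langle\mathcal B\ve v,\ve v\rangle = \langle L_{-1}\tilde w_1,\tilde w_1\rangle+\langle L_{-1}\tilde w_2,\tilde w_2\rangle+\langle L_3\tilde w_3,\tilde w_3\rangle>0$ whenever $\ve v\ne 0$ satisfies the three scalar orthogonality relations on $\tilde w_3$. Thus strict positivity of $\langle L_R\cdot,\cdot\rangle$ holds on the codimension-$6$ subspace cut out by $\mathcal{F}(\cdot,\Q)=(\cdot,\Lambda\Q)_{\dot H^1}=(\cdot,\partial_j\Q)_{\dot H^1}=0$ directly, without needing the three auxiliary conditions of \eqref{Ort11} separately — so in fact I can avoid an intersection-of-two-subspaces dimension argument altogether and just apply this refined version. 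Either way, the existence of the $6$-dimensional space $F$ on which $\langle L_R\cdot,\cdot\rangle\le 0$ while $F\subset\{\Q\}^{\perp_{L_R}}\cap\{\Lambda\Q,\partial_j\Q\}^{\perp_{\dot H^1}}$ and every nonzero element of the latter has $\langle L_R\cdot,\cdot\rangle>0$ yields the contradiction $\ve g=0$.

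To finish, I would assemble these pieces: establish strict positivity on the constraint set via the $\Gamma^{-1}$-reduction to the scalar operators $L_{-1},L_{-1},L_3$ and the known coercivity of $L_3$ under orthogonality to $Q,\Lambda Q,\partial_jQ$ (\cite[Lemma 3.5]{CamposFarahRoudenko}); then upgrade strict positivity to the quantitative lower bound \eqref{Fine} by the standard compactness argument (if it failed, a normalized minimizing sequence would converge weakly to a minimizer realizing $\langle L_R\cdot,\cdot\rangle=0$ on the constraint set, contradicting strict positivity, using that the $Q^2$-potential part is a compact perturbation so the weak limit is nonzero). The only real work is the linear-algebra verification that the six constraints in \eqref{SOrt11} correspond exactly to the scalar constraints on the $L_3$-block component $\tilde w_3$; I expect this to be the main obstacle, and it is purely computational, relying on $\Q\in\mathrm{Span}\{\Q_p,\Q_q\}$, $L_R(\Lambda\Q)=L_R(\partial_j\Q)=0$, and the explicit form of the diagonalizing matrix $C$ and of $\Gamma$.
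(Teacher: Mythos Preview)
Your six-dimensional subspace $F=\mathrm{Span}\{\ve g,\Lambda\Q,\partial_1\Q,\dots,\partial_4\Q\}$ does not by itself contradict Lemma~\ref{Apx11}: that lemma gives strict positivity only on a subspace of codimension $6$, so a $6$-dimensional nonpositive subspace is dimensionally compatible with it and no contradiction follows. You seem to sense this and pivot to a ``direct'' argument, claiming that the six conditions in \eqref{SOrt11} translate under $C^*\Gamma^{-1}$ exactly into the six scalar conditions $(\tilde w_3,Q)_{\dot H^1}=(\tilde w_3,\Lambda Q)_{\dot H^1}=(\tilde w_3,\partial_jQ)_{\dot H^1}=0$. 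This is false in general. From the third column of $C$ one has $\tilde w_3=\tfrac{v_1}{2\sqrt{m_1}}+\tfrac{v_2}{2\sqrt{2m_2}}+\tfrac{v_3}{2\sqrt{2m_3}}$, so $(\tilde w_3,\Lambda Q)_{\dot H^1}$ is the linear combination $\sum_k a_k(v_k,\Lambda Q)_{\dot H^1}$ with $a=(\tfrac{1}{2\sqrt{m_1}},\tfrac{1}{2\sqrt{2m_2}},\tfrac{1}{2\sqrt{2m_3}})$, whereas $(\ve v,\Lambda\Q)_{\dot H^1}=\sum_k c_k(v_k,\Lambda Q)_{\dot H^1}$ with $c_k$ the coefficients in $Q_k=c_kQ$ from \eqref{q1q3}. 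A direct check gives $c_1/c_2=\sqrt{2m_1/m_2}$ but $a_1/a_2=\sqrt{2m_2/m_1}$, so the two constraints are different unless $m_1=m_2$. The same mismatch occurs for $\mathcal F(\ve v,\Q)=0$ versus $(\tilde w_3,Q)_{\dot H^1}=0$. Hence your ``purely computational'' verification would fail, and the direct route does not close.

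The paper's remedy is exactly the missing idea: add $\Q$ to your spanning set. Because $\mathcal F(\Q)<0$ and, by hypothesis, $\mathcal F(\ve g,\Q)=0$, one checks that $\langle L_R(a\ve g+b\Q+\ve k),a\ve g+b\Q+\ve k\rangle=a^2\langle L_R\ve g,\ve g\rangle+2b^2\mathcal F(\Q)\le 0$ for every $\ve k\in\mathrm{Span}\{\Lambda\Q,\partial_j\Q\}\subset\ker L_R$, so the enlarged space $E=\mathrm{Span}\{\ve g,\Q,\Lambda\Q,\partial_1\Q,\dots,\partial_4\Q\}$ is $7$-dimensional with $\langle L_R\cdot,\cdot\rangle\le 0$ throughout. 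Now the dimension count does bite: $E$ must intersect the codimension-$6$ positive subspace of Lemma~\ref{Apx11} nontrivially, yielding the contradiction. Your overall strategy (reduce to strict positivity, then use Lemma~\ref{Apx11} via a dimension argument, then upgrade by compactness) is the same as the paper's; the single missing ingredient is exploiting $\mathcal F(\Q)<0$ to gain the extra dimension.
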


\begin{proof}
Following the approach of Lemma~\ref{CoerLi11}, we show that if $\ve{v}$ satisfies \eqref{SOrt11}, then $\mathcal{F}(\ve{v}) := \tfrac{1}{2}\langle L_{R}\ve{v},\ve{v}\rangle > 0$. 

Suppose by contradiction that there exists $\ve{g}\in \dot{H}^{1}\setminus\{0\}$ such that
\begin{align}\label{Ort22}
    \mathcal{F}(\ve{g}, \Q) = (\ve{g}, \Lambda \Q)_{\dot{H}^{1}} = (\ve{g}, \partial_{j}\Q)_{\dot{H}^{1}} = 0,
\end{align}
and $\mathcal{F}(\ve{g}) \leq 0$. Since $\mathcal{F}(\Q) < 0$, it is straightforward to show that 
\[
E = \text{span}\left\{\ve{g}, \Q, \Lambda \Q, \partial_{1}\Q, \partial_{2}\Q, \partial_{3}\Q, \partial_{4}\Q    \right\}
\] 
is a subspace of dimension $7$ where $\mathcal{F}(\ve{u}) \leq 0$ for all $\ve{u}\in E$. However, Lemma~\ref{Apx11} establishes that $\mathcal{F}(\ve{u}) = \tfrac{1}{2}\langle L_{R}\ve{u},\ve{u}\rangle$ is positive definite on a subspace of co-dimension $6$, leading to a contradiction.
\end{proof}

By Lemmas~\ref{Coer11} and \ref{CoerLi11}, we get the following proposition.

\begin{proposition}\label{CoerQuadra}
There exists  a positive constant $C>0$,  depending on $m_1$, $m_2$, $m_3$, and the best Sobolev constant in dimension~4,  such that for every $\ve{h}\in G^{\bot}$, we have
\[
\F(\ve{h})\geq C \|\ve{h}\|^{2}_{\dot{H}^{1}},
\]
where
\begin{align*}
G^{\bot} := \big\{ \ve{h} \in \dot{H}^{1} \, \big| \, &\F(\Q, \ve{h}) = (i\Q_{p}, \ve{h})_{\dot{H}^{1}} = (i\Q_{q}, \ve{h})_{\dot{H}^{1}} \\
&= (\Lambda \Q, \ve{h})_{\dot{H}^{1}} = (\partial_{j} \Q, \ve{h})_{\dot{H}^{1}} = 0 : j = 1, \ldots, 4\big\}.
\end{align*}
\end{proposition}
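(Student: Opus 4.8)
The plan is to deduce Proposition~\ref{CoerQuadra} from Lemmas~\ref{Coer11} and~\ref{CoerLi11} by splitting an arbitrary $\ve{h}\in G^{\bot}$ into its real and imaginary parts and handling each with the appropriate coercivity lemma. Write $\ve{h}=\ve{a}+i\ve{b}$ with $\ve{a}=\RE\ve{h}$, $\ve{b}=\IM\ve{h}\in(\dot H^1(\R^4:\R))^3$. By the definition \eqref{Quadratic}, $\F(\ve{h})=\tfrac12\langle L_R\ve{a},\ve{a}\rangle+\tfrac12\langle L_I\ve{b},\ve{b}\rangle$, so it suffices to show that $\ve{a}$ satisfies the orthogonality conditions \eqref{SOrt11} of Lemma~\ref{Coer11} and that $\ve{b}$ satisfies \eqref{SOrt1111} of Lemma~\ref{CoerLi11}; then $\langle L_R\ve{a},\ve{a}\rangle\ge C\|\ve{a}\|^2_{\dot H^1}$ and $\langle L_I\ve{b},\ve{b}\rangle\ge C\|\ve{b}\|^2_{\dot H^1}$, and adding gives $\F(\ve{h})\ge \tfrac{C}{2}\bigl(\|\ve{a}\|^2_{\dot H^1}+\|\ve{b}\|^2_{\dot H^1}\bigr)=\tfrac{C}{2}\|\ve{h}\|^2_{\dot H^1}$, which is the claim.

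The key bookkeeping step is to translate the five complex-linear constraints defining $G^\bot$ into the real-linear constraints on $\ve{a}$ and $\ve{b}$. Since $\Q$, $\Lambda\Q$, and $\partial_j\Q$ are real-valued vectors, the real inner product satisfies $(\Lambda\Q,\ve{h})_{\dot H^1}=(\Lambda\Q,\ve{a})_{\dot H^1}+i(\Lambda\Q,\ve{b})_{\dot H^1}$ only if one uses the Hermitian product; to keep everything real I would use the real pairing $\RE(\cdot,\cdot)_{\dot H^1}$ throughout, so that $(\Lambda\Q,\ve{h})_{\dot H^1}=0$ forces $(\Lambda\Q,\ve{a})_{\dot H^1}=0$ (the real part), and likewise $(\partial_j\Q,\ve{h})_{\dot H^1}=0$ gives $(\partial_j\Q,\ve{a})_{\dot H^1}=0$ for $j=1,\dots,4$. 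For the purely imaginary test vectors, $(i\Q_p,\ve{h})_{\dot H^1}=0$ is, in real terms, $(\Q_p,\ve{b})_{\dot H^1}=0$, and similarly $(i\Q_q,\ve{h})_{\dot H^1}=0$ gives $(\Q_q,\ve{b})_{\dot H^1}=0$; these are exactly \eqref{SOrt1111}. Finally $\F(\Q,\ve{h})=0$: since $\Q$ is real, $\F(\Q,\ve{h})=\tfrac12\langle L_R\Q,\ve{a}\rangle=\F(\ve{a},\Q)$ (the $L_I$ term vanishes because $\RE(i\Q)$... rather, because the imaginary part of the real vector $\Q$ is zero), so $\F(\Q,\ve{h})=0$ yields $\F(\ve{a},\Q)=0$, the first condition in \eqref{SOrt11}. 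Thus $\ve{a}$ satisfies all of \eqref{SOrt11} and $\ve{b}$ satisfies \eqref{SOrt1111}, so both lemmas apply.

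The only genuine subtlety — and what I would treat carefully rather than the routine algebra — is the sign/projection convention in the definition of $\F(\Q,\ve{h})$ and the consistency of "real part of a complex inner product" versus the bilinear form $\F$ being defined on complex vectors via their real and imaginary components. One must check that $\F$ as given in \eqref{Quadratic} is indeed the correct bilinear form whose real-linear restriction to real vectors is $\tfrac12\langle L_R\cdot,\cdot\rangle$ and whose restriction to purely imaginary vectors is $\tfrac12\langle L_I\cdot,\cdot\rangle$, with no cross terms between $\RE\ve{h}$ and $\IM\ve{h}$; this is immediate from \eqref{Quadratic} since $\RE(\ve{a}+i\ve{b})=\ve{a}$ and $\IM(\ve{a}+i\ve{b})=\ve{b}$ decouple. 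With that settled, the proof is a clean two-line decomposition argument invoking the two preceding lemmas.

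\begin{proof}
Write $\ve{h}=\ve{a}+i\ve{b}$ with $\ve{a}=\RE\ve{h}$ and $\ve{b}=\IM\ve{h}$ in $(\dot H^1(\R^4:\R))^3$. By \eqref{Quadratic},
\[
\F(\ve{h})=\tfrac12\langle L_R\ve{a},\ve{a}\rangle+\tfrac12\langle L_I\ve{b},\ve{b}\rangle.
\]
Since $\Q$, $\Lambda\Q$, and $\partial_j\Q$ are real-valued, the conditions $(\Lambda\Q,\ve{h})_{\dot H^1}=0$ and $(\partial_j\Q,\ve{h})_{\dot H^1}=0$ ($1\le j\le 4$) give, upon taking real parts, $(\Lambda\Q,\ve{a})_{\dot H^1}=0$ and $(\partial_j\Q,\ve{a})_{\dot H^1}=0$. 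Moreover $\F(\Q,\ve{h})=\tfrac12\langle L_R\Q,\ve{a}\rangle=\F(\ve{a},\Q)$, so $\F(\ve{a},\Q)=0$. Hence $\ve{a}$ satisfies \eqref{SOrt11} and Lemma~\ref{Coer11} yields a constant $C>0$ with $\langle L_R\ve{a},\ve{a}\rangle\ge C\|\ve{a}\|^2_{\dot H^1}$.

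Similarly, the conditions $(i\Q_p,\ve{h})_{\dot H^1}=0$ and $(i\Q_q,\ve{h})_{\dot H^1}=0$ are equivalent to $(\Q_p,\ve{b})_{\dot H^1}=0$ and $(\Q_q,\ve{b})_{\dot H^1}=0$, which is \eqref{SOrt1111}. By Lemma~\ref{CoerLi11}, $\langle L_I\ve{b},\ve{b}\rangle\ge C\|\ve{b}\|^2_{\dot H^1}$ (enlarging $C$ if necessary we may use the same constant). Combining,
\[
\F(\ve{h})=\tfrac12\langle L_R\ve{a},\ve{a}\rangle+\tfrac12\langle L_I\ve{b},\ve{b}\rangle\ge \tfrac{C}{2}\bigl(\|\ve{a}\|^2_{\dot H^1}+\|\ve{b}\|^2_{\dot H^1}\bigr)=\tfrac{C}{2}\|\ve{h}\|^2_{\dot H^1},
\]
which is the desired estimate with $C$ replaced by $C/2$.
\end{proof}
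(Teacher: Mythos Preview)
Your proposal is correct and is exactly the approach the paper has in mind: the paper simply states ``By Lemmas~\ref{Coer11} and \ref{CoerLi11}, we get the following proposition,'' and your argument supplies precisely the real/imaginary decomposition and orthogonality bookkeeping that makes this implication explicit. The only cosmetic point is that, under the paper's real $\dot H^1$ pairing, the identities $(\Lambda\Q,\ve{h})_{\dot H^1}=(\Lambda\Q,\ve{a})_{\dot H^1}$ and $(i\Q_p,\ve{h})_{\dot H^1}=(\Q_p,\ve{b})_{\dot H^1}$ hold exactly (not merely after ``taking real parts''), so the hedging in your plan is unnecessary.
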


\section{Modulation analysis}\label{S:Modula}

We recall the quantity (cf. \eqref{deltaK})
\[
\delta(\mathbf{f}) := |K(\mathbf{f}) - K({\Q})|.
\]
Consider a radial solution $\ve{u}(t)$ to \eqref{NLS} with initial data $\ve{u}_{0}$ in $\dot{H}^{1}$ satisfying
\[
E(\ve{u}) = E(\Q),
\]
and define the quantity 
\[
\delta(t) :=\delta(\mathbf{u}(t)) = \left| K(\ve{u}(t))- K(\Q) \right|.
\]
Let $\delta_{0} > 0$ be a small parameter, and define the open set
\[
I_{0} = \left\{ t \in [0, \infty) : \delta(t) < \delta_{0} \right\}.
\]
We now state and prove the following proposition.

\begin{proposition}\label{ModilationFree}
For $\delta_{0} > 0$ sufficiently small, there exist functions 
\[
\eta: I_{0} \to \R, \quad \theta: I_{0} \to \R, \quad \mu: I_{0} \to \R^{\ast}, \quad \alpha: I_{0} \to \R, \quad \text{and} \quad \ve{h}: I_0 \to \dot{H}^1
\]
such that, for all $t \in I_0$, the radial solution $\ve{u}$ can be decomposed as
\begin{equation}\label{DecomUFree}
\ve{u}_{[\eta(t), \theta(t), \mu(t)]}(t) = (1 + \alpha(t))\Q + \ve{h}(t),
\end{equation}
where the following estimates hold:
\begin{equation}\label{EstimateOne}
|\alpha(t)| \sim \|\ve{h}(t)\|_{\dot{H}^{1}} \sim \delta(t),
\end{equation}
and
\begin{equation}\label{EstimateFree}
|\eta^{\prime}(t)| + |\theta^{\prime}(t)| + |\alpha^{\prime}(t)| + \frac{|\mu^{\prime}(t)|}{|\mu(t)|} \lesssim \mu^{2}(t)\delta(t).
\end{equation}
\end{proposition}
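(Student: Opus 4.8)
The plan is to carry out a standard modulation (implicit function theorem) argument, adapted to the two-parameter phase symmetry. First I would set up the map that realizes the orthogonality conditions. Fix the parameters $(\eta,\theta,\mu)=(0,0,1)$ at the ground state and consider, for $\ve{u}$ close to $\Q$ in $\dot H^1$, the function
\[
\Psi(\ve{u},\eta,\theta,\mu,\alpha) := \ve{u}_{[\eta,\theta,\mu]} - (1+\alpha)\Q,
\]
together with the five scalar functionals obtained by pairing $\Psi$ in $\dot H^1$ against $i\Q_p$, $i\Q_q$, $\Lambda\Q$, $\partial_j\Q$ (one needs a choice here: since the symmetry group has four generators $\eta,\theta,\mu$ plus the scaling normalization for $\alpha$, and $\dim\mathrm{Ker}(L_I)=2$ supplies the two phase directions $i\Q_p,i\Q_q$), and the condition $\F(\Q,\ve{h})=0$ fixing $\alpha$. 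Actually, to match Proposition~\ref{CoerQuadra}, I would impose exactly the orthogonality conditions defining $G^\perp$ on $\ve{h}$, i.e. $\F(\Q,\ve{h})=(i\Q_p,\ve{h})_{\dot H^1}=(i\Q_q,\ve{h})_{\dot H^1}=(\Lambda\Q,\ve{h})_{\dot H^1}=(\partial_j\Q,\ve{h})_{\dot H^1}=0$. This is a system of $7$ equations in the $7$ unknowns $(\eta,\theta,\mu,\alpha,\text{and the }\partial_j\text{ ones give }4)$ — wait, $\eta,\theta,\mu,\alpha$ is only $4$; for radial $\ve{u}$ the four conditions $(\partial_j\Q,\ve{h})_{\dot H^1}=0$ are automatically satisfied by parity, so effectively the system is $3$ equations ($\F(\Q,\ve h)=0$, $(i\Q_p,\ve h)_{\dot H^1}=0$, $(i\Q_q,\ve h)_{\dot H^1}=0$) in the $3$ unknowns $(\alpha,\eta,\theta)$ once we also use $(\Lambda\Q,\ve h)_{\dot H^1}=0$ to solve for $\mu$ — giving a $4\times4$ system. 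I would compute the Jacobian at $\ve{u}=\Q$, $(\eta,\theta,\mu,\alpha)=(0,0,1,0)$ and check it is invertible: this reduces to verifying that the matrix of pairings $(i\Q_p,i\Q_p)_{\dot H^1}$, $(i\Q_p,i\Q_q)_{\dot H^1}$, etc., and $(\Lambda\Q,\Lambda\Q)_{\dot H^1}$, $\F(\Q,\Q)<0$, and the generator of the $\alpha$-direction $(\Q,\Q)_{\dot H^1}$ paired against $\F(\Q,\cdot)$ form a non-degenerate block, which follows from $L_I\Q_p=L_I\Q_q=0$, the nondegeneracy of $\Q_p,\Q_q$ (they span $\mathrm{Ker}\,L_I$), $L_R\Lambda\Q=0$, and $\F(\Q)<0$. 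The implicit function theorem then yields $C^1$ functions $\eta(\ve u),\theta(\ve u),\mu(\ve u),\alpha(\ve u)$ on a $\dot H^1$-neighborhood of $\Q$, hence on $I_0$ for $\delta_0$ small, using Proposition~\ref{Aprox} to guarantee $\ve u(t)$ stays in that neighborhood modulo symmetries.

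Next I would establish \eqref{EstimateOne}. The lower bound $\|\ve h\|_{\dot H^1}\lesssim\delta(t)$ follows because the map $\ve u\mapsto(\eta,\theta,\mu,\alpha,\ve h)$ is Lipschitz near $\Q$ and $\ve h=0$ exactly when $\ve u\in\mathcal B$; combined with Proposition~\ref{Aprox} this gives $\|\ve h\|_{\dot H^1}\lesssim\varepsilon(\delta(t))$, and a sharper linear estimate from the IFT upgrades $\varepsilon(\delta)$ to $O(\delta)$. For the matching $|\alpha|\sim\delta(t)$, I would expand $K(\ve u_{[\eta,\theta,\mu]})=K(\ve u)$ (scaling and phase invariance of $K$, noting $K$ of the rescaled object equals $K(\ve u)$ up to the $\lambda^{-1}$ scaling which is critical in $\dot H^1$) — actually more carefully, $K$ is invariant under phase rotation and under the critical scaling in \eqref{DefUR}, so $K(\ve u_{[\eta,\theta,\mu]})=\mu^{-?}$... in four dimensions with the $\lambda^{-1}$-normalization this is scale invariant, hence $\delta(t)=|K(\ve u_{[\eta,\theta,\mu]})-K(\Q)|$. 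Then expand $K((1+\alpha)\Q+\ve h)=(1+\alpha)^2K(\Q)+2(1+\alpha)\langle\nabla\Q,\nabla\ve h\rangle_{L^2\text{-type}}+K(\ve h)$; the cross term is controlled by $\|\ve h\|_{\dot H^1}$ and the leading deviation is $2\alpha K(\Q)+O(\alpha^2+\|\ve h\|^2)$, so $\delta(t)=2|\alpha|K(\Q)+O(\delta(t)^2)$, yielding $|\alpha|\sim\delta(t)$. One has to be slightly careful that the cross term $\langle\nabla\Q,\nabla\ve h\rangle$ is not merely $O(\|\ve h\|)$ but genuinely lower order; this is where the orthogonality condition $\F(\Q,\ve h)=0$ enters — it forces $\langle\nabla\Q,\nabla\RE\ve h\rangle$-type quantities to be quadratically small, absorbing the dangerous linear term.

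Then I would prove the dynamical estimate \eqref{EstimateFree}. Differentiating the decomposition \eqref{DecomUFree} in $t$ and using the equation \eqref{Decomh} for $\ve h$ (or rather the equation satisfied by the modulated object), one gets an identity of the schematic form
\[
\partial_t\ve h + \mathcal{L}\ve h = \eta'(t)\,i\Q_p^{\sharp} + \theta'(t)\,i\Q_q^{\sharp} + \tfrac{\mu'(t)}{\mu(t)}\Lambda\Q^{\sharp} + \alpha'(t)\,\Q^{\sharp} + (\text{quadratic remainder}) + \mu^2 R(\ve h)\text{-type terms},
\]
where the $\sharp$'s denote the generators of the symmetry directions evaluated at the current parameters. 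Taking the $\dot H^1$ inner product of this identity against each of $i\Q_p$, $i\Q_q$, $\Lambda\Q$, $\partial_j\Q$, and against $\F(\Q,\cdot)$, the terms involving $\mathcal{L}\ve h$ vanish or become $O(\|\ve h\|_{\dot H^1})=O(\delta)$ by self-adjointness and the kernel relations $\mathcal{L}(\partial_j\Q)=\mathcal{L}(\Lambda\Q)=\mathcal{L}(i\Q_p)=\mathcal{L}(i\Q_q)=0$; the nonlinear remainder $R(\ve h)$ is at least quadratic in $\ve h$, hence $O(\delta^2)$, and critically the rescaling introduces the factor $\mu^2(t)$ in front (since $R$ contains terms like $\overline h_1 h_2 Q_3$ which under the $\mu^{-1}(\mu^{-1}x)$ scaling pick up $\mu^2$, matching the Laplacian scaling $\mu^{-2}\Delta$). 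The resulting $5\times5$ (or effectively $4\times4$ for radial data) linear system for $(\eta',\theta',\mu'/\mu,\alpha')$ has the same invertible Jacobian as in the IFT step, so inverting it gives $|\eta'|+|\theta'|+|\alpha'|+|\mu'|/|\mu|\lesssim\mu^2(t)\delta(t)$.

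The main obstacle, and the place genuinely new compared to the scalar case, is the two-dimensional kernel of $L_I$: I must produce \emph{two} independent phase generators $i\Q_p,i\Q_q$ and show they are non-degenerate and "dual" to the two modulation parameters $\eta,\theta$ in the sense that the Gram-type matrix $\big[(i\Q_a, (\text{generator}_b))_{\dot H^1}\big]_{a,b\in\{p,q\}}$ is invertible. Concretely, one needs that the infinitesimal phase rotations of $\Q$ — namely $\partial_{\theta_1}\Q_{[\theta_1,\theta_2,1]}|_0$ and $\partial_{\theta_2}\Q_{[\theta_1,\theta_2,1]}|_0$ — are related by an invertible linear map to $i\Q_p,i\Q_q$; this is a finite-dimensional linear-algebra check using $\Q_p=(Q_1,2Q_2,0)$, $\Q_q=(2Q_1,-Q_2,5Q_3)$ and the explicit phases $e^{i(\theta_1+\theta_2)},e^{2i\theta_1},e^{2i\theta_2}$ in \eqref{DefUR}. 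Once this $2\times2$ block is seen to be non-degenerate, the rest of the Jacobian is block-triangular with the familiar scalar blocks ($\Lambda\Q$ vs $\mu$, and $\Q$ vs $\alpha$ through $\F(\Q,\Q)<0$), and the argument proceeds as in \cite{DuyMerle2009}.
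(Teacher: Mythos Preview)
Your overall strategy matches the paper's: an implicit-function-theorem step producing $(\eta,\theta,\mu)$ from three orthogonality conditions (with the translation directions automatic by radiality), then a separate definition of $\alpha$ via $\F(\Q,\cdot)$, then differentiation of the decomposition and projection onto $i\Q_p,i\Q_q,\Lambda\Q,\Q$ to read off the derivative bounds. The $2\times2$ phase block is indeed the new point; the paper handles it exactly as you suggest, by a linear change of the phase parameters so that the Jacobian becomes block-diagonal with entries $\|\Q_p\|_{\dot H^1}^2$, $\|\Q_q\|_{\dot H^1}^2$, $\|\Lambda\Q\|_{\dot H^1}^2$ (using $(\Q_p,\Q_q)_{\dot H^1}=0$).

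There is one genuine gap in your argument for \eqref{EstimateOne}. Expanding $K$ alone cannot give $\|\ve h\|_{\dot H^1}\sim|\alpha|$. With $\ve v=\alpha\Q+\ve h$ and the orthogonality $\F(\Q,\ve h)=0$ (which, since $L_R\Q=(\tfrac{1}{m_1}\Delta Q_1,\tfrac{1}{m_2}\Delta Q_2,\tfrac{1}{m_3}\Delta Q_3)$, amounts to the $K$-weighted orthogonality of $\Q$ and $\ve h$), one has the exact identity $K(\ve v)=\alpha^2K(\Q)+K(\ve h)$ and hence $\delta(t)=|(2\alpha+\alpha^2)K(\Q)+K(\ve h)|$. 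This relates $\delta$ to $|\alpha|$ and $\|\ve h\|^2$, but gives no control of $\|\ve h\|$ in terms of $|\alpha|$ or $\delta$; the ``Lipschitz from IFT'' remark only bounds $\|\ve h\|$ by the $\dot H^1$-distance to the orbit, i.e.\ by $\varepsilon(\delta)$, not by $\delta$ itself. The missing ingredient is the energy constraint $E(\ve u)=E(\Q)$: Taylor-expanding $E$ at $\Q$ gives $\F(\ve v)=O(\|\ve v\|_{\dot H^1}^3)$, and since $\F(\ve v)=\F(\ve h)+\alpha^2\F(\Q)$ with $\F(\Q)<0$, the coercivity of $\F$ on $G^\perp$ (Proposition~\ref{CoerQuadra}) yields $\|\ve h\|_{\dot H^1}^2\sim\F(\ve h)=\alpha^2|\F(\Q)|+O(\|\ve v\|^3)$. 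Combining this with the $K$-identity gives $|\alpha|\sim\|\ve h\|_{\dot H^1}\sim\|\ve v\|_{\dot H^1}\sim\delta(t)$. This is precisely how the paper proceeds; without it, your equivalence \eqref{EstimateOne} does not close.
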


For the proof of the proposition, we need the following result:


\begin{lemma}\label{ExistenceFree}
There exists $\delta_0 > 0$ such that for all radial $\mathbf{u}$ in $\dot{H}^1$ 
satisfying $E(\mathbf{u}) = E(\Q)$ and $\delta(\mathbf{u})< \delta_0$, there exist 
$(\eta, \theta, \mu) \in \mathbb{R} \times \mathbb{R} \times (0, +\infty)$ with
\[
\mathbf{u}_{[\eta, \theta, \mu]} \perp i\Q_{p}, \quad \mathbf{u}_{[\eta, \theta, \mu]} \perp i{\Q}_{q}, \quad
\mathbf{u}_{[\eta, \theta, \mu]} \perp \Lambda\Q,
\]
where $\Q_{p} = (Q_{1}, 2Q_{2}, 0)$, $\Q_{q} = (2Q_{1}, -Q_{2}, 5Q_{3})$, and 
$\Lambda\Q = (\Lambda Q_{1}, \Lambda Q_{2}, \Lambda Q_{3})$ with 
$\Lambda Q_{j} = 2Q_{j} + x \cdot \nabla Q_{j}$ (cf. \eqref{DEFQt}). 
The parameters $(\eta, \theta, \mu)$ are unique in 
$\mathbb{R}/2\pi\mathbb{Z} \times \mathbb{R}/2\pi\mathbb{Z} \times \mathbb{R}_+$, 
and the mapping $\mathbf{u} \mapsto (\eta, \theta, \mu)$ is $C^1$.
\end{lemma}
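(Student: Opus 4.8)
The plan is to prove Lemma~\ref{ExistenceFree} via the implicit function theorem applied to a map $\Theta$ whose zeros encode the three orthogonality conditions. Concretely, for a radial $\ve{u}$ near $\Q$ in $\dot{H}^1$, define
\[
\Theta(\ve{u}; \eta, \theta, \mu) := \Big( (\ve{u}_{[\eta,\theta,\mu]} - \Q, \, i\Q_p)_{\dot{H}^1}, \ (\ve{u}_{[\eta,\theta,\mu]} - \Q, \, i\Q_q)_{\dot{H}^1}, \ (\ve{u}_{[\eta,\theta,\mu]} - \Q, \, \Lambda\Q)_{\dot{H}^1} \Big) \in \R^3,
\]
(subtracting $\Q$ is harmless since $\Q \perp i\Q_p, i\Q_q$ and $(\Q,\Lambda\Q)_{\dot H^1}$ is a fixed constant, to be absorbed). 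First I would check that $\Theta$ is $C^1$ jointly in its arguments: the group action $(\eta,\theta,\mu)\mapsto \ve{u}_{[\eta,\theta,\mu]}$ is smooth from $\R\times\R\times(0,\infty)$ into $\dot{H}^1$ when tested against fixed smooth, rapidly-decaying vectors such as $i\Q_p$, $i\Q_q$, $\Lambda\Q$ (these lie in the dual pairing comfortably since $Q$ and its derivatives decay and $\Lambda Q \in L^2$). Then I would evaluate $\Theta(\Q; 0,0,1) = 0$, using $(\Q, i\Q_p)_{\dot H^1} = (\Q, i\Q_q)_{\dot H^1} = 0$ (real inner product of a real and a purely imaginary vector vanishes) and arranging the third component so that it vanishes at the base point.

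The core computation is the Jacobian of $(\eta,\theta,\mu)\mapsto\Theta(\Q;\eta,\theta,\mu)$ at $(0,0,1)$, which must be shown invertible. Differentiating the group action at the identity produces the infinitesimal generators: $\partial_\eta \ve{u}_{[\eta,\theta,\mu]}\big|_{(0,0,1)}$ and $\partial_\theta \ve{u}_{[\eta,\theta,\mu]}\big|_{(0,0,1)}$ are (real linear combinations of) $i\Q_p$ and $i\Q_q$ — this is precisely why $\Q_p$, $\Q_q$ were introduced as the two independent phase directions coming from the two-parameter phase symmetry (i)–(ii) — while $\partial_\mu$ gives $-\Lambda\Q$ (up to sign/normalization from the scaling $\lambda^{-1}u(\lambda^{-1}x)$). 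Hence the Jacobian matrix has entries built from $(i\Q_p, i\Q_p)_{\dot H^1}$, $(i\Q_q, i\Q_q)_{\dot H^1}$, $(\Lambda\Q,\Lambda\Q)_{\dot H^1}$ on a "diagonal" and cross terms $(i\Q_p, i\Q_q)_{\dot H^1}$, $(i\Q_p,\Lambda\Q)_{\dot H^1}$, $(i\Q_q,\Lambda\Q)_{\dot H^1}$ off it. The two mixed real/imaginary terms $(i\Q_p,\Lambda\Q)_{\dot H^1}$ and $(i\Q_q,\Lambda\Q)_{\dot H^1}$ vanish identically (real vs. imaginary), so the matrix is block-diagonal: a $2\times 2$ block in $(\eta,\theta)$ and a $1\times 1$ block in $\mu$. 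The $1\times 1$ block is $\|\Lambda\Q\|_{\dot H^1}^2 \neq 0$. For the $2\times 2$ block I would note that $\{\Q_p,\Q_q\}$ are linearly independent (indeed $(\Q_p,\Q_q)_{\dot H^1}=0$ as recorded after \eqref{DEFQt}), so the Gram matrix of $\{i\Q_p, i\Q_q\}$ in $\dot H^1$ is positive definite, hence invertible. Therefore the full Jacobian is invertible.

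With invertibility in hand, the implicit function theorem yields a $C^1$ map $\ve{u}\mapsto(\eta(\ve{u}),\theta(\ve{u}),\mu(\ve{u}))$ defined on a $\dot H^1$-neighborhood of $\Q$ such that $\Theta(\ve{u};\eta,\theta,\mu)=0$, i.e. the three stated orthogonality relations hold, with $(\eta,\theta,\mu)$ near $(0,0,1)$ and unique in that neighborhood. To upgrade "$\ve{u}$ near $\Q$ in $\dot H^1$" to "$\delta(\ve{u})<\delta_0$ with $E(\ve{u})=E(\Q)$", I would invoke Proposition~\ref{Aprox}: if $\delta(\ve{u})$ is small, then after some $(\theta_1,\theta_2,\lambda)$ the transformed $\ve{u}_{[\theta_1,\theta_2,\lambda]}$ is $\varepsilon(\delta(\ve{u}))$-close to $\Q$, so composing this preliminary normalization with the IFT map gives the conclusion for all radial $\ve{u}$ with $\delta(\ve{u})<\delta_0$; shrinking $\delta_0$ absorbs the gap between the two $\varepsilon$'s. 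Finally, uniqueness modulo $2\pi\mathbb Z$ in the phases follows from the periodicity of the phase action together with the local uniqueness from the IFT: any other admissible triple lies in the same orbit and, being close, must coincide after reduction mod $2\pi$.

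The main obstacle I anticipate is not conceptual but bookkeeping: correctly computing the infinitesimal generators of the group action in \eqref{DefUR}–\eqref{DEFQt} and verifying that $\partial_\eta$ and $\partial_\theta$ at the identity span exactly $\mathrm{span}\{i\Q_p, i\Q_q\}$ with the right constants, so that the $2\times 2$ phase block is genuinely nondegenerate — this is the one place where the two-dimensional kernel of $L_I$ (and the specific choice of $\Q_p,\Q_q$) enters crucially, and a sign error or a mismatch between the modulation directions and the kernel directions would break the argument. Everything else (smoothness of the action against Schwartz-class test vectors, vanishing of the mixed real/imaginary pairings, the passage from $\dot H^1$-proximity to smallness of $\delta$) is routine given Proposition~\ref{Aprox} and the remarks following \eqref{DEFQt}.
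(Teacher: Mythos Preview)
Your approach is correct and essentially the same as the paper's: apply the implicit function theorem to the orthogonality map at the base point $(\Q;0,0,1)$, check invertibility of the Jacobian via the block structure (real/imaginary splitting kills the mixed entries), and then invoke Proposition~\ref{Aprox} to pass from a $\dot H^1$-neighborhood of $\Q$ to the regime $\delta(\ve{u})<\delta_0$. The paper resolves your flagged bookkeeping obstacle by an explicit linear reparametrization $H(\theta_1,\theta_2,\mu,\ve{u}):=J(\theta_1-\tfrac12\theta_2,\tfrac52\theta_2,\mu,\ve{u})$, chosen so that the two phase derivatives at the base point are exactly $i\Q_p$ and $i\Q_q$; combined with $(\Q_p,\Q_q)_{\dot H^1}=0$ this makes the Jacobian determinant simply $\|\Q_p\|_{\dot H^1}^2\,\|\Q_q\|_{\dot H^1}^2\,\|\Lambda\Q\|_{\dot H^1}^2\neq 0$.
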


\begin{proof}
By Proposition~\ref{Aprox}, we can choose ${\eta_{1}}$, ${\theta_{1}}$, and $\mu_{1}$ such that
\begin{align}\label{aprx1}
	\mathbf{u}_{[{\eta_{1}}, {\theta_{1}}, \mu_{1}]} = \Q + g \quad \text{with} \quad \|g\|_{\dot{H}^{1}} \leq \epsilon(\delta(\mathbf{u})),
\end{align}
for $\delta(\mathbf{u})$ sufficiently small. Now, consider the functional
\begin{align*}
J(\eta, \theta, \mu, \mathbf{u}) &= (J_1(\eta, \theta, \mu, \mathbf{u}), 
J_2(\eta, \theta, \mu, \mathbf{u}), J_3(\eta, \theta, \mu, \mathbf{u})) \\
&= \left( (\mathbf{u}_{[\eta, \theta, \mu]}, i\Q_{p})_{\dot{H}^1}, 
(\mathbf{u}_{[\eta, \theta, \mu]}, i{\Q}_{q})_{\dot{H}^1}, 
(\mathbf{u}_{[\eta, \theta, \mu]}, \Lambda{\Q})_{\dot{H}^1} \right).
\end{align*}
Let
\[
H(\theta_{1}, \theta_{2} , \mu, \mathbf{u}):=J(\theta_{1}-\tfrac{1}{2}\theta_{2}, \tfrac{5}{2}\theta_{2}, \mu, \mathbf{u}).
\]
Since $\Q$, $\Q_p$, $\Q_q$ are real-valued and $(\Q, \Lambda\Q)_{\dot{H}^1}=0$, we have
\[
	H(0, 0, 1, \Q) = J(0, 0, 1, \Q)= \left( 
	(\Q, i\Q_p)_{\dot{H}^1}, 
	(\Q, i\Q_q)_{\dot{H}^1},
	(\Q, \Lambda\Q)_{\dot{H}^1}
	\right)=(0,0,0).
\]
On the other hand, a direct calculation shows that 
\begin{align*}
\frac{\partial H}{\partial (\theta_{1}, \theta_{2} , \mu)} (0, 0, 1, \Q)=
 \begin{bmatrix}
(i\Q_{p}, i\Q_p)_{\dot{H}^1} & (i\Q_{q}, i\Q_p)_{\dot{H}^1} & -(\Lambda \Q, i\Q_p)_{\dot{H}^1} \\
(i\Q_{p}, i\Q_q)_{\dot{H}^1} & (i\Q_{q}, i\Q_q)_{\dot{H}^1} & -(\Lambda \Q, i\Q_q)_{\dot{H}^1} \\
(i\Q_{p}, \Lambda \Q)_{\dot{H}^1} & (i\Q_{q}, \Lambda \Q)_{\dot{H}^1} & -(\Lambda \Q, \Lambda \Q)_{\dot{H}^1}
\end{bmatrix}.
\end{align*}
Therefore, using $(\Q_{q}, \Q_p)_{\dot{H}^1}=0$, we see that 
\[
\left|\det\left(\frac{\partial H}{\partial (\theta_{1}, \theta_{2} , \mu)} (0, 0, 1, \Q) \right)\right| = 
\|\nabla \Q_{p}\|^{2}_{L^{2}}\|\nabla \Q_{q}\|^{2}_{L^{2}}\|\nabla \Lambda\Q\|^{2}_{L^{2}} \neq 0.
\]

Hence, by the implicit function theorem, there exist $\epsilon_0, \gamma_0 > 0$ such that for any $\mathbf{h} \in \dot{H}^1$ with $\|\mathbf{h} - \Q\|_{\dot{H}^1} < \epsilon_0$, there exists a unique $(\tilde{\theta_{1}}(\mathbf{h}), \tilde{\theta_{2}}(\mathbf{h}), \tilde{\mu}(\mathbf{h}))$ (a $C^1$ function of $\mathbf{h}$) satisfying $|\tilde{\theta_{1}}|+|\tilde{\theta_{2}}| + |\tilde{\mu} - 1|\ll  \gamma_0$ and
\[
H(\tilde{\theta_{1}}, \tilde{\theta_{2}}, \tilde{\mu}, \mathbf{h}) = J(\tilde{\theta_{1}}-\tfrac{1}{2}\tilde{\theta_{2}}, \tfrac{5}{2}\tilde{\theta_{2}},\tilde{\mu}, \mathbf{h}) = 0.
\]
Defining $\eta_{0}=\tilde{\theta_{1}}-\tfrac{1}{2}\tilde{\theta_{2}}$ and $\theta_{0}=\tfrac{5}{2}\tilde{\theta_{2}}$, we obtain $|{\eta_{0}}|+|{\theta_{0}}| + |\tilde{\mu} - 1| < \gamma_0$ and
\[
J(\eta_{0}, \theta_{0},\tilde{\mu}, \mathbf{h}) = 0.
\]
Thus, from \eqref{aprx1}, we find that there exists a unique $(\tilde{\eta}_0, \tilde{\theta}_0, \tilde{\mu}_0)$  such that
\[
J(\tilde{\eta}_0, \tilde{\theta}_0, \tilde{\mu}_0, \mathbf{u}_{[{\eta_{1}}, {\theta_{1}}, \mu_{1}]}) = 0.
\]
Using the group properties of the transformation $\mathbf{u} \mapsto \mathbf{u}_{[\eta, \theta, \mu]}$, this is equivalent to
\[
J(\tilde{\eta}_0+\eta_{1}, \tilde{\theta}_0+\theta_{1}, \tilde{\mu}_0\mu_1, \mathbf{u}) = 0.
\]
This completes the proof by taking the final parameters to be $\eta = \tilde{\eta}_0 + \eta_1$, $\theta = \tilde{\theta}_0 + \theta_1$, and $\mu = \tilde{\mu}_0\mu_1$.
\end{proof}


Let $u$ be a radial solution to \eqref{NLS} and $I_{0}$ be a time interval such that
\[
\delta(t)=\delta(\mathbf{u}(t))< \delta_0 \text{ for all } t \in I_{0},
\]
where $\delta_{0}$ is given by the previous lemma. For each $t \in I_{0}$, we choose the parameters $(\eta(t), \theta(t), \mu(t))$ according to Lemma~\ref{ExistenceFree}, and we express the solution $\ve{u}$ in the form
\begin{equation}\label{DefH}
\ve{u}_{[\eta(t), \theta(t), \mu(t)]}(t) = (1 + \alpha(t))\Q + \ve{h}(t) \quad \text{for all} \quad t \in I_0,
\end{equation}
where the modulation parameter $\alpha(t)$ is given by (cf. \eqref{NegativeLQ})
\[
\alpha(t) + 1 = \tfrac{1}{\F(\Q, \Q)} \F(\Q, \ve{u}_{[\eta(t), \theta(t), \mu(t)]}).
\]
The function $\ve{h}(t)$ satisfies the following orthogonality conditions:
\begin{equation}\label{h: Cond}
\ve{h} \perp \text{span}\left\{\nabla \Q, i\Q_{p}, i\Q_{q}, \Lambda\Q \right\} \quad \text{and} \quad \F(\Q, \ve{h}) = 0.
\end{equation}
Observe that the linearized operator $L_{R}$ applied to $\Q$ yields
\[
L_{R}(\Q) = \left(\tfrac{1}{m_{1}}\Delta Q_{1}, \tfrac{1}{m_{2}}\Delta Q_{2}, \tfrac{1}{m_{3}}\Delta Q_{3}\right).
\]
Consequently, from the orthogonality conditions in \eqref{h: Cond}, we deduce that
\begin{align}\label{OrQh}
	\left(\big(\tfrac{1}{m_{1}}Q_{1}, \tfrac{1}{m_{2}} Q_{2}, \tfrac{1}{m_{3}} Q_{3}\big), \ve{h}\right)_{\dot{H}^{1}} = 0.
\end{align}
Note also that
\begin{align}\label{NegativeLQ}
	\F(\Q, \Q)=\F(\Q)=-K(\Q)<0.
\end{align}

\begin{lemma}\label{BoundI}
Taking a smaller $\delta_{0}$, if necessary, for all $t \in I_0$, we have
\begin{equation}\label{DeltaBound}
\delta(t) \sim |\alpha(t)| \sim \|\ve{h}(t)\|_{\dot{H}^{1}}.
\end{equation}
\end{lemma}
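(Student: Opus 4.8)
\textbf{Proof plan for Lemma~\ref{BoundI}.}
The plan is to extract the estimate $\delta(t)\sim|\alpha(t)|\sim\|\ve{h}(t)\|_{\dot H^1}$ from the decomposition \eqref{DefH} by computing $\delta(t)=|K(\ve{u}(t))-K(\Q)|$ directly in terms of $\alpha(t)$ and $\ve{h}(t)$. First I would note that the functional $\delta$ is invariant under the symmetries defining $\ve{u}\mapsto\ve{u}_{[\eta,\theta,\mu]}$ (scaling and phase rotation do not change $K$), so that $\delta(t)=|K(\ve{u}_{[\eta(t),\theta(t),\mu(t)]}(t))-K(\Q)|$. Substituting $\ve{u}_{[\eta,\theta,\mu]}=(1+\alpha)\Q+\ve{h}$ and expanding the quadratic form $K(\cdot)=\|\nabla\cdot\|_{L^2}^2$-type expression, one gets
\[
K(\ve{u}_{[\eta,\theta,\mu]})-K(\Q)=(2\alpha+\alpha^2)K(\Q)+2(1+\alpha)(\Q,\ve{h})_{\dot H^1}+K(\ve{h}).
\]
By the orthogonality conditions \eqref{h: Cond} together with the identity $L_R(\Q)=(\tfrac{1}{m_1}\Delta Q_1,\tfrac{1}{m_2}\Delta Q_2,\tfrac{1}{m_3}\Delta Q_3)$, which gives \eqref{OrQh}, the cross term $(\Q,\ve{h})_{\dot H^1}$ is \emph{not} automatically zero; rather, one uses that $\F(\Q,\ve{h})=0$ from \eqref{h: Cond}. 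Here I would use $\F(\Q,\ve{h})=\tfrac12\langle L_R\Q,\RE\ve{h}\rangle+\tfrac12\langle L_I\cdot 0,\IM\ve{h}\rangle$ (since $\Q$ is real) and the relation $\langle L_R\Q,\ve{h}\rangle$ is proportional to $(\text{(}\tfrac{1}{m_1}Q_1,\tfrac{1}{m_2}Q_2,\tfrac{1}{m_3}Q_3),\ve{h})_{\dot H^1}=0$ via integration by parts, so that $(\Q,\ve{h})_{\dot H^1}$ can be controlled — in fact a cleaner route is to observe that $(\Q,\ve{h})_{\dot H^1}$ is actually zero or at worst $O(\|\ve{h}\|^2)$; I would double-check which, but in either case it contributes at lower order.

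The next step is to use the energy constraint $E(\ve{u}(t))=E(\Q)$, which by scaling/phase invariance of $E$ becomes $E(\ve{u}_{[\eta,\theta,\mu]})=E(\Q)$, i.e. $K(\ve{u}_{[\eta,\theta,\mu]})-2P(\ve{u}_{[\eta,\theta,\mu]})=K(\Q)-2P(\Q)$. Expanding $P$ (a cubic-type form in $\Q$ and $\ve{h}$) around $\Q$ and using the Pohozaev identity \eqref{IP4}, namely $K(\Q)=4P(\Q)$, together with the fact that $\Q$ satisfies the elliptic system \eqref{s1A}, the linear-in-$\ve{h}$ terms in $E$ cancel and one is left with a relation of the form
\[
0=E(\ve{u}_{[\eta,\theta,\mu]})-E(\Q)=c_1\alpha^2 K(\Q)-\mathcal{F}(\ve{h})+O(|\alpha|^3+|\alpha|\|\ve{h}\|_{\dot H^1}^2+\|\ve{h}\|_{\dot H^1}^3),
\]
for an explicit constant $c_1$, where $\mathcal F$ is the quadratic form \eqref{Quadratic}; this is exactly the mechanism by which the negative direction $\Q$ (recall $\F(\Q)=-K(\Q)<0$ from \eqref{NegativeLQ}) is compensated by the $\alpha^2$ term. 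Combining this with the coercivity of $\mathcal F$ on $G^\perp$ from Proposition~\ref{CoerQuadra} — noting that $\ve{h}$ satisfies precisely the orthogonality conditions \eqref{h: Cond} placing it in $G^\perp$ — yields $\mathcal F(\ve{h})\gtrsim\|\ve{h}\|_{\dot H^1}^2$, hence $\|\ve{h}\|_{\dot H^1}^2\lesssim\alpha^2+(\text{higher order})$, so $\|\ve{h}\|_{\dot H^1}\lesssim|\alpha|$ after absorbing the higher-order terms for $\delta_0$ small.

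With $\|\ve{h}\|_{\dot H^1}\lesssim|\alpha|$ in hand, I would return to the expansion of $\delta(t)$: the leading term is $2|\alpha|K(\Q)$ and all remaining terms are $O(\alpha^2)=O(|\alpha|\cdot\|\ve{h}\|)$ or $O(\|\ve{h}\|^2)$, hence $\delta(t)=2K(\Q)|\alpha(t)|+O(|\alpha(t)|^2)$, which for $\delta_0$ small gives $\delta(t)\sim|\alpha(t)|$. Conversely, from the definition of $\alpha$, namely $\alpha+1=\F(\Q,\ve{u}_{[\eta,\theta,\mu]})/\F(\Q,\Q)$, and the approximation \eqref{aprx1} (or Proposition~\ref{Aprox}) one has $|\alpha|\lesssim\|\ve{h}\|_{\dot H^1}$ by expanding $\F(\Q,\ve{u}_{[\eta,\theta,\mu]})=\F(\Q,(1+\alpha)\Q+\ve{h})=(1+\alpha)\F(\Q,\Q)+\F(\Q,\ve{h})=(1+\alpha)\F(\Q,\Q)$ since $\F(\Q,\ve{h})=0$ — wait, this forces $\alpha$ to be exactly determined, so instead I would bound $\|\ve{h}\|_{\dot H^1}$ directly: $\ve{h}=\ve{u}_{[\eta,\theta,\mu]}-(1+\alpha)\Q$, and using $\|\ve{u}_{[\eta,\theta,\mu]}-\Q\|_{\dot H^1}\le\epsilon(\delta(\ve{u}))\to 0$ from Proposition~\ref{Aprox} together with the continuity of $\alpha$, one gets $\|\ve{h}\|_{\dot H^1}\lesssim\epsilon(\delta(t))+|\alpha(t)|\lesssim\delta(t)$ (the last step using the formula for $\alpha$ and that $|\F(\Q,\ve{u}_{[\eta,\theta,\mu]})-\F(\Q,\Q)|\lesssim\|\ve{u}_{[\eta,\theta,\mu]}-\Q\|_{\dot H^1}$). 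Chaining $\delta\lesssim|\alpha|\lesssim\|\ve{h}\|_{\dot H^1}\lesssim\delta$ closes the loop and proves \eqref{DeltaBound}. The main obstacle I anticipate is bookkeeping the cross terms and verifying that the linear-in-$\ve{h}$ contributions to both $K$ and $E$ genuinely cancel (for $E$ via Pohozaev and the Euler--Lagrange equations, for $K$ via the orthogonality $\F(\Q,\ve{h})=0$ combined with \eqref{OrQh}); getting the direction of the inequality right in the coercivity step — i.e. that the $\alpha^2$ term has the correct sign to dominate $-\mathcal F(\ve{h})<0$ — is the delicate point, and it relies crucially on $\F(\Q)=-K(\Q)<0$.
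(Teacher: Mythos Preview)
Your approach is essentially the paper's: expand the energy around $\Q$, use coercivity of $\F$ on $G^\perp$, and read off $\delta$ from the expansion of $K$. Two points in the execution need fixing.

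First, the cross term in the $K$-expansion. You pick up $2(1+\alpha)(\Q,\ve{h})_{\dot H^1}$ and then hedge that it is ``actually zero or at worst $O(\|\ve{h}\|^2)$''. It cannot be $O(\|\ve{h}\|^2)$ --- it is linear in $\ve{h}$ --- so if it did not vanish it would be the \emph{leading} term and the argument would collapse. What actually appears is the $K$-inner product (weighted by $\tfrac{1}{2m_k}$), not the plain $\dot H^1$-inner product, and that cross term $(\Q,\ve{h})_K$ vanishes exactly by \eqref{OrQh}, which is precisely what $\F(\Q,\ve{h})=0$ encodes once you use $L_R\Q=(\tfrac{1}{m_1}\Delta Q_1,\tfrac{1}{m_2}\Delta Q_2,\tfrac{1}{m_3}\Delta Q_3)$. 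The paper writes this cleanly: with $\ve{v}=\alpha\Q+\ve{h}$ one has $K(\ve{v})=\alpha^2K(\Q)+K(\ve{h})$, no residual linear term.

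Second, and more seriously, you never establish $|\alpha|\lesssim\|\ve{h}\|_{\dot H^1}$. From your energy identity $\F(\ve{h})=c_1\alpha^2 K(\Q)+O(\text{cubic})$ you extract only the coercivity direction $\|\ve{h}\|^2\lesssim\F(\ve{h})$, giving $\|\ve{h}\|\lesssim|\alpha|$. Your attempted converse via Proposition~\ref{Aprox} produces only $\|\ve{h}\|\lesssim\delta$, which does not yield $|\alpha|\lesssim\|\ve{h}\|$; the chain ``$\delta\lesssim|\alpha|\lesssim\|\ve{h}\|\lesssim\delta$'' you write has its middle link unproved. The fix is immediate and is what the paper does: the \emph{same} identity together with the trivial upper bound $\F(\ve{h})\lesssim\|\ve{h}\|_{\dot H^1}^2$ (boundedness of $\F$ on $\dot H^1$) gives $\alpha^2\lesssim\|\ve{h}\|^2+O(\|\ve{v}\|^3)$. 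Combining both directions with $\|\ve{v}\|^2\sim\alpha^2+\|\ve{h}\|^2$ then yields $|\alpha|\sim\|\ve{h}\|\sim\|\ve{v}\|$, after which $\delta(t)\sim|\alpha|$ follows from the $K$-expansion as you outline.
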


\begin{proof}
Let $\ve{v} = \ve{u}_{[\eta(t), \theta(t), \mu(t)]}(t) - \Q = \ve{h} + \alpha(t)\Q$. From \eqref{OrQh}, we obtain
\begin{align}\label{EquV}
K(\ve{v}) = \alpha^{2}K(\Q) + K(\ve{h}).
\end{align}
Note that $K(\ve{v}) \sim \|\ve{v}\|^{2}_{\dot{H}^{1}}$ is small when $\delta(t)$ is small.

By a Taylor expansion, we have
\[
E(\ve{v} + \Q) - E(\Q) = \left\langle E^{\prime}(\Q), \ve{v} \right\rangle + \F(\ve{v}) + o(\|\ve{v}\|^{3}_{\dot{H}^{1}}).
\]
Since $E(\Q + \ve{v})= E(\Q)$ and $E^{\prime}(\Q) = 0$, it follows that
\begin{align}
|\F(\ve{v})| \lesssim \|\ve{v}\|^{3}_{\dot{H}^{1}}.
\end{align}
Moreover, since $\F(\Q) < 0$ (cf. \eqref{NegativeLQ}), we can write
\[
\F(\ve{v}) = \F(\ve{h}) + \alpha^{2}\F(\Q) = \F(\ve{h}) - \alpha^{2}|\F(\Q)|.
\]
This implies that $|\F(\ve{h}) - \alpha^{2}|\F(\Q)|| \leq C\|\ve{v}\|^{3}_{\dot{H}^{1}}$. Additionally, by Proposition~\ref{CoerQuadra} (cf. \eqref{h: Cond}), we deduce that $\|\ve{h}\|^{2}_{\dot{H}^{1}} \sim \F(\ve{h})$. Therefore,
\begin{align}\label{TwoI}
\alpha^{2} \leq O(\|\ve{h}\|^{2}_{\dot{H}^{1}} + \|\ve{v}\|^{3}_{\dot{H}^{1}})
\quad \text{and} \quad 
\|\ve{h}\|^{2}_{\dot{H}^{1}} \leq O(\alpha^{2} + \|\ve{v}\|^{3}_{\dot{H}^{1}}).
\end{align}

Since $K(\ve{v}) \sim \|\ve{v}\|^{2}_{\dot{H}^{1}}$, combining \eqref{EquV} and \eqref{TwoI}, we obtain, for $\delta_{0}$ sufficiently small,
\[
|\alpha| \sim \|\ve{h}\|_{\dot{H}^{1}} \sim \|\ve{v}\|_{\dot{H}^{1}}.
\]
Finally, as 
\[
\delta(t) = |K(\ve{u}) - K(\Q)| = |K(\ve{v}) - \alpha|\F(\Q)||,
\]
we conclude that $\delta(t) \sim |\alpha|$. This completes the proof.
\end{proof}

\begin{lemma}\label{BoundII}
Let $(\eta(t), \theta(t), \mu(t))$ be as in Lemma~\ref{ExistenceFree} and $\ve{h}(t)$ and $\alpha(t)$ be as in \eqref{DefH}. Then, we have
\begin{align}
\label{alfaE}
|\eta^{\prime}(t)|+|\theta^{\prime}(t)|+|\alpha^{\prime}(t)|+\frac{|\mu^{\prime}(t)|}{|\mu(t)|}
\lesssim\mu^{2}(t)\delta(t),
\end{align}
\end{lemma}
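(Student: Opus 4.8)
The plan is to track the modulated profile $\mathbf{w}(t):=\mathbf{u}_{[\eta(t),\theta(t),\mu(t)]}(t)=(1+\alpha(t))\Q+\mathbf{h}(t)$, derive the differential equation it satisfies, and then differentiate the orthogonality conditions \eqref{h: Cond} in time. (Differentiability of the parameters in $t$ comes out of this same computation together with the $C^{1}$ dependence in Lemma~\ref{ExistenceFree}, so I would treat $\eta,\theta,\mu,\alpha$ as $C^{1}$.) Since the scaling $\lambda^{-1}\mathbf{u}(\lambda^{-1}\cdot)$ and the two phase rotations are symmetries of \eqref{NLS} only for constant parameters, a chain-rule computation using that $\mathbf{u}$ solves \eqref{NLS} gives
\[
\partial_{t}\mathbf{w}=i\eta'\,\mathcal{D}_{1}\mathbf{w}+i\theta'\,\mathcal{D}_{2}\mathbf{w}-\tfrac{\mu'}{\mu}(1+x\cdot\nabla)\mathbf{w}+\mu^{2}\,\mathcal{E}(\mathbf{w}),
\]
where $\mathcal{D}_{1}=\mathrm{diag}(1,2,0)$ and $\mathcal{D}_{2}=\mathrm{diag}(1,0,2)$ are the generators of the two phase invariances, $\mathcal{E}$ is the nonlinear vector field on the right of \eqref{NLS}, and the factor $\mu^{2}$ reflects the $\dot{H}^{1}$-critical scaling of $\Delta$ and of the cubic nonlinearity. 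Because $\Q$ is a stationary solution, $\mathcal{E}(\Q)=0$ (cf.\ \eqref{s1A}); writing $\mathbf{w}=\Q+\mathbf{v}$ with $\mathbf{v}=\alpha\Q+\mathbf{h}$ and using the algebraic splitting \eqref{Decomh}, one gets $\mathcal{E}(\Q+\mathbf{v})=-\mathcal{L}\mathbf{v}+\mathcal{R}(\mathbf{v})$ with $\|\mathcal{L}\mathbf{v}\|_{\dot{H}^{-1}}\lesssim\|\mathbf{v}\|_{\dot{H}^{1}}$ and, $R(\cdot)$ containing only quadratic and cubic terms, $\|\mathcal{R}(\mathbf{v})\|_{\dot{H}^{-1}}\lesssim\|\mathbf{v}\|_{\dot{H}^{1}}^{2}$; recall $\|\mathbf{v}\|_{\dot{H}^{1}}\sim\delta(t)$ by Lemma~\ref{BoundI}.

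Next I would differentiate the three conditions defining $(\eta,\theta,\mu)$. Since $\Q,\Q_{p},\Q_{q}$ are real and $(\Q,\Lambda\Q)_{\dot{H}^{1}}=0$, the conditions in \eqref{h: Cond} involving $i\Q_{p},i\Q_{q},\Lambda\Q$ are equivalent to $(\mathbf{w},i\Q_{p})_{\dot{H}^{1}}=(\mathbf{w},i\Q_{q})_{\dot{H}^{1}}=(\mathbf{w},\Lambda\Q)_{\dot{H}^{1}}=0$. Differentiating these in $t$, inserting the formula above, and transferring the derivatives hidden in $\mathcal{L}\mathbf{v},\mathcal{R}(\mathbf{v})$ (which lie only in $\dot{H}^{-1}$) and in $(1+x\cdot\nabla)\mathbf{v}$ onto the smooth, rapidly decaying profiles $i\Q_{p},i\Q_{q},\Lambda\Q$, one obtains
\[
M(t)\begin{pmatrix}\eta'\\ \theta'\\ \mu'/\mu\end{pmatrix}=\mathbf{b}(t),\qquad M(t)=M_{0}+O(\|\mathbf{v}\|_{\dot{H}^{1}}),\qquad |\mathbf{b}(t)|\lesssim\mu^{2}\|\mathbf{v}\|_{\dot{H}^{1}},
\]
where $\mathbf{b}(t)$ gathers exactly the $\mu^{2}(-\mathcal{L}\mathbf{v}+\mathcal{R}(\mathbf{v}))$ contributions. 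The entries of $M_{0}$ are $\dot{H}^{1}$-inner products of $i\mathcal{D}_{1}\Q,\ i\mathcal{D}_{2}\Q,\ (1+x\cdot\nabla)\Q$ against $i\Q_{p},i\Q_{q},\Lambda\Q$; using $i\mathcal{D}_{1}\Q=i\Q_{p}$, $i\mathcal{D}_{2}\Q=\tfrac15 i\Q_{p}+\tfrac25 i\Q_{q}$, $(\Q_{p},\Q_{q})_{\dot{H}^{1}}=0$, the $\dot{H}^{1}$-orthogonality of a real vector to $i\Q_{p},i\Q_{q}$, together with $i\Q_{p},i\Q_{q}\in\ker\mathcal{L}$, $L_{R}(\Lambda\Q)=0$, and $((1+x\cdot\nabla)\Q,\Q)_{\dot{H}^{1}}=0$ (scale invariance of $\|Q\|_{L^{4}}$), one checks that $M_{0}$ is upper triangular with nonzero diagonal entries $\|\Q_{p}\|_{\dot{H}^{1}}^{2},\ \tfrac25\|\Q_{q}\|_{\dot{H}^{1}}^{2},\ \|(1+x\cdot\nabla)\Q\|_{\dot{H}^{1}}^{2}$, hence invertible. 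Choosing $\delta_{0}$ small enough that $\|\mathbf{v}\|_{\dot{H}^{1}}$ stays small keeps $M(t)$ invertible with $\|M(t)^{-1}\|\lesssim1$, so $|\eta'|+|\theta'|+|\mu'|/|\mu|\lesssim\mu^{2}\|\mathbf{v}\|_{\dot{H}^{1}}\lesssim\mu^{2}\delta(t)$.

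Finally, for $\alpha'$ I would differentiate $\alpha+1=\F(\Q,\Q)^{-1}\F(\Q,\mathbf{w})$: since $\F(\Q,\cdot)$ is a bounded linear functional (a fixed $\dot{H}^{1}$-pairing with $\Q$), one has $\alpha'=\F(\Q,\Q)^{-1}\F(\Q,\partial_{t}\mathbf{w})$. Inserting the equation for $\partial_{t}\mathbf{w}$ and using $\F(\Q,i\mathcal{D}_{1}\Q)=\F(\Q,i\mathcal{D}_{2}\Q)=0$ (real versus imaginary parts in $\F$) and $\F(\Q,(1+x\cdot\nabla)\Q)=0$ (from $\langle L_{R}\Q,(1+x\cdot\nabla)\Q\rangle=0$), the modulation terms contribute only $O\big((|\eta'|+|\theta'|+|\mu'|/|\mu|)\,\|\mathbf{v}\|_{\dot{H}^{1}}\big)\lesssim\mu^{2}\delta(t)^{2}$ by the previous step, while $\mu^{2}\F(\Q,-\mathcal{L}\mathbf{v}+\mathcal{R}(\mathbf{v}))\lesssim\mu^{2}(\|\mathbf{v}\|_{\dot{H}^{1}}+\|\mathbf{v}\|_{\dot{H}^{1}}^{2})\lesssim\mu^{2}\delta(t)$, again after moving the derivatives onto $\Q$. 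Adding these bounds and invoking Lemma~\ref{BoundI} yields \eqref{alfaE}. The main obstacle I foresee is the bookkeeping in the first step — getting the three modulation vector fields and the $\mu^{2}$ weights exactly right — together with the functional-analytic point that $\mathcal{L}\mathbf{v},\mathcal{R}(\mathbf{v})$ and $x\cdot\nabla\mathbf{v}$ are only in $\dot{H}^{-1}$, so every pairing must be read after transferring derivatives onto the fixed Schwartz-type profiles $\Q,\Q_{p},\Q_{q},\Lambda\Q$; the invertibility of $M_{0}$ itself is immediate from the kernel structure of $L_{I}$ and $L_{R}$ established in Section~\ref{EqLinea}.
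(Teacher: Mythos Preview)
Your proposal is correct and follows essentially the same strategy as the paper: derive the evolution equation for the modulated profile $\mathbf{w}=\mathbf{u}_{[\eta,\theta,\mu]}$, differentiate the orthogonality conditions, and read off the parameter derivatives from the resulting linear relations. Your packaging via the $3\times 3$ system $M(t)(\eta',\theta',\mu'/\mu)^{T}=\mathbf{b}(t)$ with $M(t)=M_{0}+O(\|\mathbf{v}\|_{\dot H^{1}})$ is exactly the content of the paper's estimate ``each derivative $=O(\mu^{2}\delta+\delta\,\delta^{\ast})$ followed by a continuity argument''; the two formulations are equivalent once $\delta_{0}$ is small. Your separate treatment of $\alpha'$ via $\alpha+1=\F(\Q,\Q)^{-1}\F(\Q,\mathbf{w})$ matches the paper's use of $\F(\Q,\partial_{t}\mathbf{h})=0$, and your identification $i\mathcal{D}_{1}\Q=i\Q_{p}$, $i\mathcal{D}_{2}\Q=\tfrac15 i\Q_{p}+\tfrac25 i\Q_{q}$ reproduces the coefficients $(\eta'+\tfrac15\theta')\Q_{p}+\tfrac25\theta'\Q_{q}$ appearing in the paper's equation for $\mathbf{h}$.
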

for $\delta_{0}$ small enough.
\begin{proof}
We define ${\delta}^{\ast}(t) := |\eta'(t)| + |\theta'(t)| + \left|\tfrac{\mu'(t)}{\mu(t)}\right| + \mu^2(t)\delta(t)$ and
\[ 
\ve{v}(t, y) := \ve{u}_{\left[\eta(t),\theta(t), \mu(t)\right]}(t, y).
\]
A straightforward calculation shows that the equation \eqref{NLS} takes the form:
\begin{align}\label{Line}
i\begin{pmatrix}
\partial_{t}v_{1} \\
\partial_{t}v_{2} \\
\partial_{t}v_{3}
\end{pmatrix}
&+\mu^{2}(t)\begin{pmatrix}
  \tfrac{1}{2m_{1}}\Delta v_{1} \\
  \tfrac{1}{2m_{2}}\Delta v_{2} \\
  \tfrac{1}{2m_{3}}\Delta v_{3}
\end{pmatrix}
+
\begin{pmatrix}
(\eta^{\prime}(t) + \theta^{\prime}(t))v_{1} \\
2\eta^{\prime}(t)v_{2} \\
2\theta^{\prime}(t)v_{3}
\end{pmatrix} \\
&+\tfrac{\mu^{\prime}(t)}{\mu(t)}\begin{pmatrix}
2 + y \cdot \nabla v_1 \\
2 + y \cdot \nabla v_2 \\
2 + y \cdot \nabla v_3
\end{pmatrix}
+\mu^{4}(t)
\begin{pmatrix}
\overline{v_{1}}v_{2}v_{3} \\
{v_{1}}^{2}\overline{v}_{2} \\
{v_{1}}^{2}\overline{v}_{3}
\end{pmatrix}=\ve{0}.
\end{align}
Moreover, since $\ve{v}=(1+\alpha(t))\Q+\ve{h}$, equation \eqref{Line} shows that $\ve{h}$ satisfies, for $t \in I_{0}$,  
\begin{align}
i\partial_{t}\ve{h} &+ \mu^{2}(t)\begin{pmatrix}
  \tfrac{1}{2m_{1}}\Delta h_{1} \\
  \tfrac{1}{2m_{2}}\Delta h_{2} \\
  \tfrac{1}{2m_{3}}\Delta h_{3}
\end{pmatrix}
+ i\alpha^{\prime}(t)\Q + (\eta^{\prime}(t)+\tfrac{1}{5}\theta^{\prime}(t))\Q_{p} + \tfrac{2}{5}\theta^{\prime}(t)\Q_{q} \nonumber \\
&+ i\tfrac{\mu^{\prime}(t)}{\mu(t)}\Lambda\Q = O\left(\mu^2(t)\delta(t) + \delta(t){\delta}^{\ast}(t)\right) \quad \text{in} \quad \dot{H}^1.
\end{align}
Using \eqref{h: Cond}, we obtain 
\[\partial_{t}\ve{h} \bot \text{span}\left\{\nabla \Q, i\Q_{p}, i\Q_{q}, \Lambda\Q \right\}\] 
and $\F(\Q, \partial_{t}\ve{h}) = 0$ for $t \in I_{0}$. Then, Lemma~\ref{BoundI} implies (recall that $(\Q_{p}, \Q_{q})_{\dot{H}^1}=0$)
\[
|\alpha'(t)| = O\left(\mu^2(t)\delta(t) + \delta(t){\delta}^{\ast}(t)\right), \quad 
\left|\tfrac{\mu^{\prime}(t)}{\mu(t)}\right| = O\left(\mu^2(t)\delta(t) + \delta(t){\delta}^{\ast}(t)\right)
\]
and  
\[|\theta'(t)| = O\left(\mu^2(t)\delta(t) + \delta(t){\delta}^{\ast}(t)\right)\quad
|\eta'(t)| = O\left(\mu^2(t)\delta(t) + \delta(t){\delta}^{\ast}(t)\right).
\]
By a continuity argument, we obtain the result for $\delta_{0}$ sufficiently small.
\end{proof}

\section{Convergence for subcritical threshold solution}\label{S: SubThres}

Henceforth, we assume the constants $m_1$, $m_2$, and $m_3$ satisfy the mass resonance condition $2m_1 + m_2 = m_3$ and, in particular, that the conclusions of Lemma~\ref{VirialIden} hold.

This section is devoted to proving the following result:

\begin{proposition}\label{CompacDeca}
Let $\ve{u}$ be a radial solution of \eqref{NLS} on the interval $I=(T_{-}, T_{+})$ satisfying 
\begin{align}\label{PropCon11}
E(\ve{u}_{0})=E(\Q) \qtq{and} K(\ve{u}_{0})<K(\Q).
\end{align}
Then the solution is global, i.e., $I=\R$. Moreover, if 
\begin{align}\label{ScaTa}
\|\ve{u}\|_{L^{6}_{t, x}((0, \infty)\times \R^{4})}=\infty,
\end{align}
then there exist parameters $\theta_{1}, \theta_{2}\in \R$, $\lambda>0$, and constants $c>0$, $C>0$ such that
\[
\|\ve{u}(t)-\Q_{[\theta_{1}, \theta_{2}, \lambda]}\|_{\dot{H}^{1}}\leq Ce^{-c t}
\qtq{for all $t\geq 0$.}
\]
An analogous result holds for negative times.
\end{proposition}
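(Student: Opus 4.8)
The plan is to follow the concentration--compactness plus modulation scheme of Duyckaerts--Merle \cite{DuyMerle2009}, adapted to the two--phase symmetry of \eqref{NLS}; the proof splits into a variational/global--existence step, a trapping step, and a virial--driven exponential decay step.

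\emph{Variational trapping and global existence.} First I would record that $E(\ve{u}(t))=E(\Q)$ for all $t\in I$ and that $K(\ve{u}(t))<K(\Q)$ throughout $I$: if $K(\ve{u}(t_{0}))=K(\Q)$ at some $t_{0}$, then the Pohozaev identity \eqref{IP4} (which gives $2E(\Q)=K(\Q)$) forces $\ve{u}(t_{0})$ to achieve equality in \eqref{GNI}, so by Proposition~\ref{UBS} $\ve{u}(t_{0})\in\B$ and hence $K(\ve{u}_{0})=K(\Q)$, contradicting \eqref{PropCon11}; continuity of $t\mapsto K(\ve{u}(t))$ then also rules out $K(\ve{u}(t))>K(\Q)$. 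In particular $\delta(t)=K(\Q)-K(\ve{u}(t))>0$, and combining $E(\ve{u}(t))=E(\Q)$ with $2E(\Q)=K(\Q)$ gives
\[
F_{\infty}[\ve{u}(t)]=4\bigl(K(\ve{u}(t))-4P(\ve{u}(t))\bigr)=4\bigl(2E(\Q)-K(\ve{u}(t))\bigr)=4\,\delta(t),
\]
which will drive the virial estimate below. To prove $I=\R$, I would argue by contradiction: if $T_{+}<\infty$ then $\|\ve{u}\|_{L^{6}_{t,x}((0,T_{+})\times\R^{4})}=\infty$ by Proposition~\ref{LCP}; applying the bubble decomposition Theorem~\ref{BubD} (with \eqref{AsP}) and the variational rigidity Lemma~\ref{AproxLim23} along $t_{n}\uparrow T_{+}$, the constraint $E(\ve{u}_{0})=E(\Q)$ forces a single concentrating $\Q$--bubble with vanishing remainder, so $\delta(t_{n})\to0$ with concentration rate $\to\infty$; feeding the modulation decomposition into the truncated virial identity (Lemmas~\ref{VirialIden}, \ref{VirialModulate}) then yields a contradiction with $T_{+}<\infty$ exactly as in \cite{DuyMerle2009}. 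Time reversal gives $T_{-}=\infty$.

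\emph{From non-scattering to trapping by $\B$.} Assuming \eqref{ScaTa}, I would first show $\liminf_{t\to+\infty}\delta(t)=0$: if $\delta(t_{n})\ge\varepsilon_{0}$ along some $t_{n}\to+\infty$, a profile decomposition of $\ve{u}(t_{n})$ together with Lemma~\ref{ConvexIn} and the sub--threshold scattering Theorem~\ref{Th1} produces a limiting solution that scatters, contradicting \eqref{ScaTa}. Then I would upgrade this to $\delta(t)\to0$ by stability of the trapped regime: once $\delta$ enters a small interval $[0,\delta_{0})$ it cannot exit for large $t$, since an exit would (via Lemma~\ref{ConvexIn} and the virial identity) force scattering or finite--time blow--up, both excluded; hence there is $t_{0}$ with $\delta(t)<\delta_{0}$ for all $t\ge t_{0}$.

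\emph{Exponential decay of $\delta$ and convergence.} For $t\ge t_{0}$ I would invoke Proposition~\ref{ModilationFree}, writing $\ve{u}_{[\eta(t),\theta(t),\mu(t)]}(t)=(1+\alpha(t))\Q+\ve{h}(t)$ with $|\alpha(t)|\sim\|\ve{h}(t)\|_{\dot{H}^{1}}\sim\delta(t)$ and the parameter bounds \eqref{EstimateFree}. Using Lemma~\ref{VirialModulate} with $\chi\equiv1$ near the bubble and $R$ tuned to $\mu$, together with $F_{\infty}[\ve{u}(t)]=4\delta(t)$, $I_{R}[\Q_{[\eta,\theta,\mu]}]=0$ (Lemma~\ref{Virialzero}), and the orthogonality conditions \eqref{h: Cond}, I expect to build a bounded quantity $J(t)$ satisfying $J'(t)\ge c\,\delta(t)-o(\delta(t))$ and $|J(t)|\lesssim\delta(t)$ (the latter because $I_{R}$ vanishes on $\B$, so $I_{R}[\ve{u}(t)]$ is first order in $\ve{h}(t)$). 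Since $\delta(t)\to0$, integrating $J'\gtrsim\delta$ from $t$ to $+\infty$ yields $\delta(t)\gtrsim\int_{t}^{\infty}\delta(s)\,ds$, and a Gronwall argument gives $\delta(t)\le Ce^{-ct}$. A companion virial argument (using $\int\delta<\infty$) shows $\mu(t)$ stays bounded above and below; then \eqref{EstimateFree} gives $|\eta'(t)|+|\theta'(t)|+|\alpha'(t)|+|\mu'(t)/\mu(t)|\lesssim e^{-ct}$, so $\mu(t)$, $\eta(t)$, $\theta(t)$ converge and $\alpha(t)\to0$ with exponential rate, whence $\|\ve{u}_{[\eta(t),\theta(t),\mu(t)]}(t)-\Q\|_{\dot{H}^{1}}\lesssim e^{-ct}$; undoing the convergent modulation produces $\theta_{1},\theta_{2}\in\R$, $\lambda>0$ with $\|\ve{u}(t)-\Q_{[\theta_{1},\theta_{2},\lambda]}\|_{\dot{H}^{1}}\le Ce^{-ct}$ for $t\ge0$ (absorbing $[0,t_{0}]$ into the constant), and the negative-time statement follows by applying everything to $\ve{u}(-t)$.

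\emph{Main obstacle.} The hard part is the third step: constructing $J(t)$ with error terms genuinely of size $o(\delta(t))$. This requires the full coercivity of $\F$ on the seven-codimensional space $G^{\bot}$ (Proposition~\ref{CoerQuadra}) and, specific to this system, simultaneous control of \emph{both} phase modulation parameters $\eta(t),\theta(t)$ coming from $\dim\ker L_{I}=2$ --- one must orthogonalize against $i\Q_{p}$ and $i\Q_{q}$ and bound $\eta',\theta'$ together --- followed by the non-degeneracy of the scale $\mu(t)$, where the mass-resonance hypothesis $2m_{1}+m_{2}=m_{3}$ enters (it underlies Lemma~\ref{VirialIden}).
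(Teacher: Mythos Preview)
Your overall scheme is the right one, but there is a structural gap compared with the paper's proof: you never establish the Kenig--Merle compactness property, and your order of operations (first $\delta(t)\to 0$, then exponential decay) is the reverse of what actually works here.

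In the paper, the backbone of the whole argument is Lemma~\ref{Compacness11}: under \eqref{PropCon11} and \eqref{NscaF}, there exists $\lambda(t)$ such that $\{\ve{u}_{[\lambda(t)]}:t\ge 0\}$ is precompact in $\dot H^{1}$. This compactness is what makes the localized virial errors $F_{R}[\ve{u}]-F_{\infty}[\ve{u}]$ uniformly small for $R\gtrsim 1/\lambda(t)$ (see \eqref{CompactAgain}), what drives $\sqrt{t}\,\lambda(t)\to\infty$ (Claim~\ref{Claim11}), and what underlies the control of the scale parameter in Proposition~\ref{Spatialcenter}. Your profile-decomposition sketch in the ``trapping'' step is roughly the first move in proving such a compactness lemma, but as stated it does not give you a precompact orbit, and without that you cannot close the virial estimates: on the set $\{\delta(t)\ge\delta_{0}\}$ you have no modulation and no a~priori localization, so the $o(\delta(t))$ error in your $J'(t)\ge c\,\delta(t)-o(\delta(t))$ is unjustified.

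Second, the paper does \emph{not} prove $\delta(t)\to 0$ before the exponential decay. It proves (i) ergodic-mean convergence $\tfrac{1}{T}\int_{0}^{T}\delta\to 0$ (Lemma~\ref{ZeroVirial}), (ii) the integral bound $\int_{t_{1}}^{t_{2}}\delta\lesssim \sup \lambda^{-2}\,(\delta(t_{1})+\delta(t_{2}))$ (Lemma~\ref{Lemma11}), (iii) control of $\lambda(t)$ via \eqref{BoundCenter}, and only then deduces $\int_{T}^{\infty}\delta\lesssim e^{-cT}$ by Gronwall; the pointwise $\delta(t)\to 0$ comes \emph{after} this, from integrating $|\alpha'|\lesssim\mu^{2}\delta$ in \eqref{EstimateFree}. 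Your proposed shortcut ``once $\delta$ enters $[0,\delta_{0})$ it cannot exit, since an exit would force scattering or blow-up'' is not available at this stage: there is no mechanism, absent the compactness and the integral estimates, that rules out $\delta$ oscillating between small and order-one values. Likewise, your claim that $\mu(t)$ stays bounded above and below needs Proposition~\ref{Spatialcenter} (or an equivalent), which again rests on compactness. Fixing this means inserting the compactness lemma up front and following the order (i)--(iii) above; once you do, your final paragraph (exponential convergence of $\eta,\theta,\mu,\alpha$ from \eqref{EstimateFree}) is exactly right.
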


As a consequence of the previous proposition, we obtain the following corollary:

\begin{corollary}\label{ClassC}
There exists no radial solution to equation \eqref{NLS} satisfying both \eqref{PropCon11} and 
\begin{equation}\label{Infinity10}
\|\ve{u}\|_{L^{6}_{t, x}((0, \infty)\times \R^{4})}=
\|\ve{u}\|_{L^{6}_{t, x}((-\infty,0)\times \R^{4})}=
\infty.
\end{equation}
\end{corollary}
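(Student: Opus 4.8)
The plan is to derive Corollary~\ref{ClassC} directly from Proposition~\ref{CompacDeca} by a contradiction argument combined with the variational rigidity already available in the excerpt. Suppose, for contradiction, that $\ve{u}$ is a radial solution satisfying \eqref{PropCon11} and \eqref{Infinity10}. Since \eqref{PropCon11} holds, Proposition~\ref{CompacDeca} applies and tells us first that $\ve{u}$ is global, and second, since $\|\ve{u}\|_{L^{6}_{t,x}((0,\infty)\times\R^{4})}=\infty$, there exist $\theta_{1}^{+},\theta_{2}^{+}\in\R$, $\lambda^{+}>0$ and $c^{+},C^{+}>0$ with
\[
\|\ve{u}(t)-\Q_{[\theta_{1}^{+},\theta_{2}^{+},\lambda^{+}]}\|_{\dot H^{1}}\le C^{+}e^{-c^{+}t}\qquad\text{for all }t\ge 0.
\]
Applying the ``analogous result for negative times'' to the other blow-up-of-norm hypothesis $\|\ve{u}\|_{L^{6}_{t,x}((-\infty,0)\times\R^{4})}=\infty$, we likewise obtain $\theta_{1}^{-},\theta_{2}^{-}\in\R$, $\lambda^{-}>0$ and $c^{-},C^{-}>0$ with
\[
\|\ve{u}(t)-\Q_{[\theta_{1}^{-},\theta_{2}^{-},\lambda^{-}]}\|_{\dot H^{1}}\le C^{-}e^{c^{-}t}\qquad\text{for all }t\le 0.
\]

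The contradiction comes from the conserved quantity $E$ together with the sharp Gagliardo--Nirenberg analysis. From the $t\to+\infty$ bound we get $K(\ve{u}(t))\to K(\Q_{[\theta_{1}^{+},\theta_{2}^{+},\lambda^{+}]})=K(\Q)$ as $t\to+\infty$, so $\delta(\ve{u}(t))\to 0$; similarly $\delta(\ve{u}(t))\to 0$ as $t\to-\infty$. In particular the solution is trapped near the ground-state orbit $\mathcal B$ in both time directions. One then invokes the virial machinery of Section~\ref{S1:Vari} (Lemma~\ref{VirialIden}, Lemma~\ref{VirialModulate}) exactly as in the proof of Proposition~\ref{CompacDeca}: using the modulation decomposition from Proposition~\ref{ModilationFree} valid on all of $\R$ (since $\delta(t)<\delta_{0}$ for $|t|$ large, and on the remaining compact set $\delta$ is continuous hence bounded, after possibly shrinking), the localized virial functional $I_{R}[\ve{u}(t)]$ is bounded while its derivative has a fixed sign coming from the coercivity of $\F$ on $G^{\bot}$; integrating over $\R$ forces $\delta\equiv 0$, i.e. $\ve{u}(t)\in\mathcal B$ for all $t$. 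But an element of $\mathcal B$ is (up to symmetries) the static solution $\Q$, whose $L^{6}_{t,x}$ norm over any half-line is infinite only in the trivial sense that $\Q$ is time-independent and nonzero—more precisely, $\Q_{[\theta_{1},\theta_{2},\lambda]}(t)$ as a solution of \eqref{NLS} is, modulo the symmetries in (i)--(ii), the stationary profile, and such a solution does not satisfy $K(\ve{u}_{0})<K(\Q)$ since $K(\Q_{[\theta_{1},\theta_{2},\lambda]})=K(\Q)$. This contradicts the strict inequality in \eqref{PropCon11}.

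A cleaner route, which I would actually prefer to write, avoids re-running the virial argument: once we know $\delta(\ve{u}(t))\to 0$ as $t\to\pm\infty$, the convergence statement of Proposition~\ref{CompacDeca} already gives $\|\ve{u}(t)-\Q_{[\theta_{1}^{\pm},\theta_{2}^{\pm},\lambda^{\pm}]}\|_{\dot H^{1}}\to 0$. Combined with conservation of $E$ and Lemma~\ref{ConvexIn} (applied with $K(\ve{u}_{0})<K(\Q)$, which gives $K(\ve{u}_{0})E(\Q)\le K(\Q)E(\ve{u}_{0})=K(\Q)E(\Q)$, hence a relation that is consistent only in a degenerate way), one pins down that the whole trajectory must be the scattering solution $\G^{-}$ from Theorem~\ref{Gcharc}, or else scatters; but $\G^{-}$ scatters as $t\to-\infty$, so $\|\G^{-}\|_{L^{6}_{t,x}((-\infty,0)\times\R^{4})}<\infty$, contradicting \eqref{Infinity10}, and a genuinely scattering solution also has finite $L^{6}_{t,x}$ in both directions, again a contradiction. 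Either way, no solution can have infinite $L^{6}_{t,x}$ norm on both half-lines while satisfying \eqref{PropCon11}.

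The main obstacle is the bookkeeping of the two phase parameters and the modulation parameter $\mu(t)$: one must be sure that the rigidity/virial argument, which in Proposition~\ref{CompacDeca} is carried out for the forward-in-time half-line, genuinely localizes the solution to $\mathcal B$ on \emph{all} of $\R$ rather than merely near $\pm\infty$, and that the exponential-convergence parameters $(\theta_{1}^{+},\theta_{2}^{+},\lambda^{+})$ and $(\theta_{1}^{-},\theta_{2}^{-},\lambda^{-})$ can be reconciled (in fact they need not coincide; what matters is only that both limits lie in $\mathcal B$, which suffices to contradict $K(\ve{u}_{0})<K(\Q)$ via conservation of $E$ and the equality case in \eqref{GNI}). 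Since this is a short corollary, I would simply state: by Proposition~\ref{CompacDeca} the solution is global with $\delta(\ve{u}(t))\to0$ as $t\to\pm\infty$; the continuity of $t\mapsto\delta(\ve{u}(t))$, conservation of energy, and the rigidity already embedded in the proof of Proposition~\ref{CompacDeca} (specifically, that a threshold solution trapped near $\mathcal B$ in one time direction that does not scatter must converge exponentially and, if trapped in \emph{both} directions, must coincide with an element of $\mathcal B$) force $\ve{u}(t)\equiv\Q_{[\theta_{1},\theta_{2},\lambda]}$, which violates $K(\ve{u}_{0})<K(\Q)$. Hence no such solution exists.
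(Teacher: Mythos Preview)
Your preferred ``cleaner route'' is circular. The fact that $\G^{-}$ scatters as $t\to-\infty$ is proved in the paper precisely by invoking Corollary~\ref{ClassC} (see the last line of the proof of Theorem~\ref{Gcharc}), and the dichotomy ``$\ve{u}=\G^{-}$ up to symmetries or $\ve{u}$ scatters'' is Theorem~\ref{TH22}(i), which is established only in Section~\ref{S:proof} and relies on Proposition~\ref{UniqueU} and Corollary~\ref{coroCla}. Neither is available at this point, so you cannot appeal to them here.

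Your first route is structurally close to the paper's proof, but the mechanism you describe is wrong. The derivative $\tfrac{d}{dt}I_{R}[\ve{u}]$ does not have ``a fixed sign coming from the coercivity of $\F$ on $G^{\bot}$''; coercivity of $\F$ plays no role in this argument. What the paper actually does is: use compactness on all of $\R$ (both half-line hypotheses in \eqref{Infinity10} give a $\lambda(t)$ with pre-compact orbit on $\R$), deduce $\lim_{t\to\pm\infty}\delta(t)=0$, and then run the two-sided version of the virial estimate in Lemma~\ref{Lemma11} to obtain
\[
\int_{-n}^{n}\delta(t)\,dt\le C\bigl(\delta(n)+\delta(-n)\bigr)\qquad\text{for all }n\in\mathbb{N}.
\]
Sending $n\to\infty$ forces $\int_{\R}\delta(t)\,dt=0$, hence $\delta\equiv 0$, which contradicts $K(\ve{u}_{0})<K(\Q)$. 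Your sketch gestures at ``integrating over $\R$ forces $\delta\equiv 0$'' but omits this key integral inequality and misattributes its source; you should point explicitly to (the two-sided analogue of) Lemma~\ref{Lemma11} rather than to coercivity.
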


We will first prove Proposition~\ref{CompacDeca}, followed by Corollary~\ref{ClassC}.

We begin with the following lemma in the spirit of Kenig and Merle's work \cite{KenigMerle2006}. The proof follows along similar lines to \cite[Proposition 5.3]{OgaTsu2023}.

\begin{lemma}[Compactness]\label{Compacness11}
Let $\ve{u}(t)$ be a radial solution of \eqref{NLS} with maximal existence interval $I=[0, T_{+})$ that satisfies both \eqref{PropCon11} and 
\begin{align}\label{NscaF}
    \|\ve{u}\|_{L^{6}_{t, x}((0, T_{+})\times \R^{4})}=\infty.
\end{align}
Then there exists a scaling parameter $\lambda(t): [0, T_{+}) \to (0, +\infty)$ such that
\begin{equation}\label{CompactX}
\left\{\ve{u}_{[\lambda(t)]}: t\in [0, T_{+})\right\} \quad
\text{is pre-compact in $\dot{H}^{1}$},
\end{equation}
where $\ve{u}_{[\lambda(t)]}(t,x) := \lambda(t)^{-1}u(\lambda(t)^{-2}t, \lambda(t)^{-1}x)$.
\end{lemma}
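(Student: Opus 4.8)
The statement to be proven is the compactness Lemma~\ref{Compacness11}: a radial solution $\ve{u}(t)$ at the energy threshold $E(\Q)$ satisfying $K(\ve{u}_0)<K(\Q)$ and failing to scatter as $t\to T_+$ must have a precompact normalized orbit modulo scaling. The plan is to follow the concentration-compactness / rigidity scheme of Kenig--Merle, adapted to the system \eqref{NLS} exactly as in \cite[Proposition 5.3]{OgaTsu2023}, but keeping track of the threshold equality. First I would record the structural consequences of the hypotheses: by Lemma~\ref{ConvexIn} together with $E(\ve{u}_0)=E(\Q)$ and $K(\ve{u}_0)<K(\Q)$, one gets $E(\ve{u}_0)\ge 0$ and in fact a uniform lower bound keeping $K(\ve{u}(t))$ bounded away from $K(\Q)$ from below is \emph{not} available at threshold, so instead I would use the sharp Gagliardo--Nirenberg inequality \eqref{GNI} and the conservation of energy to show that $K(\ve{u}(t))<K(\Q)$ is preserved on the whole maximal interval (a continuity/trapping argument: $K(\ve{u}(t))=K(\Q)$ would force, via \eqref{GNI} and $E=E(\Q)$, that $\ve{u}(t)$ realizes equality in \eqref{GNI}, hence $\ve{u}(t)\in\B$ by Proposition~\ref{UBS}, contradicting the non-scattering hypothesis since elements of $\B$ are scattering-free only in a trivial way and $K$ would have to cross $K(\Q)$). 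This gives global existence in the relevant time direction and the coercivity $E(\ve{u}(t))\sim K(\ve{u}(t))$ via Lemma~\ref{ConvexIn}.

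Next I would set up the profile decomposition. Take a sequence of times $t_n\to T_+$; by the non-scattering assumption \eqref{NscaF} and the Strichartz-based long-time perturbation theory (Lemma~\ref{ConSNLS}), the sequence $\ve{u}(t_n)$ cannot have all its $\dot H^1$-mass in a single ``scattering'' profile, and one applies the bubble decomposition Theorem~\ref{BubD} to the rescaled sequence $\ve{f}_n:=\ve{u}_{[\lambda_n]}(t_n)$ for a suitable choice of $\lambda_n$ (chosen so the first profile concentrates at unit scale). Using \eqref{AK}, \eqref{AsP}, the energy identity $K(\ve{f}_n)-2P(\ve{f}_n)=E(\Q)$, and the sub-threshold constraint $K(\ve{f}_n)<K(\Q)$, one argues that exactly one profile $\Phi^1$ is nontrivial, the remainder $\ve{r}_n^1\to 0$ in $\dot H^1$, and $\Phi^1$ itself satisfies $E(\Phi^1)\le E(\Q)$ with $K(\Phi^1)\le K(\Q)$. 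The threshold case $K(\Phi^1)<K(\Q)$ would make the solution with data $\Phi^1$ scatter by Theorem~\ref{Th1}(i), and then a nonlinear-profile / perturbation argument would make $\ve{u}$ itself scatter as $t\to T_+$, contradicting \eqref{NscaF}; hence $E(\Phi^1)=E(\Q)$ and $K(\Phi^1)=K(\Q)$, which by case (ii) of Theorem~\ref{TH22} (or directly by the equality analysis: $\Phi^1$ realizes equality in \eqref{GNI}) forces $\Phi^1\in\B$ — but in any case the point is only that $\ve{f}_n\to\Phi^1$ in $\dot H^1$ along the subsequence. Defining $\lambda(t)$ by interpolating these $\lambda_n$ (standard: one shows $\lambda(t)$ can be chosen continuous, e.g. via the modulation parameter of Lemma~\ref{ExistenceFree} once the orbit is close to $\B$), one concludes $\{\ve{u}_{[\lambda(t)]}:t\in[0,T_+)\}$ is precompact.

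\textbf{Main obstacle.} The delicate point is the \emph{one-bubble} reduction at threshold: ruling out a nontrivial remainder and ruling out two or more profiles. Unlike the strictly sub-threshold case, here the energy of each profile is only $\le E(\Q)$ and one cannot immediately invoke positivity of each $E(\Phi^j)$ to split the energy; one must combine $\sum_j K(\Phi^j)\le K(\Q)$ (from \eqref{AK}) with $\sum_j P(\Phi^j)=P(\Q)$ (from \eqref{AsP}) and the sharp inequality \eqref{GNI} with $G_S[K(\Q)]^2=P(\Q)$ to force, by the superadditivity $\big(\sum K(\Phi^j)\big)^2\ge\sum K(\Phi^j)^2$, that only one $K(\Phi^j)$ is nonzero and the remainder vanishes in $\dot H^1$ — exactly the computation already performed in the proof of Lemma~\ref{AproxLim23}. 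The second subtlety is the passage from $\dot H^1$-convergence of $\ve{u}(t_n)_{[\lambda_n]}$ along a subsequence (with $\lambda_n$ possibly depending on the subsequence) to precompactness of the full orbit with a single function $\lambda(\cdot)$; this is handled by a now-standard argument showing the limit profile is independent of the subsequence (otherwise one produces two distinct non-scattering threshold profiles and contradicts uniqueness below the threshold) and that $\lambda(t)$ varies continuously, using the modulation lemma near $\B$ and Lemma~\ref{ConSNLS} away from it. I would flag the profile-splitting step and the continuity/uniqueness of $\lambda(t)$ as the two places requiring care; the rest is a routine adaptation of \cite{KenigMerle2006, OgaTsu2023}.
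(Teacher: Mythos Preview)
The paper itself does not give a proof of this lemma; it simply refers to \cite[Proposition~5.3]{OgaTsu2023}. Your overall plan---profile decomposition plus long-time perturbation theory in the Kenig--Merle style---is the right one, and matches what that reference does.

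However, your one-bubble reduction is mis-argued. You claim that from \eqref{AsP} one gets $\sum_j P(\Phi^j)=P(\Q)$ and then invoke the superadditivity computation of Lemma~\ref{AproxLim23}. But \eqref{AsP} only gives $\sum_j P(\Phi^j)=\lim_n P(\ve{u}(t_n))$, and since $P(\ve{u}(t_n))=\tfrac{1}{2}(K(\ve{u}(t_n))-E(\Q))$, this equals $P(\Q)$ \emph{only if} $K(\ve{u}(t_n))\to K(\Q)$---which you have no reason to know at this stage (indeed, $\delta(t)$ could be bounded away from $0$ along the sequence). The argument of Lemma~\ref{AproxLim23} is therefore unavailable here. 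The correct mechanism is the standard one: from \eqref{AK} each profile satisfies $K(\Phi^j)\le K(\Q)$, and by the sharp inequality \eqref{GNI} together with $G_S=1/(4K(\Q))$ one gets $E(\Phi^j)\ge \tfrac{1}{2}K(\Phi^j)\ge 0$; since the energies (and that of the remainder) sum to $E(\Q)$, the presence of two or more nonzero profiles, or of one profile plus a remainder with nonvanishing $K$, forces each $E(\Phi^j)<E(\Q)$. Then Theorem~\ref{Th1}(i) applies to each nonlinear profile, they all scatter, and Lemma~\ref{ConSNLS} transfers scattering to $\ve{u}$, contradicting \eqref{NscaF}. Hence exactly one profile survives and $\ve{r}_n^1\to 0$ in $\dot H^1$, which is precisely the convergence you need.

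A second, smaller issue: once you know there is a single profile with vanishing remainder, you automatically have $\ve{u}_{[\lambda_n]}(t_n)\to\Phi^1$ in $\dot H^1$ and the proof is finished for that sequence. Your subsequent dichotomy on whether $K(\Phi^1)<K(\Q)$ or $K(\Phi^1)=K(\Q)$ is unnecessary, and the appeal to Theorem~\ref{Th1}(i) in the former case is illegitimate anyway (it requires $E(\Phi^1)<E(\Q)$, which fails since $E(\Phi^1)=E(\Q)$).
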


\begin{lemma}[Global solution]\label{GlobalW}
Let $\ve{u}(t)$ be a radial solution of \eqref{NLS} defined on its maximal interval of existence $I=(T_{-}, T_{+})$. If the initial data satisfies
\begin{equation}\label{leqleq}
E(\ve{u}_{0}) \leq E(\Q) \qtq{and} K(\ve{u}_{0}) \leq K(\Q),
\end{equation}
then the solution extends globally in time, i.e., $I=\R$.
\end{lemma}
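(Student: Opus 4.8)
The plan is to show global existence by ruling out finite-time blow-up in both time directions, using the energy/kinetic constraint \eqref{leqleq} together with the sub-threshold scattering theory (Theorem~\ref{Th1}) and a continuity/trapping argument. The key observation is that under \eqref{leqleq}, at time $t=0$ we are either in the strictly subcritical regime (if $K(\ve{u}_0)<K(\Q)$) or exactly at the boundary (if $K(\ve{u}_0)=K(\Q)$), and in the latter case the variational characterization forces $\ve{u}_0$ to essentially coincide with the ground state, which is itself global.

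\textbf{Step 1: The strictly subcritical case.} Suppose first that $E(\ve{u}_0)<E(\Q)$ or that $E(\ve{u}_0)=E(\Q)$ with $K(\ve{u}_0)<K(\Q)$. I claim the solution is global. If $E(\ve{u}_0)<E(\Q)$, then by combining $E=K-2P$ with the Gagliardo--Nirenberg inequality \eqref{GNI} and a standard convexity argument (of the type in Lemma~\ref{ConvexIn}), the set $\{K(\ve{u}(t))<K(\Q)\}$ is both open and closed in $I$ and hence all of $I$; then Theorem~\ref{Th1}(i) gives global existence (indeed scattering). If instead $E(\ve{u}_0)=E(\Q)$ with $K(\ve{u}_0)<K(\Q)$: again I use that $K(\ve{u}(t))$ cannot cross the value $K(\Q)$. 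Indeed, if $K(\ve{u}(t_0))=K(\Q)$ for some $t_0\in I$, then since $E(\ve{u}(t_0))=E(\Q)$ we would have $P(\ve{u}(t_0))=\tfrac12(K-E)(\ve{u}(t_0))=\tfrac12(K(\Q)-E(\Q))=P(\Q)=\tfrac14 K(\Q)$ by the Pohozaev identity \eqref{IP4}, so $\ve{u}(t_0)$ attains equality in \eqref{GNI}, hence by Proposition~\ref{UBS} it lies in $\mathcal B$ up to scaling/phase and in particular is a rescaled ground state — but then by uniqueness of the flow the whole solution is a (time-independent up to symmetries) ground state, contradicting $K(\ve{u}_0)<K(\Q)$. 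Therefore $K(\ve{u}(t))<K(\Q)$ is preserved on $I$, and then the localized virial / energy-trapping mechanism underlying Lemma~\ref{VirialIden} and Theorem~\ref{Th1} shows the solution cannot blow up: one gets a uniform bound $K(\ve{u}(t))\le (1-c)K(\Q)$ away from threshold whenever $E(\ve{u}_0)<E(\Q)$, and in the threshold case the bound degenerates only at $K=K(\Q)$, which is never reached, so together with the blow-up criterion Proposition~\ref{LCP}(ii) — combined with the standard fact that near a blow-up time the kinetic energy must exceed $K(\Q)$ under $E\le E(\Q)$ — we conclude $I=\R$.

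\textbf{Step 2: The equality case $K(\ve{u}_0)=K(\Q)$.} If $E(\ve{u}_0)=E(\Q)$ and $K(\ve{u}_0)=K(\Q)$, then exactly as in Step 1 one has $P(\ve{u}_0)=\tfrac14K(\Q)=G_S[K(\Q)]^2$ (using $G_S[K(\Q)]^2=\tfrac14 K(\Q)$, which follows from \eqref{IP4} and the fact that $\Q$ is an optimizer of \eqref{GNI}), so $\ve{u}_0$ achieves equality in the Gagliardo--Nirenberg inequality. Proposition~\ref{UBS} then identifies $\ve{u}_0$ with an element of $\mathcal B$ up to symmetries; since $\mathcal B$ consists of stationary solutions (up to the scaling/phase symmetries of the equation), $\ve{u}(t)$ is global. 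If $E(\ve{u}_0)<E(\Q)$ is impossible here since $E=K-2P$ and equality in \eqref{GNI} with $K=K(\Q)$ forces $E=E(\Q)$.

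\textbf{Step 3: Conclusion.} Combining the two cases: under \eqref{leqleq}, either $\ve{u}_0$ is a ground state modulo symmetries (Step 2, global) or $K(\ve{u}(t))<K(\Q)$ is propagated (Step 1), and in the latter situation the blow-up criterion Proposition~\ref{LCP}(ii) is never triggered because a blow-up would force $K(\ve{u}(t))\to\infty$ along a sequence, violating the preserved bound. Hence $T_+=T_-=+\infty$, i.e. $I=\R$.

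\emph{The main obstacle} is Step 1 in the borderline case $E(\ve{u}_0)=E(\Q)$: unlike the strictly subcritical situation, one does not have a uniform gap $K(\ve{u}(t))\le (1-c)K(\Q)$, so the virial functional need not be uniformly negative and finite-time blow-up is not excluded by a crude monotonicity argument. The resolution — showing that $K(\ve{u}(t))$ never actually reaches $K(\Q)$, so that the solution stays in the subcritical regime of \eqref{NLS} for all times in $I$, and then invoking that a hypothetical blow-up time would force passage through (or beyond) the threshold kinetic energy — is exactly the point where the ground-state rigidity from Proposition~\ref{UBS} and the uniqueness of the Cauchy flow (Proposition~\ref{LCP}(i)) are essential. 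One should also be careful that the blow-up exclusion does not require the extra hypotheses ($\ve{u}_0\in L^2$ or $|x|\ve{u}_0\in L^2$) appearing in Theorem~\ref{Th1}(ii): here we are in the regime $K<K(\Q)$, the \emph{opposite} side, so only the (unconditional) global part of the sub-threshold theory is needed.
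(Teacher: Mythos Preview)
Your Step~2 (the equality case $K(\ve{u}_0)=K(\Q)$) and the sub-threshold case $E(\ve{u}_0)<E(\Q)$ are fine and match the paper's Cases~(i) and~(ii). The genuine gap is in Step~1 at the threshold $E(\ve{u}_0)=E(\Q)$, $K(\ve{u}_0)<K(\Q)$. Your key claim---that a hypothetical finite blow-up time $T_+<\infty$ would force $K(\ve{u}(t))\to\infty$, or at least force $K(\ve{u}(t))$ to reach $K(\Q)$---is \emph{false} for energy-critical equations. The blow-up criterion (Proposition~\ref{LCP}(ii)) only asserts that the scaling-critical Strichartz norm $\|\ve{u}\|_{L^6_{t,x}}$ diverges; the $\dot H^1$ norm is itself scale-invariant and can stay bounded while the solution concentrates and blows up. So the bound $K(\ve{u}(t))<K(\Q)$, which you correctly propagate, does \emph{not} by itself exclude $T_+<\infty$. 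Neither does invoking Theorem~\ref{Th1}, which is stated and proved only for $E(\ve{u}_0)<E(\Q)$.

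The paper closes this gap by a different, and genuinely deeper, argument: in the threshold case with $\|\ve{u}\|_{L^6_{t,x}([0,T_+))}=\infty$ it first invokes the Kenig--Merle compactness Lemma~\ref{Compacness11} to obtain a scaling parameter $\lambda(t)$ making the orbit pre-compact in $\dot H^1$. Assuming $T_+<\infty$, one then shows $\lambda(t)\to\infty$ and runs a localized $L^2$-mass argument (the function $z_R(t)$), bounding $|z_R'(t)|$ uniformly via Hardy and using compactness to show $z_R(T)\to0$ as $T\to T_+$, which forces $\ve{u}_0\in L^2$ with $\|\ve{u}_0\|_{L^2}^2\lesssim|T_+-0|$ and ultimately $\ve{u}_0=0$, a contradiction. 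This rigidity step is what replaces your (incorrect) appeal to kinetic-energy blow-up.
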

\begin{proof}
We consider three cases:

\textbf{Case (i).} Suppose that $K(\ve{u}_{0})=K(\Q)$. Lemma~\ref{ConvexIn} implies that $E(\ve{u})=E(\Q)$. Then the variational characterization given in Proposition~\ref{TheGN} shows that $\ve{u}_{0}=\Q_{[\theta_{1},\theta_{2}, \lambda_{0}]}$.

\textbf{Case (ii).} Suppose that $K(\ve{u}_{0})<K(\Q)$ and $E(\ve{u})<E(\Q)$. Theorem~\ref{Th1} shows that the solution $\ve{u}$ is global.

\textbf{Case (iii).} Suppose that $K(\ve{u}_{0})<K(\Q)$ and $E(\ve{u})=E(\Q)$. If $\|\ve{u}\|_{L^{6}_{t, x}(I\times \R^{4})}<\infty$,
then by the finite blow-up criterion, we conclude that $\ve{u}$ is a global solution.

On the other hand, if $\|\ve{u}\|_{L^{6}_{t, x}([0, T_{+})\times \R^{6})}=\infty$, Lemma~\ref{Compacness11} implies that there exists a function
$\lambda(t)$ such that $\left\{\ve{u}_{[\lambda(t)]}: t\in [0, T_{+})\right\}$ 
is pre-compact in $\dot{H}^{1}$. 

Suppose, for contradiction, that $T_{+}< +\infty$. 
By compactness and following the same argument as in Case 1 of \cite[Proposition 5.3]{KenigMerle2006}, we obtain
\begin{align}\label{LamdaI}
	\lim_{t\to T_{+}}\lambda(t)=+\infty.
\end{align}

Now, for $R>0$, define (we set $\ve{u}:=(u,v,g)$)
\[
z_{R}(t)=\int_{\R^{4}}[m_{1}|u(t,x)|^{2}+m_{2}|v(t,x)|^{2}+m_{3}|g(t,x)|^{2}]\xi\left( \tfrac{x}{R} \right)dx
\quad\text{for } t\in [0, T_{+}),
\]
where $\xi=1$ if $|x|\leq 1$ and $\xi=0$ if $|x|\geq 2$. From (cf. \eqref{F1LocalVirial}) 
\[
z^{\prime}_{R}(t)=\tfrac{2}{R}\int_{\R^{4}}(\overline{u}\nabla u + \overline{v}\nabla v+\overline{g}\nabla g )\cdot (\nabla\xi)\left( \tfrac{x}{R} \right),
\]
and by Hardy's inequality together with $K(\ve{u(t)})\leq K(\Q)$, we obtain $|z^{\prime}_{R}(t)|\leq C_{0}$, where $C_{0}$ is a constant independent of $R$. Applying the fundamental theorem of calculus on $[t, T]\subset [0, T_{+})$, we have
\begin{align}\label{InteF}
	|z_{R}(t)-z_{R}(T)|\leq C_{0}|t-T|.
\end{align}

By the compactness property \eqref{CompactX}, we see that for any $\rho>0$,
\begin{equation}\label{CritialE}
\int_{|x|\geq{\rho}}|u(t,x)|^{4}+|v(t,x)|^{4} +|g(t,x)|^{4}\,dx \to 0 \quad\text{as } t\to T_{+}.
\end{equation}

Combining \eqref{LamdaI}, \eqref{CritialE}, and taking the limit $t\to T_{+}$,
we obtain
\[
\lim_{t\to T_{+}}z_{R}(t)=0.
\]

From \eqref{InteF}, we have $|z_{R}(t)|\leq  C_{0}|t-T_{+}|$. Letting $R\to+\infty$, we conclude that $\ve{u}(t)\in L^{2}$ and
$\|\ve{u}(t)\|^{2}_{L^{2}}\leq C_{0}|t-T_{+}|$. In particular, this implies $\ve{u}_{0}=0$, which contradicts $E(\ve{u})= E(\Q)>0$. Therefore, $T_{+}=+\infty$. 
\end{proof}

\begin{lemma}[Convergence in the ergodic mean]\label{ZeroVirial}
Suppose $\ve{u}$ is a radial solution of \eqref{NLS} satisfying assumptions \eqref{PropCon11} and \eqref{ScaTa}. Then 
\begin{align}\label{Cdelta}
    \lim_{T\to +\infty}\frac{1}{T}\int^{T}_{0}\delta(t)dt=0,
\end{align}
where $\delta(t)=K(\Q)-K(\ve{u}(t))$.
\end{lemma}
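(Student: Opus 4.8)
The plan is to adapt the localized virial argument of Duyckaerts--Merle \cite{DuyMerle2009}. Since \eqref{ScaTa} holds, Lemma~\ref{GlobalW} gives that $\ve u$ is global, and Lemma~\ref{Compacness11} furnishes a scaling function $\lambda(t):[0,\infty)\to(0,\infty)$ such that $\{\ve u_{[\lambda(t)]}(t):t\ge 0\}$ is pre-compact in $\dot H^1$. The algebraic starting point is that, under $E(\ve u_0)=E(\Q)$, the full virial quantity coincides with $\delta$: using Lemma~\ref{VirialIden}, the Pohozaev identity \eqref{IP4}, and the resulting relation $E(\Q)=\tfrac12 K(\Q)$, one gets $4P(\ve u(t))=2K(\ve u(t))-K(\Q)$, hence
\[
F_\infty[\ve u(t)]=4\bigl(K(\ve u(t))-4P(\ve u(t))\bigr)=4\bigl(K(\Q)-K(\ve u(t))\bigr)=4\delta(t).
\]
Moreover the usual energy-trapping argument (via the equality case of \eqref{GNI}, Proposition~\ref{UBS}, and $K(\ve u_0)<K(\Q)$) shows $K(\ve u(t))<K(\Q)$ for all $t$, so $\delta(t)>0$; in the end only $\delta(t)\ge 0$ is needed.

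Next I would integrate the localized virial identity \eqref{LocalVirial}, $\tfrac{d}{dt}I_R[\ve u(t)]=F_R[\ve u(t)]$, and split $F_R=F_\infty+(F_R-F_\infty)$:
\[
4\int_0^T\delta(t)\,dt = I_R[\ve u(T)]-I_R[\ve u(0)]-\int_0^T\bigl(F_R-F_\infty\bigr)[\ve u(t)]\,dt .
\]
Since $\nabla w_R$ is supported in $\{|x|\le 2R\}$ with $|\nabla w_R|\lesssim R$ there, Hölder's inequality, the Sobolev embedding \eqref{SobL}, and $\sup_t K(\ve u(t))\le K(\Q)$ yield the uniform bound $|I_R[\ve u(t)]|\lesssim R\,\|\nabla\ve u(t)\|_{L^2}\,\|\ve u(t)\|_{L^2(|x|\le 2R)}\lesssim R^2K(\Q)$, so the boundary contribution is at most $C_0R^2$, independently of $T$.

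It then remains to show that for every $\varepsilon>0$ one may choose $R=R(\varepsilon)$ with $\sup_{t\ge0}|(F_R-F_\infty)[\ve u(t)]|\le\varepsilon$. Since $w_R=|x|^2$ for $|x|\le R$, the difference $F_R-F_\infty$ is built only from $\partial_r^2(w_R-w_\infty)$, $\Delta(w_R-w_\infty)$ and $\Delta\Delta w_R$, which are bounded (the last by $R^{-2}$) and supported in $\{|x|\ge R\}$; hence $|(F_R-F_\infty)[\ve u(t)]|$ is dominated by the exterior energy $\int_{|x|\ge R}\bigl(|\nabla\ve u(t)|^2+|\ve u(t)|^4\bigr)dx$ (up to a term $R^{-2}\!\int_{R\le|x|\le 2R}|\ve u(t)|^2dx\lesssim\|\ve u(t)\|_{L^4(|x|\ge R)}^2$). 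By pre-compactness of $\{\ve u_{[\lambda(t)]}(t)\}$ this exterior energy is $<\varepsilon$ as soon as $R\ge\rho_0(\varepsilon)/\lambda(t)$. Thus uniform smallness — and this is the main obstacle — reduces to ruling out degeneration of the scaling, i.e. $\inf_{t\ge0}\lambda(t)=:\lambda_\ast>0$. This is the rigidity-type input of the Kenig--Merle scheme: one first observes $\inf_tK(\ve u(t))>0$ (otherwise $\ve u(t_n)\to0$ in $\dot H^1$ along a sequence and small-data theory forces scattering, contradicting \eqref{ScaTa}), and then, if $\lambda(t_n)\to0$, a rescaling/weak-limit argument together with the scaling invariance of $K$ and $P$, the constraint $E(\ve u(t_n))=E(\Q)>0$, and \eqref{GNI} gives a contradiction. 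Granting $\lambda_\ast>0$, take $R(\varepsilon):=\rho_0(\varepsilon)/\lambda_\ast$; then
\[
\frac{4}{T}\int_0^T\delta(t)\,dt\le\frac{2C_0R(\varepsilon)^2}{T}+C\varepsilon ,
\]
and letting $T\to\infty$ and then $\varepsilon\to0$, using $\delta(t)\ge0$, yields \eqref{Cdelta}.
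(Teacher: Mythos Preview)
Your overall strategy---integrate the localized virial identity, identify $F_\infty[\ve u(t)]=4\delta(t)$, bound the boundary terms by $CR^2$, and make the error $F_R-F_\infty$ small using pre-compactness---matches the paper's. The gap is in the step you yourself flag as ``the main obstacle'': the claim that $\lambda_\ast:=\inf_{t\ge0}\lambda(t)>0$. The argument you sketch (``if $\lambda(t_n)\to0$, a rescaling/weak-limit argument together with scaling invariance of $K$, $P$, the constraint $E(\ve u(t_n))=E(\Q)$, and \eqref{GNI} gives a contradiction'') does not work. If $\lambda(t_n)\to0$, compactness gives $\ve u_{[\lambda(t_n)]}(t_n)\to\ve v_0$ strongly in $\dot H^1$ with $E(\ve v_0)=E(\Q)$ and $K(\ve v_0)\le K(\Q)$; this is a perfectly good nontrivial profile, and neither \eqref{GNI} nor scaling invariance yields any contradiction. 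In the energy-critical setting there is no extra conserved quantity (such as mass) to rule out degeneration of $\lambda$ directly, and in fact the lower bound on $\lambda$ is only obtained \emph{after} the present lemma (it uses Lemma~\ref{Lemma11} and Proposition~\ref{Spatialcenter}, which themselves rely on Lemma~\ref{ZeroVirial11}). So invoking $\lambda_\ast>0$ here would be circular.

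The paper circumvents this by proving the weaker statement $\sqrt{t}\,\lambda(t)\to+\infty$ (Claim~\ref{Claim11}). This is provable at this stage precisely because its negation gives $t_n\lambda(t_n)^2$ bounded, allowing one to rescale and run the flow \emph{backward} all the way to time $0$, so that (by long-time perturbation) $\lambda(t_n)^{-1}\ve u_0(\lambda(t_n)^{-1}\cdot)$ converges strongly to a nontrivial limit while simultaneously $\rightharpoonup 0$ since $\lambda(t_n)\to0$---a contradiction. With only $\sqrt{t}\,\lambda(t)\to\infty$ in hand, one cannot take $R$ independent of $T$; instead the paper chooses $R=\varepsilon_0\sqrt{T}$ and restricts to $t\in[t_0,T]$, so that $R\ge C_\varepsilon/\lambda(t)$ on that window while still $R^2/T=\varepsilon_0^2$ is small. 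Your proof would be fixed by replacing the unjustified $\lambda_\ast>0$ with this $\sqrt{t}\,\lambda(t)\to\infty$ claim and the $T$-dependent choice of $R$.
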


\begin{proof} 
 Since $|\nabla w_{R}| \lesssim \tfrac{R^{2}}{|x|}$, Hardy's inequality implies
\[
|I_{R}[\ve{u}](t)| \leq C_{\ast} R^{2}
\]
for some constant $C_{\ast}>0$.

Given $\epsilon>0$ and choosing $R>0$ (to be determined later), we write (cf. Lemma~\ref{VirialIden})
\begin{align*}
    \frac{d}{d t}I_{R}[\ve{u}] &= F_{\infty}[\ve{u}(t)] \\
                          &+ F_{R}[\ve{u}(t)] - F_{\infty}[\ve{u}(t)].
\end{align*}

Using the relations $K(\Q)=4P(\Q)$ and $2E(\ve{u})=K(\Q)$, we obtain
\[
F_{\infty}[\ve{u}(t)] = 4[K(\ve{u})-4P(\ve{u})] = 4[K(\Q)-K(\ve{u})] = 4\delta(t).
\]
Thus,
\[
\frac{d}{d t}I_{R}[\ve{u}] = 4\delta(t) + [F_{R}[\ve{u}(t)] - F_{\infty}[\ve{u}(t)]].
\]

Next, observe that (We set \ve{u}:=(u,v,g))
\begin{align*}
F_{R}[\ve{u}(t)] - F_{\infty}[\ve{u}(t)] 
&= \int_{|x|\geq R}\left(-\tfrac{1}{4} \Delta \Delta w_{R}\right)\left(\tfrac{1}{m_{1}}|u|^{2}+\tfrac{1}{m_{2}}|v|^{2}+\tfrac{1}{m_{3}}|g|^{2}\right)\,dx \\
&- 2\RE\int_{|x|\geq R}\Delta[w_{R}(x)]\overline{u}(x)^{2}v(x)g(x)dx \\
&-2\RE\int_{|x|\geq R}\left[\tfrac{1}{m_{1}}|\nabla u|^{2} + \tfrac{1}{m_{2}}|\nabla v|^{2} + \tfrac{1}{m_{3}}|\nabla g|^{2} - 8\overline{u}^{2}vg\right]dx \\
&+ \RE\int_{|x|\geq R} \left[\tfrac{1}{m_{1}}\overline{u_{j}} u_{k} + \tfrac{1}{m_{2}}\overline{v_{j}} v_{k} + \tfrac{1}{m_{3}}\overline{g_{j}} g_{k}\right]\partial_{jk}[w_{R}(x)]dx.
\end{align*}

By compactness in $\dot{H}^{1}$, there exists $C_{\epsilon}>0$ such that
\[
\sup_{t\geq 0}\int_{|x|>\tfrac{C_{\epsilon}}{\lambda(t)}}\left[|\nabla u|^{2} + |\nabla v|^{2} +|\nabla g|^{2} + |u|^{4} + |v|^{4} + |g|^{4}\right](t,x)dx \ll \epsilon.
\]

Using the conditions on the weight $w_{R}$ specified in Lemma~\ref{VirialIden} and applying H\"older's inequality, we obtain for $R \geq \tfrac{C_{\epsilon}}{\lambda(t)}$,
\[
|F_{R}[\ve{u}(t)] - F_{\infty}[\ve{u}(t)]| \leq \epsilon.
\]

\begin{claim}\label{Claim11}
\begin{align}\label{Claim1}
    \lim_{t\to +\infty}\sqrt{t}\lambda(t) = +\infty.
\end{align}
\end{claim}

Assuming the claim holds, there exists $t_{0}\geq 0$ such that for all $t \geq t_{0}$ we have
\[
\lambda(t) \geq \tfrac{M_{0}}{\sqrt{t}},
\]
where we choose $M_{0}$ satisfying
\[
M_{0}\epsilon_{0} \geq C_{\epsilon} \quad \text{with} \quad \epsilon^{2}_{0} := \tfrac{\epsilon}{2C_{\ast}}.
\]

Setting $R := \epsilon_{0}\sqrt{T}$ for $T \geq t_{0}$, we find that for $t \in [t_{0}, T]$,
\[
R \geq \epsilon_{0}\sqrt{T}\frac{M_{0}}{\sqrt{t}\lambda(t)} = \frac{\sqrt{T}}{\sqrt{t_{0}}}\frac{M_{0}\epsilon_{0}}{\lambda(t)} \geq \frac{C_{\epsilon}}{\lambda(t)}.
\]

Combining the above estimates and applying the fundamental theorem of calculus on $[t_{0}, T]$, we conclude
\[
\tfrac{4}{T}\int^{T}_{t}\delta(t)\,dt \leq 2C_{\ast}\tfrac{R^{2}}{T} + \epsilon\tfrac{(T-t_{0})}{T} \leq 2\epsilon.
\]

Finally, taking the limit $T\to +\infty$ followed by $\epsilon\to 0$, we obtain
\[
\lim_{T\to +\infty}\tfrac{1}{T}\int^{T}_{0}\delta(t)dt = 0.
\]

To complete the proof, it remains to verify the claim.
\begin{proof}[Proof of Claim~\ref{Claim11}]
Suppose by contradiction that \eqref{Claim1} does not hold. Then there exists $s \in [0, +\infty)$ such that
$\lim_{t_{n}\to+\infty}\sqrt{t_{n}}\lambda(t_{n}) = s$. Consequently,
\begin{align}\label{LamZer}
    \lim_{t_{n}\to+\infty}\lambda(t_{n}) = 0.
\end{align}

Define
\[
\ve{w}_{n}(\tau, y) = \lambda(t_{n})^{-1}\ve{u}\left( t_{n} + \tfrac{\tau}{\lambda(t_{n})^{2}}, \tfrac{y}{\lambda(t_{n})}\right).
\]

By compactness, there exists $\ve{w}_{0} \in \dot{H}^{1}$ such that $\ve{w}_{n}(0) \to \ve{w}_{0}$ in $\dot{H}^{1}$ as $n \to \infty$. Since $E(\ve{u}_{0}) = E(\Q)$ and $K(\ve{u}(t_{n})) < K(\Q)$, it follows that $E(\ve{w}_{0}) = E(\Q)$ and $K(\ve{w}_{0}) \leq K(\Q)$. Lemma~\ref{GlobalW} then implies that the solution $\ve{w}(t)$ to \eqref{NLS} with initial data $\ve{w}_{0}$ is global and satisfies $E(\ve{w}(t)) = E(\Q)$ for all $t \in \R$.

Now, since $-\sqrt{t_{n}}\lambda(t_{n}) \to -s$, stability theory (cf. Lemma~\ref{ConSNLS}) yields
\[
\lambda(t_{n})^{-1}\ve{u}_{0}\left(\tfrac{y}{\lambda(t_{n})}\right) = \ve{w}_{n}(-t_{n}\lambda(t_{n})^{2}, y) \to \ve{w}(-s^{2},y).
\]

However, by \eqref{LamZer} we have
\[
\lambda(t_{n})^{-1}\ve{u}_{0}\left(\tfrac{y}{\lambda(t_{n})}\right) \rightharpoonup 0 \quad \text{in $\dot{H}^{1}$},
\]
which contradicts $E(\ve{w}(-s^{2})) = E(\Q) > 0$. This completes the proof of the claim.
\end{proof}
\end{proof}

As a direct consequence of Lemma~\ref{ZeroVirial}, we obtain the following result.
\begin{lemma}\label{ZeroVirial11}
Let $\ve{u}$ be a radial solution of \eqref{NLS} satisfying the assumptions of Proposition~\ref{CompacDeca}. 
Then there exists a sequence $t_{n}\to\infty$ so that 
\[
\lim_{n\to +\infty}\delta(t_{n})=0.
\]
\end{lemma}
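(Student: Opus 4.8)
The plan is to deduce Lemma~\ref{ZeroVirial11} directly from Lemma~\ref{ZeroVirial}, which asserts that the Ces\`aro mean of $\delta(t)$ over $[0,T]$ tends to $0$ as $T\to+\infty$. The key observation is that a nonnegative quantity whose ergodic average vanishes must have a subsequence along which it vanishes; one cannot have $\delta(t)$ bounded away from $0$ for all large $t$ without forcing the average to stay bounded away from $0$.

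\begin{proof}
By Lemma~\ref{ZeroVirial}, under the assumptions of Proposition~\ref{CompacDeca} (which include \eqref{PropCon11} and \eqref{ScaTa}), we have
\[
\lim_{T\to+\infty}\frac{1}{T}\int_{0}^{T}\delta(t)\,dt = 0,
\]
where $\delta(t) = K(\Q) - K(\ve{u}(t)) \geq 0$ by \eqref{PropCon11}. Suppose, for contradiction, that there is no sequence $t_n \to +\infty$ with $\delta(t_n) \to 0$. Then there exist $\varepsilon_0 > 0$ and $T_0 \geq 0$ such that $\delta(t) \geq \varepsilon_0$ for all $t \geq T_0$. Consequently, for every $T > T_0$,
\[
\frac{1}{T}\int_{0}^{T}\delta(t)\,dt \geq \frac{1}{T}\int_{T_0}^{T}\delta(t)\,dt \geq \varepsilon_0 \cdot \frac{T - T_0}{T} \xrightarrow[T\to+\infty]{} \varepsilon_0 > 0,
\]
which contradicts Lemma~\ref{ZeroVirial}. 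Hence such a sequence $t_n \to +\infty$ with $\delta(t_n) \to 0$ exists.
\end{proof}

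There is essentially no obstacle here: the argument is the elementary fact that a nonnegative locally integrable function whose running average converges to zero must have a sequence of times along which it converges to zero. The only point requiring (trivial) care is that $\delta(t) \geq 0$, which is guaranteed by the subcritical hypothesis $K(\ve{u}_0) < K(\Q)$ together with the conservation-type monotonicity that keeps $K(\ve{u}(t)) < K(\Q)$ for all $t$ in the existence interval (this is implicit in the setup of Proposition~\ref{CompacDeca}, where $\delta(t) = K(\Q) - K(\ve{u}(t))$ is written without absolute value). No further machinery is needed.
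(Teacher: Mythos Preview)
Your proof is correct and matches the paper's treatment: the paper states that Lemma~\ref{ZeroVirial11} is a direct consequence of Lemma~\ref{ZeroVirial}, and your contradiction argument is precisely the standard way to extract a subsequence along which $\delta(t_n)\to 0$ from the vanishing of the Ces\`aro mean of the nonnegative function $\delta(t)$. The observation that $\delta(t)\ge 0$ under \eqref{PropCon11} (so that no absolute value is needed) is exactly what makes the argument work.
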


Let $\ve{u}(t,x) = (u(t,x), v(t,x), g(t,x))$ be a solution of \eqref{NLS}. Consider $\delta_0 > 0$ and the modulation parameters $\eta(t)$, $\theta(t)$, $\mu(t)$, and $\alpha(t)$ given by Lemma~\ref{ExistenceFree}, which are defined for all $t \in I_0$.  

The decomposition \eqref{DecomUFree} and the estimate \eqref{EstimateOne} imply the existence of a constant $C_0 > 0$ such that:  for all  $t \in I_{0}$
\begin{align*}  
\int_{\mu(t)\leq|x|\leq 2\mu(t)}\left[|\nabla u(t,x)|^{2} + |\nabla v(t,x)|^{2} + |\nabla g(t,x)|^{2}\right]dx  
\geq \int_{1\leq|x|\leq 2}|\nabla Q_{1}|^{2} - C_{0}\delta(t). 
\end{align*}  
Taking $\delta_{0} > 0$ sufficiently small, there exists $\epsilon > 0$ for which  
\begin{align*}  
\int_{\frac{\mu(t)}{\lambda(t)}\leq|x|\leq\frac{2\mu(t)}{\lambda(t)}} \tfrac{1}{\lambda(t)^{4}} 
\left[\left|\nabla u\left(t,\tfrac{x}{\lambda(t)}\right)\right|^{2} + \left|\nabla v\left(t,\tfrac{x}{\lambda(t)}\right)\right|^{2}+ \left|\nabla g\left(t,\tfrac{x}{\lambda(t)}\right)\right|^{2}\right]dx \geq \epsilon  
\end{align*}  
for all $t \in I_{0}$. Since $\left\{\ve{u}_{[\lambda(t)]}: t \in [0, +\infty)\right\}$ is pre-compact in $\dot{H}^{1}$, we deduce that $|\mu(t)| \sim |\lambda(t)|$ for $t \in I_{0}$.  

Therefore, we may adjust $\lambda(t)$ so that $\left\{\ve{u}_{[\lambda(t)]}: t \in [0, +\infty)\right\}$ remains pre-compact in $\dot{H}^{1}$ with  
\begin{align}\label{Kcp}  
    \lambda(t) = \mu(t) \quad \text{for all } t \in I_{0}.  
\end{align}


\begin{lemma}\label{Lemma11}
There exists a constant $C=C(\delta_{1})>0$ such that for any interval $[t_{1}, t_{2}]\subset [0, \infty)$, 
\begin{equation}\label{BoundT1}
\int^{t_{2}}_{t_{1}}\delta(t)dt\leq C\sup_{t\in[t_{1},t_{2}]}\frac{1}{\lambda(t)^{2}}
\left\{\delta(t_{1})+\delta(t_{2})\right\}.
\end{equation}
\end{lemma}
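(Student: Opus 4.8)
The plan is to run a localized virial estimate on $[t_1,t_2]$ in the spirit of \cite{DuyMerle2009}. We work on the range where $\delta(t)<\delta_1$, so that the modulation decomposition of Proposition~\ref{ModilationFree} and the identity \eqref{Kcp} ($\mu(t)=\lambda(t)$) are available. Two standing facts will be used: (a) by precompactness \eqref{CompactX}--\eqref{CritialE}, for every $\epsilon>0$ there is $C_\epsilon$ with
\[
\sup_{t}\Big(\|\nabla\ve u(t)\|^{2}_{L^{2}(|x|>C_\epsilon/\lambda(t))}+\|\ve u(t)\|^{4}_{L^{4}(|x|>C_\epsilon/\lambda(t))}\Big)<\epsilon ;
\]
(b) comparing (a) with Proposition~\ref{Aprox} (after the rescaling $\mu=\lambda$) shows that $\lambda(t)$ is bounded above on $I_0$ by a constant $\Lambda_0=\Lambda_0(\delta_1)$, and that there is a ground-state orbit element $\Q^{\ast}(t)\in\mathcal B$, of scale $\lambda^{\ast}(t)\sim\lambda(t)$, with $\|\ve u(t)-\Q^{\ast}(t)\|_{\dot H^{1}}\lesssim\delta(t)$.

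First I would fix $R\ge 1$ (to be chosen) and write the identity of Lemma~\ref{VirialModulate} with $\chi\equiv1$ and with the free parameters there taken equal to the modulation parameters, so that the subtracted profile is $\Q^{\ast}(t)$. Using the Pohozaev relation \eqref{IP4} together with $2E(\ve u)=K(\Q)$ gives $F_\infty[\ve u(t)]=4\delta(t)$, and since $K,P$ are scaling- and phase-invariant on $\mathcal B$, $F_\infty[\Q^{\ast}(t)]=0$; hence
\[
\tfrac{d}{dt}I_{R}[\ve u(t)] = 4\delta(t) + \big\{(F_{R}-F_\infty)[\ve u(t)]-(F_{R}-F_\infty)[\Q^{\ast}(t)]\big\}.
\]
Since $w_{R}=w_\infty$ on $\{|x|\le R\}$, the functional $F_{R}-F_\infty$ is supported in $\{|x|\ge R\}$, where it is a sum of quadratic forms in $\nabla\ve f$ and in $\ve f$ and a quartic form in $\ve f$, with $O(1)$ coefficients. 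Thus the bracket is controlled by $\|\ve u(t)-\Q^{\ast}(t)\|_{\dot H^{1}}$ times the $\dot H^{1}\cap L^{4}$ norms of $\ve u(t)$ and $\Q^{\ast}(t)$ over $\{|x|\ge R\}$, plus higher order terms. Taking $R:=C_\epsilon\sup_{[t_1,t_2]}\tfrac{1}{\lambda(t)}$ (so $R\ge C_\epsilon/\lambda(t)$ for all $t$, and $R/\lambda(t)\ge C_\epsilon/\Lambda_0^{2}$), fact (a) makes the tails of $\ve u(t)$ small while $R/\lambda(t)$ being large makes the tail of $\Q^{\ast}(t)$ small; choosing $\epsilon=\epsilon(\delta_1)$ small enough, the bracket is $\le\tfrac14\,\delta(t)$ on $[t_1,t_2]$.

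Integrating over $[t_1,t_2]$ and absorbing the $\tfrac14\delta$-term into the left side (legitimate because $\delta\ge0$ here, since $K(\ve u(t))<K(\Q)$) yields
\[
\int_{t_1}^{t_2}\delta(t)\,dt \;\le\; |I_{R}[\ve u(t_1)]|+|I_{R}[\ve u(t_2)]| .
\]
For the boundary terms, I use that $I_{R}$ is scaling-covariant and phase-invariant, so $I_{R}[\ve u(t_i)]=\mu(t_i)^{-2}\,I_{R\mu(t_i)}\big[(1+\alpha(t_i))\Q+\ve h(t_i)\big]$, and that $I_\rho$ vanishes on real profiles; expanding in $\ve h$ and using $|\alpha|\sim\|\ve h\|_{\dot H^{1}}\sim\delta$ from \eqref{EstimateOne} gives $|I_{R\mu(t_i)}[(1+\alpha)\Q+\ve h]|\lesssim (1+R\mu(t_i))\,\delta(t_i)$, whence, using $\mu(t_i)=\lambda(t_i)$ and $\lambda(t_i)^{-1}\sup_{[t_1,t_2]}\lambda^{-1}\le\sup_{[t_1,t_2]}\lambda^{-2}$,
\[
|I_{R}[\ve u(t_i)]|\;\lesssim\;\big(1+C_\epsilon\big)\,\lambda(t_i)^{-2}\,\delta(t_i)\;\le\;C(\delta_1)\,\sup_{t\in[t_1,t_2]}\tfrac{1}{\lambda(t)^{2}}\;\delta(t_i),
\]
which is the claimed inequality.

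The step I expect to be the crux is the error estimate in the second paragraph: one must bound the bracket by $\tfrac14\delta(t)$ rather than merely by a fixed $\epsilon$, which forces replacing $\ve u(t)$ by the nearby ground-state profile $\Q^{\ast}(t)$ and knowing that \emph{both} have small tails beyond $|x|=R$ uniformly in $t$. This is what dictates the choice of $R$ and makes essential use of the a priori upper bound $\lambda\le\Lambda_0(\delta_1)$ on $I_0$ (the source of the dependence $C=C(\delta_1)$) and of the uniform concentration coming from precompactness. The remaining computations—the precise form of $F_{R}-F_\infty$ on $\{|x|\ge R\}$, the scaling law for $I_{R}$, and the $\ve h$-expansion of the boundary terms—are routine.
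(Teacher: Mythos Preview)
Your argument handles the regime $\delta(t)<\delta_0$ and mirrors the paper's treatment of that case, but the lemma is stated for \emph{arbitrary} intervals $[t_1,t_2]\subset[0,\infty)$, and nothing prevents $\delta(t)$ from exceeding the modulation threshold on part of $[t_1,t_2]$ (or at an endpoint $t_j$). On such a set the decomposition of Proposition~\ref{ModilationFree} is unavailable, so you cannot subtract a nearby ground-state profile $\Q^\ast(t)$, and your control of the bracket $(F_R-F_\infty)[\ve u]-(F_R-F_\infty)[\Q^\ast]$ collapses. The paper closes this gap by inserting a time cutoff $\chi(t)$ (equal to $1$ where $\delta(t)<\delta_0$ and $0$ otherwise) into the identity of Lemma~\ref{VirialModulate}: on the set $\{\delta(t)\ge\delta_0\}$ one does not compare with $\Q$ at all but uses compactness directly to get $|F_R[\ve u]-F_\infty[\ve u]|\le\epsilon\le(\epsilon/\delta_0)\,\delta(t)$, and likewise $|I_R[\ve u(t_j)]|\lesssim R^2\le(R^2/\delta_0)\,\delta(t_j)$ via Hardy and the uniform $\dot H^1$ bound. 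This two-regime splitting is the missing idea; without it you cannot absorb the error by $\tfrac14\,\delta(t)$ uniformly on $[t_1,t_2]$.

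Two smaller points. First, your boundary estimate $|I_{R\mu}[(1+\alpha)\Q+\ve h]|\lesssim(1+R\mu)\,\delta$ should read $(R\mu)^2\,\delta$: Hardy gives $|\nabla w_\rho|\lesssim\rho^2/|x|$, so the cross terms are $O(\rho^2\|\ve h\|_{\dot H^1})$. After undoing the scaling this still yields $|I_R[\ve u(t_i)]|\lesssim R^2\delta(t_i)$, which with $R=C_\epsilon\sup\lambda^{-1}$ gives the right $\sup\lambda^{-2}$ factor, so the conclusion survives. Second, the a priori bound $\lambda(t)\le\Lambda_0$ on $I_0$ that you invoke is not available at this stage (boundedness of $1/\lambda^2$ is deduced \emph{later}, using this very lemma together with Proposition~\ref{Spatialcenter}); fortunately it is not needed, since $R\mu(t)=C_\epsilon\,\mu(t)\sup_{s}\lambda(s)^{-1}\ge C_\epsilon$ already forces the rescaled $\Q$-tail to lie beyond radius $C_\epsilon$, which is small once $C_\epsilon$ is chosen large.
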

\begin{proof}
Let $R>1$ be a constant to be determined later. We establish the localized virial identities (cf. Lemma \ref{VirialModulate}) with $\chi(t)$ satisfying
\[
\chi(t)=
\begin{cases}
1& \text{if } \delta(t)<\delta_{0}, \\
0& \text{if } \delta(t)\geq \delta_{0}.
\end{cases}
\]
From Lemma \ref{VirialModulate} (recalling that $F_{\infty}[\ve{u}(t)]=4\delta(t)$), we obtain
\begin{equation}\label{VirilaX}
\frac{d}{dt}I_{R}[\ve{u}(t)]=F_{\infty}[\ve{u}(t)]+\EE(t)=4\delta(t)+\EE(t),
\end{equation}
where
\begin{equation}\label{Error11}
\EE(t)=
\begin{cases}
F_{R}[\ve{u}(t)]-F_{\infty}[\ve{u}(t)]& \text{if } \delta(t)\geq \delta_{0}, \\
F_{R}[\ve{u}(t)]-F_{\infty}[\ve{u}(t)]-\K[\ve{u}(t)]& \text{if } \delta(t)< \delta_{0},
\end{cases}
\end{equation}
with
\begin{equation}\label{Error22}
\K(t)=F_{R}[\Q_{[-\eta(t), -\theta(t), \lambda(t)^{-1}]}]-F_{\infty}[\Q_{[-\eta(t), -\theta(t), \lambda(t)^{-1}]}].
\end{equation}

We now assume the following claims temporarily to complete the proof.

\textbf{Claim I.} For $R>1$, we have
\begin{align}\label{EstimateV11}
	|I_{R}[\ve{u}(t_{j})]|\lesssim \frac{R^{2}}{\delta_{0}}\delta(t_{j}) \quad& \text{if } \delta(t_{j})\geq \delta_{0} \text{ for } j=1, 2,\\
	\label{EstimateV22}
	|I_{R}[\ve{u}(t_{j})]|\lesssim R^{2} \delta(t_{j}) \quad &\text{if } \delta(t_{j})< \delta_{0} \text{ for } j=1, 2.
\end{align}

\textbf{Claim II.} Given $\epsilon>0$, there exists $\rho_{\epsilon}=\rho(\epsilon)>0$ such that if $R=\rho_{\epsilon}\sup_{t\in [t_{1}, t_{2}]}\tfrac{1}{\lambda(t)}$, then
\begin{align}\label{EstimateE11}
	|\EE(t)|\leq \frac{\epsilon}{\delta_{0}}\delta(t) \quad &\text{uniformly for } t\in [t_{1}, t_{2}] \text{ and } \delta(t)\geq \delta_{0},\\
	\label{EstimateE22}
|\EE(t)|\leq \epsilon \delta(t)\quad &\text{uniformly for } t\in [t_{1}, t_{2}] \text{ and } \delta(t)< \delta_{0}.
\end{align}

Assuming Claims I and II, integrating \eqref{VirilaX} over $[t_{1}, t_{2}]$ and applying estimates \eqref{EstimateV11}, \eqref{EstimateV22}, \eqref{EstimateE11}, and \eqref{EstimateE22} yields
\[
\int^{t_{2}}_{t_{1}}\delta(t)dt\lesssim 
\frac{\rho_{\epsilon}}{\delta_{0}}\sup_{t\in [t_{1}, t_{2}]}\frac{1}{\lambda(t)^{2}}(\delta(t_{1})+\delta(t_{2}))
+\Big(\frac{\epsilon}{\delta_{0}}+\epsilon\Big)\int^{t_{2}}_{t_{1}}\delta(t)dt.
\]
Choosing $\epsilon=\epsilon(\delta_{0})$ sufficiently small gives the estimate \eqref{BoundT1}.

To complete the proof, we now verify the claims.
\begin{proof}[Proof of Claim I] 
Note that if $\delta(t_{j})\geq \delta_{0}$, Hardy's inequality implies
\[
|I_{R}[\ve{u}(t)]|
\lesssim R^{2}\|\ve{u}\|^{2}_{L^{\infty}_{t}\dot{H}^{1}}\lesssim_{Q} \frac{R^{2}}{\delta_{0}}\delta(t_{j}),
\]
which proves \eqref{EstimateV11}. On the other hand, if $\delta(t_{j})< \delta_{0}$, using the fact that $Q$ is real, we obtain
\begin{align*}
|I_{R}[\ve{u}(t_{j})]|&=\left| 2\IM\int_{\R^{4}}\nabla w_{R}(\overline{\ve{u}}_{[\eta(t_{j}), \theta(t_{j}), \lambda(t_{j})]} \nabla \ve{u}_{[\eta(t_{j}), \theta(t_{j}), \lambda(t)]}
-\Q\nabla\Q)
dx\right|\\
&\lesssim
R^{2}[\|\ve{u}\|_{\Lm^{\infty}_{t}\dot{H}^{1}_{x}}+\|\Q\|_{\dot{H}^{1}}]
\|{\ve{u}}_{[\eta(t_{j}), \theta(t_{j}), \lambda(t_{j})]}-\Q\|_{\dot{H}^{1}}\\
&\lesssim_{Q}R^{2} \delta(t_{j}),
\end{align*}
where the last inequality follows from \eqref{EstimateOne}.
\end{proof}

\begin{proof}[Proof of Claim II]
Assume that $\delta(t) \geq \delta_{0}$. From \eqref{CompactX}, we infer that for each $\epsilon > 0$, there exists $\rho_{\epsilon} = \rho(\epsilon) > 0$ such that (recall $\ve{u} = (u, v, g)$)
\begin{equation}\label{CompactAgain}
\sup_{t \geq 0} \int_{|x| > \tfrac{C_{\epsilon}}{\lambda(t)}} \left[|\nabla u|^2 + |\nabla v|^2 + |\nabla g|^2 + |u|^4 + |v|^4 + |g|^4\right](t, x) \, dx \ll \epsilon.
\end{equation}
Let
\[
R := \rho_{\epsilon} \sup_{t \in [t_1, t_2]} \tfrac{1}{\lambda(t)}.
\]
From the argument in Lemma~\ref{ZeroVirial}, we have
\[
|F_R[\ve{u}(t)] - F_\infty[\ve{u}(t)]| \leq \epsilon \leq \tfrac{\epsilon}{\delta_0} \delta(t)
\quad \text{for all $t \in [t_1, t_2]$ with $\delta(t) \geq \delta_0$}.
\]
This establishes the estimate \eqref{EstimateE11}.

Now, suppose $\delta(t) < \delta_0$. By the definition of $\EE(t)$ in \eqref{Error11} and an argument analogous to that in Lemma~\ref{ZeroVirial}, we may write
\begin{align}
\EE(t) &\leq |F_R[\ve{u}(t)] - F_R[\Q_{[-\eta, -\theta, \lambda^{-1}]}]| 
        + |F_\infty[\ve{u}(t)] - F_\infty[\Q_{[-\eta, -\theta, \lambda^{-1}]}]| \nonumber \\
    &= |F_R[\ve{u}_{[\eta,\theta,\lambda]}(t)] - F_R[\Q]| 
        + |F_\infty[\ve{u}_{[\eta,\theta,\lambda]}(t)] - F_\infty[\Q]| \nonumber \\
    &\lesssim \Big[\|\ve{u}_{[\eta,\theta,\lambda]}(t)\|^2_{\dot{H}^1(|x|\geq R)} 
        + \|\Q\|^2_{\dot{H}^1(|x|\geq R)} \nonumber \\
        &\quad + \|\ve{u}(t)\|^2_{L^4(|x|\geq R)} 
        + \|\Q\|^2_{L^4(|x|\geq R)}\Big] 
        \|\ve{u}_{[\eta,\theta,\lambda]}(t) - \Q\|_{\dot{H}^1} \nonumber \\\label{Decomp44}
    &\lesssim \Big[\|\ve{u}_{[\eta,\theta,\lambda]}(t)\|^2_{\dot{H}^1(|x|\geq R)} 
        + \|\Q\|^2_{\dot{H}^1(|x|\geq R)} \nonumber \\
        &\quad + \|\ve{u}(t)\|^2_{L^4(|x|\geq R)} 
        + \|\Q\|^2_{L^4(|x|\geq R)}\Big] \delta(t),
\end{align}
for all $t \in [t_1, t_2]$. By \eqref{CompactAgain}, the term \eqref{Decomp44} is bounded as
\begin{align*}
|\eqref{Decomp44}| &\lesssim \Big[\|\ve{u}(t)\|^2_{\dot{H}^1(|x|\geq \rho_\epsilon/\lambda(t))} 
        + \|\Q\|^2_{\dot{H}^1(|x|\geq \rho_\epsilon)} \nonumber \\
        &\quad + \|\ve{u}(t)\|^2_{L^4(|x|\geq \rho_\epsilon/\lambda(t))} 
        + \|\Q\|^2_{L^4(|x|\geq \rho_\epsilon)}\Big] \\
    &\leq \epsilon \delta(t),
\end{align*}
provided $\rho_\epsilon$ is sufficiently large. This completes the proof of Claim II.
\end{proof}
\end{proof}


\begin{proposition}[Control of the variations of the parameter $\lambda(t)$]\label{Spatialcenter}
Let $[t_{1}, t_{2}]$ be an interval of $(0, \infty)$ satisfying $t_{1}+\tfrac{1}{\lambda(t_{1})}\leq t_{2}$. Then there exists a positive constant $C_{0}$ such that
\begin{equation}\label{BoundCenter}
\left|\frac{1}{\lambda(t_{2})^{2}}-\frac{1}{\lambda(t_{1})^{2}}\right|\leq C_{0}\int^{t_{2}}_{t_{1}}\delta(t)\, dt.
\end{equation}
\end{proposition}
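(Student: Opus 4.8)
\emph{Strategy.} I would deduce \eqref{BoundCenter} from a bound on the total variation of $t\mapsto\lambda(t)^{-2}$ on $[t_1,t_2]$. Let $\delta_0>0$ be the small constant of Section~\ref{S:Modula}, set $B:=\{t\in[t_1,t_2]:\delta(t)\ge\delta_0\}$ (a closed set), and write $[t_1,t_2]\setminus B$ as a countable disjoint union of relatively open intervals $(a_i,b_i)\subset I_0$. On the ``good'' part $[t_1,t_2]\setminus B$ the modulation analysis of Section~\ref{S:Modula} is available, while on the ``bad'' part $B$ one has to rely on the precompactness of $\{\ve u_{[\lambda(t)]}\}$ from Lemma~\ref{Compacness11}; the hypothesis $t_1+\lambda(t_1)^{-1}\le t_2$ is what makes the rescaling arguments on $B$ legitimate, since it guarantees that $[t_1,t_2]$ is long on the intrinsic scale attached to $t_1$.

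\emph{Good intervals.} On each $(a_i,b_i)$ we have $\delta<\delta_0$, so $\lambda\equiv\mu$ there by \eqref{Kcp}, and \eqref{EstimateFree} gives $|\mu'(t)|\le C\mu(t)^3\delta(t)$. Hence
\[
\Bigl|\frac{d}{dt}\,\frac{1}{\lambda(t)^{2}}\Bigr|=\frac{2|\mu'(t)|}{\mu(t)^{3}}\le 2C\,\delta(t)\qquad\text{on }(a_i,b_i),
\]
which after integration yields $\bigl|\lambda(b_i)^{-2}-\lambda(a_i)^{-2}\bigr|\le 2C\int_{a_i}^{b_i}\delta(t)\,dt$. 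Summing over $i$, the part of the total variation of $\lambda^{-2}$ carried by $[t_1,t_2]\setminus B$ is at most $2C\int_{t_1}^{t_2}\delta(t)\,dt$. It then remains only to control the jumps $\bigl|\lambda(d_k)^{-2}-\lambda(c_k)^{-2}\bigr|$ across the connected components $[c_k,d_k]$ of $B$.

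\emph{Bad intervals.} Here I would first extract, from \eqref{CompactX} by a rescaling-and-continuity argument in the spirit of \cite{KenigMerle2006}, constants $C_\ast\ge1$ and $c_\ast>0$ depending on $\ve u$ such that $|s-t|\le c_\ast\lambda(t)^{-2}$ forces $C_\ast^{-1}\le\lambda(s)/\lambda(t)\le C_\ast$: if this failed one could rescale $\ve u$ along a sequence, pass to a limiting solution via precompactness, and produce a profile that escapes a compact set of $\dot H^1$ in bounded time, a contradiction. Since $\delta\ge\delta_0$ on $[c_k,d_k]$ one has $d_k-c_k\le\delta_0^{-1}\int_{c_k}^{d_k}\delta(t)\,dt$, so $\sum_k(d_k-c_k)\le\delta_0^{-1}\int_B\delta(t)\,dt$; moreover Lemma~\ref{Lemma11}, applied on $[t_1,t_2]$ together with the a~priori bound $\delta\le K(\Q)$, controls $\sup_{[t_1,t_2]}\lambda^{-2}$. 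Covering each $[c_k,d_k]$ by overlapping intervals on which the comparability estimate applies and chaining it, one aims to bound $\bigl|\lambda(d_k)^{-2}-\lambda(c_k)^{-2}\bigr|$ by a constant times $\int_{c_k}^{d_k}\delta(t)\,dt$; summing over $k$ and adding the good part gives \eqref{BoundCenter} with $C_0=C_0(\delta_0,\ve u)$.

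\emph{Main obstacle.} The delicate point is precisely the last step: turning the \emph{qualitative} precompactness \eqref{CompactX} into a \emph{quantitative}, scale-invariant modulus of continuity for $\lambda$ on the region where $\delta$ is not small, and organizing the chaining so that the number of intervals in the covering does not inflate the bound — i.e.\ so that the accumulated change of $\lambda^{-2}$ across $B$ is genuinely dominated by $\int_B\delta$ and not merely by a stray power of $\sup_{[t_1,t_2]}\lambda^{-1}$. (In the application, Proposition~\ref{CompacDeca}, one eventually knows that $\lambda$ is bounded away from $0$ on $[0,\infty)$, which trivializes this bookkeeping; for the present statement one should avoid presupposing it.)
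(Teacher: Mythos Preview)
Your two ingredients---the modulation bound \eqref{EstimateFree} on $\{\delta<\delta_0\}$ and a local comparability estimate $C_*^{-1}\le\lambda(s)/\lambda(t)\le C_*$ whenever $|s-t|\le c_*\lambda(t)^{-2}$, extracted from compactness---are exactly those the paper uses. But your organization by connected components of $B=\{\delta\ge\delta_0\}$ leaves a real gap on \emph{short} bad components, i.e.\ $[c_k,d_k]$ with $d_k-c_k\ll\lambda(c_k)^{-2}$. There comparability only yields $\bigl|\lambda(d_k)^{-2}-\lambda(c_k)^{-2}\bigr|\lesssim\lambda(c_k)^{-2}$, which can be arbitrarily large compared with $\int_{c_k}^{d_k}\delta\lesssim d_k-c_k$; so the chaining does not produce a bound by $\int\delta$ on such pieces. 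Your appeal to Lemma~\ref{Lemma11} to control $\sup_{[t_1,t_2]}\lambda^{-2}$ is also misplaced: that lemma has $\sup\lambda^{-2}$ on the right-hand side, so it cannot be used to bound $\sup\lambda^{-2}$ without circularity.

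The paper resolves exactly the obstacle you flagged by inserting a two-threshold dichotomy (its Step~2): there is a smaller $\delta_1\in(0,\delta_0)$ such that on every intrinsic window $[T,T+\lambda(T)^{-2}]$ one has either $\sup\delta<\delta_0$ or $\inf\delta\ge\delta_1$. This is proved by contradiction: if $\delta(t_n)\to0$ and $\delta(t_n')\ge\delta_0$ with $t_n,t_n'$ in a common intrinsic window, rescale by $\lambda(t_n)$, pass to the limit via compactness and stability, and obtain convergence to $\Q$, contradicting $\delta(t_n')\ge\delta_0$. One then subdivides $[t_1,t_2]$ not by good/bad but by \emph{intrinsic length}, $s_{j+1}=s_j+c\,\lambda(s_j)^{-2}$, and applies the dichotomy on each $[s_j,s_{j+1}]$: in the first alternative modulation gives $\bigl|\lambda(s_{j+1})^{-2}-\lambda(s_j)^{-2}\bigr|\le C\int_{s_j}^{s_{j+1}}\delta$; in the second, comparability gives $\bigl|\lambda(s_{j+1})^{-2}-\lambda(s_j)^{-2}\bigr|\lesssim\lambda(s_j)^{-2}\sim s_{j+1}-s_j\le\delta_1^{-1}\int_{s_j}^{s_{j+1}}\delta$. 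Summing over $j$ yields \eqref{BoundCenter} with a constant independent of $\sup\lambda^{-2}$. The point you were missing is that the intrinsic-length subdivision together with the two-threshold dichotomy forces every ``bad'' piece to have \emph{full} intrinsic length, which is what turns the multiplicative comparability into an additive bound by $\int\delta$.
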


\begin{proof}
The proof is divided into three steps.

\textsl{Step 1.} There exists a positive constant $C_{1}$ such that
\begin{equation}\label{step11}
\tfrac{\lambda(s)}{\lambda(t)}+\tfrac{\lambda(t)}{\lambda(s)}\leq C_{1} \quad \text{for all $t$, $s\geq 0$ such that $|t-s|\leq \tfrac{1}{\lambda(s)^{2}}$}.
\end{equation}
To prove this, suppose by contradiction that sequences $s_{n}$, $t_{n}$ satisfy 
\begin{align}\label{CBound}
|t_{n}-s_{n}|\leq \tfrac{1}{\lambda(s_{n})} \qtq{but}
\tfrac{\lambda(s_{n})}{\lambda(t_{n})}+\tfrac{\lambda(t_{n})}{\lambda(s_{n})}\to \infty.
\end{align}
Taking a subsequence if necessary, we may assume that
\[
\lim_{n\to \infty}\lambda(s_{n})^{2}(t_{n}-s_{n})=\tau_{0}\in[-1, 1].
\]
Consider the solution of \eqref{NLS}
\[
\ve{v}_{n}(\tau,y)=\lambda(s_{n})^{-1}\ve{u}\( \tfrac{\tau}{\lambda(s_{n})^{2}}+s_{n},\tfrac{y}{\lambda(s_{n})} \).
\]
By compactness, there exists $\ve{v}_{0}\in \dot{H}^{1}$ such that
\[
\ve{v}_{n}(0,y)\to \ve{v}_{0}(y) \qtq{in $\dot{H}^{1}$ as $n\to \infty$.}
\]
Since $E(\ve{v})=E(\Q)$ and $K(\ve{v}_{0})\leq K(\Q)$, the solution $\ve{v}$ of \eqref{NLS} with initial data $\ve{v}_{0}$ is globally defined (cf. Lemma~\ref{GlobalW}), and by stability theory (cf. Lemma~\ref{ConSNLS}) we conclude that
\[
\ve{w}_{n}(y)=\ve{v}_{n}(\lambda(s_{n})^{2}(t_{n}-s_{n}), y)=
\lambda(s_{n})^{-1}\ve{u}\( t_{n},\tfrac{y}{\lambda(s_{n})}\)\to \ve{v}(\tau_{0}, y).
\]
Moreover, by compactness we have
\[
\tfrac{1}{\lambda(t_{n})}\ve{u}\( t_{n},\tfrac{y}{\lambda(t_{n})}\)=
\tfrac{\lambda(s_{n})}{\lambda(t_{n})}\ve{w}_{n}\(\tfrac{\lambda(s_{n})}{\lambda(t_{n})}y\)\to \varphi\neq 0
\]
in $\dot{H}^{1}$, which implies the boundedness of $\tfrac{\lambda(s_{n})}{\lambda(t_{n})} + \tfrac{\lambda(t_{n})}{\lambda(s_{n})}$, contradicting \eqref{CBound}.

\textsl{Step 2.}
There exists $\delta_{1}>0$ such that either
\begin{equation}\label{MinMax}
\inf_{t\in [T, T+\tfrac{1}{\lambda(T)^{2}}]}\delta(t)\geq \delta_{1} \quad \text{or}\quad
\sup_{t\in [T,T+\tfrac{1}{\lambda(T)^{2}}]}\delta(t)<\delta_{0}\quad \text{for any $T\geq 0$}.
\end{equation}
Assume by contradiction that there exist $t_{n}^{\ast}\geq 0$ and sequences $t_{n}, t^{\prime}_{n}\in [t_{n}^{\ast}, t_{n}^{\ast}+\tfrac{1}{\lambda(t_{n}^{\ast})^{2}}]$ with
\begin{align}\label{ContraStep2}
&\delta(t_{n})\to 0 \quad \text{and}\quad \delta(t^{\prime}_{n})\geq \delta_{1} \quad \text{as $n\to \infty$}.
\end{align}
Step 1 implies $\tfrac{\lambda(t_{n})}{\lambda(t^{\ast}_{n})}\leq C$, so for a subsequence,
\begin{align}\label{ContraLimit}
\lambda(t_{n})^{2}(t_{n}-t^{\prime}_{n})\to t^{\ast}\in[-C,C].
\end{align}
Define
\[
\ve{v}_{n}(\tau,y)=\lambda(t_{n})^{-1}\ve{u}\( \tfrac{\tau}{\lambda(t_{n})^{2}}+t_{n},\tfrac{y}{\lambda(t_{n})} \).
\]
Since $\delta(t_{n})\to 0$, compactness yields (cf. Proposition~\ref{UBS}) parameters $\lambda_{0}>0$, $\theta_{1}, \theta_{2}\in \R$ with
\begin{equation}\label{Step2Conver}
\ve{v}_{n}(0, \cdot)\to \Q_{[\theta_{1}, \theta_{2}, \lambda_{0}]} \text{ strongly in $\dot{H}^{1}$}.
\end{equation}
Combining \eqref{ContraLimit} and \eqref{Step2Conver} via stability theory gives
\[
\lambda(t_{n})^{-1}\ve{u}\(t^{\prime}_{n},\tfrac{y}{\lambda(t_{n})} \)=
\ve{v}_{n}(\lambda(t_{n})^{2}(t_{n}-t^{\prime}_{n}), y)\to \Q_{[\theta_{1}, \theta_{2}, \lambda_{0}]},
\]
contradicting \eqref{ContraStep2}.

\textsl{Step 3.} We now establish
\begin{align}\label{acD}
0\leq t_{1}\leq \tilde{t_{1}}\leq \tilde{t_{2}}\leq t_{2}=t_{1}+\tfrac{1}{C^{2}_{1}\lambda(t_{2})^{2}}
\Rightarrow 
\left|\tfrac{1}{\lambda(\tilde{t_{2}})^{2}}-\tfrac{1}{\lambda(\tilde{t_{1}})^{2}}\right|\leq C\int^{t_{2}}_{t_{1}}\delta(t)\,dt.
\end{align}
By Step 2, either $\sup_{t\in [t_{1}, t_{2}]}\delta(t)<\delta_{0}$ or $\inf_{t\in [t_{1}, t_{2}]}\delta(t)\geq \delta_{1}$. In the first case, integrating $\left|\tfrac{\lambda^{\prime}(t)}{\lambda(t)^{3}}\right|\lesssim \delta(t)$ (cf. \eqref{EstimateFree}) yields \eqref{acD}. In the second case, note that $\int^{t_{2}}_{t_{1}}\delta(t)\,dt\geq \delta_{1}(t_{2}-t_{1})$ and
\[
|\tilde{t_{1}}-\tilde{t_{2}}|\leq \tfrac{1}{C^{2}_{1}\lambda(t_{1})^{2}}\leq \tfrac{1}{\lambda(\tilde{t_{1}})^{2}}.
\]
Thus Step 1 gives ($C_{1}\geq1$)
\[
\left|\tfrac{1}{\lambda(\tilde{t_{2}})^{2}}-\tfrac{1}{\lambda(\tilde{t_{1}})^{2}}\right|
\leq \tfrac{2C^{5}_{1}}{\delta_{1}}\int^{t_{2}}_{t_{1}}\delta(t)dt.
\]
Finally, dividing $[t_{1}, t_{2}]$ into subintervals and combining these inequalities proves \eqref{BoundCenter}.
\end{proof}

\begin{proof}[{Proof of Proposition~\ref{CompacDeca}}]
With Lemmas~\ref{ZeroVirial11} and \ref{Lemma11} and Proposition~\ref{Spatialcenter} at hand, the proof of the proposition follows along the same lines as in \cite[Proposition 6.1]{MIWUGU2015}. Here we outline the main steps.

First, Lemma~\ref{Lemma11} and Proposition~\ref{Spatialcenter} imply that $\tfrac{1}{\lambda(t)^{2}}$ is bounded on $[0,\infty)$; see \cite[Lemma 6.9]{MIWUGU2015} for details.

Using the boundedness of $\tfrac{1}{\lambda(t)^{2}}$, Lemma~\ref{Lemma11} yields
\[
\int^{s}_{T}\delta(t)\,dt\leq C\left\{\delta(T)+\delta(s)\right\}
\qtq{for $[T, s]\subset [0, \infty]$.}
\]
Applying this to a sequence $t_n\to\infty$ with $\delta(t_n)\to 0$ (cf. Lemma~\ref{ZeroVirial11}), we obtain $\int^{\infty}_{T}\delta(t)\,dt\leq C\delta(T)$ for all $T\geq 0$. Then Gronwall's lemma shows that
\begin{align}\label{PIN}
	\int^{\infty}_{T}\delta(t)\,dt\leq C e^{-c T}.
\end{align}
for some constants $C, c>0$.

Combining this inequality with estimate \eqref{alfaE} and employing the same argument as in Proposition~\ref{SupercriQ} (cf. \eqref{deltaN}) below, we obtain
\begin{align}\label{DeltaZ}
	\lim_{t\to \infty}\delta(t)=0.
\end{align}
In particular, the modulation parameters $\lambda(t)$, $\eta(t)$ and $\theta(t)$ are well-defined for $t\geq t_{0}$ for some $t_{0}\geq 0$.

From Lemma~\ref{BoundCenter} and \eqref{PIN}, it follows that $\lim_{t\to \infty} \lambda(t)=\lambda_{\infty}\in (0, +\infty)$.
See \cite[Section 6.2]{MIWUGU2015} for details. Moreover, Proposition~\ref{Spatialcenter} gives
\[
\left|\frac{1}{\lambda(t)^{2}}-\frac{1}{\lambda_{\infty}^{2}}\right|\leq C e^{-c t}.
\]
Since $|\alpha(t)|\sim |\delta(t)|$, \eqref{DeltaZ} implies $\lim_{t\to \infty}\alpha(t)=0$. Thus, from \eqref{EstimateFree} we derive
\[
\delta(t)+\|\ve{h}(t)\|_{\dot{H}^{1}}\sim |\alpha(t)|
\leq C\int^{+\infty}_{t}|\alpha^{\prime}(s)|ds\leq 
C\int^{+\infty}_{t}\lambda(s)^{2}\delta(s)ds\leq Ce^{-ct}.
\]
Finally, since $\int^{\infty}_{T}|\eta^{\prime}(t)|+ |\theta^{\prime}(t)|dt\leq Ce^{-cT}$ for sufficiently large $T$, there exist $\eta_{\infty}$ and $\theta_{\infty}\in \R$ such that
$|\eta(t)-\eta_{\infty}|+|\theta(t)-\theta_{\infty}|\leq Ce^{-ct}$. Combining all these estimates yields
\[
\delta(t)+|\alpha(t)|+\|\ve{h}(t)\|_{\dot{H}^{1}}+|\eta(t)-\eta_{\infty}|+|\theta(t)-\theta_{\infty}|
	+\left|\frac{1}{\lambda(t)^{2}}-\frac{1}{\lambda_{\infty}^{2}}\right|\leq C e^{-c T},
\]
which completes the proof of the proposition.
\end{proof}

\begin{proof}[{Proof of Corollary~\ref{ClassC}}]
Suppose, by contradiction, that $\ve{u}$ satisfies \eqref{PropCon11} and \eqref{Infinity10}. Following the same arguments as above, we can construct $\lambda(t)$ such that the set 
$\left\{\ve{u}_{[\lambda(t)]}(t)): t\in \R\right\}$ is pre-compact in $\dot{H}^{1}$. Moreover, using the same approach developed in this section, we can show that
\[
\lim_{t\to -\infty}\delta(t)=\lim_{t\to \infty}\delta(t)=0.
\]
Additionally, by modifying the proof of Lemma~\ref{Lemma11}, we obtain
\[
\int^{n}_{-n}\delta(t)\,dt\leq C(\delta(n)+\delta(-n))\quad \text{for all $n\in \mathbb{N}$}.
\]
Taking the limit as $n\to \infty$, we conclude that $\delta(t)\equiv 0$, which contradicts \eqref{PropCon11}.\end{proof}


\section{Convergence for supercritical threshold solutions}\label{S: SuperThres}

The main objective of this section is to prove the following result.

\begin{proposition}\label{SupercriQ}
Let $\ve{u}\in H^{1}$ be a radial solution to \eqref{NLS} such that
\begin{align*}
E(\ve{u}_{0})=E(\Q) \quad \text{and} \quad K(\ve{u}_{0})>K(Q),
\end{align*}
which is globally defined in positive time. Then there exist $\eta_{1}$, $\theta_{1}\in \R$, $\lambda_{0}>0$, and constants $c, C>0$ such that
\begin{align}\label{ExpoAbove}
\|\ve{u}(t)-Q_{[\eta_{1}, \theta_{1}, \lambda_{0}]}\|_{\dot{H}^{1}}\leq Ce^{-c t}
\quad \text{for all } t\geq 0.
\end{align}
Moreover, the negative time of existence is finite.
\end{proposition}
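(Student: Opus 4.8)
The plan is to mirror, step by step, the scheme of Section~\ref{S: SubThres} and of \cite{DuyMerle2009, MIWUGU2015}, the only structural change being that the virial functional now carries the opposite sign. Write $\delta(t):=K(\ve{u}(t))-K(\Q)$. The first task is to show $\delta(t)>0$ on the whole interval of existence. If $K(\ve{u}(t_{0}))\le K(\Q)$ for some $t_{0}$, then Lemma~\ref{ConvexIn} gives $K(\ve{u}(t_{0}))E(\Q)\le K(\Q)E(\ve{u})$ with $E(\ve{u})=E(\Q)$, so equality must hold; hence $\ve{u}(t_{0})$ saturates \eqref{GNI} and, by Proposition~\ref{UBS}, lies on the orbit $\B$, and uniqueness of the flow then puts $\ve{u}_{0}$ on $\B$, contradicting $K(\ve{u}_{0})>K(\Q)$. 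Next, using $E(\ve{u})=E(\Q)$ together with the Pohozaev identity \eqref{IP4} one computes $F_{\infty}[\ve{u}(t)]=4\bigl[K(\ve{u}(t))-4P(\ve{u}(t))\bigr]=-4\delta(t)<0$.

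The second task is to prove $\int_{0}^{\infty}\delta(t)\,dt<\infty$. I would first note that $\ve{u}$ cannot scatter as $t\to+\infty$: a scattering solution satisfies $P(\ve{u}(t))\to0$, hence $K(\ve{u}(t))\to E(\ve{u})=\tfrac{1}{2}K(\Q)<K(\Q)$, contradicting $\delta(t)>0$. Thus, by the concentration--compactness/rigidity machinery of \cite{OgaTsu2023, KenigMerle2006} adapted as in Lemma~\ref{Compacness11}, there is $\lambda:[0,\infty)\to(0,\infty)$ with $\{\ve{u}_{[\lambda(t)]}(t):t\ge0\}$ precompact in $\dot H^{1}$. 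With this in hand the localized virial identity of Lemma~\ref{VirialIden} gives, writing $V_{R}(t):=\int(m_{1}|u|^{2}+m_{2}|v|^{2}+m_{3}|g|^{2})w_{R}\ge0$, that $V_{R}'=I_{R}[\ve{u}]$ and $V_{R}''=F_{\infty}[\ve{u}]+(F_{R}-F_{\infty})[\ve{u}]=-4\delta(t)+\EE_{R}(t)$, with $\EE_{R}(t)$ controlled for $R\gtrsim C_{\epsilon}/\lambda(t)$ using the precompactness, the boundedness of the conserved mass-type quantities $\|u_{1}\|_{L^{2}}^{2}+2\|u_{2}\|_{L^{2}}^{2}$ and $\|u_{1}\|_{L^{2}}^{2}+2\|u_{3}\|_{L^{2}}^{2}$ (for the $R^{-2}\!\int|\ve{u}|^{2}$ term), and, in the modulation regime, by subtracting the ground-state virial (which vanishes by Lemma~\ref{Virialzero}) to extract a factor $\delta(t)$. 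Then the ergodic-mean estimate $\tfrac{1}{T}\int_{0}^{T}\delta\to0$ (as in Lemma~\ref{ZeroVirial}, including $\sqrt t\,\lambda(t)\to+\infty$ from Claim~\ref{Claim11}) furnishes a sequence $t_{n}\to\infty$ with $\delta(t_{n})\to0$, and the virial bound of Lemma~\ref{Lemma11} together with the control of $\lambda$ of Proposition~\ref{Spatialcenter} gives first the boundedness of $1/\lambda(t)^{2}$ and then $\int_{T}^{\infty}\delta(t)\,dt\le C\delta(T)$ for all large $T$. I expect this step --- transporting the localized virial apparatus of Section~\ref{S: SubThres} to the supercritical regime, where precompactness of the trajectory is less automatic and only $\dot H^{1}\cap L^{2}$ (not weighted-$L^{2}$) data is available --- to be the main obstacle.

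Gronwall's inequality then upgrades the last bound to
\begin{equation}\label{deltaN}
\int_{T}^{\infty}\delta(t)\,dt\le Ce^{-cT}\qquad\text{for all }T\ge0,
\end{equation}
and a continuity (bootstrap) argument combining \eqref{deltaN} with the modulation estimate \eqref{EstimateFree} forces $\delta(t)\to0$, hence $\delta(t)\le Ce^{-ct}$. For $t$ large, $\ve{u}(t)$ then lies in the modulation regime of Proposition~\ref{ModilationFree}, $\ve{u}_{[\eta(t),\theta(t),\mu(t)]}(t)=(1+\alpha(t))\Q+\ve{h}(t)$, with $\mu(t)\sim\lambda(t)$ bounded above and below. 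Feeding $\delta(t)\le Ce^{-ct}$ into \eqref{EstimateFree} and integrating yields $|\alpha(t)|+\|\ve{h}(t)\|_{\dot H^{1}}\le Ce^{-ct}$ and the existence of $\eta_{1},\theta_{1}\in\R$ and $\lambda_{0}\in(0,\infty)$ with $|\eta(t)-\eta_{1}|+|\theta(t)-\theta_{1}|+\bigl|\tfrac{1}{\mu(t)^{2}}-\tfrac{1}{\lambda_{0}^{2}}\bigr|\le Ce^{-ct}$; assembling these estimates gives \eqref{ExpoAbove}. (This is precisely the argument invoked in the proof of Proposition~\ref{CompacDeca}.)

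Finally, to see that $T_{-}(\ve{u})<\infty$, suppose for contradiction $T_{-}(\ve{u})=-\infty$. Running the first three steps in reversed time (the identity $F_{\infty}[\ve{u}(t)]=-4\delta(t)$ is unchanged) gives $\int_{-\infty}^{0}\delta(t)\,dt<\infty$ and $\delta(t)\to0$ as $t\to-\infty$; combined with the forward analysis the full trajectory is precompact modulo a scaling $\lambda(t)$ bounded above and below on $\R$. Fixing $R$ large, $V_{R}(t)\ge0$ then satisfies $V_{R}''(t)=-4\delta(t)+\EE_{R}(t)\le-2\delta(t)<0$ for every $t\in\R$, so $V_{R}$ is a strictly concave nonnegative function on all of $\R$, which is impossible unless it is constant. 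Hence $V_{R}''\equiv0$, i.e. $\delta\equiv0$, contradicting $\delta(0)>0$. Therefore the negative time of existence is finite, completing the plan.
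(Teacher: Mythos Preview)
Your plan has a genuine gap at the step where you invoke concentration--compactness. Lemma~\ref{Compacness11} is stated and proved under the subcritical hypothesis $K(\ve{u}_{0})<K(\Q)$, which furnishes the uniform bound $K(\ve{u}(t))\le K(\Q)$ needed to run the profile decomposition. In the supercritical regime $K(\ve{u}(t))>K(\Q)$ there is \emph{no} a priori upper bound on $K(\ve{u}(t))$: energy conservation $K(\ve{u})=E(\Q)+2P(\ve{u})$ combined with the Gagliardo--Nirenberg inequality \eqref{GNI} only gives a quadratic inequality that places $K(\ve{u}(t))$ \emph{outside} a bounded interval, not inside one. Without a uniform $\dot H^{1}$ bound the precompactness of $\{\ve{u}_{[\lambda(t)]}(t)\}$ is unavailable, and every subsequent ingredient you import from Section~\ref{S: SubThres} (Lemma~\ref{ZeroVirial}, Claim~\ref{Claim11}, Lemma~\ref{Lemma11}, Proposition~\ref{Spatialcenter}) collapses. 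You flag this as ``the main obstacle'' but offer no mechanism to resolve it.

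The paper avoids this obstacle entirely by abandoning the precompactness route. The proof of Lemma~\ref{BoundDQ} controls the virial error $F_{R}[\ve{u}]-F_{\infty}[\ve{u}]$ in two regimes: for $\delta(t)$ bounded away from zero, Strauss's radial lemma (Lemma~\ref{STRau}) and mass conservation give $|F_{R}-F_{\infty}|\lesssim R^{-2}+R^{-3}K(\ve{u}(t))^{1/2}$, which is dominated by $2\delta(t)$ for $R$ large because the error grows sublinearly in $\delta$; for $\delta(t)$ small, the modulation decomposition \eqref{DecomUFree} together with the mass lower bound \eqref{InfP} on $\mu(t)$ gives $|F_{R}-F_{\infty}|\lesssim\delta(t)^{2}+R^{-1}\delta(t)$. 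This yields $V_{R}''\le-2\delta(t)$ directly, with no appeal to compactness. From there the convexity argument of Lemma~\ref{NeD} (since $V_{R}\ge0$ and $V_{R}''<0$, one has $V_{R}'>0$, hence $\int_{t}^{T}\delta\le I_{R}[\ve{u}(t)]\lesssim R^{2}\delta(t)$) gives \eqref{DoQ11} via Gronwall, bypassing the ergodic-mean estimate, the $\sqrt{t}\lambda(t)\to\infty$ claim, and the control of $\lambda$ altogether. The remainder of your plan (the convergence of $\eta,\theta,\mu$ and the concavity contradiction for $T_{-}=\infty$) matches the paper.
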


To prove this proposition, we establish the following two lemmas.

\begin{lemma}\label{BoundDQ}
Let $\ve{u}(t)$ be a solution to \eqref{NLS} satisfying the conditions of Proposition~\ref{SupercriQ}. Then there exists $R_{1}>0$ such that for $R\geq R_{1}$ we have
\begin{align}\label{DoDeri}
\tfrac{d}{dt}I_{R}[\ve{u}(t)]\leq -2\delta(t) \quad \text{for all } t\geq 0.
\end{align}
\end{lemma}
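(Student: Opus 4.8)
The plan is to reduce the asserted inequality to an estimate for the localization error $F_{R}[\ve{u}]-F_{\infty}[\ve{u}]$, and then to control that error by combining the concentration-compactness of $\ve{u}$ with the modulation decomposition near $\Q$. I would begin with two algebraic observations. Since $\Q$ solves the elliptic system \eqref{s1A}, the constant function $\ve{u}(t)\equiv\Q$ is a static solution of \eqref{NLS}, so Lemma~\ref{VirialIden} gives $F_{R}[\Q]=\tfrac{d}{dt}I_{R}[\Q]=0$ for every $R\in[1,\infty]$. Moreover, as in the variational analysis of Section~\ref{S1:Vari}, the solution stays strictly supercritical: if $K(\ve{u}(t_{0}))=K(\Q)$ for some $t_{0}$, then $E(\ve{u}(t_{0}))=E(\Q)=\tfrac12K(\Q)$ and \eqref{IP4} force $|P(\ve{u}(t_{0}))|=\tfrac14K(\ve{u}(t_{0}))=G_{S}[K(\ve{u}(t_{0}))]^{2}$, so Proposition~\ref{UBS} together with $E(\ve{u}(t_{0}))=E(\Q)$ puts $\ve{u}(t_{0})$ on the orbit $\mathcal{B}$, contradicting $K(\ve{u}_{0})>K(\Q)$ and continuity; hence $\delta(t)=K(\ve{u}(t))-K(\Q)>0$ for all $t\ge0$. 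Using \eqref{IP4} and $E(\ve{u})\equiv E(\Q)$ together with Lemma~\ref{VirialIden},
\[
\tfrac{d}{dt}I_{R}[\ve{u}(t)]=F_{R}[\ve{u}(t)]=F_{\infty}[\ve{u}(t)]+\bigl(F_{R}[\ve{u}(t)]-F_{\infty}[\ve{u}(t)]\bigr),\qquad F_{\infty}[\ve{u}(t)]=4\bigl(2E(\Q)-K(\ve{u}(t))\bigr)=-4\delta(t),
\]
so it suffices to prove that $F_{R}[\ve{u}(t)]-F_{\infty}[\ve{u}(t)]\le2\delta(t)$ for all $t\ge0$, once $R$ is large enough.

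Because $w_{R}(x)=|x|^{2}$ on $\{|x|\le R\}$, the difference $F_{R}-F_{\infty}$ is a sum of integrals over $\{|x|\ge R\}$ whose weights $\partial_{r}^{2}w_{R}-2$, $\Delta w_{R}-8$ and $-\tfrac14\Delta\Delta w_{R}$ are $O(1)$, $O(1)$ and $O(R^{-2})$, the last being supported on the annulus $\{R\le|x|\le2R\}$; absorbing the $R^{-2}$ through $\|f\|_{L^{2}(R\le|x|\le2R)}\lesssim R\|f\|_{L^{4}(|x|\ge R)}$ and using the uniform bound $\sup_{t}\|\ve{u}(t)\|_{\dot{H}^{1}}<\infty$ (valid by the compactness below, as $K$ is scaling invariant) one gets $|F_{R}[\ve{u}(t)]-F_{\infty}[\ve{u}(t)]|\lesssim\omega_{R}(t):=\|\nabla\ve{u}(t)\|_{L^{2}(|x|\ge R)}^{2}+\|\ve{u}(t)\|_{L^{4}(|x|\ge R)}^{2}$. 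To make $\omega_{R}$ uniformly small I would argue by compactness. First, $\ve{u}$ does not scatter as $t\to+\infty$: otherwise the free evolutions disperse in $L^{4}$ (by the standard dispersive estimate), so $P(\ve{u}(t))\to0$ and hence $K(\ve{u}(t))=E(\ve{u}(t))+2P(\ve{u}(t))\to E(\Q)=\tfrac12K(\Q)<K(\Q)$, contradicting the previous paragraph. Thus $\|\ve{u}\|_{L^{6}_{t,x}((0,\infty)\times\R^{4})}=\infty$, and, exactly as in Lemma~\ref{Compacness11}, there is $\lambda\colon[0,\infty)\to(0,\infty)$ with $\{\ve{u}_{[\lambda(t)]}(t):t\ge0\}$ pre-compact in $\dot{H}^{1}$; the hypothesis $\ve{u}_{0}\in H^{1}$ moreover gives $c_{0}:=\inf_{t\ge0}\lambda(t)>0$, since $\|\ve{u}_{[\lambda(t)]}(t)\|_{L^{2}}^{2}=\lambda(t)^{2}\|\ve{u}_{0}\|_{L^{2}}^{2}$ forces any $\dot{H}^{1}$-limit along a sequence $\lambda(t_{n})\to0$ to vanish on every ball, hence identically, contradicting $K(\ve{u}(t_{n}))>K(\Q)>0$. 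Pre-compactness in $\dot{H}^{1}$, hence in $L^{4}$, then yields for every $\varepsilon>0$ a constant $C_{\varepsilon}$ with $\sup_{t\ge0}\omega_{R}(t)\le\varepsilon$ whenever $R\ge C_{\varepsilon}/c_{0}$.

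To finish I would split into two regimes, with $\delta_{0}>0$ as in Proposition~\ref{ModilationFree} and $\lambda$ adjusted so that $\lambda(t)=\mu(t)$ on $\{\delta(t)<\delta_{0}\}$ (cf.~\eqref{Kcp}). When $\delta(t)\ge\delta_{0}$, choosing $R_{1}$ so that $\sup_{t}\omega_{R_{1}}(t)$ is small enough that $|F_{R}[\ve{u}(t)]-F_{\infty}[\ve{u}(t)]|\le\delta_{0}\le\delta(t)$ settles this case at once. When $\delta(t)<\delta_{0}$, I would use the decomposition $\ve{u}_{[\eta(t),\theta(t),\mu(t)]}(t)=(1+\alpha(t))\Q+\ve{h}(t)$ with $|\alpha(t)|\sim\|\ve{h}(t)\|_{\dot{H}^{1}}\sim\delta(t)$. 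A direct computation (equivalently, the fact that $\ve{f}\mapsto\ve{f}_{[\sigma]}$ maps solutions to solutions) gives $F_{R}[\ve{f}_{[\sigma]}]=F_{R/\sigma}[\ve{f}]$, while $F_{R}$ is invariant under the phase group of \eqref{NLS} (the trilinear term $\RE\int\Delta w_{R}\,\overline{u}_{1}^{2}u_{2}u_{3}$ being phase invariant); therefore
\[
F_{R}[\ve{u}(t)]-F_{\infty}[\ve{u}(t)]=\bigl(F_{R\mu(t)}-F_{\infty}\bigr)\bigl[(1+\alpha(t))\Q+\ve{h}(t)\bigr].
\]
Since $F_{R\mu(t)}[\Q]=F_{\infty}[\Q]=0$, expanding the quadratic-plus-cubic functional $F_{R\mu}-F_{\infty}$ about $\Q$ leaves only terms at least linear in $\alpha\Q+\ve{h}$; bounding the localized $\Q$-factors by $\|\Q\|_{\dot{H}^{1}(|x|\ge R\mu)}+\|\Q\|_{L^{4}(|x|\ge R\mu)}\lesssim(R\mu)^{-1}$ (once $R\mu\ge1$) and using $\|\ve{h}(t)\|_{\dot{H}^{1}}+|\alpha(t)|\lesssim\delta(t)$, I obtain $|F_{R}[\ve{u}(t)]-F_{\infty}[\ve{u}(t)]|\lesssim(R\mu(t))^{-1}\delta(t)+\delta(t)^{2}\lesssim\bigl((Rc_{0})^{-1}+\delta_{0}\bigr)\delta(t)$, which is $\le2\delta(t)$ after enlarging $R_{1}$ and shrinking $\delta_{0}$. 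Combining the two regimes proves the lemma.

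The main obstacle is the regime $\delta(t)<\delta_{0}$, where $\delta(t)$ may be arbitrarily small, so the localization error has to be dominated by $\delta(t)$ itself rather than by a fixed small number. This works only because $\Q$ is an exact static solution of \eqref{NLS}, so the principal part $F_{R\mu}[\Q]$ of the error vanishes identically and only terms genuinely of order $\delta(t)$ (times a negative power of $R\mu(t)$) survive; these are then absorbed into $2\delta(t)$ thanks to the lower bound $\lambda(t)=\mu(t)\ge c_{0}>0$, whose proof (which is where the hypothesis $\ve{u}_{0}\in H^{1}$ enters) is the other point requiring care, alongside the construction of the compactness-giving scale $\lambda(t)$.
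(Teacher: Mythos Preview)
Your reduction to bounding $F_{R}[\ve{u}]-F_{\infty}[\ve{u}]$ by $2\delta(t)$ is correct, and the treatment of the regime $\delta(t)<\delta_{0}$ via the modulation decomposition and the identity $F_{R\mu}[\Q]=0$ is essentially the paper's Step~2. The problem is your handling of the regime $\delta(t)\ge\delta_{0}$.

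There you bound $|F_{R}-F_{\infty}|$ by $\omega_{R}(t)=\|\nabla\ve{u}(t)\|_{L^{2}(|x|\ge R)}^{2}+\|\ve{u}(t)\|_{L^{4}(|x|\ge R)}^{2}$ and then invoke pre-compactness of $\{\ve{u}_{[\lambda(t)]}(t)\}$ ``exactly as in Lemma~\ref{Compacness11}''. But Lemma~\ref{Compacness11} is proved under the \emph{subcritical} hypothesis $K(\ve{u}_{0})<K(\Q)$; its concentration-compactness argument rests on the sub-threshold scattering of Theorem~\ref{Th1}(i), which is unavailable for profiles with $K>K(\Q)$. No compactness result is established (or standard) for supercritical threshold solutions, so the uniform tail smallness $\sup_{t}\omega_{R}(t)\to0$ is unjustified. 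Relatedly, your assertion $\sup_{t}\|\ve{u}(t)\|_{\dot H^{1}}<\infty$ is also not a priori known in this regime: from $E(\ve{u})=E(\Q)$ and $|P|\le G_{S}K^{2}$ one cannot bound $K(\ve{u}(t))$ from above.

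The paper avoids compactness entirely. For Step~1 it chooses the cutoff so that $\partial_{r}^{2}w_{R}\le2$, which makes the gradient contribution to $F_{R}-F_{\infty}$ \emph{non-positive} and hence discardable; only the bi-Laplacian term ($\lesssim R^{-2}\|\ve{u}_{0}\|_{L^{2}}^{2}$) and the quartic term remain. The latter is controlled by the radial Strauss estimate (Lemma~\ref{STRau}) and mass conservation, giving the one-sided bound $F_{R}-F_{\infty}\le C_{0}\bigl[R^{-2}+R^{-3}(\delta(t)+K(\Q))^{1/2}\bigr]$, which is sublinear in $\delta$; a concavity argument then shows this is $\le2\delta$ for all $\delta\ge\delta_{2}$ once $R$ is large, even if $K(\ve{u}(t))$ is unbounded. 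For the small-$\delta$ lower bound on $\mu$, the paper also bypasses compactness: it obtains $\mu_{\text{inf}}>0$ directly from mass conservation via \eqref{InfP}, rather than from a scaling parameter $\lambda(t)$ attached to a pre-compact orbit. These are the missing ingredients in your argument.
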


\begin{proof}
From Lemma~\ref{VirialIden} we can write
\begin{align*}
	\frac{d}{dt }I_{R}[\ve{u}]&=F_{\infty}[\ve{u}(t)] + F_{R}[\ve{u}(t)]-F_{\infty}[\ve{u}(t)].
\end{align*}
for some $R$ to be specified below. Since
\[
F_{\infty}[\ve{u}(t)]=4[K(\ve{u})- 4P(\ve{u})] = 4[K(\Q) - K(\ve{u})]=-4\delta(t),
\]
we obtain
\[
\frac{d}{d t}I_{R}[\ve{u}]=-4\delta(t)+[F_{R}[\ve{u}(t)]-F_{\infty}[\ve{u}(t)]],
\]
where (we set $\ve{u}=(u, v, g)$)
\begin{align*}
F_{R}[\ve{u}(t)] - F_{\infty}[\ve{u}(t)] 
&= \int_{|x|\geq R}\left(-\tfrac{1}{4} \Delta \Delta w_{R}\right)\left(\tfrac{1}{m_{1}}|u|^{2}+\tfrac{1}{m_{2}}|v|^{2}+\tfrac{1}{m_{3}}|g|^{2}\right)\,dx \\
&\quad - 2\RE\int_{|x|\geq R}\Delta[w_{R}(x)]\overline{u}(x)^{2}v(x)g(x)dx \\
&\quad -2\RE\int_{|x|\geq R}\left[\tfrac{1}{m_{1}}|\nabla u|^{2} + \tfrac{1}{m_{2}}|\nabla v|^{2} + \tfrac{1}{m_{3}}|\nabla g|^{2} - 8\overline{u}^{2}vg\right]dx \\
&\quad + \RE\int_{|x|\geq R} \left[\tfrac{1}{m_{1}}\overline{u_{j}} u_{k} + \tfrac{1}{m_{2}}\overline{v_{j}} v_{k} + \tfrac{1}{m_{3}}\overline{g_{j}} g_{k}\right]\partial_{jk}[w_{R}(x)]dx.
\end{align*}

\textbf{Step 1. General bound on $|F_{R}[\ve{u}(t)]-F_{\infty}[\ve{u}(t)]|$.}
By choosing $\phi$ appropriately such that $\partial^{2}_{r}w_{R}\leq 2$, we observe that
\begin{align*}
 &\RE\int_{|x|\geq R} \left[\tfrac{1}{m_{1}}\overline{u_{j}} u_{k} + \tfrac{1}{m_{2}}\overline{v_{j}} v_{k} + \tfrac{1}{m_{3}}\overline{g_{j}} g_{k}\right]\partial_{jk}[w_{R}(x)]dx \\
 &\quad -2\RE\int_{|x|\geq R}\left[\tfrac{1}{m_{1}}|\nabla u|^{2} + \tfrac{1}{m_{2}}|\nabla v|^{2} + \tfrac{1}{m_{3}}|\nabla g|^{2}\right]dx \\
 &= \RE\int_{|x|\geq R}\left[\tfrac{1}{m_{1}}|\nabla u|^{2} + \tfrac{1}{m_{2}}|\nabla v|^{2} + \tfrac{1}{m_{3}}|\nabla g|^{2}\right](\partial^{2}_{r}w_{R}- 2)dx \leq 0.
\end{align*}
Then, H\"older's inequality shows that
\[
|F_{R}[\ve{u}(t)]-F_{\infty}[\ve{u}(t)]|
\lesssim
\int_{|x|\geq R} \tfrac{1}{R^{2}}[|u|^{2}+|v|^{2}+|g|^{2}]dx
+\int_{|x|\geq R} [|u|^{4}+|v|^{4}+|g|^{4}]dx.
\]

By Strauss' lemma (see Lemma~\ref{STRau}), we see that for any $f \in H^{1}(\mathbb{R}^4)$,
\[
\int_{|x|\geq R} |f(x)|^{4}\, dx \leq \|f\|_{L^\infty_{\{|x|\geq R\}}}^{2} \|f\|_{L^2}^2 
\leq \tfrac{C}{R^{3}} \|\nabla f\|_{L^2} \|f\|_{L^2}^{3}.
\]
Therefore,
\[
\int_{|x|\geq R} [|u|^{4}+|v|^{4}+|g|^{4}]dx\leq \tfrac{C}{R^{3}}
[\|\nabla u\|_{L^{2}}+\|\nabla v\|_{L^{2}}+\|\nabla g\|_{L^{2}}],
\]
where the constant $C$ depends only on $\|\ve{u}_{0}\|_{L^{2}}$.
Combining the above estimates, we conclude
\begin{align}\label{GeneralB}
	|F_{R}[\ve{u}(t)]-F_{\infty}[\ve{u}(t)]|
\leq C_{0}\left[ \tfrac{1}{R^{2}} +\tfrac{1}{R^{3}} (\delta(t)+K(\Q))^{\frac{1}{4}} \right].
\end{align}
\textbf{Step 2. Bound on $|F_{R}[\ve{u}(t)]-F_{\infty}[\ve{u}(t)]|$ for sufficiently small $\delta(t)$.}

Using \eqref{DecomUFree}, we can write $\ve{u}_{[\eta(t), \theta(t), \mu(t)]} = \Q + \ve{V}$, where $\|\ve{V}\|_{\dot{H}^{1}} \sim \delta(t)$. 
First, we claim that
\begin{align}\label{InfP}
    \mu_{\text{inf}} := \inf\left\{{\mu(t) : t \geq 0, \delta(t) \leq \delta_{1}}\right\} > 0
\end{align}
for $\delta_{1}$ sufficiently small. Indeed, writing $\ve{V} = (v_{1}, v_{2}, v_{3})$, mass conservation gives
\begin{align*}
    \|\ve{u}_{0}\|^{2}_{L^{2}} &\gtrsim \int_{|x| \leq \tfrac{1}{\mu(t)}} [|u(x,t)|^{2} + |v(x,t)|^{2} + |g(x,t)|^{2}] dx \\
    &= \tfrac{1}{\mu(t)^{2}} \int_{|x| \leq 1} [|u_{[\eta(t), \theta(t), \mu(t)]}|^{2} + |v_{[\eta(t),\theta(t), \mu(t)]}|^{2} + |g_{[\eta(t),\theta(t), \mu(t)]}|^{2}] dx \\
    &\gtrsim \tfrac{1}{\mu(t)^{2}} \left( \int_{|x| \leq 1} Q^{2} dx - \int_{|x| \leq 1} [|v_{1}|^{2} + |v_{2}|^{2} + |v_{3}|^{2}] dx \right).
\end{align*}
Since
\[
\|\ve{V}(t)\|_{L^{2}(|x| \leq 1)} \lesssim \|\ve{V}(t)\|_{L^{4}(|x| \leq 1)} \lesssim \|\ve{V}(t)\|_{\dot{H}^{1}} \lesssim \delta(t),
\]
we obtain
\[
\|\ve{u}_{0}\|_{L^{2}} \gtrsim \tfrac{1}{\mu(t)^{2}} \left( \int_{|x| \leq 1} Q^{2} dx - C\delta^{2}(t) \right).
\]
Taking $\delta_{1}$ sufficiently small yields \eqref{InfP}.

Let us define
\[
A_{R}(\ve{u}(t)) := F_{R}[\ve{u}(t)] - F_{\infty}[\ve{u}(t)].
\]
A change of variables shows that
\[
|A_{R}(\ve{u}(t))| = |A_{R\mu(t)}(\ve{V}(t) + \Q)|.
\]
Moreover, since
\begin{align}\label{IdenQ}
    A_{R\mu(t)}(\Q) = 0, \quad \text{and} \quad \|Q\|_{\dot{H}^{1}(|x| \geq r)} \sim \|Q\|_{L^{3}(|x| \geq r)} \sim r^{-1} \quad \text{for } r \geq 1,
\end{align}
the H\"older, Hardy, and Sobolev inequalities (cf. \eqref{SobL}) combined with \eqref{IdenQ} imply that for $R \geq 1$ (recall that $\ve{V} = (v_{1}, v_{2}, v_{3})$),
\begin{align*}
|A_{R}(\ve{u}(t))| &= |A_{R\mu(t)}(\Q + \ve{V}(t))| \\
&= |A_{R\mu(t)}(\Q + \ve{V}(t)) - A_{R\mu(t)}(\Q)| \\
&\leq C \big[ \|\ve{V}\|^{2}_{\dot{H}^{1}} + \tfrac{1}{R\mu(t)}\|\ve{V}\|_{\dot{H}^{1}} \\
&\quad + \tfrac{1}{(R\mu(t))^{3}}\|\ve{V}\|_{\dot{H}^{1}} + \tfrac{1}{(R\mu(t))^{2}}\|\ve{V}\|^{2}_{\dot{H}^{1}} \\
&\quad + \tfrac{1}{R\mu(t)}\|\ve{V}\|^{3}_{\dot{H}^{1}} + \|\ve{V}\|^{4}_{\dot{H}^{1}} \big] \\
&\leq C_{\ast} \left[ \delta(t)^{2} + \tfrac{1}{R} \delta(t) \right],
\end{align*}
where the constant $C_{\ast}$ depends only on $\mu_{\text{inf}}$.

\textbf{Step 3. Conclusion}. To establish the estimate \eqref{DoDeri}, it suffices to show that
\begin{align}\label{ClaimDel}
    |F_{R}[\ve{u}(t)]-F_{\infty}[\ve{u}(t)]|\leq 2\delta(t).
\end{align}
By Step 2, there exists $\delta_{2}>0$ such that if $\delta(t)\leq\delta_{2}$ and $R\geq R_{1}$, then
\[
|F_{R}[\ve{u}(t)]-F_{\infty}[\ve{u}(t)]|\leq C_{\ast}\left[ \delta(t)^{2}+\tfrac{1}{R}\delta(t) \right] \leq 2\delta(t),
\]
for $R_{1}$ sufficiently large. 

Next, we consider the case $\delta(t)>\delta_{2}$. Define the function
\[
f_{R}(\delta):=C_{0}\left[ \tfrac{1}{R^{2}} +\tfrac{1}{R^{3}} (\delta+K(\Q))^{\frac{1}{2}} \right]-2\delta,
\]
where $C_{0}$ is the constant from \eqref{GeneralB}. Note that $f^{\prime\prime}_{R}(\delta)<0$ for all $\delta>0$.

For sufficiently large $R_{2}$, we observe that:
\begin{itemize}
\item $f_{R_{2}}(\delta_{2})\leq 0$,
\item $f^{\prime}_{R_{2}}(\delta_{2})\leq 0$.
\end{itemize}
Consequently, $f_{R}(\delta)\leq 0$ for all $\delta\geq \delta_{2}$ and $R\geq R_{2}$. Therefore, the bound \eqref{ClaimDel} holds for $R=\max\left\{R_{1}, R_{2}\right\}$.

This completes the proof of the result.
\end{proof}

\begin{lemma}\label{NeD}
Let $\ve{u}(t)$ be as in Proposition~\ref{SupercriQ}. Then there exist positive constants $c$ and $C$, and $R_{1}>0$ such that for $R\geq R_{1}$ we have
\begin{align}\label{DoQ11}
\int^{+\infty}_{t}\delta(s)ds \leq Ce^{-ct} \quad \text{for all } t\geq 0.
\end{align}
\end{lemma}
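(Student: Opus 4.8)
The plan is to feed the differential inequality of Lemma~\ref{BoundDQ} into a Gronwall argument for the localized virial quantity $I_{R}[\ve{u}(t)]$, in the spirit of \cite{DuyMerle2009}; since Lemma~\ref{BoundDQ} already supplies the favorable sign $\tfrac{d}{dt}I_{R}\leq -2\delta$ (no compactness is needed, unlike the subcritical case), the argument is essentially bookkeeping plus one modulation estimate. First I would record that in the supercritical regime $\delta(t)=K(\ve{u}(t))-K(\Q)>0$ for every $t\geq 0$: if $K(\ve{u}(t_{1}))=K(\Q)$ for some $t_{1}$, then $E(\ve{u}(t_{1}))=E(\Q)$ together with the Pohozaev identity \eqref{IP4} forces equality in the Gagliardo--Nirenberg inequality \eqref{GNI} at $t_{1}$, so $\ve{u}(t_{1})$ lies in the ground state orbit by Proposition~\ref{UBS}; since that orbit consists of static solutions of \eqref{NLS}, uniqueness of the flow gives $K(\ve{u}_{0})=K(\Q)$, contradicting the hypothesis, and continuity of $t\mapsto K(\ve{u}(t))$ closes the argument. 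In particular, for $R\geq R_{1}$ Lemma~\ref{BoundDQ} gives $\tfrac{d}{dt}I_{R}[\ve{u}(t)]\leq -2\delta(t)\leq 0$, so $t\mapsto I_{R}[\ve{u}(t)]$ is non-increasing.

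Next I would prove $I_{R}[\ve{u}(t)]\geq 0$ for all $t\geq 0$. Consider $V(t):=\int_{\R^{4}}\bigl(m_{1}|u|^{2}+m_{2}|v|^{2}+m_{3}|g|^{2}\bigr)w_{R}\,dx$, which is finite because $\ve{u}\in H^{1}$ and nonnegative because $w_{R}\geq 0$, and which satisfies $V'(t)=I_{R}[\ve{u}(t)]$ by Lemma~\ref{VirialIden}. If $I_{R}[\ve{u}(t_{0})]<0$ at some $t_{0}$, monotonicity gives $V'(t)=I_{R}[\ve{u}(t)]\leq I_{R}[\ve{u}(t_{0})]<0$ for all $t\geq t_{0}$, forcing $V(t)\to -\infty$, which is absurd. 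Hence $0\leq I_{R}[\ve{u}(t)]\leq I_{R}[\ve{u}(0)]=:M$ for all $t\geq 0$ (and $M>0$, else $\delta\equiv 0$); integrating the inequality of the previous paragraph over $[0,T]$ gives $2\int_{0}^{T}\delta(s)\,ds\leq M$, so $\int_{0}^{\infty}\delta<\infty$, and $I_{R}[\ve{u}(t)]$ decreases to some $\ell\geq 0$ with
\[
2\int_{t}^{\infty}\delta(s)\,ds\;\leq\;I_{R}[\ve{u}(t)]-\ell\;\leq\;I_{R}[\ve{u}(t)]\qquad\text{for all }t\geq 0.
\]

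The crux is to upgrade this to the pointwise bound $I_{R}[\ve{u}(t)]\leq C\delta(t)$. I would split according to a fixed threshold $\delta_{2}\in(0,\delta_{1}]$, with $\delta_{1}$ as in \eqref{InfP}. When $\delta(t)\geq \delta_{2}$ the bound is immediate: $I_{R}[\ve{u}(t)]\leq M\leq (M/\delta_{2})\,\delta(t)$. When $\delta(t)<\delta_{2}$ I would use the modulation decomposition \eqref{DecomUFree}, writing $\ve{u}_{[\eta(t),\theta(t),\mu(t)]}=\Q+\ve{V}(t)$ with $\|\ve{V}(t)\|_{\dot{H}^{1}}\sim\delta(t)$ (cf. \eqref{EstimateOne}) and $\mu(t)\geq \mu_{\text{inf}}>0$ (cf. \eqref{InfP}). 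Since $I_{R}$ is invariant under the phase rotations and scales as $I_{R}[\ve{f}_{[\cdot,\cdot,\nu]}]=\nu^{2}\,I_{R/\nu}[\ve{f}]$, one obtains $I_{R}[\ve{u}(t)]=\mu(t)^{-2}\bigl(I_{R\mu(t)}[\Q+\ve{V}(t)]-I_{R\mu(t)}[\Q]\bigr)$, where $I_{R\mu(t)}[\Q]=0$ by Lemma~\ref{Virialzero} once $R\geq R_{1}\geq \mu_{\text{inf}}^{-1}$ ensures $R\mu(t)\geq 1$. Hardy's and Sobolev's inequalities (used to control $\||x|\,\ve{V}\,\mathbf{1}_{|x|\leq 2R\mu}\|_{L^{2}}\lesssim (R\mu)^{2}\|\nabla\ve{V}\|_{L^{2}}$ and $\||x|\,Q\,\mathbf{1}_{|x|\leq 2R\mu}\|_{L^{2}}\lesssim R\mu$ in $\R^{4}$) then yield $|I_{R\mu}[\Q+\ve{V}]-I_{R\mu}[\Q]|\lesssim (R\mu)^{2}\delta(t)$, hence $|I_{R}[\ve{u}(t)]|\lesssim R^{2}\delta(t)$. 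Combining the two regimes gives $I_{R}[\ve{u}(t)]\leq C\delta(t)$ for all $t\geq 0$, with $C=C(R,\delta_{2},M)$. This matching of the two regimes is the main obstacle, and it is where the structure of the problem enters: the lower bound $\mu_{\text{inf}}>0$ pins down the cutoff radius, and the facts that $I_{R}$ scales by $\nu^{2}$ and annihilates the real ground state family make the a priori dangerous factor $\mu(t)^{-2}$ cancel; note that the two additional phase invariances of \eqref{NLS} do not obstruct this, since $I_{R}$ is insensitive to constant phases on each component.

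Finally, setting $y(t):=\int_{t}^{\infty}\delta(s)\,ds<\infty$, the last two displayed inequalities give $2y(t)\leq I_{R}[\ve{u}(t)]\leq C\delta(t)=-C\,y'(t)$, that is $y'(t)\leq -\tfrac{2}{C}\,y(t)$; Gronwall's inequality then produces $y(t)\leq y(0)\,e^{-2t/C}$, which is precisely \eqref{DoQ11} with $c=2/C$ and $C=y(0)=\int_{0}^{\infty}\delta(s)\,ds$.
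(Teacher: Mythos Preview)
Your proof is correct and follows essentially the same route as the paper: combine the monotonicity $\tfrac{d}{dt}I_{R}\leq -2\delta$ from Lemma~\ref{BoundDQ} with the nonnegativity of $I_{R}$ (obtained from $V_{R}\geq 0$ and $V_{R}'=I_{R}$) and the pointwise bound $I_{R}[\ve{u}(t)]\lesssim R^{2}\delta(t)$, then apply Gronwall. The only cosmetic difference is that the paper cites the virial bounds \eqref{EstimateV11}--\eqref{EstimateV22} from Claim~I of Lemma~\ref{Lemma11}, whereas you reprove them in situ via the modulation decomposition and the scaling of $I_{R}$; your treatment of the large-$\delta$ regime using the crude bound $I_{R}(t)\leq I_{R}(0)$ is in fact slightly cleaner, since it does not require an a~priori $\dot{H}^{1}$ bound on $\ve{u}$.
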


\begin{proof}
From Lemmas~\ref{BoundDQ} and \ref{VirialIden}, we deduce that
$\tfrac{d^{2}}{dt^{2}}V_{R}(t)=\tfrac{d}{dt}I_{R}[\ve{u}(t)]\leq -2\delta(t)$ for $R\geq R_{1}$.  Since $\tfrac{d^{2}}{dt^{2}}V_{R}(t)<0$ and $V_{R}(t)>0$ for all $t\geq0$, we conclude that $I_{R}[\ve{u}(t)]=\tfrac{d}{dt}V_{R}(t)>0$ for all $t\geq 0$. Therefore,
\[
2\int^{T}_{t}\delta(s)ds\leq -\int^{T}_{t}\tfrac{d}{ds}I_{R}[\ve{u}(s)]ds
=I_{R}[\ve{u}(t)]-I_{R}[\ve{u}(T)]\leq I_{R}[\ve{u}(t)]\leq CR^{2}\delta(t),
\]
where we have used the estimate $I_{R}[\ve{u}(t)]\leq CR^{2}\delta(t)$ for all $t\geq 0$ (see \eqref{EstimateV11} and \eqref{EstimateV22}). The Gronwall inequality then yields \eqref{DoQ11}.
\end{proof}

\begin{proof}[Proof of Proposition~\ref{SupercriQ}]
First, we show that 
\begin{align}\label{deltaN}
    \lim_{t \to \infty}\delta(t)=0.
\end{align}
Indeed, Lemma~\ref{NeD} guarantees the existence of a sequence $\left\{t_{n}\right\}_{n\in \mathbb{N}}$ with $t_{n}\to +\infty$ such that $\lim_{n \to \infty}\delta(t_{n})=0$. Fix such a sequence $\left\{t_{n}\right\}_{n\in \mathbb{N}}$.  

Now, assume by contradiction that \eqref{deltaN} fails. Then, there exists a sequence $\left\{t^{\prime}_{n}\right\}_{n\in \mathbb{N}}$ such that $\delta(t^{\prime}_{n})\geq \epsilon$ for some $\epsilon\in(0, \delta_{0})$. By passing to subsequences of $\left\{t_{n}\right\}_{n\in \mathbb{N}}$ and $\left\{t^{\prime}_{n}\right\}_{n\in \mathbb{N}}$ if necessary, we may assume that  
\[
t_{n}<t^{\prime}_{n}, \quad \delta(t^{\prime}_{n})=\epsilon, \quad \delta(t)<\epsilon \quad \text{for all $t\in [t_{n}, t^{\prime}_{n})$}.
\]
Note that on $[t_{n}, t^{\prime}_{n})$, the parameters $\alpha(t)$, $\theta(t)$, and $\mu(t)$ are well-defined, and (cf. \eqref{DecomUFree})  
\[
\ve{u}_{[\theta(t), \mu(t)]}(t)=(1+\alpha(t))\Q+\ve{h}(t).
\] 
Further, by taking a subsequence if necessary, we have  
\begin{align}\label{BounLa}
    \lim_{n\to \infty}\mu(t_{n})=\mu_{\infty}\in (0, +\infty).
\end{align}
Indeed, from the estimate $\left|\tfrac{\mu^{\prime}(t)}{\mu(t)^{3}}\right|\leq C\delta(t)$ and \eqref{DoQ11}, we deduce that  
\begin{equation}\label{BoLamda}
\left|\tfrac{1}{\mu(t)^{2}}-\tfrac{1}{\mu(t_{n})^{2}}\right|\leq C_{0}e^{-ct_{n}}
\qtq{for $t\in [t_{n}, t^{\prime}_{n})$.}
\end{equation}

Next, suppose $\mu_{\infty}=\infty$. Let $r_{0}>0$. By H\"older's, Hardy's, and Sobolev's inequalities, we obtain  
\begin{align*}
    |V_{R}(t_{n})|&\lesssim
r^{4}_{0}K(\Q)+\|\ve{u}(t_{n})\|^{2}_{{L}^{4}(|x|\geq r_{0})}.
\end{align*}
Since $\ve{u}_{[\eta(t_{n}), \theta(t_{n}), \mu(t_{n})]} \to \Q$ in $\dot{H}^{1}$, it follows that for every $r_{0}>0$,  
\begin{equation}\label{Limuv}
\|\ve{u}(t_{n})\|^{2}_{{L}^{4}(|x|\geq r_{0})}\to 0
\qtq{as $n\to \infty$.}
\end{equation}
Passing to the limit $n\to \infty$ and then $r_{0} \to 0$, we conclude  
\[
\lim_{n\to \infty}V_{R}(t_{n})=0.
\]
However, since $\tfrac{d}{dt}V_{R}>0$ for all $t\geq 0$ (cf. Lemma~\ref{NeD}), we have $V_{R}(t)<0$ for $t\geq 0$, which is a contradiction.  
Thus, $\mu_{\infty}<\infty$. In particular, \eqref{BoLamda} implies that $\mu(t)\leq 2\mu_{\infty}$ on $\cup [t_{n}, t^{\prime}_{n})$.  

Since $|\alpha^{\prime}(t)|\lesssim \mu(t)^{2}|\delta(t)|\lesssim|\delta(t)|$ on $\cup [t_{n}, t^{\prime}_{n})$ (cf. \eqref{EstimateFree}), estimate \eqref{DoQ11} yields  
\begin{equation}\label{limnn}
\lim_{n\to \infty}|\alpha(t_{n})-\alpha(t^{\prime}_{n})|=0.
\end{equation}
As $|\alpha| \sim |\delta|$ (cf. \eqref{EstimateOne}), we have  
\[
|\alpha(t_{n})| \sim |\delta(t_{n})|\to 0 \quad \text{and} \quad 
|\alpha(t^{\prime}_{n})| \sim |\delta(t^{\prime}_{n})|=\epsilon>0,
\]
which contradicts \eqref{limnn}. Therefore, $\lim_{t \to \infty}\delta(t)=0$. In particular, the parameters $\alpha(t)$, $\mu(t)$, $\eta(t)$, and $\theta(t)$ are well-defined for large $t$, and from \eqref{InfP}, we deduce that $\mu_{\infty}>0$.  

Moreover, since $\mu(t)\leq 2\mu_{\infty}$ for sufficiently large $t$, estimate \eqref{EstimateOne} implies  
\[
\delta(t)+\|\ve{h}(t)\|_{\dot{H}^{1}}\sim |\alpha(t)|
\leq C\int^{+\infty}_{t}|\alpha^{\prime}(s)|ds\leq 
C\int^{+\infty}_{t}\mu(s)^{2}\delta(s)\,ds\leq Ce^{-ct}.
\]
Additionally, by \eqref{EstimateFree}, we infer the existence of $\eta_{\infty}$ and $\theta_{\infty}$ such that  
\[
\lim_{t\to +\infty}|\eta(t)-\eta_{\infty}|=0, \quad \lim_{t\to +\infty}|\theta(t)-\theta_{\infty}|=0, \quad \lim_{t\to +\infty}|\mu(t)-\mu_{\infty}|=0,
\]
which, by stability theory, implies  
\[
\|\ve{u}(t)-\Q_{[\eta_{\infty},\theta_{\infty}, \lambda_{\infty}]}\|_{\dot{H}^{1}}\leq  Ce^{-ct}\qtq{for all $t\geq 0$.}
\]

Finally, we prove finite-time blow-up for negative times. Suppose by contradiction that $\ve{u}$ is globally defined for negative times. Define $\ve{v}(t,x):=\overline{\ve{u}(-t,x)}$. Then, Lemmas~\ref{BoundDQ} and \ref{NeD} also hold for negative times. In particular, we obtain  
\[
\lim_{t\to \pm\infty}\delta(t)=0.
\]
Furthermore, $\frac{d}{dt}V_{R}(t)\to 0$ as $t\to \pm\infty$, and $\frac{d}{dt}V_{R}(t)> 0$ for all $t\in \R$ (cf. Lemma~\ref{NeD}). Since $\frac{d^{2}}{dt^{2}}V_{R}(t)< 0$ for all $t\in\R$, we arrive at a contradiction.  

This completes the proof of the proposition.
\end{proof}


\section{Spectral properties of the linearized operator}\label{SpecLine}

Recall from \eqref{Decomh} that we have
\[
\mathcal{L} := \begin{pmatrix}
0 & -L_{I} \\
L_{R} & 0
\end{pmatrix},
\]
where $L_{I}$ and $L_{R}$ are defined in Section~\ref{EqLinea}. The main objective of this section is to establish some spectral properties that will be used in subsequent sections.

The primary goal of this section is to prove the following result.

\begin{lemma}\label{SpecLL}
Let $\sigma(\L)$ denote the spectrum of the operator $\L$, defined on the space $(L^{2}(\R^{4}:\R))^{6}$ with domain $(H^{2}(\R^{4}:\R))^{6}$. The operator $\L$ admits two simple eigenfunctions $e_{+} = (Y, Z, W)$ and $e_{-} = (\overline{Y}, \overline{Z}, \overline{W})$, both belonging to the Schwartz space $(\Sch(\R^{4}:\R))^{6}$, with corresponding real eigenvalues $\pm \lambda_{1}$, where $\lambda_{1} > 0$. Moreover, the real part of the spectrum satisfies
\[
\sigma(\L) \cap \R = \left\{-\lambda_{1}, 0, \lambda_{1}\right\},
\]
and the essential spectrum of $\L$ is given by
\[
\sigma_{\text{ess}}(\L) = \left\{i\xi: \xi \in \R\right\}.
\]
\end{lemma}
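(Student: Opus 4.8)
The plan is to reduce the spectral analysis of the non-self-adjoint operator $\L$ to the spectral theory of the two self-adjoint operators $L_I$ and $L_R$, exploiting the block structure $\L=\begin{pmatrix} 0 & -L_I\\ L_R & 0\end{pmatrix}$. The key algebraic observation is that if $\L\begin{pmatrix}\ve{a}\\ \ve{b}\end{pmatrix}=\lambda\begin{pmatrix}\ve{a}\\ \ve{b}\end{pmatrix}$ then $-L_I\ve{b}=\lambda\ve{a}$ and $L_R\ve{a}=\lambda\ve{b}$, so $\L^2\begin{pmatrix}\ve{a}\\ \ve{b}\end{pmatrix}=\begin{pmatrix}-L_IL_R\ve{a}\\ -L_RL_I\ve{b}\end{pmatrix}=\lambda^2\begin{pmatrix}\ve{a}\\ \ve{b}\end{pmatrix}$. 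Hence real eigenvalues $\pm\lambda_1$ of $\L$ correspond to a positive eigenvalue $\lambda_1^2$ of the (self-adjoint, relative to a suitable weighted inner product) product operator $L_IL_R$. Since $L_I\geq 0$ with $\ker L_I=\mathrm{span}\{\Q_p,\Q_q\}$ (Remark~\ref{KerLI}) and $L_R$ has finitely many negative directions with $\mathrm{span}\{\Lambda\Q,\partial_j\Q\}\subset\ker L_R$ and $\langle L_R\Q,\Q\rangle<0$ (Lemma~\ref{Coer11} and \eqref{NegativeLQ}), a Sylvester-type counting argument (as in \cite[Section~2]{ChangGustafsonNakanishiTsai2008} or \cite[Section~5]{DuyMerle2009}) shows that $L_IL_R$ has exactly one negative eigenvalue counted with the right multiplicity, equivalently one pair $\pm\lambda_1^2$ with $\lambda_1>0$; the associated eigenfunctions are simple. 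The eigenfunctions lie in the Schwartz class by elliptic regularity and the exponential decay of $Q$, since they solve an elliptic system with smooth, exponentially decaying potentials.

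First I would fix the weighted space. Because $L_I\geq 0$, I would work on the closure of $(\ker L_I)^{\perp}$ in the form domain, or alternatively use the factorization $L_I^{1/2}$ (well-defined on that complement) and study the conjugated self-adjoint operator $L_I^{1/2}L_RL_I^{1/2}$, whose negative spectrum encodes the unstable directions of $\L$. The nonnegativity of $L_I$ combined with Lemma~\ref{CoerLi11} (coercivity of $L_I$ on $\{\Q_p,\Q_q\}^\perp$) makes $L_I^{1/2}$ have closed range there. Then I would invoke Appendix~\ref{S:A2} (which asserts $\L$ has at least one negative eigenvalue, i.e.\ $L_R$ is not nonnegative on the relevant subspace, consistent with $\langle L_R\Q,\Q\rangle<0$) together with Lemma~\ref{Coer11} to conclude the index is exactly one: the coercivity of $L_R$ on a subspace of codimension $6$ (spanned by $\Q$ itself modulo the five-dimensional kernel $\{\Lambda\Q,\partial_j\Q\}$ — note $\Q\in\mathrm{Span}\{\Q_p,\Q_q\}$ and $L_I\Q=0$, so the only genuinely negative direction is $\Q$) pins down a one-dimensional negative eigenspace for the symmetrized product operator. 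This yields simplicity of $\pm\lambda_1$ and the claim $\sigma(\L)\cap\R=\{-\lambda_1,0,\lambda_1\}$: there can be no further real eigenvalue because that would produce a second negative direction for $L_I^{1/2}L_RL_I^{1/2}$; and $0\in\sigma(\L)$ since $\L(\partial_j\Q)=\L(\Lambda\Q)=\L(i\Q_p)=\L(i\Q_q)=0$.

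For the essential spectrum, I would use Weyl's theorem: $\L$ is a relatively compact perturbation of the constant-coefficient operator $\L_0=\begin{pmatrix}0 & -L_I^0\\ L_R^0 & 0\end{pmatrix}$ where $L_I^0=L_R^0=\mathrm{diag}(-\tfrac{1}{2m_k}\Delta)$ (the potentials are bounded, decay like $Q^2\sim|x|^{-4}$, and act by multiplication, hence are $\Delta$-compact in four dimensions on $L^2$ — one uses the standard argument that $Q^2(-\Delta+1)^{-1}$ is compact). Therefore $\sigma_{\mathrm{ess}}(\L)=\sigma_{\mathrm{ess}}(\L_0)$. Computing $\sigma(\L_0)$ on the Fourier side: the symbol is the block matrix $\begin{pmatrix}0 & -|\xi|^2 D\\ |\xi|^2 D & 0\end{pmatrix}$ with $D=\mathrm{diag}(\tfrac{1}{2m_k})$, whose eigenvalues are $\pm i|\xi|^2/(2m_k)$, ranging over all of $i\R$ as $\xi$ varies. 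Hence $\sigma_{\mathrm{ess}}(\L)=i\R$, and in particular the real spectrum is discrete away from $0$ and consists only of isolated eigenvalues, justifying the counting above.

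The main obstacle I anticipate is making the index/counting argument fully rigorous in the non-self-adjoint, infinite-dimensional setting: one must carefully handle the kernel of $L_I$ (two-dimensional here, the novelty of the paper) when passing to $L_I^{1/2}L_RL_I^{1/2}$, verify that $\Q_p,\Q_q\in\ker L_I$ do not spuriously contribute, and confirm that no eigenvalue escapes into the essential spectrum or becomes embedded. I would control this by combining the coercivity Lemmas~\ref{CoerLi11} and~\ref{Coer11} (which quantitatively separate the finitely many bad directions from a coercive complement) with a standard min-max / Sylvester inertia argument on the reduced finite-dimensional quadratic forms, following the template of \cite[Section~5]{DuyMerle2009} and \cite[Appendix]{MIWUGU2015} adapted to the two-dimensional phase kernel.
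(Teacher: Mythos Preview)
Your treatment of the essential spectrum (Weyl's theorem, relatively compact perturbation of the free system) and of the existence of $\pm\lambda_1$ (invoking Appendix~\ref{S:A2}) matches the paper. The difference is in how you exclude further real eigenvalues and establish simplicity.

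You propose a Sylvester/index count: show the self-adjoint operator $\TT=L_I^{1/2}L_RL_I^{1/2}$ has exactly one negative eigenvalue, whence $\L$ has exactly one pair $\pm\lambda_1$. This is a legitimate alternative, but two corrections are in order. First, your concern about the two-dimensional $\ker L_I$ is misplaced in the $L^2$ setting: since $Q\notin L^2(\R^4)$, neither $\Q_p$ nor $\Q_q$ lies in $L^2$, so $\ker L_I=\{0\}$ on $L^2$ (the paper uses exactly this in the proof of Lemma~\ref{NegativeL}). Thus $L_I^{1/2}$ is injective with dense range and the inertia argument is clean; the two-dimensional kernel is a $\dot H^1$ phenomenon relevant to modulation, not to the spectral count on $L^2$. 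Second, to get Morse index exactly one for $L_R$ on $L^2$ you should invoke the diagonalization of Lemma~\ref{Apx11}: after the change of variables, $L_R$ becomes $\mathrm{diag}(L_{-1},L_{-1},L_3)$ with $L_{-1}=-\Delta+Q^2\geq 0$ and $L_3=-\Delta-3Q^2$ having a single negative eigenvalue (scalar theory). You gesture at this but do not make it explicit.

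The paper takes a different route. It proves Proposition~\ref{FCove} (coercivity of the quadratic form $\F$ on the subspace $\tilde G^\perp$, defined by the orthogonality conditions $\F(\cdot,e_\pm)=0$ together with $(\cdot,i\Q_p)_{\dot H^1}=(\cdot,i\Q_q)_{\dot H^1}=(\cdot,\Lambda\Q)_{\dot H^1}=(\cdot,\partial_j\Q)_{\dot H^1}=0$). Then, given a hypothetical real eigenvalue $\lambda_0\notin\{0,\pm\lambda_1\}$ with eigenfunction $\ve f$, the skew-symmetry $\F(\L\ve u,\ve v)=-\F(\ve u,\L\ve v)$ forces $\F(\ve f,e_+)=\F(\ve f,e_-)=\F(\ve f)=0$; stripping off the kernel components and applying coercivity yields $\ve f=0$. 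The payoff of this route is that Proposition~\ref{FCove} (and the intermediate Remark~\ref{Nzero}, $\F(e_+,e_-)\neq 0$) are reused verbatim in Section~\ref{S:uniq}: the decay estimate \eqref{EstimaV} for $v^\perp$ in the proof of Proposition~\ref{AxuST} relies precisely on $\|v^\perp\|_{\dot H^1}^2\lesssim\F(v^\perp)$. Your approach bypasses this coercivity statement, so you would still need to supply it separately for the later uniqueness argument.
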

\begin{proof}
Note that the operator $\mathcal{L}$ is a compact perturbation of $(-i \tfrac{1}{m_{1}}\Delta, -i \tfrac{1}{m_{2}}\Delta, -i \tfrac{1}{m_{3}}\Delta)$. Indeed, $Q$ decays at infinity. Consequently, the essential spectrum of $\mathcal{L}$ satisfies $\sigma_{\text{ess}}(\mathcal{L}) = i \mathbb{R}$. In particular, the intersection $\sigma(\mathcal{L}) \cap (\mathbb{R} \setminus \{0\})$ consists solely of eigenvalues. 

Lemma~\ref{NegativeL} in Appendix~\ref{S:A2} shows that $\mathcal{L}$ has a negative eigenvalue $-\lambda_{1}$ (and, by conjugation, it also has the corresponding positive eigenvalue $\lambda_{1} > 0$). Thus, $\{\pm \lambda_{1}\} \subset \sigma(\mathcal{L})$. 

Furthermore, employing the same reasoning as developed in \cite[Subsection 7.2.2]{DuyMerle2009}, we deduce that the eigenfunctions $e_{\pm}$ belong to $(\Sch(\R^{4}:\R))^{6}$. Here, $e_{+} = (Y, Z, W)$ is the eigenfunction associated with the eigenvalue $\lambda_{1}$, and $e_{-} = \overline{e_{+}} = (\overline{Y}, \overline{Z}, \overline{W})$ is the eigenfunction associated with the eigenvalue $-\lambda_{1}$. 

\begin{remark}\label{PLf} 
A straightforward computation shows that for any $\ve{h}$, $\ve{u} \in \dot{H}^{1}$, the following properties hold:
\begin{align*}
&\F(e_{\pm}) = \F(i\Q_{p}) = \F(i\Q_{q}) = \F(\Lambda \Q) = 0, \quad \F(\Q) < 0, \\
&\F(\ve{h}, \ve{u}) = \F(\ve{u}, \ve{h}), \quad \F(\L \ve{h}, \ve{u}) = -\F(\ve{h}, \L \ve{u}), \\
&\F(\ve{h}, i\Q_{p}) = \F(\ve{h}, i\Q_{q}) = \F(\ve{h}, \Lambda \Q) = \F(\ve{h}, \partial_{j}\Q) =0,
\end{align*}
for $j = 1, \ldots, 4$.
\end{remark}

\begin{remark}\label{Nzero}
We observe that $\F(e_{+}, e_{-}) \neq 0$. Indeed, suppose for contradiction that $\F(e_{+}, e_{-}) = 0$. Define the subspace
\[
E = \mbox{span}\left\{i\Q_{p}, i\Q_{q}, e_{+}, e_{-}, \Lambda \Q, \partial_{j} \Q : j = 1, \ldots, 4\right\},
\]
which has codimension $9$. Then, using the identities established in Remark~\ref{PLf}, we see that $\F(h) = 0$ for all $h \in E$. However, this leads to a contradiction because $\F$ is positive definite on a co-dimension $8$ subspace (cf. Proposition~\ref{CoerQuadra}).
\end{remark}

It remains to prove that $\sigma(\mathcal{E}) \cap (\mathbb{R} \setminus \{0\}) = \{-\lambda_{1}, \lambda_{1}\}$. Before proceeding with the proof, we require the following result. Recall that $\mathcal{F}$ is the quadratic form defined in \eqref{Quadratic}.
\begin{proposition}\label{FCove}
There exists a constant $ C > 0$ such that for every $\ve{h} \in \tilde{G}^{\bot} $, the following inequality holds:
\[
\F(\ve{h}) \geq C \|\ve{h}\|^{2}_{\dot{H}^{1}},
\]
where the orthogonal complement $ G^{\bot}$ is defined as
\begin{align*}
\tilde{G}^{\bot} := \bigg\{ \ve{h} \in \dot{H}^{1} \, \bigg| \, & \F(\ve{h}, e_{+}) = \F(\ve{h}, e_{-}) = (i\Q_{p}, \ve{h})_{\dot{H}^{1}} = (i\Q_{q}, \ve{h})_{\dot{H}^{1}} = (\Lambda \Q, \ve{h})_{\dot{H}^{1}} = 0, \\
& (\partial_{j} \Q, \ve{h})_{\dot{H}^{1}} = 0 \, \text{ for } j = 1, \ldots, 4 \bigg\}.
\end{align*}
\end{proposition}
\begin{proof}[Proof of Proposition~\ref{FCove}] 
We first show that if $\ve{h} \in \tilde{G}^{\bot}$, then $\F(\ve{h}) > 0$. Suppose, for contradiction, that there exists $\ve{g} \in \tilde{G}^{\bot}$ with $\ve{g} \neq 0$ such that $\F(\ve{g}) \leq 0$. From Remarks~\ref{PLf} and~\ref{Nzero}, we have
\begin{equation}\label{FEE}
\F(e_{-}) = 0, \quad \F(e_{+}) = 0,
\quad \text{and} \quad 
\F(e_{+}, e_{-}) \neq 0.
\end{equation}
Define the subspace
\[
E_{-} := \mbox{span}\left\{ i\Q_{p}, i\Q_{q}, \Lambda \Q, e_{+}, \ve{g}, \partial_{j} \Q : j = 1, \ldots, 4\right\}.
\]
From \eqref{FEE}, it follows that $\F(\ve{h}) \leq 0$ for all $\ve{h} \in E_{-}$. Since $i\Q_{p}$, $i\Q_{q}$, $\Lambda \Q$, $\ve{g}$, and $\partial_{j} \Q$ are orthogonal in the real Hilbert space $\dot{H}^{1}$ and $\F(e_{-}) = 0, \quad \F(e_{+}) = 0$, we deduce that $\mbox{dim}_{\R} E_{-} = 9$.  However, Proposition~\ref{CoerQuadra} states that $\F$ is positive definite on a co-dimension $8$ subspace of $\dot{H}^{1}$, which leads to a contradiction. Therefore, $\F(\ve{h}) > 0$ for all $\ve{h} \in \tilde{G}^{\bot}$.
Finally, since $Q$ decays at infinity, a compactness argument ensures that coercivity holds on $\tilde{G}^{\bot}$.
\end{proof}

To complete the proof of Lemma~\ref{SpecLL}, we must show that $\sigma(\EE)\cap (\R \setminus \{0\}) = \{-\lambda_{1}, \lambda_{1}\}$. 
Assume for contradiction that there exists $\ve{f} \in H^{2}$ with $\ve{f} \neq 0$ such that $\L \ve{f} = -\lambda_{0}\ve{f}$, where $\lambda_{0} \in \R \setminus \{0, -\lambda_{1}, \lambda_{1}\}$. Using the identity $\F(\L \ve{g}, \ve{h}) = -\F(\ve{g}, \L \ve{h})$, we derive:
\[
(\lambda_{1} + \lambda_{0})\F(\ve{f}, e_{+}) = (\lambda_{1} - \lambda_{0})\F(\ve{f}, e_{-}) = 0
\quad \text{and} \quad 
\lambda_{0}\F(\ve{f}, \ve{f}) = -\lambda_{0}\F(\ve{f}, \ve{f}),
\]
which simplifies to:
\[
\F(\ve{f}, e_{+}) = \F(\ve{f}, e_{-}) = \F(\ve{f}, \ve{f}) = 0.
\]

Decompose $\ve{f}$ as:
\[
\ve{f} = i\beta_{0}\Q_{p} + i\beta_{1}\Q_{q} + \sum_{j=1}^{4} \alpha_{j}\partial_{j} \Q + \gamma \Lambda \Q + \ve{g},
\]
where $\ve{g} \in \tilde{G}^{\bot}$, and the coefficients are defined by:
\[
\beta_{0} = \frac{(\ve{f}, i\Q_{p})_{\dot{H}^{1}}}{\|\Q_{p}\|^{2}_{\dot{H}^{1}}}, \quad
\beta_{1} = \frac{(\ve{f}, i\Q_{q})_{\dot{H}^{1}}}{\|\Q_{q}\|^{2}_{\dot{H}^{1}}}, \quad
\alpha_{j} = \frac{(\ve{f}, \partial_{j} \Q)_{\dot{H}^{1}}}{\|\partial_{j} \Q\|^{2}_{\dot{H}^{1}}}, \quad
\gamma = \frac{(\ve{f}, \Lambda \Q)_{\dot{H}^{1}}}{\|\Lambda \Q\|^{2}_{\dot{H}^{1}}}.
\]

From Remark~\ref{PLf}, we observe that $\F(\ve{g}, \ve{g}) = \F(\ve{f}, \ve{f}) = 0$. By Proposition~\ref{FCove}, this implies:
\[
\|\ve{g}\|^{2}_{H^{1}} \lesssim \F(\ve{g}) = 0.
\]
Thus, $\ve{g} = 0$, and consequently $\lambda_{0}\ve{f} = \L \ve{f} = \L \ve{g} = 0$, which contradicts $\ve{f} \neq 0$. This completes the proof of Lemma~\ref{SpecLL}.

\end{proof}

\begin{remark}\label{KerLR}
As a direct consequence of Proposition~\ref{FCove}, we obtain
\begin{align}\label{Ker11}
	\text{Ker}(\L) = \text{span}\left\{i\Q_{p}, i\Q_{q}, \Lambda Q, \partial_{j}\Q : j = 1, \ldots, 4\right\}.
\end{align}
In particular, we deduce that
\begin{align}\label{Ker12}
	\text{Ker}(L_{R}) &= \text{span}\left\{ \Lambda Q, \partial_{j}\Q : j = 1, \ldots, 4\right\}, \\
	\text{Ker}(L_{I}) &= \text{span}\left\{\Q_{p}, \Q_{q}\right\}.
\end{align}
\end{remark}

\begin{remark}\label{PE1}
We observe that
\[
(e_{1}, \Q)_{K} \neq 0 \qtq{where} e_{1} = (\RE Y, \RE Z, \RE W),
\]
and $(\cdot,\cdot )_{K}$ denotes the inner product associated with the norm $K(\cdot)^{\frac{1}{2}}$ (see \eqref{HNN}).
To prove this, assume by contradiction that $(e_{1}, \Q)_{K} = 0$. Note that
\[
\lambda_{1}\F(e_{\pm}, \Q) = \pm\F(\L e_{\pm}, \Q) = \mp\F(e_{\pm}, \L \Q)
= \tfrac{1}{2}\lambda_{1}(e_{1}, \Q)_{K} = 0.
\]
Here, we have used the fact that $L_{I}e_{2} = -\lambda_{1} e_{1}$, where $e_{2} = (\IM Y, \IM Z, \IM W)$.
Additionally, since
\[
(i\Q_{q}, \Q)_{\dot{H}^{1}}=(i\Q_{p}, \Q)_{\dot{H}^{1}} = (\Lambda\Q, \Q)_{\dot{H}^{1}} = (\partial_{j}\Q, \Q)_{\dot{H}^{1}} = 0,
\]
Proposition~\ref{FCove} implies that $\F(\Q) > 0$, which leads to a contradiction (cf. Remark~\ref{PLf}). 
\end{remark}

\section{Construction of special solutions}\label{S:Existence}

We begin with some estimates that will be useful throughout this section. Recall that for $\ve{h}=(h_{1}, h_{2}, h_{3})$ (cf. Section~\ref{EqLinea}):

\[
\begin{aligned}
K(\ve{h}) &= (2\overline{h}_1 Q_2 Q_3 + 2\overline{Q}_1 h_2 Q_3 + 2\overline{Q}_1 Q_2 h_3, \\
&\quad 2h_1 Q_1 \overline{Q}_3 + Q_1^2 \overline{h}_3, 2h_1 Q_1 \overline{Q}_2 + Q_1^2 \overline{h}_2), \\
R(\ve{h}) &= (2\overline{h}_1 h_2 h_3 + 2\overline{h}_1 h_2 Q_3 + 2\overline{h}_1 Q_2 h_3 + 2\overline{Q}_1 h_2 h_3, \\
&\quad h_1^2 \overline{h}_3 + h_1^2 \overline{Q}_3 + 2h_1 Q_1 \overline{h}_3, h_1^2 \overline{h}_2 + h_1^2 \overline{Q}_2 + 2h_1 Q_1 \overline{h}_2).
\end{aligned}
\]

\begin{lemma}[Linear estimates]\label{LL1}
Let $I$ be a finite interval of length $|I|$, $\ve{h}\in S(I)$, and $\nabla \ve{h}\in Z(I)$. Then, there exists a positive constant $C$ independent of $I$ such that
\begin{equation}\label{Fi22}
\|\nabla K(\ve{h})\|_{N(I)} \leq |I|^{\frac{1}{3}} \|\nabla \ve{h}\|_{Z(I)}.
\end{equation}
Moreover, for $\ve{h}\in \text{L}^{3}$, we have
\begin{equation}\label{Fi1}
\|K(\ve{h})\|_{\text{L}_{x}^{\frac{4}{3}}} \leq C \|\ve{h}\|_{\text{L}_{x}^{4}}.
\end{equation}
\end{lemma}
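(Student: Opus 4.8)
The plan is to prove the two estimates in Lemma~\ref{LL1} by unwinding the definition of $K(\ve{h})$ and applying H\"older's inequality in the appropriate Strichartz-type and Lebesgue spaces, exploiting that the coefficients $Q_j$ are smooth, bounded, and rapidly decaying (so in particular $Q_j, \nabla Q_j \in L^\infty \cap L^p$ for all relevant $p$).

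\textbf{Estimate \eqref{Fi1}.} This is the simpler one. Each component of $K(\ve{h})$ is a finite sum of terms of the form (coefficient built from $Q_1, Q_2, Q_3$) times (one factor linear in $\ve{h}$, either $h_i$ or $\overline{h}_i$). Since the $Q_j$ are bounded, pointwise one has $|K(\ve{h})(x)| \lesssim \big(\sum_j |Q_j(x)|^2\big)\,|\ve{h}(x)|$. To land in $L^{4/3}_x$ from $|\ve{h}| \in L^4_x$, I would use H\"older with exponents $\tfrac{1}{4/3} = \tfrac14 + \tfrac12$, i.e. pair $|\ve{h}| \in L^4_x$ with the coefficient $\sum_j |Q_j|^2$, which lies in $L^2_x(\R^4)$ because $Q(x) = (1+|x|^2/8)^{-1}$ gives $Q^2 \in L^2(\R^4)$ (indeed $Q^2$ decays like $|x|^{-4}$, which is square-integrable at infinity in $\R^4$, and is bounded near the origin). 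This yields $\|K(\ve{h})\|_{L^{4/3}_x} \lesssim \|\,\textstyle\sum_j Q_j^2\|_{L^2_x}\,\|\ve{h}\|_{L^4_x} \leq C\|\ve{h}\|_{L^4_x}$, with $C$ depending only on $m_1,m_2,m_3$.

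\textbf{Estimate \eqref{Fi22}.} Here $N(I) = (L^2_t L^{4/3}_x)^3$ and $Z(I) = (L^6_t L^{12/5}_x)^3$. Applying $\nabla$ to $K(\ve{h})$ via the product rule produces two types of terms: (a) $(\nabla Q_{i}Q_{j} \text{ or } Q_iQ_j)\cdot \ve{h}$ type terms where the derivative falls on a $Q$ (so a bounded, decaying coefficient multiplies $\ve{h}$ itself, no derivative on $\ve h$), and (b) $(\text{product of two }Q\text{'s})\cdot \nabla \ve{h}$ type terms. For the spatial norm at fixed $t$: in case (b), H\"older with $\tfrac{1}{4/3} = \tfrac{5}{12} + \tfrac{1}{3}$ pairs $\nabla \ve{h} \in L^{12/5}_x$ with the coefficient in $L^3_x$ — and $Q_iQ_j \sim Q^2 \in L^3(\R^4)$ since $Q^2$ decays like $|x|^{-4}$ (cube-integrable at infinity) and is bounded. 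In case (a) the coefficient $\nabla Q_i \cdot Q_j$ is likewise in $L^3_x$ (even better decay), and one can either bound $\ve{h}$ directly in $L^{12/5}_x$ or note $\|\ve h\|_{L^{12/5}} \lesssim \|\nabla \ve h\|_{L^{12/5}}$ is \emph{not} available, so instead use Sobolev/H\"older to reduce to $\nabla \ve h$; more robustly, since $S(I) = (L^6_tL^6_x)^3$ controls $\ve h$ and $\nabla \ve h \in Z(I)$, one has by Sobolev embedding $\dot W^{1,12/5}(\R^4)\hookrightarrow L^6(\R^4)$... — but actually the cleanest route, matching the statement, is: for each term the spatial $L^{4/3}_x$ norm is $\lesssim \|Q^2\|_{L^3_x}\|\nabla\ve h\|_{L^{12/5}_x}$ (handling (a) by trading the $L^\infty$ bound on $\nabla Q_i$ and the $L^3$ bound on $Q_j$, then estimating $\ve h$ in $L^{12/5}$ via the inclusion $\dot H^1 \hookrightarrow \dot W^{1,12/5}$ on the relevant scales is not uniform — so instead I would simply absorb (a) into the same form by writing $\nabla K(\ve h) \lesssim (\text{bounded decaying coeff})\nabla\ve h + (\text{bounded decaying coeff})\ve h$ and bounding the second piece using that $Q^2$ decays, giving $L^{4/3}_x \lesssim \|\ve h\|_{L^{12/5}_x}\|Q^2\|_{L^3_x}$, then in time H\"older on $I$). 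Then take $L^2_t$ in time over $I$ of length $|I|$: since $\nabla \ve h \in L^6_t L^{12/5}_x$, H\"older in $t$ with $\tfrac12 = \tfrac13 + \tfrac16$ gives a factor $|I|^{1/3}$, yielding $\|\nabla K(\ve h)\|_{N(I)} \lesssim |I|^{1/3}\|\nabla\ve h\|_{Z(I)}$ (after noting the $\ve h$-without-derivative terms are handled the same way, or are even better since they come with faster-decaying coefficients).

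\textbf{Main obstacle.} The only genuinely delicate point is bookkeeping the terms in $\nabla K(\ve h)$ where the derivative does \emph{not} hit $\ve h$: one must argue that $\ve h$ (not $\nabla \ve h$) can still be controlled by $\|\nabla \ve h\|_{Z(I)}$, which requires either invoking a Sobolev inequality to pass from $\ve h$ to $\nabla\ve h$ in the right Lebesgue exponent or, more simply, observing that in all such terms the $Q$-coefficient has enough integrability/decay (it lies in $L^{q}$ for a range of $q$ including small ones) that H\"older can be arranged to pair $\ve h$ against a coefficient in some $L^q_x$ and still close — I would present the clean version where every term is reduced to the template $\|\text{coeff}\|_{L^3_x \text{ or } L^2_x}\|\nabla \ve h\|_{L^{12/5}_x}$ using the embedding $\dot H^1(\R^4)\hookrightarrow L^4(\R^4)$ and interpolation only where strictly needed. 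Everything else is a routine application of H\"older's inequality in space and time together with the explicit decay $Q^2(x)\lesssim \langle x\rangle^{-4}$.
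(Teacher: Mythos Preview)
Your argument for \eqref{Fi1} is fine (the paper uses the split $L^4\times L^4\times L^4$ rather than your $L^2\times L^4$ for the $Q^2$-factor, but both work).

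For \eqref{Fi22} you have the right skeleton but you fumble precisely the step you flag as the ``main obstacle'': controlling the terms where $\nabla$ falls on a $Q$ and leaves a bare $h_i$. You write down the Sobolev embedding $\dot W^{1,12/5}(\R^4)\hookrightarrow L^6(\R^4)$ and then abandon it, instead trying to place $\ve h$ in $L^{12/5}_x$ --- which cannot be controlled by $\|\nabla\ve h\|_{L^{12/5}_x}$ in $\R^4$. That embedding is exactly what the paper uses (it is displayed as $\|f\|_{L^6_tL^6_x}\lesssim\|\nabla f\|_{L^6_tL^{12/5}_x}$), and it closes the argument immediately: for a term like $h_i(\nabla Q_j)Q_k$, apply H\"older with $\tfrac34=\tfrac5{12}+\tfrac16+\tfrac16$ in space and $\tfrac12=\tfrac16+\tfrac16+\tfrac16$ in time, putting $\nabla Q_j\in L^6_t(I)L^{12/5}_x$, $Q_k\in L^6_t(I)L^6_x$ (each contributing a factor $|I|^{1/6}$ since the $Q$'s are time-independent), and $h_i\in L^6_tL^6_x\lesssim \|\nabla h_i\|_{L^6_tL^{12/5}_x}$ via Sobolev. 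Together with $|\partial^\alpha Q|\lesssim Q$ and $Q\in L^{12/5}\cap L^6$, this gives \eqref{Fi22}. So commit to the Sobolev step you already identified and the proof goes through cleanly.
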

\begin{proof}
First, note that by the Sobolev inequality,
\begin{equation}\label{SI1}
\left\|{f}\right\|_{L^{6}_{t}L^{6}_{x}} \lesssim \left\|\nabla {f}\right\|_{L^{6}_{t}L^{\frac{12}{5}}_{x}}.
\end{equation}
Additionally, H\"older's inequality shows that
\begin{equation}\label{Hnew}
\|fgh\|_{L^{\frac{4}{3}}} \leq \|f\|_{L^{4}} \|g\|_{L^{4}} \|h\|_{L^{4}}.
\end{equation}
The inequality \eqref{Fi1} is a direct consequence of \eqref{Hnew}.
On the other hand, H\"older's inequality also implies
\begin{equation}\label{H11}
\|f{g}h\|_{L^{2}_{t}L^{\frac{4}{3}}_{x}} \leq \|f\|_{L^{6}_{t}L^{\frac{12}{5}}_{x}} \|{g}\|_{L^{6}_{t}L^{6}_{x}} \|h\|_{L^{6}_{t}L^{6}_{x}}.
\end{equation}
Since $|\partial_{\alpha}Q| \lesssim |Q|$ for every multi-index $\alpha$, and $Q \in L^{4} \cap L^{\frac{12}{5}}$, combining \eqref{SI1} and \eqref{H11}, we obtain \eqref{Fi22}.
\end{proof}

\begin{lemma}[Nonlinear estimates]\label{LN1}
Let $\ve{h}$ and $\ve{g}$ be functions in $L^{4}$. We have that
\begin{equation}\label{Nol1}
\|R(\ve{h}) - R(\ve{g})\|_{L^{\frac{4}{3}}} \leq C \|\ve{h} - \ve{g}\|_{L^{4}} \left( \|\ve{h}\|_{L^{4}} + \|\ve{g}\|_{L^{4}} 
+\|\ve{h}\|^{2}_{L^{4}} + \|\ve{g}\|^{2}_{L^{4}}
\right).
\end{equation}
In addition, let $I$ be a finite interval of length $|I|$, $\ve{h}, \ve{g} \in S(I)$, and $\nabla\ve{h}, \nabla\ve{g} \in Z(I)$. There exists a positive constant $C$ independent of $I$ such that
\begin{equation}\label{Nol2}
\begin{split}
\|\nabla R(\ve{h}) - \nabla R(\ve{g})\|_{N(I)} \leq C \|\nabla \ve{h} - \nabla \ve{g}\|_{Z(I)} \times \\
 \left( \|\nabla \ve{h}\|_{Z(I)} + \|\nabla \ve{g}\|_{Z(I)} + \|\nabla \ve{h}\|^{2}_{Z(I)} + \|\nabla \ve{g}\|^{2}_{Z(I)} \right).
\end{split}
\end{equation}
\end{lemma}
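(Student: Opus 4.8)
\textbf{Proof plan for Lemma~\ref{LN1}.}
The strategy is standard multilinear interpolation: exploit the algebraic structure of $R(\ve{h})$, which is a sum of trilinear and quadrilinear monomials in the components of $\ve{h}$ (together with the fixed, bounded and rapidly decaying profiles $Q_j$), and apply H\"older's inequality componentwise. First I would record the pointwise algebraic identity underlying everything: if $T(a_1,\dots,a_k)$ is a $k$-linear form, then
\[
T(\ve{h},\dots,\ve{h}) - T(\ve{g},\dots,\ve{g}) = \sum_{\ell=1}^{k} T(\ve{g},\dots,\ve{g},\ve{h}-\ve{g},\ve{h},\dots,\ve{h}),
\]
the usual telescoping sum placing $\ve{h}-\ve{g}$ in slot $\ell$. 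This reduces the difference $R(\ve{h})-R(\ve{g})$ to a finite sum of monomials, each containing exactly one factor of the form $h_i - g_i$ and the remaining factors drawn from $\{h_j, g_j, Q_j\}$.

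For \eqref{Nol1}, each resulting monomial is a product of three or four functions in $L^4(\R^4)$ (the $Q_j$ being in $L^4$, indeed in every $L^p$, and uniformly bounded, by \eqref{q1q3}), so a single application of the generalized H\"older inequality with exponents $\tfrac43 = \tfrac14+\tfrac14+\tfrac14$ (for the cubic terms) or $\tfrac43 = \tfrac14+\tfrac14+\tfrac14+$ a bounded $L^\infty$ factor absorbed into the constant (for the quartic terms) yields the bound $\|\ve{h}-\ve{g}\|_{L^4}$ times a product of $\|\ve{h}\|_{L^4}$, $\|\ve{g}\|_{L^4}$ and constants depending on $\|Q\|_{L^4}$. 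Collecting the cubic contributions produces the linear-plus-quadratic factor $\|\ve{h}\|_{L^4}+\|\ve{g}\|_{L^4}$ and the quartic ones the factor $\|\ve{h}\|_{L^4}^2+\|\ve{g}\|_{L^4}^2$, as claimed; the two terms $2\overline{h}_1 Q_2 h_3$ and $2\overline{Q}_1 h_2 h_3$ etc.\ also appear in $R$ but with a $Q$ factor, so they contribute to the lower-order (cubic) part.

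For \eqref{Nol2}, I would apply the product/Leibniz rule so that $\nabla$ falls on exactly one factor of each monomial; since $|\partial^\alpha Q| \lesssim |Q|$ for all multi-indices (already used in Lemma~\ref{LL1}), a gradient landing on a $Q_j$ costs nothing. Then estimate in the norm $N(I)=(L^2_tL^{4/3}_x)^3$ using H\"older in time and space exactly as in \eqref{H11}: one differentiated factor in $Z(I)=(L^6_tL^{12/5}_x)^3$, the undifferentiated $\ve{h},\ve{g}$-factors placed in $S(I)=(L^6_tL^6_x)^3$ and converted to $Z(I)$ via the Sobolev embedding \eqref{SI1}, $\|f\|_{L^6_tL^6_x}\lesssim\|\nabla f\|_{L^6_tL^{12/5}_x}$, and any $Q_j$-factor placed in $L^\infty_tL^p_x$. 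The time exponents balance since $\tfrac12 = \tfrac16+\tfrac16+\tfrac16$ for the cubic terms (and one extra $L^6_t$ factor for the quartic terms is absorbed after noting $Q\in L^\infty$), so no power of $|I|$ is needed here—unlike \eqref{Fi22}—because $R$ is genuinely nonlinear in $\ve{h}$; the constant $C$ is therefore independent of $I$. Summing over the finitely many monomials and the $k$ telescoping slots gives \eqref{Nol2}. The only mild bookkeeping obstacle is keeping track of which monomials are cubic versus quartic so that the powers on the right-hand side come out exactly as stated; there is no analytic difficulty beyond repeated H\"older.
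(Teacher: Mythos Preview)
Your approach is correct and coincides with the paper's own (very terse) proof: telescope the multilinear difference and apply the H\"older inequalities \eqref{Hnew}, \eqref{H11} together with the Sobolev embedding \eqref{SI1}. One cosmetic correction: every monomial in $R(\ve{h})$ is a product of exactly \emph{three} factors (either three $h_i$'s, or two $h_i$'s and one $Q_j$), so there are no ``quadrilinear'' or quartic terms and no $L^\infty$ absorption is ever needed; the cubic-in-$\ve{h}$ monomials produce the $\|\ve{h}\|_{L^4}^2+\|\ve{g}\|_{L^4}^2$ contribution on the right, while the quadratic-in-$\ve{h}$ monomials (carrying one $Q_j\in L^4$) produce the $\|\ve{h}\|_{L^4}+\|\ve{g}\|_{L^4}$ contribution.
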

\begin{proof}
Inequality \eqref{Nol1} is an immediate consequence of \eqref{Hnew}. Moreover, by combining \eqref{H11} and \eqref{SI1}, the inequality \eqref{Nol2} follows easily. 
\end{proof}

This result will be useful in this and the next section; see \cite{DuyMerle2009}.
\begin{lemma}\label{SumsE}
Let $a_{0}>0$, $t_{0}>0$, $p\in[1, \infty)$, $E$ a normed vector space, and $f\in L_{\text{loc}}^{p}((t_{0}, \infty); E)$. Suppose that there exist $\tau_{0}>0$ and $C_{0}>0$ so that
\[
\|f\|_{L^{p}(t, t+\tau_{0})}\leq C_{0}e^{-a_{0}t} \quad \text{for all $t\geq t_{0}$}.
\]
Then
\[
\|f\|_{L^{p}(t, \infty)}\leq \frac{C_{0}e^{-a_{0}t}}{1-e^{-a_{0}\tau_{0}}}.
\]
\end{lemma}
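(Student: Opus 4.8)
The statement to prove is Lemma~\ref{SumsE}: if $\|f\|_{L^p(t,t+\tau_0)}\le C_0 e^{-a_0 t}$ for all $t\ge t_0$, then $\|f\|_{L^p(t,\infty)}\le \frac{C_0 e^{-a_0 t}}{1-e^{-a_0\tau_0}}$. The natural approach is to decompose the half-line $(t,\infty)$ into a countable union of consecutive intervals of length $\tau_0$, apply the hypothesis on each piece, and sum a geometric series. There is no genuine obstacle here; the only point requiring a little care is handling the $L^p$ norm of a function over a union of intervals, which is subadditive (not additive) in the right way.

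\textbf{Key steps.} First, fix $t\ge t_0$ and write $(t,\infty)=\bigcup_{k=0}^{\infty} I_k$ where $I_k=(t+k\tau_0,\, t+(k+1)\tau_0)$. Since these intervals are essentially disjoint and cover $(t,\infty)$, we have the identity
\[
\|f\|_{L^p(t,\infty)}^p=\sum_{k=0}^{\infty}\|f\|_{L^p(I_k)}^p.
\]
Second, apply the hypothesis with $t$ replaced by $t+k\tau_0\ge t_0$ to each term: $\|f\|_{L^p(I_k)}\le C_0 e^{-a_0(t+k\tau_0)}=C_0 e^{-a_0 t}e^{-a_0 k\tau_0}$. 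Third, substitute and sum the resulting geometric series:
\[
\|f\|_{L^p(t,\infty)}^p\le \sum_{k=0}^{\infty}C_0^p e^{-a_0 p t}e^{-a_0 p k\tau_0}=\frac{C_0^p e^{-a_0 p t}}{1-e^{-a_0 p\tau_0}}.
\]
Fourth, take $p$-th roots. At this stage one obtains $\|f\|_{L^p(t,\infty)}\le \frac{C_0 e^{-a_0 t}}{(1-e^{-a_0 p\tau_0})^{1/p}}$, and since $0<1-e^{-a_0\tau_0}\le (1-e^{-a_0 p\tau_0})^{1/p}$ by convexity/Bernoulli (equivalently, $1-x\le(1-x^p)^{1/p}$ for $x\in[0,1]$), we conclude $\|f\|_{L^p(t,\infty)}\le \frac{C_0 e^{-a_0 t}}{1-e^{-a_0\tau_0}}$, as claimed.

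\textbf{Alternative for $p=1$ and robustness.} For $p=1$ the argument is even more transparent since $\|f\|_{L^1(t,\infty)}=\sum_k\|f\|_{L^1(I_k)}$ directly, and one immediately gets the stated bound with the factor $\frac{1}{1-e^{-a_0\tau_0}}$ without any root manipulation. One could alternatively avoid the Bernoulli-type inequality entirely by using a slightly coarser decomposition: bound $\|f\|_{L^p(t,\infty)}\le \sum_{k=0}^\infty \|f\|_{L^p(I_k)}$ (valid since the $L^p$ norm of a sum of functions with disjoint supports is at most the sum of the norms — indeed it equals the $\ell^p$ norm, which is dominated by the $\ell^1$ norm of the sequence of norms), and then directly sum $\sum_k C_0 e^{-a_0 t}e^{-a_0 k\tau_0}=\frac{C_0 e^{-a_0 t}}{1-e^{-a_0\tau_0}}$. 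This is the cleanest route and I would present it this way. The main (very minor) subtlety to get right is simply the direction of the triangle/$\ell^p$ inequality over the disjoint pieces; everything else is a one-line geometric sum.
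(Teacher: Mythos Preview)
Your argument is correct. The paper does not actually supply a proof of this lemma; it merely states it and refers the reader to \cite{DuyMerle2009}, so there is no ``paper's proof'' to compare against beyond the standard one you have reproduced. The cleanest version is your final route: use the triangle inequality $\|f\|_{L^p(t,\infty)}\le \sum_{k\ge 0}\|f\|_{L^p(t+k\tau_0,\,t+(k+1)\tau_0)}$ and sum the geometric series directly, which yields the constant $\tfrac{C_0 e^{-a_0 t}}{1-e^{-a_0\tau_0}}$ without any appeal to a Bernoulli-type inequality.
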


\begin{lemma}\label{BoundGra11}
Let $\ve{v}$ be a solution of \eqref{Decomh} satisfying
\begin{equation}\label{ExpH}
\|\ve{v}(t)\|_{{\dot{H}}^{1}}\leq Ce^{-c_{0}t}
\end{equation}
for some positive constants $C$ and $c_{0}$. Then for any admissible pair $(q, r)$ and sufficiently large $t$, we have
\begin{equation}\label{BoundCo}
\|\ve{v}\|_{S(t, +\infty)}
+\|\nabla \ve{v}\|_{\text{L}^{p}(t, +\infty; L^{q})}
\leq Ce^{-c_{0}t}.
\end{equation}
\end{lemma}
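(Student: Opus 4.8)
The plan is to run the standard Strichartz bootstrap for the linearized equation \eqref{Decomh}, exploiting the exponential decay hypothesis \eqref{ExpH} together with the linear and nonlinear estimates of Lemmas~\ref{LL1} and~\ref{LN1} and the summation device in Lemma~\ref{SumsE}. First I would fix a small $\tau_0>0$ (to be chosen depending only on the constants from the Strichartz inequality \eqref{Estriz} and Lemmas~\ref{LL1},~\ref{LN1}) and work on slabs $J_n = (t+n\tau_0, t+(n+1)\tau_0)$ of length $\tau_0$. On each slab, write the Duhamel formula for $\ve{v}$: since $\partial_t \ve{v} + \L\ve{v} = iR(\ve{v})$, one has, with $U(t)$ the free propagator and after splitting the linear potential part $K(\ve v)$ off (so that the "inhomogeneity'' is $K(\ve v)+R(\ve v)$),
\[
\ve{v}(s) = U(s - s_0)\ve{v}(s_0) + i\int_{s_0}^{s} U(s - \sigma)\big(K(\ve{v}(\sigma)) + R(\ve{v}(\sigma))\big)\,d\sigma
\]
for $s_0,s \in J_n$. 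Applying \eqref{Estriz} together with \eqref{Fi22} and \eqref{Nol2} (the latter with $\ve g = 0$), and using $|J_n|^{1/3}\le \tau_0^{1/3}$ small, the term $\|\nabla K(\ve v)\|_{N(J_n)}$ is absorbed into the left side, while $\|\nabla R(\ve v)\|_{N(J_n)}$ is controlled by $(\|\nabla\ve v\|_{Z(J_n)} + \|\nabla\ve v\|^2_{Z(J_n)})\|\nabla\ve v\|_{Z(J_n)}$; for $t$ large the a priori smallness of $\|\ve v(s_0)\|_{\dot H^1}\le Ce^{-c_0 s_0}$ and a continuity/bootstrap argument on the single slab then yield
\[
\|\nabla\ve{v}\|_{Z(J_n)} \lesssim \|\ve{v}(t+n\tau_0)\|_{\dot H^1} \lesssim e^{-c_0(t+n\tau_0)}.
\]

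Next I would upgrade this $Z$-bound to the full range of admissible pairs. Once $\|\nabla\ve v\|_{Z(J_n)}$ is controlled, $\|\nabla K(\ve v)\|_{N(J_n)}$ and $\|\nabla R(\ve v)\|_{N(J_n)}$ are controlled on $J_n$, and a further application of Strichartz \eqref{Estriz} (in its full form, for every admissible $(q,r)$ on the left) gives $\|\nabla\ve v\|_{L^p(J_n;L^q)} \lesssim e^{-c_0(t+n\tau_0)}$ and, via Sobolev embedding \eqref{SI1}, $\|\ve v\|_{S(J_n)} \lesssim e^{-c_0(t+n\tau_0)}$. Then I would sum over $n\ge 0$: by Lemma~\ref{SumsE} applied with $f = \nabla\ve v$ (resp. $f=\ve v$), $p$ the relevant Strichartz exponent, $a_0 = c_0$, $\tau_0$ as above, and the slab bound $\|f\|_{L^p(t',t'+\tau_0)}\le C_0 e^{-c_0 t'}$ for all $t'\ge t$, we obtain
\[
\|\ve v\|_{S(t,+\infty)} + \|\nabla\ve v\|_{L^p(t,+\infty;L^q)} \le \frac{C_0 e^{-c_0 t}}{1 - e^{-c_0\tau_0}} \le C e^{-c_0 t},
\]
which is exactly \eqref{BoundCo}. (Strictly, Lemma~\ref{SumsE} gives the tail estimate from the slab estimates, so one only needs the per-slab bound, which the previous paragraph supplies.)

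The main obstacle I anticipate is making the per-slab estimate fully rigorous: one must ensure the implicit constants in the Strichartz and nonlinear estimates do not degrade over the infinitely many slabs, which requires that the smallness used to absorb $\|\nabla K(\ve v)\|_{N(J_n)}$ and to close the nonlinear bootstrap depends only on $\tau_0$ (through $|J_n|^{1/3}=\tau_0^{1/3}$) and on the uniform bound $\sup_{s\ge t}\|\ve v(s)\|_{\dot H^1}\le Ce^{-c_0 t}$, not on $n$. This is where I would be careful: fix $\tau_0$ once and for all so that $C_{\mathrm{Str}}\,\tau_0^{1/3} \le \tfrac12$ (absorbing the linear potential term) and then choose $t$ large enough that $Ce^{-c_0 t}$ is below the nonlinear smallness threshold; since the slabs $J_n$ for $n\ge 0$ all lie in $(t,\infty)$ where $\|\ve v\|_{\dot H^1}$ is even smaller, the same constants work uniformly in $n$. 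A secondary technical point is that \eqref{Decomh} is an equation for $\ve h = \ve u - \Q$ with a genuine potential term ($\L$ is not the free operator), so one must treat $K(\ve h)$ as part of the forcing and absorb it using the short time interval — this is precisely what Lemma~\ref{LL1}, estimate \eqref{Fi22}, is designed for, and it is the reason the slab length $\tau_0$ must be taken small rather than arbitrary.
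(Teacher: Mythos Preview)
Your proposal is correct and follows precisely the approach indicated in the paper: the paper's proof simply cites Strichartz estimates \eqref{Estriz}, Lemmas~\ref{LL1}, \ref{LN1}, \ref{SumsE}, and a continuity argument (referring to \cite[Lemma~5.7]{DuyMerle2009} for details), which is exactly the slab-by-slab bootstrap you have written out. Your careful treatment of absorbing the potential term $K(\ve v)$ via the factor $|I|^{1/3}$ in \eqref{Fi22} and the uniform-in-$n$ smallness discussion are the right points to emphasize.
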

\begin{proof}
The estimate \eqref{BoundCo} follows from Strichartz estimates (cf. \eqref{Estriz}), Lemmas~\ref{LL1}, \ref{LN1} and \ref{SumsE}, and a continuity argument. See \cite[Lemma 5.7]{DuyMerle2009} for further details.
\end{proof}

\begin{proposition}\label{ApproxSo}
Let $a \in \mathbb{R}$. There exists a sequence $\{g^{a}_{j}\}_{j \geq 1}$ in $\mathcal{S}:=\big(\mathcal{S}(\mathbb{R}^{4})\big)^{3}$ satisfying the following properties: 
\begin{itemize}
    \item The first term is given by $g^{a}_{1} = a e_{+}$;
    \item For each $k \geq 1$, defining 
    \[
    U^{a}_{k}(t, x) := \sum^{k}_{j=1} e^{-j \lambda_{1} t} g^{a}_{j}(x),
    \]
    the approximation error satisfies
    \begin{equation}\label{Aproxlimi}
    \epsilon_{k} := \partial_{t} U^{a}_{k} + \mathcal{L} U^{a}_{k} - i {R}(U^{a}_{k}) = \mathcal{O}(e^{-(k+1) \lambda_{1} t}) \quad \text{in }\mathcal{S} \quad \text{as } t \to \infty.
    \end{equation}
\end{itemize}
Note that if $W^{a}_{k}:= (f^{a}_{k}, h^{a}_{k}, q^{a}_{k}) := U^{a}_{k} + \mathcal{Q}$, then error term becomes
    \[
    \epsilon_{k} := i\partial_{t}W^{a}_{k} + 
		(\tfrac{1}{2m_{1}}\Delta f^{a}_{k}, \tfrac{1}{2m_{2}}\Delta h^{a}_{k}, \tfrac{1}{2m_{3}}\Delta q^{a}_{k}) + {R}(f^{a}_{k}, h^{a}_{k}, q^{a}_{k}) = \mathcal{O}(e^{-(k+1) \lambda_{1} t}) \quad \text{in }\mathcal{S},
    \]
	as  $t \to \infty$.
\end{proposition}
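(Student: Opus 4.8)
The construction is by induction on $k$, producing the profiles $g^a_j$ one at a time so that the $j$-th term cancels the $e^{-j\lambda_1 t}$ contribution to the error. The base case is forced: setting $g^a_1 = a e_+$ and recalling that $\mathcal{L}e_+ = \lambda_1 e_+$, we get $\partial_t U^a_1 + \mathcal{L}U^a_1 = -a\lambda_1 e^{-\lambda_1 t}e_+ + a\lambda_1 e^{-\lambda_1 t}e_+ = 0$, so that $\epsilon_1 = -i R(U^a_1)$. Since $R(\ve{h})$ is a sum of quadratic and cubic terms in $\ve{h}$ (with coefficients involving the Schwartz-class function $Q$ through $\Q$), and $U^a_1 = a e^{-\lambda_1 t}e_+$ with $e_+ \in \mathcal{S}$, each monomial in $R(U^a_1)$ carries a factor $e^{-j\lambda_1 t}$ with $j \geq 2$ and lies in $\mathcal{S}$; hence $\epsilon_1 = \mathcal{O}(e^{-2\lambda_1 t})$ in $\mathcal{S}$, which is \eqref{Aproxlimi} for $k=1$.

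\textbf{Inductive step.} Suppose $g^a_1, \dots, g^a_k$ have been chosen with $\epsilon_k = \mathcal{O}(e^{-(k+1)\lambda_1 t})$ in $\mathcal{S}$. Because $R$ is polynomial in its argument and $U^a_k$ is a finite sum $\sum_{j=1}^k e^{-j\lambda_1 t}g^a_j$ with each $g^a_j \in \mathcal{S}$, the quantity $\epsilon_k = \partial_t U^a_k + \mathcal{L}U^a_k - iR(U^a_k)$ expands as a finite linear combination $\sum_{m} e^{-m\lambda_1 t} G_m$ with $G_m \in \mathcal{S}$ and, by the inductive hypothesis, only terms with $m \geq k+1$ survive; write the leading one as $\epsilon_k = e^{-(k+1)\lambda_1 t} F_{k+1} + \mathcal{O}(e^{-(k+2)\lambda_1 t})$ with $F_{k+1} \in \mathcal{S}$. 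We seek $g^a_{k+1} \in \mathcal{S}$ so that $U^a_{k+1} = U^a_k + e^{-(k+1)\lambda_1 t}g^a_{k+1}$ has error $\mathcal{O}(e^{-(k+2)\lambda_1 t})$. Plugging in and using that $R$ is polynomial (so the new term only affects orders $\geq k+1$, and at order exactly $k+1$ only linearly since the other factor would need order $\geq 1$), one computes that the $e^{-(k+1)\lambda_1 t}$ coefficient of the new error is
\[
-(k+1)\lambda_1 g^a_{k+1} + \mathcal{L}g^a_{k+1} + F_{k+1} = \big(\mathcal{L} - (k+1)\lambda_1\big)g^a_{k+1} + F_{k+1}.
\]
So it suffices to solve $\big((k+1)\lambda_1 - \mathcal{L}\big)g^a_{k+1} = F_{k+1}$ in $\mathcal{S}$. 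By Lemma~\ref{SpecLL}, $\sigma(\mathcal{L}) \cap \R = \{-\lambda_1, 0, \lambda_1\}$ and $\sigma_{\mathrm{ess}}(\mathcal{L}) = i\R$; since $(k+1)\lambda_1 > \lambda_1 > 0$ for $k \geq 1$, the real number $(k+1)\lambda_1$ lies in the resolvent set of $\mathcal{L}$, so $(k+1)\lambda_1 - \mathcal{L}$ is invertible on $L^2$ and $g^a_{k+1} := \big((k+1)\lambda_1 - \mathcal{L}\big)^{-1}F_{k+1}$ is well-defined in $H^2$. To upgrade $g^a_{k+1}$ from $H^2$ to $\mathcal{S}$, one bootstraps: $\mathcal{L}$ is a compact perturbation of a constant-coefficient elliptic operator with Schwartz-class potential ($Q^2 \in \mathcal{S}$), so elliptic regularity together with decay estimates (the potential terms and $F_{k+1}$ are Schwartz) propagates smoothness and rapid decay to $g^a_{k+1}$; this is the same argument used in \cite[Subsection~7.2.2]{DuyMerle2009} to show $e_\pm \in \mathcal{S}$, invoked already in the proof of Lemma~\ref{SpecLL}. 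With this choice, the $e^{-(k+1)\lambda_1 t}$ term in the new error vanishes and $\epsilon_{k+1} = \mathcal{O}(e^{-(k+2)\lambda_1 t})$ in $\mathcal{S}$, completing the induction.

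\textbf{Main obstacle and final remarks.} The only substantive point is the solvability of $\big((k+1)\lambda_1 - \mathcal{L}\big)g = F$ in $\mathcal{S}$; the invertibility on $L^2$ is immediate from the spectral description in Lemma~\ref{SpecLL} (no resonance at integer multiples $\geq 2$ of $\lambda_1$), so the real work is the regularity/decay bootstrap, which is standard for Schr\"odinger-type operators with Schwartz potentials and is already cited in the paper. The bookkeeping that $R(U^a_k)$ expands into finitely many exponential modes each with Schwartz coefficients, and that adding $e^{-(k+1)\lambda_1 t}g^a_{k+1}$ perturbs the error only at orders $\geq k+1$ with the stated linear leading term, is routine once one notes $R$ is a fixed polynomial map (quadratic plus cubic) with $\Q \in \mathcal{S}$. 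Finally, the reformulation in terms of $W^a_k = U^a_k + \Q$ is immediate: since $\Q$ solves the stationary system $\tfrac{1}{2m_j}\Delta Q_j + (\text{nonlinear terms}) = 0$ (cf. \eqref{s1A}), substituting $\ve{u} = W^a_k$ into the expression $i\partial_t W^a_k + (\tfrac{1}{2m_1}\Delta f^a_k, \tfrac{1}{2m_2}\Delta h^a_k, \tfrac{1}{2m_3}\Delta q^a_k) + R(W^a_k)$ reduces, after cancelling the ground-state contribution via \eqref{Decomh}, exactly to $\epsilon_k = \partial_t U^a_k + \mathcal{L}U^a_k - iR(U^a_k)$, so the two formulations of the error coincide.
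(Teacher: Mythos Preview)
Your proof is correct and follows essentially the same route as the paper's: induction on $k$, with the base case handled by $\mathcal{L}e_+=\lambda_1 e_+$ and the quadratic/cubic structure of $R$, and the inductive step by inverting $\mathcal{L}-(k+1)\lambda_1$ on a Schwartz source using Lemma~\ref{SpecLL}. The paper cites \cite[Section~6.2]{CAPA2022} for the Schwartz regularity of $g^a_{k+1}$ rather than \cite[Subsection~7.2.2]{DuyMerle2009}, but the underlying bootstrap argument is the same.
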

\begin{proof}
The proof proceeds by induction. For the  case $k=1$, consider $U^{a}_{1} := a e^{-\lambda_{1} t}e_{+}$. We observe that:
$$
\partial_{t} U^{a}_{1} + \mathcal{L} U^{a}_{1} - i R(U^{a}_{1}) = -i R(U^{a}_{1}) = \mathcal{O}(e^{-2\lambda_{1}t}).
$$
This establishes \eqref{Aproxlimi} for $k=1$.

For the inductive step, assume there exist $g_{1}^{a},\ldots,g_{k}^{a}$ such that $U_{k}^{a}$ satisfies \eqref{Aproxlimi}. Then there exists $P_{k+1}^{a}\in\mathcal{S}$ such that as $t\to+\infty$:
\begin{align}\label{dua}
	\partial_{t}U_{k}^{a}+\mathcal{L} U_{k}^{a}
	=iR(U_{k}^{a})+e^{-(k+1)\lambda_{1}t}P_{k+1}^{a}+O\left(e^{-(k+2)\lambda_{1}t}\right)\ \mbox{ in }\  \mathcal{S}.
\end{align}
Since $(k + 1)\lambda_{1}$ is not in the spectrum of $\mathcal{L}$ (by Lemma~\ref{SpecLL}), we define:
$$
g_{k+1}^{a}:=-\left(\mathcal{L}-(k+1)\lambda_{1}\right)^{-1}P_{k+1}^{a}.
$$
Following the argument in \cite[Section 6.2]{CAPA2022}, we conclude $g_{k+1}^{a}\in  \mathcal{S}$. Let $U_{k+1}^{a}:=U_{k}^{a}+e^{-(k+1)\lambda_{1}t}g_{k+1}^{a}$. Then by construction and \eqref{dua}, $U_{k+1}^{a}$ satisfies:
$$
\partial_{t}U_{k+1}^{a}+\mathcal{L} U_{k+1}^{a}-iR(U_{k+1}^{a})=iR(U_{k}^{a})-iR(U_{k+1}^{a})+O\left(e^{-(k+2)\lambda_{1}t}\right)\mbox{ as }t\to+\infty.
$$
The explicit form of $R$ yields $R(U_{k}^{a})-R(U_{k+1}^{a})=O\left(e^{-(k+2)e_{0}t}\right)$ as $t\to+\infty$, which completes the inductive step and proves the proposition.
\end{proof}

\begin{proposition}\label{ContracA}
Let $a\in \R$. There exist constants $k_{0}>0$ and $t_{k}\geq 0$ such that for every $k\geq k_{0}$, the following holds:
\begin{enumerate}
    \item[\text(i)] There exists a  radial solution $W^{a}$ of \eqref{NLS} satisfying, for all $t\geq t_k$,
    \begin{equation}\label{Uniq}
    \|\nabla W^{a}(t) - \nabla W_k^a(t)\|_{Z(t, +\infty)}
    \leq e^{-(k+\frac{1}{2})\lambda_{1}t}.
    \end{equation}
    
    \item[\text(ii)] The radial solution $W^{a}$ is the unique solution to \eqref{NLS} satisfying \eqref{Uniq} for large $t$.
    
    \item[\text(iii)] The radial solution $W^{a}$ is independent of $k$ and satisfies, for large $t$,
    \begin{equation}\label{UniqVec}
    \|W^{a}(t)-\Q-ae^{-\lambda_{1} t}e_{+}\|_{\dot{H}^{1}}\leq e^{-\frac{3}{2}\lambda_{1}t}.
    \end{equation}
\end{enumerate}
\end{proposition}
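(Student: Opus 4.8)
The plan is to construct $W^a$ as a fixed point of the integral (Duhamel) formulation of \eqref{NLS} obtained by perturbing around the approximate solution $W_k^a = U_k^a + \Q$. Writing a genuine solution as $W^a = W_k^a + \mathbf{z}$, the remainder $\mathbf{z}$ must satisfy the equation $i\partial_t \mathbf{z} + (\tfrac{1}{2m_1}\Delta z_1, \tfrac{1}{2m_2}\Delta z_2, \tfrac{1}{2m_3}\Delta z_3) + \big[R(W_k^a + \mathbf{z}) - R(W_k^a)\big] = -\epsilon_k$, with the natural condition $\mathbf{z}(t) \to 0$ as $t \to +\infty$. First I would fix $k$ large and rewrite this as
\[
\mathbf{z}(t) = \Phi(\mathbf{z})(t) := -i\int_t^{+\infty} U(t-s)\Big[\epsilon_k(s) + R(W_k^a(s)+\mathbf{z}(s)) - R(W_k^a(s))\Big]\,ds,
\]
and seek a fixed point in the complete metric space
\[
X_{t_k} := \Big\{ \mathbf{z} : \ \|\mathbf{z}\|_{S(t,+\infty)} + \|\nabla \mathbf{z}\|_{Z(t,+\infty)} \leq e^{-(k+\frac12)\lambda_1 t}\ \ \text{for all } t \geq t_k \Big\},
\]
equipped with the metric induced by the norm $\sup_{t \geq t_k} e^{(k+\frac12)\lambda_1 t}\big(\|\mathbf{z}\|_{S(t,+\infty)} + \|\nabla \mathbf{z}\|_{Z(t,+\infty)}\big)$.

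The verification that $\Phi$ maps $X_{t_k}$ into itself and is a contraction rests on three inputs: the Strichartz estimate \eqref{Estriz}, the linear estimates of Lemma~\ref{LL1}, and the nonlinear estimates \eqref{Nol1}--\eqref{Nol2} of Lemma~\ref{LN1}. The source term $\epsilon_k$ is controlled using \eqref{Aproxlimi}: since $\epsilon_k = \mathcal{O}(e^{-(k+1)\lambda_1 t})$ in $\mathcal{S}$, Lemma~\ref{SumsE} gives $\|\epsilon_k\|_{N(t,+\infty)} + \|\nabla \epsilon_k\|_{N(t,+\infty)} \lesssim e^{-(k+1)\lambda_1 t}$, which for $t \geq t_k$ large beats $e^{-(k+\frac12)\lambda_1 t}$. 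For the difference $R(W_k^a + \mathbf{z}) - R(W_k^a)$ I would apply \eqref{Nol2} with $\ve{h} = W_k^a + \mathbf{z}$, $\ve{g} = W_k^a$; since $W_k^a = \Q + U_k^a$ with $\|\nabla U_k^a\|_{Z(t,+\infty)} \to 0$ and $\nabla\Q \in L^\infty_t Z$ on tails (because $Q$ is Schwartz-like with $|\partial^\alpha Q| \lesssim |Q| \in L^{12/5}$), the factor $(\|\nabla\ve{h}\|_Z + \|\nabla\ve{g}\|_Z + \|\nabla\ve{h}\|_Z^2 + \|\nabla\ve{g}\|_Z^2)$ is bounded by a small constant plus $\|\nabla\Q\|_Z$-type contributions on intervals $(t,+\infty)$ with $t$ large; combined with the smallness of $\mathbf{z}$ in $X_{t_k}$ and choosing $k$ large (so that the gain $e^{-\frac12\lambda_1 t}$ from the extra decay dominates), one gets both the self-mapping property and a contraction constant $< 1$. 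This produces a unique $\mathbf{z} \in X_{t_k}$, hence a radial solution $W^a = W_k^a + \mathbf{z}$ of \eqref{NLS} satisfying \eqref{Uniq}, giving (i). For radiality, note $e_+$, $\Q$, $g_j^a$ are radial and the fixed-point iteration preserves radial symmetry.

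For part (ii), suppose $\tilde W^a$ is another solution satisfying \eqref{Uniq} for $t$ large; then $\tilde W^a - W_k^a =: \tilde{\mathbf{z}}$ satisfies the same estimate and hence lies in $X_{t_k'}$ for some $t_k' \geq t_k$, so by the uniqueness of the fixed point of $\Phi$ (which does not depend on the choice of the ball radius beyond $e^{-(k+\frac12)\lambda_1 t}$, one checks $\Phi$ is a contraction on the larger space too after enlarging $t_k$) we get $\tilde{\mathbf{z}} = \mathbf{z}$, i.e. $\tilde W^a = W^a$. For part (iii), independence of $k$: the solution $W^{a}$ built at level $k+1$ also satisfies the level-$k$ bound \eqref{Uniq} because $W_{k+1}^a - W_k^a = e^{-(k+1)\lambda_1 t}g_{k+1}^a = \mathcal{O}(e^{-(k+1)\lambda_1 t})$ in $\mathcal{S}$, whose $Z$-gradient norm on $(t,+\infty)$ is $\lesssim e^{-(k+1)\lambda_1 t} \ll e^{-(k+\frac12)\lambda_1 t}$; so by the uniqueness in (ii) the two solutions coincide. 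Finally \eqref{UniqVec} follows by writing $W^a - \Q - a e^{-\lambda_1 t}e_+ = (W^a - W_k^a) + (U_k^a - a e^{-\lambda_1 t}e_+)$; the first term is $\mathcal{O}(e^{-(k+\frac12)\lambda_1 t})$ in $\dot H^1$ by \eqref{Uniq} and Sobolev embedding $S \hookrightarrow$ controls, while the second is $\sum_{j=2}^k e^{-j\lambda_1 t}g_j^a = \mathcal{O}(e^{-2\lambda_1 t})$, and for $k \geq k_0$ large both are $\leq e^{-\frac32\lambda_1 t}$. I expect the main obstacle to be the careful bookkeeping in the contraction estimate for $R(W_k^a + \mathbf{z}) - R(W_k^a)$: one must track that the "bad" unbounded-in-time factors are exactly the $\nabla\Q$ pieces, which are fixed (independent of $t_k$) so that they cannot be absorbed, and therefore the whole scheme only closes because the source term $\epsilon_k$ carries a strictly better exponential rate $e^{-(k+1)\lambda_1 t}$ than the ball scale $e^{-(k+\frac12)\lambda_1 t}$ — this is what forces the choice of $k$ large rather than merely $t_k$ large.
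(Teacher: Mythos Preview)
Your Duhamel setup and fixed-point space are fine, but there is a genuine gap in the treatment of the nonlinear difference. When you write $R(W_k^a + \mathbf{z}) - R(W_k^a)$ and propose to apply \eqref{Nol2} with $\ve{h} = W_k^a + \mathbf{z}$ and $\ve{g} = W_k^a$, the right-hand side of \eqref{Nol2} is infinite: $W_k^a = \Q + U_k^a$, and since $\Q$ is time-independent, $\|\nabla\Q\|_{Z(t,+\infty)} = \|1\|_{L^6(t,+\infty)}\|\nabla\Q\|_{L^{12/5}_x} = +\infty$. (Your phrase ``$\nabla\Q \in L^\infty_t Z$ on tails'' conflates $L^\infty_t L^{12/5}_x$, which is true, with $Z = L^6_t L^{12/5}_x$, which fails on any unbounded interval.) Your closing remark correctly flags the $\nabla\Q$ contribution as the obstruction, but the resolution you suggest --- that the extra half-power carried by $\epsilon_k$ closes the scheme --- does not touch this term: $\epsilon_k$ is an additive source and has no effect on the linear-in-$\mathbf{z}$ piece generated by $\Q$.

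The paper resolves this by subtracting $\Q$ \emph{before} forming the nonlinear difference. Writing $\ve{v}_k := W_k^a - \Q = U_k^a$, one has $F(\Q + \ve{v}_k + \mathbf{z}) - F(\Q + \ve{v}_k) = K(\mathbf{z}) + \big[R(\ve{v}_k + \mathbf{z}) - R(\ve{v}_k)\big]$, with $K$ the linear piece (coefficients $\sim Q^2$) and $R$ the quadratic/cubic remainder. Now \eqref{Nol2} applies cleanly to the $R$-difference because $\|\nabla\ve{v}_k\|_{Z(t,+\infty)} \lesssim e^{-\lambda_1 t}$ is finite and small, yielding the extra factor $e^{-\lambda_1 t}$ (this is \eqref{Cla22}). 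The term $K(\mathbf{z})$ cannot be estimated directly on $(t,+\infty)$; instead \eqref{Fi22} gives $\|\nabla K(\mathbf{z})\|_{N(t,t+\tau_0)} \leq C\tau_0^{1/3}\|\nabla\mathbf{z}\|_{Z(t,t+\tau_0)}$ on each finite window, and Lemma~\ref{SumsE} sums these to
\[
\|\nabla K(\mathbf{z})\|_{N(t,+\infty)} \leq \frac{C\tau_0^{1/3}}{1 - e^{-(k+\frac12)\lambda_1\tau_0}}\,e^{-(k+\frac12)\lambda_1 t}.
\]
One first fixes $\tau_0$ small so that $C\tau_0^{1/3}$ is small, and \emph{then} takes $k \geq k_0$ to push the denominator close to $1$; this is the actual mechanism through which $k$ large enters, and it is what is missing from your argument.
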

\begin{proof}
The function $W^a$ is a solution of \eqref{NLS} if and only if $\ve{w}^a := W^a - \Q$ satisfies
\[
\partial_t \ve{w}^a + \L \ve{w}^a = iR(\ve{w}^a).
\]
From \eqref{Aproxlimi}, the approximation $\ve{v}_k^a := W_k^a - \Q$ fulfills the identity
\[
\partial_t \ve{v}_k^a + \L \ve{v}_k^a - iR(\ve{v}_k^a) = \varepsilon_k.
\]
Consequently, $W^a$ solves \eqref{NLS} precisely when $\ve{h} := W^a - W_k^a = \ve{w}^a - \ve{v}_k^a$ satisfies
\[
\partial_t \ve{h} + \L \ve{h} = i\big[R(\ve{v}_k^a + \ve{h}) - R(\ve{v}_k^a)\big] - \varepsilon_k.
\]
In component form (with $\ve{h}:=(h,g,r)$), this becomes
\[
i \partial_t \ve{h} + \left(\tfrac{1}{2m_{1}}\Delta h, \tfrac{1}{2m_{2}}\Delta g, \tfrac{1}{2m_{3}}\Delta r\right) = -K(\ve{h}) - \big[R(\ve{v}_{k}+ \ve{h}) - R(\ve{v}_{k})\big] - i \varepsilon_k.
\]

We therefore construct the solution $W^{a}$ to \eqref{NLS} via a fixed point argument. Define the operator
\begin{equation*}
[\text{M}_{k}(\ve{h})](t) := -\int^{\infty}_{t} U(t-s)\big[-iB(\ve{h}(s)) - i\big(R(\ve{v}_{k}(s)+\ve{h}(s)) - R(\ve{v}_{k}(s))\big) + \epsilon_{k}(s)\big]\,ds,
\end{equation*}
where the propagator $S_{P}(t)$ is given by
\[
U(t)=
\left(
\begin{array}{ccc}
e^{\frac{1}{2m_{1}}it\Delta} & 0 & 0 \\
0 & e^{\frac{1}{2m_{2}}it\Delta}& 0 \\
0 & 0 & e^{\frac{1}{2m_{3}}it\Delta}
\end{array}
\right).
\]
Fix $k>0$ and $t_{k}\geq 0$. We define the space
\begin{align*}
E^{k}_{Z}&:=\left\{ \ve{h} \in S(t_k, +\infty), \nabla \ve{h} \in Z(t_k, +\infty); \|h\|_{E_l^k} := \sup_{t \geq t_k} e^{(k + \frac{1}{2})\lambda_{1} t} \| \nabla \ve{h}\|_{Z(t, +\infty)} < \infty \right\},\\
L^{k}_{Z}&:=\left\{\ve{h} \in E^{k}_{Z}, \|\ve{h}\|_{E^{k}_{Z}}\leq 1\right\}.
\end{align*}
Note that $E_Z^k$ is a Banach space.

\begin{claim}\label{Est34}
There exists $k_{0}>0$ such that for all $k\geq k_{0}$, the following estimates hold:
\begin{enumerate}
    \item[(i)] For any $\ve{h}\in E^{k}_{Z}$,
    \begin{align}\label{Cla11}
    \| \nabla K (\ve{h})\|_{N(t, \infty)} &\leq \tfrac{1}{4 C^{\ast}}e^{-(k+\frac{1}{2})\lambda_{1}t}\|\ve{h}\|_{E_{Z}^{k}}.
    \end{align}
    
    \item[(ii)] There exists a constant $C_{k}$ (depending only on $k$) such that for all $\ve{h}, \ve{g}\in L^{k}_{Z}$ and $t\geq t_{k}$,
    \begin{align}\label{Cla22}
    \|\nabla (N(\ve{v}_{k}+\ve{g})-N(\ve{v}_{k}+\ve{h}))\|_{N(t, \infty)} &\leq C_{k}e^{-(k+\frac{3}{2})\lambda_{1}t}\|\ve{g}-\ve{h}\|_{E_{Z}^{k}},\\
		\label{Cla33}
    \|\epsilon_{k}\|_{N(t, \infty)} &\leq C_{k}e^{-(k+1)\lambda_{1}t}.
    \end{align}
 \end{enumerate}
\end{claim}
\begin{proof}[Proof of Claim~\ref{Est34}]
First, observe that \eqref{Cla33} follows directly from \eqref{Aproxlimi}.

Next, we establish estimate \eqref{Cla11}. Fix $\tau_{0}>0$. From \eqref{Fi22}, we derive
\[
\|\nabla K(\ve{h})\|_{N(t, t+\tau_{0})}\leq C_{1}\tau_{0}^{\frac{1}{3}}e^{-(k+\frac{1}{2})\lambda_{1}t}\|\ve{h}\|_{E_{Z}^{k}}.
\]
Hence, \eqref{Cla11} follows by applying Lemma~\ref{SumsE} for $k\geq k_{0}$, provided $\tau_{0}$ and $k_{0}$ are chosen appropriately.

Finally, we prove \eqref{Cla22}. By construction (see Proposition~\ref{ContracA}), we have the bound $\|\ve{v}^{a}_{k}\|_{Z(t, t+1)}\leq C_{k}e^{-\lambda_{1}t}$. Let $I:=[t,t+1]$. Using estimate \eqref{Nol2}, we obtain:
\begin{align*}
&\|\nabla (R(\ve{v}^{a}_{k}+\ve{g}) - R(\ve{v}^{a}_{k}+\ve{h}))\|_{N(I)} \\
&\quad \leq C_{1,2} \|\nabla \ve{h} - \nabla \ve{g}\|_{Z(I)} \Bigl( \|\nabla \ve{h}\|_{Z(I)} + \|\nabla \ve{g}\|_{Z(I)} \\
&\qquad + \|\nabla \ve{v}^{a}_{k}\|_{Z(I)} + \|\nabla \ve{h}\|^{2}_{Z(I)} + \|\nabla \ve{g}\|^{2}_{Z(I)} + \|\nabla \ve{v}^{a}_{k}\|^{2}_{Z(I)}\Bigr) \\
&\quad \leq C_{k,2} e^{-\lambda_{1}t} \|\nabla \ve{h} - \nabla \ve{g}\|_{Z(I)} \\
&\quad \leq C_{k,2} e^{-(k+\frac{3}{2})\lambda_{1}t} \|\ve{h} - \ve{g}\|_{E_{Z}^{k}}.
\end{align*}
Here, the constant $C_{k,2}$ depends only on $k$. An application of Lemma~\ref{SumsE} now yields \eqref{Cla22}. This completes the proof of the claim.
\end{proof}

With Claim~\ref{Est34} established and applying a fixed point argument, we can prove  the existence of a unique radial solution $W^{a}$ to \eqref{NLS} satisfying \eqref{Uniq}. By the uniqueness property in the fixed point argument, we conclude that $W^{a}$ is independent of the parameter $k$ 
(cf. \cite[Proposition 6.3, Step 2]{DuyMerle2009} for more details)
Finally, from estimates \eqref{Cla11} and \eqref{Cla22}, we obtain
\[
\|\nabla W^{a}(t) - \nabla W_k^a(t)\|_{\dot{H}^{1}}\leq C e^{-(k+\frac{1}{2})\lambda_{1}t}.
\]
Combining this with the asymptotic expansion $W^{a}(t)=\Q+ae^{-\lambda_{1}t}e_{+}+O(e^{-2\lambda_{1}t})$ (cf. Proposition~\ref{ApproxSo}), we derive \eqref{UniqVec}. This completes the proof of the proposition.
\end{proof}

\subsection{Construction of special solutions}
\begin{proof}[Proof of Theorem~\ref{Gcharc}]
From Proposition~\ref{ContracA} we see that
\[
K(W^{a}(t)) = K(\Q) + 2a e^{-\lambda_{1}t} (e_{1}, \Q)_{K} + O\left(e^{-\frac{3}{2}\lambda_{1}t}\right) \quad \text{as } t \to +\infty.
\]
We may assume that $(e_{1}, \Q)_{K} > 0$ (cf. Remark~\ref{PE1}), which implies that $K(W^{a}(t)) - K(\Q)$ has the same sign as $a$ for large times. In particular, by the variational characterization of $\Q$ (cf. Proposition~\ref{UBS}), we have that $K(W^{a}(t_{0})) - K(\Q)$ has the same sign as $a$. Defining
\[
\G^{+}(t,x) = W^{+1}(t+t_{0},x), \quad \G^{-}(t,x) = W^{-1}(t+t_{0},x),
\] 
for $t_{0}$ sufficiently large, we obtain two radial solutions $\G^{\pm}(t,x)$ of \eqref{NLS} that satisfy
\[
K(\G^{-}(0)) < K(\Q) \quad \text{and} \quad K(\G^{+}(0)) > K(\Q), 
\]
and such that
\[
\|\G^{\pm}(t) - \Q\|_{\dot{H}^{1}} \leq C e^{-\lambda_{1}t} \quad \text{for } t \geq 0.
\]
In particular, $E(\G^{\pm}) = E(\Q)$. Finally, Corollary~\ref{ClassC} shows that the solution $\G^{-}$ is defined for all $\mathbb{R}$ and scatters as $t \to -\infty$. 
This concludes the proof of the theorem.
\end{proof}

\section{A Uniqueness Result}\label{S:uniq}

The main objective of this section is to establish the following proposition and its corollary.

\begin{proposition}\label{UniqueU}  
Let $\ve{u}$ be a radial solution to \eqref{NLS} satisfying
\begin{equation}\label{UniqCon}
\|\ve{u}(t)-\Q\|_{\dot{H}^{1}}\leq Ce^{-ct} \qtq{for $t\geq 0$,}
\end{equation} 
for some positive constants $C$ and $c$. Then there exists a unique $a \in \R$ such that $\ve{u}=W^{a}$, where $W^{a}$ is the solution of \eqref{NLS} given in Proposition~\ref{ContracA}.
\end{proposition}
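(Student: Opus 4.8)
The strategy follows the classical Duyckaerts--Merle uniqueness argument \cite[Section~6]{DuyMerle2009}, adapted here to the two phase parameters forced by $\dim\ker(L_I)=2$. Write $\ve{w}=\ve{u}-\Q$, so that $\ve{w}$ solves the linearized equation $\partial_t\ve{w}+\mathcal{L}\ve{w}=iR(\ve{w})$ with $\|\ve{w}(t)\|_{\dot H^1}\le Ce^{-ct}$. The goal is to pin down the leading-order exponential behaviour of $\ve{w}$: I claim that there is a unique $a\in\R$ such that $\ve{w}(t)=ae^{-\lambda_1 t}e_{+}+O(e^{-(1+\varepsilon)\lambda_1 t})$ in $\dot H^1$ for some $\varepsilon>0$, and then $\ve{u}=W^a$ follows from the uniqueness clause in Proposition~\ref{ContracA}(ii) (the difference $\ve{u}-W^a$ is a radial solution decaying faster than any $e^{-(k+1/2)\lambda_1 t}$, hence $\equiv 0$).

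To extract the leading term, introduce modulation along the orbit $\mathcal{B}$: by Proposition~\ref{ModilationFree} (or directly Lemma~\ref{ExistenceFree}, since \eqref{UniqCon} forces $\delta(t)\to 0$), decompose $\ve{u}_{[\eta(t),\theta(t),\mu(t)]}=(1+\alpha(t))\Q+\ve{h}(t)$ with $\ve{h}\perp\mathrm{span}\{\nabla\Q,i\Q_p,i\Q_q,\Lambda\Q\}$, $\F(\Q,\ve{h})=0$, and $|\alpha(t)|\sim\|\ve{h}(t)\|_{\dot H^1}\sim\delta(t)\lesssim e^{-ct}$, with \eqref{EstimateFree} giving $|\eta'|+|\theta'|+|\alpha'|+|\mu'/\mu|\lesssim\mu^2(t)\delta(t)$; since $\mu(t)$ is bounded above and below (as in Proposition~\ref{SupercriQ}), the parameters converge exponentially and we may rescale/rotate so that $\mu\equiv 1$, $\eta\equiv\theta\equiv 0$, i.e. work directly with $\ve{w}=\ve{u}-\Q$ satisfying the orthogonality conditions. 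Now pair the linearized equation against $e_{-}$ using the bilinear form $\F$: from Remark~\ref{PLf}, $\frac{d}{dt}\F(\ve{w},e_{-})=\F(\partial_t\ve{w},e_{-})=-\F(\mathcal{L}\ve{w},e_{-})+\F(iR(\ve{w}),e_{-})=\lambda_1\F(\ve{w},e_{-})+O(\|\ve{w}\|^2)$, which by Gronwall forces either $\F(\ve{w}(t),e_{-})\equiv 0$ for all $t$ or an exponential growth contradicting the decay; in either case one gets $|\F(\ve{w}(t),e_{-})|\lesssim\|\ve{w}(t)\|_{\dot H^1}^2\lesssim e^{-2ct}$. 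Symmetrically, pairing against $e_{+}$ gives $\frac{d}{dt}\F(\ve{w},e_{+})=-\lambda_1\F(\ve{w},e_{+})+O(\|\ve{w}\|^2)$; writing $b(t):=\F(\ve{w}(t),e_{+})/\F(e_{+},e_{-})$ (legitimate since $\F(e_+,e_-)\neq0$ by Remark~\ref{Nzero}), integrating yields $b(t)=ae^{-\lambda_1 t}+O(e^{-2ct}+e^{-(\lambda_1+c)t})$ for a unique constant $a$. Then the coercivity of $\F$ on $\tilde G^\perp$ (Proposition~\ref{FCove}) applied to $\ve{w}-b(t)e_{+}-\F(\ve{w},e_-)/\F(e_+,e_-)\,e_-$ (which, after subtracting its $\ker\mathcal{L}$ components, lies in $\tilde G^\perp$ up to $O(e^{-2ct})$ errors coming from the exponentially small modulation corrections) controls $\|\ve{w}(t)-ae^{-\lambda_1 t}e_+\|_{\dot H^1}$, giving the desired refined asymptotic; a bootstrap on the exponent (as in \cite[Lemma~5.6--5.7]{DuyMerle2009}, using Lemma~\ref{BoundGra11} to upgrade $\dot H^1$ decay to Strichartz decay of $R(\ve{w})$) pushes the error down to $e^{-(k+1/2)\lambda_1 t}$ for any $k$.

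Finally, with $\ve{u}$ and $W^a$ both satisfying $\|\cdot-\Q-ae^{-\lambda_1 t}e_+\|_{\dot H^1}=O(e^{-\frac32\lambda_1 t})$ and both solving \eqref{NLS}, long-time perturbation theory (Lemma~\ref{ConSNLS}) together with the contraction-mapping uniqueness of Proposition~\ref{ContracA}(ii) identifies $\ve{u}\equiv W^a$; uniqueness of $a$ is immediate since $a$ is read off from $\lim_{t\to\infty}e^{\lambda_1 t}\F(\ve{u}(t)-\Q,e_+)/\F(e_+,e_-)$. The main obstacle, and the place where the present setting genuinely departs from the scalar case, is the modulation step: one must simultaneously control the two phase rotations $\eta(t),\theta(t)$ (associated with $i\Q_p$ and $i\Q_q$ spanning $\ker L_I$) and show that their time derivatives are quadratically small and their limits are reached exponentially fast, so that replacing $\ve{u}_{[\eta,\theta,\mu]}-\Q$ by $\ve{u}-\Q$ in the $\F$-pairing arguments only introduces errors of order $O(e^{-2ct})$; this is exactly where estimate \eqref{EstimateFree} and the boundedness of $\mu(t)$ must be invoked with care, and where the extra coordinate function attached to the second kernel eigenfunction (flagged in the introduction) enters.
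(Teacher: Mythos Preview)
Your overall strategy is correct and matches the paper's: write $\ve{v}=\ve{u}-\Q$, pair the linearized equation against $e_{\pm}$ via $\F$ to isolate the coefficient $a$ along the decaying mode $e^{-\lambda_1 t}e_+$, bootstrap the decay rate using that $R(\ve{v})$ is at least quadratic, and finally invoke the uniqueness clause of Proposition~\ref{ContracA}. There are, however, two points worth correcting.

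\textbf{(1) The modulation detour is unnecessary and the reference to Proposition~\ref{SupercriQ} is misplaced.} The paper does \emph{not} modulate in this step. Proposition~\ref{AxuST} decomposes $\ve{v}=\ve{u}-\Q$ directly as
\[
\ve{v}(t)=\alpha_+(t)e_++\alpha_-(t)e_-+\beta_p(t)i\Q_p+\beta_q(t)i\Q_q+\gamma(t)\Lambda\Q+v^\perp(t),\qquad v^\perp\in\tilde G^\perp\cap\dot H^1_{\mathrm{rad}},
\]
and derives explicit ODEs for the kernel coefficients $\beta_p,\beta_q,\gamma$ (equations \eqref{beta11}--\eqref{gamma11}) which are then shown to decay via \eqref{bqBp}. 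This replaces your modulation parameters $\eta,\theta,\mu$ by linear projections, so you never need to argue that $\mu(t)$ is bounded above and below. Your appeal to Proposition~\ref{SupercriQ} for that bound is in any case problematic, since that proposition assumes $\ve{u}_0\in L^2$, which is not hypothesised here. The paper's direct decomposition sidesteps the issue entirely; what your last paragraph calls ``the main obstacle'' is handled precisely by tracking $\beta_p,\beta_q$ rather than by modulating.

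\textbf{(2) Sign error in the $e_\pm$ pairings.} With the paper's convention $\mathcal{L}e_+=\lambda_1 e_+$, $\mathcal{L}e_-=-\lambda_1 e_-$, and the antisymmetry $\F(\mathcal{L}\ve{h},\ve{g})=-\F(\ve{h},\mathcal{L}\ve{g})$, one gets
\[
\tfrac{d}{dt}\F(\ve{w},e_-)=-\lambda_1\F(\ve{w},e_-)+\F(iR(\ve{w}),e_-),\qquad
\tfrac{d}{dt}\F(\ve{w},e_+)=+\lambda_1\F(\ve{w},e_+)+\F(iR(\ve{w}),e_+),
\]
the opposite of what you wrote. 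Thus $\alpha_+(t):=\F(\ve{w},e_-)$ is the \emph{stable} coefficient from which one extracts $a=\lim_{t\to\infty}e^{\lambda_1 t}\alpha_+(t)$ (cf.\ \eqref{Exp11alfalamda}), while $\alpha_-(t):=\F(\ve{w},e_+)$ is the \emph{unstable} one forced to $O(e^{-c_2 t})$ by the a~priori decay (cf.\ \eqref{Exp11alfa00}). Your $b(t)$ is attached to the wrong eigenfunction. Once these labels are swapped, your argument goes through and coincides with the paper's iteration of Proposition~\ref{AxuST}.
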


As a direct consequence of Propositions~\ref{UniqueU} and \ref{ContracA}, we obtain the following result.

\begin{corollary}\label{coroCla}
Let $a\neq0$. Then there exists $T_{a}\in \R$ such that 
\begin{equation}\label{C1s}
\begin{cases} 
W^{a}=W^{+1}(t+T_{a}) & \text{if } a>0, \\
W^{a}=W^{-1}(t+T_{a}) & \text{if } a<0.
\end{cases}
\end{equation}
\end{corollary}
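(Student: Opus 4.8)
\textbf{Plan for the proof of Corollary~\ref{coroCla}.}
The statement is a scaling/time-translation rigidity result: once we know (Proposition~\ref{UniqueU}) that every solution converging exponentially to $\Q$ is of the form $W^a$, we must identify how the family $\{W^a\}_{a\in\R}$ behaves under the time-translation symmetry of \eqref{NLS}. The plan is to fix $a\neq 0$, consider the time-translated solution $t\mapsto W^a(t+s)$ for $s\in\R$, observe that it still satisfies the exponential-convergence hypothesis \eqref{UniqCon} (with possibly different constants), and hence, by Proposition~\ref{UniqueU}, equals $W^{b(s)}$ for a unique $b(s)\in\R$. Using the asymptotic expansion \eqref{UniqVec}, namely $W^a(t)=\Q+ae^{-\lambda_1 t}e_+ + O(e^{-\frac32\lambda_1 t})$, one reads off that $W^a(t+s)=\Q+ (ae^{-\lambda_1 s})e^{-\lambda_1 t}e_+ + O(e^{-\frac32\lambda_1 t})$, so the uniqueness of the coefficient of $e^{-\lambda_1 t}e_+$ forces $b(s)=a e^{-\lambda_1 s}$. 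In particular $b$ is a continuous (indeed smooth) bijection of $\R$ onto $(0,\infty)$ when $a>0$ and onto $(-\infty,0)$ when $a<0$.

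Next I would exploit this to produce the claimed time shift. If $a>0$, the map $s\mapsto a e^{-\lambda_1 s}$ ranges over all of $(0,\infty)$, so there is a unique $T_a\in\R$ with $a e^{-\lambda_1 T_a}=1$, i.e. $T_a=\lambda_1^{-1}\log a$; then $W^a(t+T_a)$ has the same leading coefficient as $W^{+1}(t)$, hence by Proposition~\ref{UniqueU} (applied once more, now identifying two solutions that each converge exponentially to $\Q$ and share the same value of the modulation coefficient $a=1$) we get $W^a(t+T_a)=W^{+1}(t)$, which is \eqref{C1s}. Symmetrically, if $a<0$, choosing $T_a$ with $a e^{-\lambda_1 T_a}=-1$ (so $T_a=\lambda_1^{-1}\log(-a)$) gives $W^a(t+T_a)=W^{-1}(t)$. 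The uniqueness of $T_a$ follows from injectivity of $s\mapsto a e^{-\lambda_1 s}$ together with the uniqueness clause in Proposition~\ref{UniqueU}.

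\textbf{Main obstacle.} The one delicate point is checking that Proposition~\ref{UniqueU} genuinely separates solutions by the value of $a$: a priori it only says $\ve u=W^a$ for \emph{some} $a$, and one must verify that two solutions $W^{a_1}$, $W^{a_2}$ with $a_1\neq a_2$ are distinct. This is immediate from \eqref{UniqVec}, since the coefficient of $e^{-\lambda_1 t}e_+$ in the $\dot H^1$-asymptotics is exactly $a$ (and $e_+\neq 0$); hence the map $a\mapsto W^a$ is injective and the identification $b(s)=ae^{-\lambda_1 s}$ above is unambiguous. Beyond this, everything is a routine matter of using the group property of time translation and the explicit leading-order asymptotics; there is no PDE analysis left to do here, as all the hard work (construction of $W^a$, the fixed-point/uniqueness argument, and the exponential-decay rigidity) has already been carried out in Propositions~\ref{ContracA} and~\ref{UniqueU}.
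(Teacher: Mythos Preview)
Your proposal is correct and follows essentially the same route as the paper: choose $T_a$ so that $|a|e^{-\lambda_1 T_a}=1$, read off from \eqref{UniqVec} that $W^a(\cdot+T_a)$ has leading coefficient $\pm 1$, and invoke the uniqueness result to identify it with $W^{\pm 1}$. The paper attributes the final identification of $\tilde a=\pm 1$ to the uniqueness clause in Proposition~\ref{ContracA} rather than Proposition~\ref{UniqueU}, but your argument via injectivity of $a\mapsto W^a$ (extracted from the asymptotics \eqref{UniqVec}) achieves the same thing.
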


Throughout this section, we introduce the linearized equation
\begin{equation}\label{CondiExp}
\partial_{t}\ve{v}+\L \ve{v}=g, \quad (t,x)\in [0, \infty)\times \R^{4},
\end{equation}
where $\ve{v}$ and $g$ are \textsl{radial} functions satisfying
\begin{align} \label{CondiExp22}
& \|\ve{v}(t)\|_{{ \dot{H}}^{1}}\leq Ce^{-c_{1}t},\\ \label{CondiExp33}
&\|\nabla g\|_{N(t, +\infty)}+\|g\|_{L_{x}^{\frac{4}{3}}}\leq Ce^{-c_{2}t},
\end{align}
for all $t\geq 0$, with $0 < c_{1} < c_{2}$.

By Strichartz estimates (cf. \eqref{Estriz}) and Lemma~\ref{SumsE}, and a continuity argument, we can obtain the following result (cf. \cite[Lemma 5.7]{DuyMerle2009}).

\begin{lemma}\label{AxuST11}
Under the assumptions \eqref{CondiExp}, \eqref{CondiExp22}, and \eqref{CondiExp33} with $0 < c_{1} < c_{2}$, we have
\begin{equation}\label{NewSt11}
\|\ve{v}\|_{\text{L}^{p}(t, +\infty; L^{q})}
\leq Ce^{-c_{1}t}
\end{equation}
for any admissible pair $(q, r)$.
\end{lemma}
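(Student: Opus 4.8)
The plan is to run a bootstrap (continuity) argument on the Duhamel formula associated with \eqref{CondiExp}, exactly in the spirit of \cite[Lemma 5.7]{DuyMerle2009}, but keeping careful track of the inhomogeneous term $g$ and of the fact that $\L$ differs from the free propagator $U(t)$ only by the compactly supported (Schwartz-decaying) potential terms coming from $\Q$. First I would write, for $t\geq s\geq 0$,
\[
\ve{v}(t)=U(t-s)\ve{v}(s)+i\int_{s}^{t}U(t-\tau)\big[K(\ve{v}(\tau))+g(\tau)\big]\,d\tau,
\]
where $K(\cdot)$ is the bounded (linear) potential term isolated in Section~\ref{S:Existence} (so that $\partial_t\ve{v}+\L\ve{v}=g$ becomes $i\partial_t\ve{v}+(\tfrac{1}{2m_1}\Delta v_1,\dots)=-K(\ve{v})-ig$). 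The point is that $K(\ve{v})$ is \emph{linear} in $\ve{v}$ with Schwartz coefficients, so Lemma~\ref{LL1} gives, on any interval $I$ of length $\leq\tau_0$, the gain $\|\nabla K(\ve{v})\|_{N(I)}\leq \tau_0^{1/3}\|\nabla\ve{v}\|_{Z(I)}$; choosing $\tau_0$ small makes this a genuine contraction factor.

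Next I would fix a small $\tau_0>0$ and, for each $t\geq 0$, apply the Strichartz estimate \eqref{Estriz} on $I=[t,t+\tau_0]$:
\[
\|\ve{v}\|_{Z(I)}+\|\nabla\ve{v}\|_{L^p_tL^q_x(I)}\lesssim \|\ve{v}(t)\|_{\dot H^1}+\|\nabla K(\ve{v})\|_{N(I)}+\|\nabla g\|_{N(I)}.
\]
Using \eqref{CondiExp22} for the first term and \eqref{CondiExp33} for the last, and absorbing $\|\nabla K(\ve{v})\|_{N(I)}\lesssim \tau_0^{1/3}\|\nabla\ve{v}\|_{Z(I)}$ into the left side (for $\tau_0$ small), we get $\|\nabla\ve{v}\|_{Z(t,t+\tau_0)}\lesssim e^{-c_1 t}$ (here $c_1$ is the smaller of the two rates, since $c_1<c_2$). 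By the same token $\|\ve{v}\|_{L^p_tL^q_x(t,t+\tau_0)}\lesssim e^{-c_1 t}$ for every admissible pair $(q,r)$, via the Strichartz inequality once more together with the Sobolev embedding \eqref{SI1} to pass from $\nabla\ve{v}\in Z$ to $\ve{v}\in S$ when needed. Then Lemma~\ref{SumsE}, applied with $p$ equal to the time-exponent of the chosen admissible pair and $a_0=c_1$, sums these local-in-time bounds into the global estimate
\[
\|\ve{v}\|_{L^p(t,+\infty;L^q)}\leq \frac{C\,e^{-c_1 t}}{1-e^{-c_1\tau_0}},
\]
which is \eqref{NewSt11}.

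The only genuinely delicate point — and the one I would treat most carefully — is the justification that these Strichartz/Duhamel manipulations are legitimate, i.e. that $\ve{v}$ is a priori regular enough (finite Strichartz norms on bounded intervals) for the a priori bound \eqref{CondiExp22} to be upgraded; this is where the "continuity argument" enters: one first proves the bounds on a maximal subinterval where the relevant norms are finite, using \eqref{CondiExp22} to keep the bootstrap constant from blowing up, and then shows the subinterval is all of $[0,\infty)$. The role of the strict inequality $c_1<c_2$ is simply that the worst decay rate among the data contributions is $c_1$, so nothing faster can be claimed and no resonance with $e^{-c_2 t}$ interferes. Everything else is a routine combination of Lemmas~\ref{LL1}, \ref{SumsE} and the Strichartz estimate \eqref{Estriz}, precisely as in \cite[Lemma 5.7]{DuyMerle2009}.
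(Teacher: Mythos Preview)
Your proposal is correct and follows essentially the same approach as the paper, which merely cites Strichartz estimates \eqref{Estriz}, Lemma~\ref{SumsE}, and a continuity argument in the style of \cite[Lemma 5.7]{DuyMerle2009}. Your outline makes explicit exactly those ingredients: the Duhamel formula with the linear potential term $K(\ve{v})$, the short-time gain $\tau_0^{1/3}$ from Lemma~\ref{LL1} to absorb it, and the summation via Lemma~\ref{SumsE}.
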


In what follows, we will use the following notation: for a given $c > 0$, we denote by $c^{-}$ a positive number that is arbitrarily close to $c$ and satisfies $0 < c^{-} < c$.

\begin{proposition}\label{AxuST}
Consider $\ve{v}$ and $g$ radial functions satisfying \eqref{CondiExp}, \eqref{CondiExp22}, and \eqref{CondiExp33}. Then we have:
\begin{enumerate}[label=\rm{(\roman*)}]
\item If $\lambda_{1}\notin [c_{1}, c_{2})$, then
\begin{equation}\label{BoundHsec}
 \|\ve{v}(t)\|_{{ \dot{H}}_{1}}\leq Ce^{-c^{-}_{2}t}.
\end{equation}
\item If $\lambda_{1}\in [c_{1}, c_{2})$, then there exists $a\in \R$ so that
\begin{equation}\label{BoundHsec22}
 \|\ve{v}(t)-ae^{-\lambda_{1}t}e_{+}\|_{{ \dot{H}}^{1}}
\leq Ce^{-c^{-}_{2}t}.
\end{equation}
Recall that $\lambda_1 > 0$ represents the eigenvalue of the linearized operator $\L$, as defined in Lemma~\ref{SpecLL}.
\end{enumerate}
\end{proposition}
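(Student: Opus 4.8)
The plan is to follow the now-classical bootstrap-and-improve scheme from \cite[Section 5]{DuyMerle2009}, exploiting the spectral decomposition of $\L$ established in Lemma~\ref{SpecLL} together with the coercivity of $\F$ on $\tilde G^\bot$ (Proposition~\ref{FCove}). The starting point is Lemma~\ref{AxuST11}, which upgrades \eqref{CondiExp22} to Strichartz control $\|\ve v\|_{L^p(t,+\infty;L^q)}\le Ce^{-c_1 t}$ for every admissible pair. The first main step is to project the equation \eqref{CondiExp} onto the three pieces dictated by $\sigma(\L)\cap\R=\{-\lambda_1,0,\lambda_1\}$: writing $a_\pm(t):=\F(\ve v(t),e_\mp)$ (the coordinates along the unstable/stable eigendirections $e_\pm$) and letting $\ve v^\bot(t)$ be the component of $\ve v(t)$ in $\tilde G^\bot$, one uses $\partial_t\F(\ve v,e_\mp)=\F(\partial_t\ve v,e_\mp)=\F(g-\L\ve v,e_\mp)=\F(g,e_\mp)\pm\lambda_1\F(\ve v,e_\mp)$ (using $\F(\L\ve h,e_\mp)=-\F(\ve h,\L e_\mp)=\mp\lambda_1\F(\ve h,e_\mp)$ from Remark~\ref{PLf}). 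This gives scalar ODEs $a_\pm' \mp \lambda_1 a_\pm = \F(g,e_\mp)=O(e^{-c_2 t})$, whose solutions, together with the a priori bound $|a_\pm(t)|\lesssim\|\ve v(t)\|_{\dot H^1}\le Ce^{-c_1 t}$, are integrated forward or backward in time depending on the sign of the eigenvalue: the stable coordinate $a_-$ is estimated by integrating from $t$ to $+\infty$, yielding $|a_-(t)|\le Ce^{-c_2^- t}$; the unstable coordinate $a_+$ is integrated from $+\infty$ backwards when $\lambda_1\notin[c_1,c_2)$, giving $|a_+(t)|\le Ce^{-c_2^- t}$, while when $\lambda_1\in[c_1,c_2)$ the resonant case produces $a_+(t)=ae^{-\lambda_1 t}+O(e^{-c_2^- t})$ for a suitable constant $a$ (obtained as a convergent limit of $e^{\lambda_1 t}a_+(t)$).

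The second main step is to control $\ve v^\bot$, i.e.\ the part of the solution transverse to the kernel and to $e_\pm$. Here one uses the Duhamel formula for \eqref{CondiExp} integrated from $t$ to $+\infty$ (legitimate because of the decay of $\ve v$), combined with the dispersive/Strichartz estimates \eqref{Estriz} applied to the inhomogeneity $g$ and to the already-controlled eigencoordinate pieces. The coercivity $\F(\ve h)\ge C\|\ve h\|_{\dot H^1}^2$ on $\tilde G^\bot$ (Proposition~\ref{FCove}) plays the role of a conserved/almost-conserved energy: differentiating $\F(\ve v^\bot(t))$ in time and using the equation shows $\tfrac{d}{dt}\F(\ve v^\bot)$ is controlled by $\|\ve v^\bot\|_{\dot H^1}$ times source terms of size $O(e^{-c_2 t})$ plus the already-estimated eigencoordinate contributions; integrating and applying coercivity then yields $\|\ve v^\bot(t)\|_{\dot H^1}\le Ce^{-c_2^- t}$. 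One must also handle the genuine kernel directions $i\Q_p$, $i\Q_q$, $\Lambda\Q$, $\partial_j\Q$, where the linear flow is trivial ($\L$ annihilates them); the corresponding coordinates satisfy ODEs with zero linear part, so their time derivatives are $O(e^{-c_2 t})$ and, after integration from $+\infty$, they too are $O(e^{-c_2^- t})$ — there is no secular growth because the a priori bound $\le Ce^{-c_1 t}$ forces the integration constant to vanish. Assembling the pieces gives \eqref{BoundHsec} in case (i) and \eqref{BoundHsec22} in case (ii).

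The main obstacle I anticipate is the bookkeeping in the resonant case $\lambda_1\in[c_1,c_2)$: one has to show that $e^{\lambda_1 t}a_+(t)$ converges to a finite limit $a$ as $t\to+\infty$ and that subtracting $ae^{-\lambda_1 t}e_+$ genuinely removes the slowly-decaying mode, leaving a remainder that decays at rate $c_2^-$ — this requires that the energy-type argument for $\ve v^\bot$ and the kernel directions be carried out for the \emph{modified} solution $\ve v(t)-ae^{-\lambda_1 t}e_+$, which still solves a linearized equation of the same form (the extra term $\partial_t(ae^{-\lambda_1 t}e_+)+\L(ae^{-\lambda_1 t}e_+)=ae^{-\lambda_1 t}(-\lambda_1 e_+ + \L e_+)=0$ vanishes exactly, so the source is unchanged). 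A secondary technical point is the loss of $\varepsilon$ in the exponent (the $c_2^-$ rather than $c_2$), which is inherent to integrating $e^{(\lambda_1-c_2)s}$-type quantities when $\lambda_1$ is close to or below $c_2$; this is why the statement is phrased with $c_2^-$. Everything else — the Strichartz estimates, Lemmas~\ref{LL1}, \ref{LN1}, \ref{SumsE}, and the spectral facts — is already in place, so the argument is a faithful adaptation of \cite[Proposition 5.4 / Lemma 5.5]{DuyMerle2009} to the three-component setting, the only new feature being the larger (six-dimensional) kernel of $\L$, which merely enlarges the finite list of trivial-flow coordinates to track.
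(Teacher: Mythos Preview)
Your plan matches the paper's proof (and \cite[Proposition 5.9]{DuyMerle2009}): decompose $\ve v$ along $e_\pm$, the radial kernel directions $i\Q_p,i\Q_q,\Lambda\Q$ (the $\partial_j\Q$ are non-radial and drop out automatically), and a remainder $v^\bot$; solve the scalar ODEs for the eigencoordinates; control $v^\bot$ via $\tfrac{d}{dt}\F(\ve v)=2\F(g,\ve v)$, the identity $\F(\ve v)=\F(v^\bot)+2\alpha_+\alpha_-$, and coercivity on $\tilde G^\bot$; then iterate. One small correction: the kernel-coordinate derivatives are \emph{not} directly $O(e^{-c_2 t})$ --- the paper computes, e.g., $\beta_p'(t)=\|\Q_p\|_{\dot H^1}^{-2}\bigl(i\Q_p,\,g-\F(e_-,g)e_+-\F(e_+,g)e_--\L v^\bot\bigr)_{\dot H^1}$, so these derivatives inherit the current decay rate of $v^\bot$ (only $e^{-(c_1+c_2)t/2}$ at the first pass), and it is precisely the bootstrap you invoke that repeatedly halves the gap to $c_2$ and produces the final $c_2^-$ rate.
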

\begin{proof}
We closely follow the argument in \cite[Proposition 5.9]{DuyMerle2009} and \cite[Proposition 7.2]{MIWUGU2015}, which consider the scalar case.
Let 
\[
Y^{\bot}:= \left\{\ve{h} \in \dot{H}^{1}, \F(\ve{h}, e_{+}) =\F(\ve{h}, e_{-})= (i\Q_{p}, \ve{h})_{\dot{H}^{1}} = (i\Q_{q}, \ve{h})_{\dot{H}^{1}} =(\Lambda \Q, \ve{h})_{\dot{H}^{1}} =0\right\}.
\]

We write $\ve{v}$ as
\begin{equation}\label{DecompV}
\ve{v}(t)=\alpha_{+}(t)e_{+}+\alpha_{-}(t)e_{-}+\beta_{p}(t)i\Q_{p}+\beta_{q}(t)i\Q_{q}+\gamma(t)\Lambda \Q+v^{\bot}(t),
\end{equation}
where $v^{\bot}(t)\in Y^{\bot}\cap \dot{H}^{1}_{rad}$.

Recall that by Remark~\ref{Nzero}, we have $\F(e_{+}, e_{-})\neq 0$, so we can normalize the eigenfunctions $e_{\pm}$ such that $\F(e_{+}, e_{-})=1$. Then, Remark~\ref{PLf} implies
\begin{align*}
&	\alpha_{+}(t)=\F(\ve{v}(t), e_{-}), \quad	\alpha_{-}(t)=\F(\ve{v}(t), e_{+}),\\
& \beta_{p}(t)=\frac{1}{\|\Q_{p}\|_{\dot{H}^{1}}}(\ve{v}(t)-\alpha_{+}(t)e_{+}-\alpha_{-}(t)e_{-}, i\Q_{p})_{\dot{H}^{1}},\\
& \beta_{q}(t)=\frac{1}{\|\Q_{q}\|_{\dot{H}^{1}}}(\ve{v}(t)-\alpha_{+}(t)e_{+}-\alpha_{-}(t)e_{-}, i\Q_{q})_{\dot{H}^{1}}, \\
& \gamma(t)=\frac{1}{\|\Lambda\Q\|_{\dot{H}^{1}}}(\ve{v}(t)-\alpha_{+}(t)e_{+}-\alpha_{-}(t)e_{-}, \Lambda\Q)_{\dot{H}^{1}}.
\end{align*}

\textbf{Step 1. Differential equations:} First, we show that:
\begin{align}\label{FB1}
	&\frac{d}{dt}\F(\ve{v}(t))=2\F(g,\ve{v}),\\\label{alpaD}
	&\frac{d}{dt}\left(e^{-\lambda_{1}t}\alpha_{-}\right)=e^{-\lambda_{1}t}\F(g, e_{+}),\\\label{alpaD22}
	&\frac{d}{dt}\left(e^{\lambda_{1}t}\alpha_{+}\right)=e^{\lambda_{1}t}\F(g, e_{-}).
\end{align}

Indeed, note that by Remark~\ref{PLf}, we see that
\begin{align}\label{Faal}
\alpha_{-}^{\prime}(t)&=\F(\partial_{t}\ve{v}, e_{+})= \F(-\mathcal{L}\ve{v}, e_{+})+\F(g,e_{+})\\
&= \lambda_{1}\F(\ve{v}, e_{+})+\F(g, e_{+})=\lambda_{1}\alpha _{-}(t)+\F(g,e_{+}),
\end{align}
and
\begin{align}\label{Faal11}
\alpha_{+}^{\prime}(t)&=\F(\partial_{t}\ve{v}, e_{-})= \F(-\mathcal{L}\ve{v}, e_{-})+\F(g,e_{-})\\
&= -\lambda_{1}\F(\ve{v}, e_{-})+\F(g, e_{+})=-\lambda_{1}\alpha _{-}(t)+\F(g,e_{-}).
\end{align}
Combining \eqref{Faal} and \eqref{Faal11}, we obtain the equations \eqref{alpaD} and \eqref{alpaD22}.
On the other hand, from \eqref{CondiExp}, we get \eqref{FB1},
\[\tfrac{d}{dt}\F(\ve{v})=\tfrac{d}{dt}\F(\ve{v}, \ve{v})= 
2\F(\ve{v},\partial_{t}\ve{v})=2\F(\ve{v},-\mathcal{L}\ve{v})+2\F(\ve{v}, g)=2\F(\ve{v},\,g).\]

Next, we show that
\begin{align}\label{beta11}
&\frac{d}{dt}\beta_{p}(t)=\frac{\left(i\Q_{p},\ve{w}\right)_{\dot{H }^{1}}}{\left\|\Q_{p}\right\|_{\dot{H}^{1}}^{2}},\\\label{beta22}
&\frac{d}{dt}\beta_{q}(t)=\frac{\left(i\Q_{q},\ve{w}\right)_{\dot{H }^{1}}}{\left\|\Q_{q}\right\|_{\dot{H}^{1}}^{2}},\\\label{gamma11}
&\frac{d}{dt}\gamma(t)=\frac{ \left(\Lambda \Q,\ve{w}\right)_{\dot{H}^{1}}}{\left\|\Lambda \Q\right\|_{\dot{H}^{1}}^{2}},
\end{align}
where $\ve{w}:=g-\F(e_{-}, g)e_{+}-\F(e_{+},g) e_{-}-\L v^{\bot}$.
We will only prove equation \eqref{beta11}, as the proofs of \eqref{beta22} and \eqref{gamma11} are similar.

Indeed, by \eqref{CondiExp}, \eqref{DecompV}, \eqref{Faal}, and \eqref{Faal11}, we get
\begin{align*}
	\tfrac{d}{dt}\beta_{p}(t)&= \frac{1}{\left\|\Q_{p}\right\|_{\dot{H}^{1}}^{2}}(\partial_{t}\ve{v}-\alpha_ {+}^{\prime}(t)e_{+}-\alpha_{-}^{\prime}(t)e_{-},\,i \Q_{p})_{\dot{{H}}^{1}}\\
	&=\tfrac{1}{\left\|\Q_{p}\right\|_{\dot{H}^{1}}^{2}}(g-\mathcal{L}\ve{v}-\alpha_ {+}^{\prime}(t)e_{+}-\alpha_{-}^{\prime}(t)e_{-}, i \Q_{p}
	)_{\dot{{H}}^{1}}\\
	&=\tfrac{1}{\left\|\Q_{p}\right\|_{\dot{H}^{1}}^{2}}(g-\F(g,e_{-})e_{+}-\F(g, e_{+})e_{-}+{\L}{v}^{\bot}, i \Q_{p})_{\dot{{H}}^{1}}\\
	&=\tfrac{1}{\left\|\Q_{p}\right\|_{\dot{H}^{1}}^{2}}(\ve{w},i \Q_{p})_{\dot{{H}}^{1}},
\end{align*}
which shows \eqref{beta11}.

\textbf{Step 2. Decay estimates.} We will show that there exists a real number $a\in \R$ such that
\begin{align}\label{Exp11alfa00}
&|\alpha^{\prime}_{-}(t)|\leq C e^{-c_{2}t},\\ \label{Exp11alfa}
&|\alpha^{\prime}_{+}(t)|\leq C e^{-c_{2}t} \quad \text{if $\lambda_{1}\leq c_{1}$ or $c_{2}\leq \lambda_{1}$},\\\label{Exp11alfalamda}
&|\alpha_{+}(t)-ae^{-\lambda_{1}t}|\leq e^{-c_{2}t} \quad \text{if $c_{1}\leq \lambda_{1}<c_{2}$},
\end{align}
First, note that for any time interval $I$ with $|I|<+\infty$, we have
\begin{equation}\label{L1BoundF}
\begin{aligned}
\int_{I}|\F(\ve{f}(t), \ve{h}(t))|dt &\lesssim
\|\nabla \ve{f}\|_{N(I)} \|\nabla \ve{h}\|_{{L}^{2}(I:{L}^{4})}\\
&+|I| \| \ve{f}\|_{{L}^{\infty}(I: {L}^{\frac{4}{3}})}\| \ve{h}\|_{{L}^{\infty}(I: \text{L}^{4})}.
\end{aligned}
\end{equation}
Indeed, for any time interval $I$ with $|I|< \infty$, we observe that
\begin{equation*}
\begin{aligned}
&\int_{I}\left|\int_{\R^{4}} \nabla f(t)\nabla g(t) dx\right|\lesssim\|\nabla {f}\|_{L^{2}(I :L^{\frac{4}{3}})} \|\nabla {g}\|_{{L}^{2}(I: {L}^{4})}\\
&\int_{\R^{4}}  \left| f\,g\, Q^{2}\right|\, dx\lesssim \|f\|_{{L}_{x}^{\frac{4}{3}}}\|g\|_{{L}_{x}^{4}}\|Q\|^{2}_{{L}^{\infty}}.
\end{aligned}
\end{equation*}
Combining these inequalities with the definition of $\F$, we obtain \eqref{L1BoundF}.

Now, from \eqref{CondiExp33} and inequality \eqref{L1BoundF}, we obtain
\[
\int^{t+1}_{t}|e^{-\lambda_{1}s}\F(g(s), e_{+})|ds\leq Ce^{-(\lambda_{1}+c_{2})t}.
\]
In this case, Lemma~\ref{SumsE} yields
\[
\int^{\infty}_{t}|e^{-\lambda_{1}s}\F(g(s), e_{+})|ds\leq Ce^{-(\lambda_{1}+c_{2})t}.
\]
Since $\lim_{t\to+\infty}e^{-\lambda_{1}t}\alpha_{-}(t)=0$ (cf. \eqref{CondiExp22}), integrating equation \eqref{alpaD} between $t$ and $+\infty$ and applying the fundamental theorem of calculus, we establish \eqref{Exp11alfa00}.

Next, we prove \eqref{Exp11alfa}. First consider the case $\lambda_{1}<c_{1}$. Estimate \eqref{CondiExp22} implies that $\lim_{t\to+\infty}e^{\lambda_{1}t}\alpha_{+}(t)=0$. Using \eqref{L1BoundF} and following the same argument as above, we have
\[
\int^{\infty}_{t}|e^{\lambda_{1}s}\F(g(s), e_{-})|ds\leq Ce^{(\lambda_{1}-c_{2})t}.
\]
Integrating equation \eqref{alpaD22} between $t$ and $+\infty$ and applying the fundamental theorem of calculus again, we obtain 
\eqref{Exp11alfalamda}.

Next, we consider the case $c_{1}\leq \lambda_{1}<c_{2}$. 
Note that from \eqref{CondiExp33} and \eqref{L1BoundF} we obtain
\[\int_{t}^{t+1}|e^{\lambda_{0}s}\F(g(s), e_{-})|\,ds\leq Ce^{\lambda_{1}t}e^{-c_{2}t},\]
which together with Lemma~\ref{SumsE} implies that
\[\int_{t_{0}}^{+\infty}|e^{\lambda_{0}s}\F(g(s), e_{-})|\,ds\lesssim e^{\lambda_{1}t_{0}}e^{-c_{2}t_{0}}<\infty.\]
From the above estimate and \eqref{alpaD22}, we deduce that $\lim_{t\to+\infty}e^{\lambda_{1}t}\alpha_{+}(t)=a$ for some $a\in \R$ and 
\[|e^{\lambda_{1}t}\alpha_{+}(t)-a|\leq Ce^{\lambda_{1}t}e^{-c_{2}t},\]
which establishes \eqref{Exp11alfalamda}.

Finally, we consider the case $c_{1}<c_{2}\leq e_{0}$. Integrating equation \eqref{alpaD22} between $0$ and $t$ and applying the fundamental theorem of calculus, we obtain
\[\alpha_{+}(t)=e^{-\lambda_{0}t}\alpha_{+}(0)+e^{-\lambda_{0}t}\int_{0}^{t}e^{\lambda_{0}s}\F(g(s),e_{-})ds.\]

From estimate \eqref{CondiExp33} we deduce that
\[\left|\int_{0}^{t}e^{\lambda_{1}s}\F(g(s),e_{-})ds\right|\leq \left\{\begin{array}{cc}
Ce^{(\lambda_{1}-c_{2})t}, & \text{if $c_{2}<\lambda_{1}$,}\\
Ct, & \text{if $c_{2}=\lambda_{1}$,}
\end{array}\right.\]
which proves \eqref{Exp11alfa}.

\textbf{Step 3. Proof for the case $\lambda_1 \geq c_2$ or ($\lambda_1 < c_2$ and $a=0$).}
From the estimates in the previous step, we obtain
\begin{equation}\label{Al12}
|\alpha_{+}(t)| + |\alpha_{-}(t)| \leq Ce^{-c_{2}t}.
\end{equation}
We claim that
\begin{align}\label{bqBp}
\beta_{p}(t) \lesssim e^{-\frac{(c_{1}+c_{2})}{2}t},
\quad
\beta_{q}(t) \lesssim e^{-\frac{(c_{1}+c_{2})}{2}t},
\quad
\gamma(t) \lesssim e^{-\frac{(c_{1}+c_{2})}{2}t}.
\end{align}

To prove this, note that by \eqref{CondiExp33} and estimate \eqref{L1BoundF}, we have
\[
\int^{t+1}_{t} |\F(g(s), \ve{w}(s))| \, ds \leq Ce^{-(c_{1}+c_{2})t}.
\]
Lemma~\ref{SumsE} then implies
\[
\int^{\infty}_{t} |\F(g(s), \ve{w}(s))| \, ds \leq Ce^{-(c_{1}+c_{2})t}.
\]

From \eqref{CondiExp22}, it follows that $|\F(\ve{w}(t))| \lesssim \|\ve{w}(t)\|^{2}_{\dot{H}^{1}} \to 0$ as $t \to \infty$. Using \eqref{FB1}, we deduce
\[
|\F(\ve{w}(t))| \leq \int^{\infty}_{t} |\F(g, \ve{w}(t))| \, dt \leq Ce^{-(c_{1}+c_{2})t}.
\]

Since $\F(e_{+}, e_{-}) = 1$ and $\F(e_{+}) = \F(e_{-}) = 0$, Remark~\ref{PLf} yields
\[
\F(\ve{w}) = \F(v^{\bot}) + 2\alpha_{+}\alpha_{-}.
\]
By Proposition~\ref{FCove} and \eqref{Al12}, we conclude
\begin{equation}\label{EstimaV}
\|v^{\bot}(t)\|_{\dot{H}^{1}} \lesssim \sqrt{|\F(v^{\bot})|} \leq Ce^{-\tfrac{(c_{1}+c_{2})}{2}t}.
\end{equation}

Next, we establish the decay estimate for $\beta_{p}(t)$. First, observe from \eqref{Al12} that $\lim_{t \to +\infty} \beta_{p}(t) = 0$. Moreover, since
\begin{equation}\label{EstimaV123}
(i\Q_{p}, \mathcal{L} v^{\bot})_{\dot{H}^{1}} = (\mathcal{L}^{*}i\Delta \Q_{p}, v^{\bot})_{{L}^{2}} \lesssim \|\mathcal{L}^{*}i\Delta \Q_{p}\|_{L^{\frac{4}{3}}} \|v^{\bot}\|_{\dot{H}^{1}} \lesssim e^{-\tfrac{(c_{1}+c_{2})}{2}t},
\end{equation}
where we used $\mathcal{L}^{*}i\Delta \Q_{p} = L_{R}\Delta \Q_{p} \in L^{\frac{4}{3}}$, it follows from \eqref{beta11} that
\begin{align*}
\int_{t}^{t+1} |(\ve{w},i\Q_{p})_{\dot{H}^{1}}| \, ds &\lesssim e^{-c_{2}t} + \int_{t}^{t+1} |(i\Q_{p},\mathcal{L} v_{\bot}(s))_{\dot{H}^{1}}| \, ds \\
&\lesssim e^{-c_{2}t} + \int_{t}^{t+1} \int_{\mathbb{R}^{4}} |\mathcal{L}^{*}(i\Delta \Q_{p})\overline{v_{\bot}}(s)| \, dx \, ds \\
&\lesssim e^{-c_{2}t} + \|v_{\bot}(t)\|_{L^{\infty}\dot{H}^{1}} \lesssim e^{-\tfrac{c_{1}+c_{2}}{2}t}.
\end{align*}

Combining this estimate with Lemma~\ref{SumsE} and \eqref{beta11}, we obtain
\[
|\beta_{p}(t)| \lesssim e^{-\tfrac{c_{1}+c_{2}}{2}t}.
\]
A similar argument proves the estimates for $\beta_{q}(t)$ and $\gamma(t)$ in \eqref{bqBp}.

Finally, combining \eqref{Exp11alfa00}--\eqref{Exp11alfalamda} and \eqref{bqBp}, and recalling the decomposition \eqref{DecompV}, we conclude
\[
\|v(t)\|_{\dot{H}^{1}} \leq C e^{-c_{2}^{-}t}.
\]
This completes the proof for this case.

\textbf{Step 4: Proof of the case $c_{2} > \lambda_{1}$, and $a \neq 0$.}
By Step 2 and \eqref{CondiExp22}, if $c_{1} > \lambda_{1}$, we see that $a=0$. Therefore, in what follows we assume that 
$c_{1} \leq \lambda_{1}$, i.e., $\lambda_{1}\in [c_{1}, c_{2})$. Now, we set
\[\ve{w}(t) := \ve{v}(t) - a e^{-\lambda_{1}t} e_{+}.\]
Then 
\[\partial_{t}\ve{w}(t) + \L\ve{w}(t) = g(t), \quad \|\ve{w}(t)\|_{\dot{H}^{1}} \leq C e^{-c_{1}t}.\]
Writing $\overline{\alpha}_{+}(t) = \F(\ve{w}(t),e_{-})$, we see that $\overline{\alpha}_{+}(t) ={\alpha}_{+}(t)-ae^{-\lambda_{1}t}$. Thus, from \eqref{Exp11alfalamda},
\[\lim_{t \to +\infty} e^{\lambda_{1}t} \overline{\alpha}_{+}(t) = 0.\]
This implies that $\overline{\alpha}_{+}(t)$ and $g$ satisfy all the assumptions of Step 3, and we can conclude that
\[\|\ve{v}(t) - a e^{-\lambda_{1}t}e_{+}\|_{\dot{H}^{1}} \leq C e^{-c_{2}^{-}t}.\]
This completes the proof of the proposition.
\end{proof}

\begin{proof}[Proof of Proposition~\ref{UniqueU}]
Combining Lemmas~\ref{BoundGra11}, \ref{LN1}, \ref{AxuST11}, and \ref{SumsE} with Propositions~\ref{AxuST} and \ref{ContracA},
the proof follows the same lines as in \cite[Lemma 6.5]{DuyMerle2009}. We omit the details here. 
\end{proof}

\begin{proof}[Proof of Corollary~\ref{coroCla}]
Let $a\neq0$ and choose $T_{a}\in \R$ such that $|a|e^{-\lambda_{1}T_{a}}=1$. From \eqref{UniqVec} we obtain
\begin{equation}\label{W1}
\|W^{a}(t+T_{a})-\Q\mp e^{-\lambda_{1} t}e_{+}\|_{\dot{H}^{1}}
\leq e^{-\frac{3}{2}\lambda_{1}t}.
\end{equation}
Thus, $W^{a}(t+T_{a})$ satisfies the assumptions of Proposition~\ref{UniqueU}, which implies that there exists $\tilde{a}$
such that 
\[W^{a}(\cdot+T_{a})=W^{\tilde{a}}.\]
From \eqref{W1} and the uniqueness established in Proposition~\ref{ContracA}, we conclude that $\tilde{a}=1$ if $a>0$, and $\tilde{a}=-1$ if $a<0$, proving \eqref{C1s}. 
\end{proof}

\section{Proof of the main result}\label{S:proof}

\begin{proof}[Proof of Theorem~\ref{TH22}]

(i) Let $\ve{u}$ be a radial solution to \eqref{NLS} satisfying
\begin{equation}\label{Hdn}
E(\ve{u}_0) = E(\Q), \quad K(\ve{u}_0) < K(\Q).
\end{equation}
From Lemma~\ref{GlobalW}, we have that $\ve{u}$ is global. Suppose that $\ve{u}$ does not scatter, i.e., $\|\ve{u}\|_{{L}^{6}_{t, x}(\R \times \R^4)} = \infty$. 
Replacing $\ve{u}(t)$ with $\overline{\ve{u}}(-t)$ if necessary, Proposition~\ref{CompacDeca} and Corollary~\ref{ClassC} show that there exist $\eta_{0}$, $\theta_0 \in \mathbb{R}$, $\mu_0 > 0$, and constants $c, C > 0$ such that 
\[
\|\ve{u}_{[\eta_{0}, \theta_0, \mu_0]}(t) - \Q\|_{\dot{H}^1} \leq Ce^{-ct} \qtq{for $t\geq 0$.}
\]
Thus, $\ve{u}_{[\eta_{0}, \theta_0, \mu_0]}$ satisfies the assumptions of Proposition~\ref{UniqueU}. Therefore, by \eqref{Hdn}, Corollary~\ref{coroCla} implies the existence of $a < 0$ and $T_a$ such that
\[
\ve{u}_{[\eta_{0}, \theta_0, \mu_0]}(t) = W^{-1}(t + T_a).
\]
Consequently, $\ve{u}_{[\eta_{0}, \theta_0, \mu_0]}(t) = W^{-1}(t + T_a) = \G^{-}(t+t_{0})$ for some $t_{0}\in \R$, 
which completes the proof of part (i).

(ii) If $E(\ve{u}_0) = E(\Q)$ and $K(\ve{u}_0) = K(\Q)$, then by the variational characterization given in Proposition~\ref{UBS}, we deduce that $\ve{u}_0 = \Q$ up to the symmetries of the equation.

Finally, we prove part (iii). Let $\ve{u}$ be a radial solution to \eqref{NLS} defined on $[0, +\infty)$ (if necessary, replace $\ve{u}(t)$ with $\overline{\ve{u}}(-t)$) satisfying
\[
E(\ve{u}) = E(\Q), \quad K(\ve{u}_0) > K(\Q), \quad \text{and} \quad \ve{u}_0 \in {L}^2.
\]
Proposition~\ref{SupercriQ} guarantees that there exist $\eta_{0},\theta_0 \in \mathbb{R}$, $\mu_0 > 0$, and constants $c, C > 0$ such that
\[
\|\ve{u}_{[\eta_{0}, \theta_0, \mu_0]}(t) - \Q\|_{\dot{H}^1} \leq Ce^{-ct} \qtq{for $t\geq 0$.}
\]
Since $K(\ve{u}_0) > K(\Q)$, Corollary~\ref{coroCla} implies the existence of $a > 0$ and $T_a$ such that
\[
\ve{u}_{[\eta_{0}, \theta_0, \mu_0]}(t) = W^{+1}(t + T_a)=\G^{+}(t+t_{0}),
\]
for some $t_{0}\in \R$, which completes the proof of part (iii).

This concludes the proof of the theorem.

\end{proof}

\appendix
\section{Spectrum of the linearized operator}\label{S:A2}

This appendix is dedicated to showing that the operator $\L$ has at least one negative eigenvalue. 

Notice that since $\overline{\L \ve{v}} = -\L(\overline{\ve{v}})$, we infer that if $\lambda_{1} > 0$ is an eigenvalue of the operator $\L$ with eigenfunction $e_{+} = (Y, Z, W)$, then $-\lambda_{1}$ is also an eigenvalue of $\L$ with eigenfunction $e_{-} = \overline{e_{+}} = (\overline{Y}, \overline{Z}, \overline{W})$. Denoting $e_{1} = \RE e_{+}$ and $e_{2} = \IM e_{+}$, to show the existence of $e_{+}$, we must study the system
\begin{equation}\label{SiE}
\begin{cases} 
L_{R} e_{1} = \lambda_{1} e_{2}, \\
-L_{I} e_{2} = \lambda_{1} e_{1}.
\end{cases} 
\end{equation}

Lemma~\ref{CoerLi11} shows that $L_{I}$ on $L^{2}$ with domain $H^{2}$ is nonnegative. Consequently, since $L_{I}$ is self-adjoint, it follows that $L_{I}$ has a unique square root $(L_{I})^{\frac{1}{2}}$ with domain $H^{1}$. 

Now, consider the self-adjoint operator $\TT$ on $L^{2}$ with domain $H^{4}$, defined as
\[
\TT = (L_{I})^{\frac{1}{2}} L_{R} (L_{I})^{\frac{1}{2}}.
\]
Since
\[
\TT = (L_{I})^{2} - (L_{I})^{\frac{1}{2}}
\begin{pmatrix}
-4 Q_2 Q_3 & 0 & 0 \\
0 & 0 & -2 Q_1^2 \\
0 & -2 Q_1^2 & 0
\end{pmatrix}
(L_{I})^{\frac{1}{2}},
\]
and noting that $|\partial^{\alpha} Q_{j}(x)| \leq C_{\alpha} |Q_{j}(x)|$ for every multi-index $\alpha$, and $Q_{j}$ decays at infinity for $j=1$, $2$ ,$3$, it follows that $\TT$ is a relatively compact, self-adjoint perturbation of $((\tfrac{1}{2m_{1}}\Delta)^{2}, (\tfrac{1}{2m_{2}}\Delta)^{2}, (\tfrac{1}{2m_{3}}\Delta)^{2})$. By Weyl's theorem, this implies that $\sigma_{\text{ess}}(\TT) = [0, \infty)$.

Suppose there exists $\ve{g} \in H^{4}$ such that
\begin{equation}\label{egenT}
\TT \ve{g} = -\lambda^{2}_{1} \ve{g}.
\end{equation}
Defining
\[
e_{1} := (L_{I})^{\frac{1}{2}} \ve{g} \quad \text{and} \quad e_{2} := \frac{1}{\lambda_{1}} L_{R} (L_{I})^{\frac{1}{2}} \ve{g},
\]
we obtain a solution to \eqref{SiE}, which implies the existence of the eigenfunction $e_{+}$. 

Thus, to show the existence of $e_{+}$, we need to prove that the operator $\TT$ has at least one negative eigenvalue $-\lambda^{2}_{1}$, which is the content of the following result.

\begin{lemma}\label{NegativeL}
\[
\Pi(\TT) := \inf\left\{ (\TT \ve{g}, \ve{g})_{L^{2}} : \ve{g} \in H^{4}, \|\ve{g}\|_{L^{2}} = 1 \right\} < 0.
\]
\end{lemma}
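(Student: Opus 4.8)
The plan is to exhibit an explicit trial function $\ve{g}$ for which $(\TT\ve{g},\ve{g})_{L^2}<0$. Since $\TT=(L_I)^{1/2}L_R(L_I)^{1/2}$, writing $\ve{f}=(L_I)^{1/2}\ve{g}$ we have $(\TT\ve{g},\ve{g})_{L^2}=\langle L_R\ve{f},\ve{f}\rangle$, so it suffices to produce $\ve{f}$ in the range of $(L_I)^{1/2}$ (equivalently, $\ve{f}$ orthogonal to $\ker L_I=\mathrm{span}\{\Q_p,\Q_q\}$, after an approximation argument) with $\langle L_R\ve{f},\ve{f}\rangle<0$. The natural candidate comes from the structure already exposed in the paper: $L_R(\Q)=\big(\tfrac{1}{m_1}\Delta Q_1,\tfrac{1}{m_2}\Delta Q_2,\tfrac{1}{m_3}\Delta Q_3\big)$, and from \eqref{NegativeLQ} we know $\F(\Q)=\tfrac12\langle L_R\Q,\Q\rangle=-K(\Q)<0$. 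So morally $\Q$ itself is a negative direction for $L_R$; the only issue is that $\Q$ is not orthogonal to $\ker L_I$.

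First I would decompose $\Q=c_p\Q_p+c_q\Q_q+\Q^{\perp}$ with $\Q^{\perp}\perp\ker L_I$ in $\dot H^1$ (recall $\Q\in\mathrm{span}\{\Q_p,\Q_q\}$ is \emph{not} actually the relevant decomposition — here the orthogonality is the $L^2$/$\dot H^1$ inner product defining the range of $(L_I)^{1/2}$, and $\Q$ genuinely has a component transverse to $\ker L_I$). Using $L_I\Q_p=L_I\Q_q=0$ and the symmetry of $L_R$, I would compute $\langle L_R\Q^{\perp},\Q^{\perp}\rangle$ and show it is still negative: expanding $\langle L_R\Q,\Q\rangle=\langle L_R\Q^{\perp},\Q^{\perp}\rangle+2\RE\langle L_R\Q^{\perp},c_p\Q_p+c_q\Q_q\rangle+\langle L_R(c_p\Q_p+c_q\Q_q),c_p\Q_p+c_q\Q_q\rangle$, and then controlling the cross and pure $\Q_p,\Q_q$ terms. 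Since all of $Q_1,Q_2,Q_3,Q_p,Q_q$ are explicit multiples of the single profile $Q$ solving $\Delta Q+Q^3=0$, every one of these inner products reduces to an explicit scalar computation involving $\int Q^4$ and $\int|\nabla Q|^2$ via the Pohozaev relation $\int|\nabla Q|^2=\int Q^4$ (equivalently \eqref{IP4}). If the transverse component still carries the negativity — which it should, because the negative direction of $L_R$ is tied to the scaling instability of $\Q$, unrelated to the phase directions spanning $\ker L_I$ — then $\ve{f}:=\Q^{\perp}$, approximated in $\dot H^1$ by elements of the range of $(L_I)^{1/2}$ (which is dense in $(\ker L_I)^{\perp}$ since $L_I\geq0$ with closed range off its kernel by the coercivity in Lemma~\ref{CoerLi11}), gives a trial $\ve{g}$ with $(\TT\ve{g},\ve{g})_{L^2}<0$, hence $\Pi(\TT)<0$.

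An alternative, perhaps cleaner, route avoids computing $\Q^\perp$ explicitly: it is enough to find \emph{any} finite-dimensional subspace $V\subset(\ker L_I)^{\perp}$ on which $\langle L_R\cdot,\cdot\rangle$ is not positive semidefinite. Take $V=\mathrm{span}\{\Q^\perp\}$ as above, or more robustly use a min-max / dimension-counting argument: $L_R$ restricted to real-valued fields has (by the diagonalization in Lemma~\ref{Apx11}) a negative subspace coming from the $L_{-1}$ blocks, which is at least two-dimensional, while $\ker L_I$ is only two-dimensional; a dimension count then forces a negative direction of $L_R$ to persist after projecting off $\ker L_I$, unless that negative subspace lies entirely inside $\ker L_I$, which one rules out since $\ker L_I=\mathrm{span}\{\Q_p,\Q_q\}$ and $L_R\Q_p,L_R\Q_q$ are readily checked to be nonzero (so $\Q_p,\Q_q\notin\ker L_R$, and one computes $\langle L_R\Q_p,\Q_p\rangle$, $\langle L_R\Q_q,\Q_q\rangle$, $\langle L_R\Q_p,\Q_q\rangle$ explicitly to see the $2\times2$ Gram-type matrix of $L_R$ on $\ker L_I$ is not negative definite).

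The main obstacle I anticipate is purely computational bookkeeping: one must carry out the explicit scalar integrals $\langle L_R\Q_p,\Q_p\rangle$, $\langle L_R\Q_q,\Q_q\rangle$, $\langle L_R\Q_p,\Q_q\rangle$, $\langle L_R\Q^\perp,\Q_p\rangle$, etc., keeping track of the coefficients $(4m_2m_3)^{-1/4}$ and friends from \eqref{q1q3} and the off-diagonal potential entries of $L_R$, and then verify that the resulting quadratic form on $\mathrm{span}\{\Q,\Q_p,\Q_q\}$ has a negative eigenvalue on the codimension-two slice $\perp\ker L_I$. This is routine — it all collapses to rational multiples of $\int_{\R^4}Q^4$ — but it is the only place where something could conceivably fail, so I would double-check the sign of $\langle L_R\Q^\perp,\Q^\perp\rangle$ carefully, using $\langle L_R\Q,\Q\rangle=-2K(\Q)<0$ (from \eqref{NegativeLQ}) as the anchor. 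Everything else (self-adjointness of $\TT$, the square-root construction, density of $\mathrm{range}(L_I)^{1/2}$ in $(\ker L_I)^\perp$, the variational reformulation $\Pi(\TT)=\inf\langle L_R\ve f,\ve f\rangle$ over normalized $\ve f$ in that range) is standard functional analysis given the coercivity already proved in Lemma~\ref{CoerLi11}.
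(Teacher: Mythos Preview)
Your reduction to finding a negative direction for $L_R$ in the range of $(L_I)^{1/2}$ is correct, but you have misidentified the kernel of $L_I$ on $L^2$. The ground state profile satisfies $Q(x)\sim |x|^{-2}$ at infinity, so $Q\notin L^2(\R^4)$; hence $\Q_p,\Q_q\notin L^2$ and the $L^2$-kernel of $L_I$ (domain $H^2$) is \emph{trivial}. Consequently the range of $L_I$, and hence of $(L_I)^{1/2}$, is dense in all of $L^2$: no orthogonality constraint is needed, and the whole $\Q^\perp$ discussion is superfluous. More seriously, even if you did need to project off $\mathrm{span}\{\Q_p,\Q_q\}$, your first route fails outright: one checks $\Q=\tfrac35\Q_p+\tfrac15\Q_q$ exactly, so $\Q^\perp=0$ in \emph{any} inner product and $\langle L_R\Q^\perp,\Q^\perp\rangle=0$. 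Your second route also contains an error: in the diagonalization of Lemma~\ref{Apx11} the blocks $L_{-1}=-\Delta+Q^2$ are \emph{positive}, and the single negative direction comes from the $L_3=-\Delta-3Q^2$ block, so the dimension count you propose does not go through as stated.

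The paper's argument is much shorter: since $\ker_{L^2}L_I=\{0\}$, by the density argument of \cite[Claim~7.1]{DuyMerle2009} it suffices to exhibit any $\mathbf{W}$ with $\langle L_R\mathbf{W},\mathbf{W}\rangle<0$. One takes $\mathbf{W}=\bigl(\tfrac{\varphi}{2\sqrt{m_1}},\tfrac{\varphi}{2\sqrt{2m_2}},\tfrac{\varphi}{2\sqrt{2m_3}}\bigr)$ with $\varphi\in H^2$; the diagonalization in Lemma~\ref{Apx11} then gives $\langle L_R\mathbf{W},\mathbf{W}\rangle=\langle L_3\varphi,\varphi\rangle$, and the existence of $\varphi$ with $\langle L_3\varphi,\varphi\rangle<0$ is the known scalar fact from \cite{DuyMerle2009}. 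Note that using $\Q$ itself as the trial direction, while morally correct via \eqref{NegativeLQ}, would still require an approximation since $\Q\notin L^2$; the paper sidesteps this by working directly with an $H^2$ function $\varphi$.
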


\begin{proof}
Notice that since $L_{I}$ is self-adjoint on $L^{2}$ with domain $H^{2}$ and $\ker{L_{I}} = \{0\}$ (indeed, $\Q_{p} \notin L^{2}$ and $\Q_{q} \notin L^{2}$), it follows that the range of $L_{I}$ is dense in $L^{2}$. Using the same density argument developed in \cite[Claim 7.1]{DuyMerle2009}, it suffices to show that there exists $\mathbf{W} \in (\dot{H}^{2})^{3}$ such that
\begin{align}\label{PosL}
-(L_{R} \mathbf{W}, \mathbf{W})_{L^{2}} > 0.
\end{align}
In \cite[Claim 7.1]{DuyMerle2009}, it is shown that there exists a function $\varphi \in {H}^{2}$ with $-(L_{3} \varphi, \varphi)_{L^{2}} > 0$, where $L_{3} \varphi = -\Delta \varphi - 3Q^{2} \varphi$. We define
\[
\mathbf{W} = \left( \tfrac{\varphi}{2\sqrt{m_1}}, \tfrac{\varphi}{2\sqrt{2m_2}}, \tfrac{\varphi}{2\sqrt{2m_3}} \right).
\]
Then, we have (see proof of Lemma~\ref{Apx11})
\begin{align*}
\langle L_{R} \mathbf{W}, \mathbf{W} \rangle &= \langle L_{3} \varphi, \varphi \rangle  < 0,
\end{align*}
which implies \eqref{PosL}. 
\end{proof}

\section*{Acknowledgements}
The author wishes to express his sincere thanks to the referees for their valuable comments.

\bibliographystyle{siam}

\end{document}